\theoremstyle{plain}
\newtheorem{thm}{Theorem}[section]
\newtheorem{lem}[thm]{Lemma}
\newtheorem{prop}[thm]{Proposition}
\newtheorem{cor}[thm]{Corollary}
\newtheorem{conjec}[thm]{Conjecture}
\newtheorem{qtn}[thm]{Question}
\theoremstyle{definition}
\newtheorem{defn}[thm]{Definition}
\newtheorem{ex}[thm]{Example}
\newtheorem{rem}[thm]{Remark}
\theoremstyle{remark}
\newcommand{\bbC}{\mathbb{C}}
\newcommand{\bbF}{\mathbb{F}}
\newcommand{\bbH}{\mathbb{H}}
\newcommand{\bbK}{\mathbb{K}}
\newcommand{\bbN}{\mathbb{N}}
\newcommand{\bbQ}{\mathbb{Q}}
\newcommand{\bbZ}{\mathbb{Z}}
\DeclareMathOperator{\Spec}{Spec}
\newcommand{\al}{\alpha}
\newcommand{\de}{\delta}
\newcommand{\ep}{\epsilon}
\DeclareMathOperator{\Un}{U}
\DeclareMathOperator{\SL}{SL}
\DeclareMathOperator{\PSL}{PSL}
\DeclareMathOperator{\Id}{Id}
\DeclareMathOperator{\id}{id}
\DeclareMathOperator{\Ad}{Ad}
\DeclareMathOperator{\Tr}{Tr}
\DeclareMathOperator{\Aut}{Aut}
\DeclareMathOperator{\Hom}{Hom}
\DeclareMathOperator{\End}{End}
\DeclareMathOperator{\Mod}{Mod}
\DeclareMathOperator{\Br}{Br}
\DeclareMathOperator{\im}{Im}
\DeclareMathOperator{\ord}{ord}
\newenvironment{pf}{\begin{proof}}{\end{proof}}
\title{Azumaya algebras and once-punctured torus bundles}
\author{Nicholas Miller}
\address{Department of Mathematics\\University of Oklahoma\\Norman, OK 73019}
\email{nickmbmiller@ou.edu}
\begin{document}

\begin{abstract}
Recent work of Chinburg, Reid, and Stover has shown that certain arithmetic and algebro-geometric properties of the character variety of a hyperbolic knot complement in the $3$-sphere $M=S^3\setminus K$ yields topological and number theoretic information about Dehn fillings of $M$.
Specifically, they show how the study of a certain extension problem for quaternion Azumaya algebras is related to topological invariants associated to these fillings.

In this paper, we extend their work to the setting of hyperbolic once punctured torus bundles.
Along the way, we exhibit new phenomenon in the relevant extension problem not visible in the case of a hyperbolic knot complement, which is related to the more complicated non-abelian reducible representation theory of hyperbolic once punctured torus bundles.
We then apply these results to a series of examples from the literature and list some remaining questions from both works.
\end{abstract}

\maketitle


\section{Introduction}
Over the past several decades there have been a plethora of results exploring the rich connection between the geometry of a finite-volume hyperbolic 3-manifold $M$ and properties of its space of representations into $\SL_2(\bbC)$ modulo conjugation, which is frequently referred to as the $\SL_2(\bbC)$--character variety.
For instance, Thurston \cite{Thurston} showed that any canonical component of the $\SL_2(\bbC)$--character variety, that is any component containing (a lift of) the discrete faithful representation, has complex dimension equal to the number of cusps of $M$ and Culler--Shalen \cite{CS1} have shown that the character variety can be used to find embedded essential surfaces in $M$.

More recently, Chinburg--Reid--Stover \cite{CRS} have explored a connection between arithmetic properties of the $\SL_2(\bbC)$--character variety and certain topological invariants associated to Dehn fillings of knot complements in the $3$-sphere.
In this paper, we extend this connection to the setting of hyperbolic once punctured torus bundles.
To motivate our theorems, we begin by recalling some results from \cite{CRS}.

Let $M$ be a finite volume hyperbolic manifold and $\rho:\pi_1(M)\to\SL_2(\bbC)$ be any representation.
Then we may associate to $\rho$ the following invariants
$$k_\rho=\bbQ(\{\Tr(\gamma)\mid\gamma\in\pi_1(M)\}),$$
and 
\begin{equation}\label{eqn:quatalg}
\mathcal{A}_\rho=\{\sum a_i\rho(\gamma_i)\mid a_i\in k_\rho,\gamma_i\in\pi_1(M)\},
\end{equation}
where the sum in Equation \eqref{eqn:quatalg} is assumed to be finite.
When $\rho$ is absolutely irreducible, $\mathcal{A}_\rho$ defines a quaternion algebra over the field $k_\rho$ and, moreover, when $\rho$ is a representation of geometric significance, the invariants $\mathcal{A}_\rho$, $k_\rho$ tend to be important topological invariants.
For instance, when $\rho$ is (a lift of) the discrete faithful representation of $\pi_1(M)$, Mostow rigidity implies that $\mathcal{A}_\rho$, $k_\rho$  are topological invariants which are intimately related to lengths of closed geodesic on $M$ and, in many natural situations, are invariants of the commensurability class of $M$.
Similarly, if $M$ is a one cusped hyperbolic manifold and $\rho$ denotes the composition $\rho_{p/q}:\pi_1(M)\twoheadrightarrow\pi_1(M_{p/q})\to\SL_2(\bbC)$, where $M_{p/q}$ denotes its $p/q$-Dehn filling, then $A_{\rho_{p/q}}$, $k_{\rho_{p/q}}$ are topological invariants of the Dehn filled orbifold $M_{p/q}$.

When $M$ is the figure eight knot complement, it was observed by Chinburg, Reid, and Stover that the invariants $\mathcal{A}_{\rho_{p/q}}$ behave in a very structured way as $p/q$ varies through all Dehn fillings of $M$.
Precisely, recall that for a $p/q$-Dehn filling $k_{\rho_{p/q}}$ is a number field and therefore $\mathcal{A}_{\rho_{p/q}}$ is completely characterized by the places of $k_{\rho_{p/q}}$ (both finite and infinite) at which $\mathcal{A}_{\rho_{p/q}}$ ramifies. 
If we then define
$$S_M=\{\ell\in\bbN\mid\ell\text{ is prime and }\exists~ p/q\text{ such that }\mathcal{A}_{\rho_{p/q}}\text{ ramifies above }\ell\},$$
then it was observed in \cite[Thm 1.7]{CRS} that $S_M=\{2\}$.
In particular, strikingly the associated quaternion algebras $\mathcal{A}_{p/q}$ exhibit structure in their ramification sets as $p/q$ vary that a priori there is no reason to expect.

Motivated by understanding this phenomenon in the setting of hyperbolic knot complements in the $3$-sphere, $M=S^3\setminus K$, Chinburg, Reid, and Stover show that finiteness of the above set $S_M$ is related to an extension problem for a certain Azumaya algebra which one can naturally associate to a canonical component.
They then study this extension problem and show that one can give a sufficient condition for the extension problem to be solved coming from properties of the Alexander polynomial $\Delta_{K}(t)$. 
To discuss this and extensions therein, we briefly recall a few facts about curves of characters of representations, where we opt for informality in the introduction and postpone the formal definitions until Section \ref{sec:background}. 
In this paper, we use the language of varieties and schemes over number fields as it is the natural setting to discuss Azumaya algebras.

Let $\frak{C}$ denote a fixed geometrically integral curve component of the character scheme $\frak{X}(M)_k$, defined over a field $k$, which contains the character of an irreducible representation.
Then Culler--Shalen \cite{CS1} associate to $\frak{C}$ a certain representation $\rho_\frak{C}:\pi_1(M)\to \SL_2(F)$ called the tautological representation, where $F$ is an extension of the function field $k(\frak{C})$.
From this, one can form a quaternion algebra $\mathcal{A}_{k(\frak{C})}$ using precisely the same definition in Equation \eqref{eqn:quatalg}, with $k_\rho$ replaced by $k(\frak{C})$.
As $\frak{C}$ is a curve, it admits a smooth projective completion $\widetilde{C}$ with the property that $k(\widetilde{C})\cong k(\frak{C})$ and therefore, under this isomorphism, we have a quaternion algebra $\mathcal{A}_{k(\widetilde{C})}$ over $k(\widetilde{C})$ which we call the \emph{tautological Azumaya algebra}.
The aforementioned extension problem is then the question of whether there exists an Azumaya algebra $\mathcal{A}_{\widetilde{C}}$ over the entirety of $\widetilde{C}$ such that its specialization to the generic point $\xi$ of $\widetilde{C}$ is isomorphic to the tautological Azumaya algebra $\mathcal{A}_{k(\widetilde{C})}$.
If this is indeed the case, we say that $\mathcal{A}_{k(\widetilde{C})}$ extends over $\widetilde{C}$.
This formulation will be made more precise in Section \ref{sec:azalg}.

When $M=S^3\setminus K$ is a hyperbolic knot complement in $S^3$, Chinburg--Reid--Stover show that the tautological Azumaya algebra extends over $\widetilde{C}$ provided that the Alexander polynomial $\Delta_{K}(t)$ satisfies the following condition:

\medskip
\begin{enumerate}
\item[$(\star)$] For any root $z$ of $\Delta_{K}(t)$ in $\overline{\bbQ}$, if $w$ is such that $w^2=z$ then $\bbQ(w)=\bbQ(w+1/w).$ \medskip
\end{enumerate}

\noindent The key tool in analyzing the extension problem is a cohomological invariant associated to a point $p\in\widetilde{C}$ called the tame symbol, which controls precisely when the tautological Azumaya algebra can be extended over a point $p$.
In \cite{CRS}, the authors are able to show that this Azumaya algebra always extends over any point in $\widetilde{C}$ except at those points given by characters of non-abelian reducible representations, from which Condition $(\star)$ arises naturally by analyzing the tame symbol at such representations.

The main thrust of this paper is to generalize this theorem to the context of hyperbolic once punctured torus bundles $M$. 
We let $x$, $y$ denote generators of the once punctured torus fiber of $M$ and $t$ denote the stable letter.
See Section \ref{sec:1ptb} for a presentation of $\pi_1(M)$ in these three generators.
In contrast to the knot complement case, where meridians always have the same eigenvalues under any non-abelian reducible representation, there are three distinct types of non-abelian reducible representations of $\pi_1(M)$ based on whether the stable letter lies in the center, is quasi-unipotent and infinite order, or neither. We call the latter \emph{Type A representations}, the second \emph{Type B representations}, and the first \emph{Type C representations} (see Section \ref{sec:nonabclassification}).
If $\Phi\in\SL_2(\bbZ)$ denotes the class of the monodromy matrix of $M$ in the mapping class group of the once punctured torus and $k$ is a field extension of $\bbQ$, we define the following two conditions:\medskip
\begin{enumerate}
\item[$(\star_1)$] Either $\Tr(\Phi)=a^2+2$ for some $a\in\bbN$ or $w-1/w\in k$, where $w$ is such that $w^2$ is a root of the characteristic polynomial for $\Phi$,\medskip
\item[$(\star_2)$] If $n$ is the least common multiple of the orders of $x$, $y$ in $H_1(M,\bbZ)_{tors}$, then $\eta_n-\eta_n^{-1}\in k$.\medskip
\end{enumerate}

We remark that Condition $(\star_1)$ should be seen as the natural interpretation of Condition $(\star)$ in the context of once punctured torus bundles.
That is, when $M=S^3\setminus K$, the Alexander polynomial $\Delta_{K}(t)$ is the characteristic polynomial of the action of the generator of $H_1(M,\bbZ)\cong \bbZ$ on $H_1(M^{cyc},\bbZ)$, where $M^{cyc}$ is the infinite cyclic cover of $M$ associated to the kernel of the map to homology.
Similarly, for a once punctured torus bundle we have that $H_1(M,\bbZ)\cong\bbZ\oplus F$ for some finite group $F$ and one can analyze the action of the generator of the first factor (given by the stable letter) on $H_1(M^{cyc},\bbZ)$, where $M^{cyc}$ is associated to the kernel of projection to the first factor.
The characteristic polynomial of this is given by the characteristic polynomial of the monodromy matrix $\Phi$ and analyzing the identical condition that $\bbQ(w)=\bbQ(w+1/w)$ naturally leads to the trace condition (see Proposition \ref{prop:galois}).
Condition $(\star_2)$ is a genuinely new phenomenon which does not occur in the hyperbolic knot complement case, as it is related to the presence of torsion in $H_1(M,\bbZ)$ from the second factor, and ends up making the non-abelian reducible representation theory for $\pi_1(M)$ richer for hyperbolic once punctured torus bundles.

Using these two conditions, we then have the analogous theorem to that of \cite[Thm 1.2]{CRS}.

\begin{restatable}{thm}{mainone}\label{thm:main1}
Let $M$ be a hyperbolic once punctured torus bundle and let $\frak{C}$ be a geometrically irreducible component of $\frak{X}(M)_k$ containing the character of an irreducible representation.
Suppose that if there is a character of a Type A representation on $\frak{C}$, Condition $(\star_1)$ holds, and if there is a character of a Type B or C representation on $\frak{C}$, Condition $(\star_2)$ holds.
Then the tautological Azumaya algebra $\mathcal{A}_{k(\widetilde{C})}$ extends over $\widetilde{C}$. 
\end{restatable}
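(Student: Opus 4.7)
The plan is to reduce the global extension problem to a local one at each closed point of $\widetilde{C}$. Following the strategy of Chinburg--Reid--Stover, the tautological Azumaya algebra $\mathcal{A}_{k(\widetilde{C})}$ extends over $\widetilde{C}$ if and only if, at every closed point $p\in\widetilde{C}$, the Brauer class of $\mathcal{A}_{k(\widetilde{C})}$ lies in the kernel of the tame symbol map
$$\partial_p:\Br(k(\widetilde{C}))[2]\lra H^1(\kappa(p),\bbZ/2\bbZ).$$
So the task is to verify that $\partial_p(\mathcal{A}_{k(\widetilde{C})})=0$ for every closed point $p$ of $\widetilde{C}$.

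First, I would dispose of the generic locus: at any $p$ whose character corresponds to an absolutely irreducible representation, the tautological representation extends smoothly to a representation over the completed local ring $\widehat{\calO}_{\widetilde{C},p}$, so $\mathcal{A}$ already extends across such $p$ locally and $\partial_p=0$. This leaves only the \emph{reducible} characters on $\frak{C}$, which (as $\frak{C}$ meets the irreducible locus) form a finite set of closed points. At an abelian (diagonalizable) reducible character, a standard computation, using that one can choose Hilbert-symbol generators of $\mathcal{A}_{k(\widetilde{C})}$ of the form $(\Tr(\gamma)^2-4,\,\ldots)$, shows the local symbol vanishes. The substantive work is therefore at characters of non-abelian reducible representations.

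Next, using the classification into Types A, B, and C (Section \ref{sec:nonabclassification}), I would compute $\partial_p$ at each such character by choosing convenient generators for $\mathcal{A}_{k(\widetilde{C})}$ from the presentation of $\pi_1(M)$ in $x,y,t$. For a \emph{Type A} character, the stable letter $t$ is sent to a hyperbolic element with eigenvalues $w,w^{-1}$ satisfying $w^2+w^{-2}=\Tr(\Phi)$, and the tame symbol should reduce, modulo squares, to the class of $w-w^{-1}$; Condition $(\star_1)$ is tailored so that either $w-w^{-1}\in k$ directly, or the trace identity $\Tr(\Phi)=a^2+2$ supplies $w-w^{-1}=\pm a\in\bbQ$, killing the symbol. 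For \emph{Type B} and \emph{Type C} characters, $t$ is quasi-unipotent or central, so the relevant obstruction is forced into the fiber generators $x,y$; the orders of the images of $x,y$ in $H_1(M,\bbZ)_{tors}$ produce an $n$-th root of unity $\eta_n$ in the residue field, and the tame symbol reduces to the class of $\eta_n-\eta_n^{-1}$ modulo squares. Condition $(\star_2)$ is then precisely what is needed.

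The main obstacle will be the Type B and Type C analysis. In the knot complement setting of \cite{CRS} no such characters occur because $H_1(S^3\setminus K,\bbZ)\cong\bbZ$ is torsion-free, so the appearance of $\eta_n$ in $(\star_2)$ is genuinely new and one must carefully normalize the tautological representation near such a point (where $\rho_\frak{C}$ only becomes upper triangular, not diagonal) in order to identify an explicit local parameter on $\widetilde{C}$ and evaluate the symbol. A related technical point is to verify that the discrete valuation on $k(\widetilde{C})$ at $p$ detects exactly this torsion data, which requires understanding the local equations cutting out $\frak{C}$ near these non-abelian reducible points. Once these local computations are in hand, combining them across the finite set of reducible characters on $\frak{C}$ yields the global extension.
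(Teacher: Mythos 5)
Your plan matches the paper's strategy: reduce to vanishing of the tame symbol at every closed point, dispose of irreducible characters and ideal points via the Chinburg--Reid--Stover machinery (here Proposition~\ref{prop:idealirrepextend}), and then compute the tame symbol at each reducible character using the Type A/B/C classification and the explicit Hilbert symbols of Lemma~\ref{lem:choosehilbert}. Two points, though, merit correction.

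First, there is no separate ``abelian reducible'' case. Lemma~\ref{lem:CRSnonab} (Lemma~2.10/Prop.~2.11(1) of \cite{CRS}) guarantees that every reducible character on $\frak{C}$ is already the character of some \emph{non-abelian} reducible representation, and Corollary~\ref{cor:typeaorb} then places it into exactly one of Types A, B, C. Treating ``diagonal'' characters as a distinct case creates a spurious dichotomy.

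Second, and more substantively, you overestimate the work needed for the forward direction. You propose normalizing the tautological representation near Type~B/C points and identifying an explicit local parameter on $\widetilde{C}$ to ``verify that the discrete valuation detects exactly this torsion data.'' None of that is required here. Choosing $g\in\{t,x,y\}$ with $\widetilde{I}_g(p)\neq\pm 2$ (namely $g=t$ at Type~A, and the element of $\{x,y\}$ whose eigenvalues are not $\pm1$ at Type~B/C), Lemma~\ref{lem:choosehilbert} provides a symbol $(\widetilde\kappa,\widetilde\nu)=(\widetilde{I}_g^2-4,\widetilde{I}_{[g,h]}-2)$ with $\ord_p(\widetilde\kappa)=0$ and $\ord_p(\widetilde\nu)>0$. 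If $\ord_p(\widetilde\nu)$ is even the tame symbol is automatically trivial; if it is odd it equals $[\widetilde\kappa(p)]=[(\mu-\mu^{-1})^2]$ in $k(p)^*/k(p)^{*2}$, where $\mu=w$ or $\eta_m$, and Conditions~$(\star_1)$/$(\star_2)$ (together with Lemma~\ref{lem:residuefield} to place $\mu\pm\mu^{-1}$ in $k(p)$) force this to be a square. In other words the actual value of $\ord_p(\widetilde\nu)$ is irrelevant to extendability. The uniformizer computation you allude to --- via the Heusener--Porti reducedness and Lemma~\ref{lem:generalcoord} --- is genuinely needed, but only for the \emph{converse} (Theorem~\ref{thm:main2}), where one must certify $\ord_p(\widetilde\nu)=1$ to conclude \emph{non}-triviality of the tame symbol. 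Keeping these two directions separate would tighten your proof considerably.
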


As a consequence of Theorem \ref{thm:main1}, we show the following corollary, which is the analogue of \cite[Thm 1.4]{CRS}, where we recall that a canonical component of $\frak{X}(M)_k$ is any component containing the character of a lift of the discrete faithful representation.

\begin{cor}\label{cor:main1}
Suppose that $M$ be a hyperbolic once punctured torus bundle and suppose that $\frak{C}$ is a canonical component which fits the hypothesis of Theorem \ref{thm:main1}.
Then there exists a finite set of rational primes $S_M$ such that each finite place of $k_{p/q}$ which ramifies in $A_{M_{p/q}}$ lies over a prime of $S_M$. 
\end{cor}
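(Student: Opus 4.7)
The plan is to deduce Corollary \ref{cor:main1} from Theorem \ref{thm:main1} by a standard spreading-out argument, following the strategy of the proof of \cite[Thm~1.4]{CRS}. By hypothesis, Theorem \ref{thm:main1} applies and produces an Azumaya algebra $\mathcal{A}_{\widetilde{C}}$ on the smooth projective curve $\widetilde{C}$ over $k$ extending the tautological Azumaya algebra. The first step is to spread the pair $(\widetilde{C}, \mathcal{A}_{\widetilde{C}})$ out to an integral model: one chooses a finite set $S_M$ of rational primes (including those ramifying in $k/\bbQ$), a smooth projective model $\widetilde{\mathcal{C}}$ of $\widetilde{C}$ over $\Spec\,\mathcal{O}_k[1/S_M]$, and an Azumaya algebra $\mathcal{A}_{\widetilde{\mathcal{C}}}$ on $\widetilde{\mathcal{C}}$ whose generic fiber recovers $\mathcal{A}_{\widetilde{C}}$. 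This is a standard finite-presentation argument, since both smooth projective curves and Azumaya algebras are of finite presentation.

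For the second step, I invoke Thurston's hyperbolic Dehn surgery theorem: for all but finitely many slopes $p/q$, the manifold $M_{p/q}$ is hyperbolic and the character $\chi_{\rho_{p/q}}$ of its discrete faithful lift lies on the canonical component, determining a closed point $x_{p/q} \in \widetilde{C}$ whose residue field equals $k_{p/q}$. The finitely many exceptional slopes contribute only finitely many additional primes of ramification and can be absorbed into $S_M$. Over the integral model, $x_{p/q}$ spreads to a subscheme $\mathcal{X}_{p/q} \subset \widetilde{\mathcal{C}}$ that is finite and flat over $\Spec\,\mathcal{O}_{k_{p/q}}[1/S_M]$. Restricting $\mathcal{A}_{\widetilde{\mathcal{C}}}$ to $\mathcal{X}_{p/q}$ then yields an Azumaya algebra over $\mathcal{O}_{k_{p/q}}[1/S_M]$ whose generic fiber agrees with $\mathcal{A}_{M_{p/q}}$, by compatibility of the tautological representation with specialization at $x_{p/q}$.

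The conclusion then follows because any Azumaya algebra over $\mathcal{O}_{k_{p/q}}[1/S_M]$ is unramified at every finite place of $k_{p/q}$ that does not lie above a prime in $S_M$. The main technical obstacle is the identification performed in the second step: one must verify both that the residue field at $x_{p/q}$ really is $k_{p/q}$ and that the specialization of the tautological representation at $x_{p/q}$ is conjugate to $\rho_{p/q}$, so that the two descriptions of the quaternion algebra---the one built from $\rho_{p/q}$ via Equation \eqref{eqn:quatalg} and the one obtained by restricting $\mathcal{A}_{\widetilde{C}}$ to $x_{p/q}$---actually coincide. Both properties follow from absolute irreducibility of $\rho_{p/q}$ for all but finitely many $p/q$ (a further consequence of Thurston's theorem), with the remaining finitely many exceptions being handled by enlarging $S_M$ to include the primes below any finite places of ramification for those exceptional fillings.
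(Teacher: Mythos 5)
The paper derives Corollary \ref{cor:main1} in one line: Theorem \ref{thm:main1} places $\mathcal{A}_{k(\widetilde{C})}$ in the image of $\Psi$, and the quoted Chinburg--Reid--Stover result (\cite[Thm 1.1(3)--(4)]{CRS}, reproduced in Section \ref{sec:azalg}) then produces the finite set $S_M$. Your proof unpacks that black box by redoing the spreading-out argument that underlies it. The steps and ideas line up, so you are taking the same route as the paper, just at a finer level of detail.

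One claim in your last paragraph is not quite right. You assert that "the residue field at $x_{p/q}$ really is $k_{p/q}$" follows from absolute irreducibility of $\rho_{p/q}$. By Lemma \ref{lem:residuefield}, the residue field at a regular point with character $\chi_\rho$ is the compositum of $k$ and the trace field $k_\rho$, and this equals $k_\rho$ only when $k\subset k_\rho$; absolute irreducibility of $\rho_{p/q}$ does not give that inclusion. This is exactly why the quoted CRS theorem is phrased in terms of ramification of the base change $\mathcal{A}_\rho\otimes_{k_\rho}k(p)$ rather than of $\mathcal{A}_\rho$ itself, and passing from places of $k(p)$ back to places of $k_{p/q}$ requires a supplementary step whenever $k\neq\bbQ$ (since for a prime $\frak{p}$ of $k_\rho$ ramified in $\mathcal{A}_\rho$, the base change can be split at every $\frak{P}\mid\frak{p}$ if all local degrees are even). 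For the Dunfield example of Section \ref{sec:dunfield}, where $k=\bbQ(i)$, this distinction is real, though there the corollary is vacuous because the Azumaya algebra does not extend. Aside from this point, and the minor observation that the closure $\mathcal{X}_{p/q}$ need not itself be $\Spec\mathcal{O}_{k(x_{p/q})}[1/S_M]$ (one should base change the restricted Azumaya algebra along the normalization), your argument is sound and matches the paper's approach.
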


As in \cite{CRS}, we will then go on to prove the converse to Theorem \ref{thm:main1}.

\begin{restatable}{thm}{maintwo}\label{thm:main2}
Let $\frak{C}$ be an irreducible component of $\frak{X}(M)_k$ containing the character of an irreducible representation $\chi_\rho$ for which $\rho\vert_{\langle x,y\rangle}$, $\rho\vert_{\langle x,t\rangle}$, and $\rho\vert_{\langle y,t\rangle}$ are irreducible.
Suppose that either there is a character of a Type A representation on $\frak{C}$ and Condition $(\star_1)$ does not hold or there is a character of a Type B or C representation on $\frak{C}$ and Condition $(\star_2)$ does not hold.
Then $\mathcal{A}_{k(\widetilde{C})}$ does not extend over $\widetilde{C}$.
\end{restatable}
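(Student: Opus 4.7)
The plan is to mirror the converse direction of the knot complement result: the extension of $\mathcal{A}_{k(\widetilde{C})}$ to an Azumaya algebra on the smooth projective curve $\widetilde{C}$ is obstructed pointwise, and the local obstruction at a closed point $p$ is computed by the tame symbol
$$\partial_p\colon\Br(k(\widetilde{C}))\longrightarrow \kappa(p)^*/(\kappa(p)^*)^2.$$
So it suffices to exhibit a single point $p\in\widetilde{C}$ at which $\partial_p(\mathcal{A}_{k(\widetilde{C})})$ is nontrivial. The candidate is of course the character $\chi_{\rho_0}$ of the non-abelian reducible representation assumed to exist on $\mathfrak{C}$.

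The first step is to use the irreducibility hypotheses on $\rho\vert_{\langle x,y\rangle}$, $\rho\vert_{\langle x,t\rangle}$, and $\rho\vert_{\langle y,t\rangle}$ to produce an explicit Hilbert symbol presentation $\mathcal{A}_{k(\widetilde{C})}\cong\left(\tfrac{\alpha,\beta}{k(\widetilde{C})}\right)$, with $\alpha,\beta$ expressed as Fricke-type polynomials in the tautological trace functions on $\mathfrak{C}$. The irreducibility of all three two-generator restrictions is what is needed so that the standard Fricke denominators do not vanish identically on $\mathfrak{C}$, ensuring both that $\alpha,\beta\in k(\widetilde{C})^*$ and that there is flexibility in choosing which pair of generators presents the algebra.

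Next, I would localize at $p=\chi_{\rho_0}$, choose a uniformizer $\pi$, write $\alpha=\pi^a u$ and $\beta=\pi^b v$ with $u,v$ units in $\mathcal{O}_{\widetilde{C},p}$, and apply the standard formula
$$\partial_p((\alpha,\beta))\equiv (-1)^{ab}\,\bar u^{\,b}\,\bar v^{-a}\pmod{(\kappa(p)^*)^2}.$$
The computation splits by the type of reducible character at which one is working. For a Type A character, the eigenvalues of $\rho_0(t)$ are $w,w^{-1}$ with $w^2$ a root of the characteristic polynomial of $\Phi$; tracking the leading terms of the trace functions as one specializes to $\chi_{\rho_0}$ should reduce the tame symbol, up to squares in $\kappa(p)$, to the Kummer class of $\Tr(\Phi)-2=(w-w^{-1})^2$. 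This class is trivial exactly when $w-w^{-1}\in k$, and it lies in the class of a positive integer square $a^2$ precisely when $\Tr(\Phi)=a^2+2$, so the failure of $(\star_1)$ yields a nontrivial tame symbol. For Type B or C characters, the torsion summand of $H_1(M,\mathbb{Z})$ forces the eigenvalues of $\rho_0(x),\rho_0(y)$ to be primitive $n$-th roots of unity with $n$ the LCM appearing in $(\star_2)$, and the analogous local computation reduces the tame symbol to the Kummer class of $\eta_n-\eta_n^{-1}$, which is trivial if and only if $(\star_2)$ holds.

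The principal obstacle will be carrying out the local tame symbol computation in a way that keeps its relationship to $\Tr(\Phi)-2$ (respectively $\eta_n-\eta_n^{-1}$) visible. An unlucky choice of Hilbert symbol presentation can make both $\alpha$ and $\beta$ units times squares in the completion $\widehat{\mathcal{O}}_{\widetilde{C},p}$, in which case $\partial_p$ vanishes trivially and conveys no information about $(\star_1)$ or $(\star_2)$. Having all three two-generator restrictions irreducible is precisely what allows one to rotate through the possible Fricke presentations and select one whose factors have the required valuation behavior detecting the degeneration of $\rho$ at $\chi_{\rho_0}$. Once a good presentation is in hand, matching the reduced symbol to the Kummer class of $\Tr(\Phi)-2$ or $\eta_n-\eta_n^{-1}$ is a direct computation in $\kappa(p)$, and combined with Theorem \ref{thm:main1} it completes the if-and-only-if picture.
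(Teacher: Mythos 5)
Your strategy is the correct one and several of the computations you sketch do match the paper, but there is a genuine gap at the most delicate step: you never establish that the order of vanishing of the second Hilbert symbol entry at the reducible character is odd, and your proposed fix (``rotating through Fricke presentations'') does not address it.

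Concretely, once you use Lemma \ref{lem:CRShilb} to write $\mathcal{A}_{k(\widetilde{C})}=\left(\frac{\widetilde{I}_g^2-4,\ \widetilde{I}_{[g,h]}-2}{k(\widetilde{C})}\right)$ and restrict attention to a point $p$ with $\Pi_k(p)=\chi_{\rho_0}$ the character of a reducible representation, the first entry is a unit at $p$ while the second one \emph{always} vanishes there — for any admissible choice of $g,h$, since $\rho_0$ is reducible and $I_{[g,h]}(\chi_{\rho_0})=2$. The danger is not that both factors become units times squares; it is that $\ord_p(\widetilde{I}_{[g,h]}-2)$ is even, in which case the tame symbol formula kills the contribution regardless of whether $\Tr(\Phi)-2$ is a square. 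Swapping between the pairs $\langle x,y\rangle$, $\langle x,t\rangle$, $\langle y,t\rangle$ gives no a priori control on that valuation, so this is not something the irreducibility hypothesis alone buys you.

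The paper resolves this by proving $\ord_p(\widetilde{I}_{[h_1,h_2]}-2)=1$ for an explicit pair $h_1,h_2$ depending on the type of reducible representation. This in turn requires knowing that $\chi_{\rho_0}$ is a \emph{reduced, regular} point of $\frak{C}$ — otherwise the local ring need not even be a DVR — and then exhibiting an explicit uniformizer. Both pieces come from the Heusener--Porti deformation theory: Lemma \ref{lem:foxderiv}(3) shows any zero of the $0$th Alexander invariant is simple, Theorem \ref{thm:HPmain} then gives reducedness and regularity of $\chi_{\overline{\rho}_0}$ on the $\PSL_2$ character scheme, Theorem \ref{thm:HPsl2c} lifts this to $\frak{X}(M)_\bbC$, Proposition \ref{prop:localinverse} produces an explicit $4$-dimensional basis of cocycles and a local inverse on the representation scheme, Lemma \ref{lem:generalcoord} deduces that $-\frac{\widetilde{I}_{[h_1,h_2]}-2}{\widetilde{I}_{h_1}^2-4}$ is a uniformizer at $p$, and Lemma \ref{lem:transferparam} transfers this from $\bbC$ down to the field of definition $k$. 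None of this machinery appears in your sketch, yet it is precisely what makes the difference between the easy direction (vanishing is insensitive to the valuation) and the converse direction you are trying to prove. The role of the irreducibility hypothesis on $\rho\vert_{\langle x,y\rangle}$, $\rho\vert_{\langle x,t\rangle}$, $\rho\vert_{\langle y,t\rangle}$ is then not to ``rotate'' to a lucky presentation but to guarantee that Lemma \ref{lem:CRShilb} applies with the \emph{specific} pair $(h_1,h_2)$ mandated by Proposition \ref{prop:localinverse}.

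One smaller inaccuracy: for Type B/C representations the eigenvalues of $\rho_0(x),\rho_0(y)$ need only be $m$-th roots of unity for some $m\mid n$, not primitive $n$-th roots; the reduction of the tame symbol to the class of $(\eta_m-\eta_m^{-1})^2$ still goes through because $k(\eta_m)\subset k(\eta_n)$, but as stated your claim is too strong.
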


Note that in the above theorem we require a certain irreducibility condition on representations $\rho$ with a character on $\frak{C}$ which we did not need in Theorem \ref{thm:main1}.
This owes to the fact that we need more delicate control over a potential Hilbert symbol for the tautological Azumaya algebra $\mathcal{A}_{k(\widetilde{C})}$.
However, as we will see in Corollary \ref{cor:canoncomp}, the condition on $\rho$ from Theorem \ref{thm:main2} holds for any (lift of a) discrete faithful representation and so we have the following consequence.

\begin{restatable}{cor}{maintwoconv}\label{cor:maintwocon}
Let $\frak{C}$ be any canonical component of $\frak{X}(M)_k$.
Suppose that either there is a character of a Type A representation on $\frak{C}$ and Condition $(\star_1)$ does not hold or there is a character of a Type B or C representation on $\frak{C}$ and Condition $(\star_2)$ does not hold.
Then $\mathcal{A}_{k(\widetilde{C})}$ does not extend over $\widetilde{C}$.
\end{restatable}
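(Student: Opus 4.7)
The plan is to derive Corollary \ref{cor:maintwocon} directly from Theorem \ref{thm:main2} by verifying that the irreducibility hypothesis on $\rho$ in that theorem is automatically satisfied on a canonical component. Since $\frak{C}$ is canonical, by definition there is a lift $\rho_0:\pi_1(M)\to\SL_2(\bbC)$ of the discrete faithful representation with $\chi_{\rho_0}\in\frak{C}$, and this $\rho_0$ is irreducible because $\rho_0(\pi_1(M))$ is a non-elementary Kleinian group. Thus $\rho_0$ is a natural candidate for the representation $\rho$ required by Theorem \ref{thm:main2}.

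What remains is the restriction hypothesis: that $\rho_0|_{\langle x,y\rangle}$, $\rho_0|_{\langle x,t\rangle}$, and $\rho_0|_{\langle y,t\rangle}$ are each irreducible. This is precisely the content of Corollary \ref{cor:canoncomp}, which I would prove (and then cite here) using standard Kleinian group geometry. For $\langle x,y\rangle$ the argument is clean: this is a non-cyclic normal subgroup of $\pi_1(M)$ (the fiber subgroup), so reducibility of its image would force it into a Borel subgroup of $\SL_2(\bbC)$ and, by normality, push all of $\rho_0(\pi_1(M))$ into the stabilizer of a point on $\partial\bbH^3$, contradicting non-elementarity. For $\langle x,t\rangle$ and $\langle y,t\rangle$ the argument is more delicate since these subgroups are not normal; one uses the pseudo-Anosov dynamics of the monodromy $\Phi$ to preclude $\rho_0(t)$ from sharing a fixed point on $\partial\bbH^3$ with $\rho_0(x)$ or $\rho_0(y)$, so that the pairs generate non-elementary subgroups and the restrictions are accordingly irreducible.

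Once Corollary \ref{cor:canoncomp} is in hand, applying Theorem \ref{thm:main2} with $\rho = \rho_0$ yields the conclusion immediately: the hypothesis on characters of Type A, B, or C representations on $\frak{C}$ is identical to that in Corollary \ref{cor:maintwocon}, so the tautological Azumaya algebra $\mathcal{A}_{k(\widetilde{C})}$ fails to extend over $\widetilde{C}$. The principal obstacle is therefore the two-generator case of the restriction irreducibility, where the normal-subgroup trick is unavailable and one must analyze the action of $\rho_0(t)$ on the fixed set of $\rho_0(x)$ (and of $\rho_0(y)$) in $\partial\bbH^3$ directly; this work is absorbed into the proof of Corollary \ref{cor:canoncomp}.
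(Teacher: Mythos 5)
Your top-level reduction is exactly the one the paper uses: invoke Corollary \ref{cor:canoncomp} to verify that the restrictions of a lift $\rho_0$ of the holonomy to $\langle x,y\rangle$, $\langle x,t\rangle$, $\langle y,t\rangle$ are irreducible, then feed $\rho_0$ into Theorem \ref{thm:main2}. The only place you diverge is in your sketched proof of Corollary \ref{cor:canoncomp}, which the paper proves not by Kleinian-geometry fixed-point considerations but by writing $\rho_0$ in J\o rgensen's normalization, so that $\rho_0(l)$ is an explicit parabolic, $\rho_0(t)$ is forced upper-triangular with nonzero off-diagonal entry by commutation with $\rho_0(l)$, and the nonvanishing lower-left entries of $\rho_0(x)$, $\rho_0(y)$ immediately rule out a common invariant line. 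Your geometric argument for the two-generator cases is recoverable, but the phrase ``pseudo-Anosov dynamics of the monodromy $\Phi$'' is a red herring: what actually does the work is the standard fact about Kleinian groups that a parabolic ($\rho_0(t)$, peripheral) and a loxodromic ($\rho_0(x)$ or $\rho_0(y)$, representing closed geodesics in $M$) cannot share a fixed point on $\partial\bbH^3$ in a discrete group, since iterated conjugation would accumulate at the identity. This is a statement about discreteness of $\rho_0(\pi_1(M))$, not about the dynamics of $\Phi$ on the fiber. With Corollary \ref{cor:canoncomp} in hand (however proved), your deduction of Corollary \ref{cor:maintwocon} is complete and coincides with the paper's.
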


As we will explain at the beginning of Section \ref{sec:convthm}, the key difficulty in proving the converse theorem is that triviality of the tame symbol is easier to certify than non-triviality.
That is to say, after the preliminary work of understanding non-abelian reducible representations of $\pi_1(M)$, certifying that the tautological Azumaya algebra extends over a point $p$ comes directly from Conditions $(\star_1)$, $(\star_2)$, whereas certifying that the tautological Azumaya algebra does not extend over a point $p$ is more delicate and requires some local understanding of the geometry of $\frak{C}$ at $p$.
For this we will require work of Heusener and Porti which shows that characters of non-abelian reducible representations of $\pi_1(M)$ are reduced points of $\frak{C}$.

After proving Theorems \ref{thm:main1} and \ref{thm:main2}, in Section \ref{sec:examples} we will give an infinite family of examples for which they apply.
Specifically, we will show that the once punctured torus bundles arising as $j$-fold cyclic covers of the figure eight knot complement satisfy both $(\star_1)$, $(\star_2)$ for any canonical component if and only if $j$ is odd.

\begin{restatable}{thm}{mainexamples}\label{thm:main1examples}
Suppose that $\{N_j\}$ is the family of hyperbolic once punctured torus bundles arising as $j$-fold cyclic covers of the figure eight knot complement.
Then if $j$ is odd, the hypothesis of Theorem \ref{thm:main1} and Corollary \ref{cor:main1} hold for any canonical component $\frak{C}_j$ of $\frak{X}(N_j)_{\bbQ}$.
If $j$ is even, then the hypothesis of Theorem \ref{thm:main2} hold for any canonical component $\frak{C}_j$ of $\frak{X}(N_j)_{\bbQ}$.
In particular, the tautological Azumaya algebra extends if and only if $j$ is odd and in that case $S_{N_j}=\{2\}$.
\end{restatable}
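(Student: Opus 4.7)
The plan is to verify Conditions $(\star_1)$ and $(\star_2)$ on each canonical component $\frak{C}_j$ of $\frak{X}(N_j)_{\bbQ}$ and then invoke Theorems \ref{thm:main1}, \ref{thm:main2}, and Corollaries \ref{cor:main1}, \ref{cor:maintwocon}. I fix the monodromy $\Phi = \begin{smm} 2 & 1 \\ 1 & 1 \end{smm}$ of the figure eight knot complement, so that $N_j$ is the once-punctured torus bundle with monodromy $\Phi^j$, whose eigenvalues are $\lambda^{\pm j} = \phi^{\pm 2j}$ where $\phi = (1+\sqrt{5})/2$. Binet's formula gives
$$\phi^j - \phi^{-j} = \begin{cases} L_j, & j \text{ odd},\\ \sqrt{5}\,F_j, & j \text{ even},\end{cases}$$
where $L_j, F_j$ are the Lucas and Fibonacci numbers; equivalently, $\Tr(\Phi^j) - 2 = L_j^2$ for $j$ odd and $\Tr(\Phi^j) - 2 = 5F_j^2$ for $j$ even (an instance of $L_{2j} = L_j^2 - 2(-1)^j$). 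Since $w^2$ being an eigenvalue of $\Phi^j$ gives $(w - w^{-1})^2 = \Tr(\Phi^j) - 2$, one has $w - w^{-1} = \pm(\phi^j - \phi^{-j})$, which lies in $\bbQ$ precisely when $j$ is odd; in that case the first clause of $(\star_1)$ is realized with $a = L_j$.

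The next step is to pin down which non-abelian reducible characters appear on the canonical component $\frak{C}_j$. Using the classification of Section \ref{sec:nonabclassification}, any reducible character on $\frak{C}_j$ has $\rho(t)$ with eigenvalues $\pm\lambda^{j/2}$, and hence trace $\pm(\phi^j + \phi^{-j})$, equal to $\pm L_j$ for $j$ even and $\pm\sqrt{5}\,F_j$ for $j$ odd. In either case this trace is never $\pm 2$ for $j\geq 1$, so $\rho(t)$ is regular semisimple and non-central; consequently every non-abelian reducible character on $\frak{C}_j$ is of Type A, no Type B or C characters appear, and Condition $(\star_2)$ is vacuous on $\frak{C}_j$ for every $j$.

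Combining the two steps, the hypothesis of Theorem \ref{thm:main1} holds on $\frak{C}_j$ for $j$ odd, and the hypothesis of Theorem \ref{thm:main2}---via Corollary \ref{cor:maintwocon}, which supplies the irreducibility assumption on a canonical component automatically---holds for $j$ even. Hence $\mathcal{A}_{k(\widetilde{C}_j)}$ extends over $\widetilde{C}_j$ if and only if $j$ is odd, and Corollary \ref{cor:main1} produces a finite set $S_{N_j}$ in the odd case. For the sharper statement $S_{N_j} = \{2\}$, I would present the extended tautological Azumaya algebra via an explicit Hilbert symbol in integral coordinates on $\widetilde{C}_j$ and track its ramification at Dehn-filling specializations, mirroring the figure eight case ($j=1$) in \cite{CRS}.

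The main obstacle I anticipate is the second step, the classification on $\frak{C}_j$: the eigenvalue computation above handles the ``generic'' reducible characters coming from roots of the characteristic polynomial of $\Phi^j$, but one must also rule out reducible characters twisted by torsion in $H_1(N_j,\bbZ)_{\mathrm{tors}}$ (whose exponent grows like $L_j$) from landing on the canonical component. This is where the output of Section \ref{sec:nonabclassification} is essential; once that is invoked the Lucas/Fibonacci dichotomy propagates through Theorems \ref{thm:main1} and \ref{thm:main2}, and the $S_{N_j}=\{2\}$ refinement follows from the explicit Hilbert symbol calculation as in the figure eight case.
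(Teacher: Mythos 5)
Your route differs fundamentally from the paper's, and it contains a genuine gap that makes the argument for $j$ even fail outright.

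The paper's proof never attempts to classify which reducible characters land on $\frak{C}_j$ in terms of Conditions $(\star_1)$, $(\star_2)$. Instead it studies the restriction maps $\widehat{p}_j:\frak{X}(N_1)\to\frak{X}(N_j)$ and $\widehat{i}_j:\frak{X}(N_j)\to\frak{X}(F)$ and, following the degree argument of Boyer--Luft--Zhang, shows $\widehat{p}_j$ is degree $1$ for $j$ odd and degree $2$ for $j$ even. For $j$ odd the birational isomorphism $\widetilde{C}_j\cong\widetilde{C}_1$ transfers the already-known extension on $\widetilde{C}_1$ directly, and $S_{N_j}=\{2\}$ follows at once from the figure-eight case. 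For $j$ even the paper produces an \emph{explicit} Type~B representation $\widehat\rho_j$ whose character $\chi_{\widehat\rho_j}=\widehat{p}_j(\chi_\rho)$ lies on $\frak{C}_j$, where $\rho$ is the irreducible \emph{dihedral} representation of $\pi_1(N_1)$ with eigenvalues primitive $5$th roots of unity; since $\eta_5-\eta_5^{-1}\notin\bbQ$, Condition $(\star_2)$ fails and Corollary \ref{cor:maintwocon} applies.

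Your claim that ``any reducible character on $\frak{C}_j$ has $\rho(t)$ with eigenvalues $\pm\lambda^{j/2}$, hence is of Type A, and no Type B or C characters appear'' is not a consequence of the classification in Section~\ref{sec:nonabclassification}. That section determines the possible \emph{types} of non-abelian reducible representations of $\pi_1(M)$ abstractly; it says nothing about which of their characters land on a given canonical component. Moreover the claim is simply false for $j$ even: the restriction to $\pi_1(N_j)$ of the dihedral representation above has $\rho(t^j)=\pm\Id$ (since $\rho(t)=\bigl(\begin{smallmatrix}0&1\\-1&0\end{smallmatrix}\bigr)$), yet its character is the character of a non-abelian \emph{Type B} representation lying on $\frak{C}_j$, as the paper verifies explicitly. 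With a Type B character on $\frak{C}_j$ and $\eta_5\notin\bbQ$, Condition $(\star_2)$ fails, so the clause of the hypothesis of Theorem \ref{thm:main1} involving $(\star_2)$ is \emph{not} vacuous, contrary to your assertion. Separately, even if you were right that Type A characters drive the $j$ even case, you never establish that any Type A character actually lies on $\frak{C}_j$, which is needed to trigger the converse Theorem \ref{thm:main2}; this too the paper avoids by identifying the explicit Type B point. Finally, your suggested route to $S_{N_j}=\{2\}$ (recomputing an integral Hilbert symbol and tracking ramification at Dehn fillings) is a sketch rather than a proof, whereas the paper obtains it for free from the isomorphism $\widetilde{C}_j\cong\widetilde{C}_1$. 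The Lucas/Fibonacci computation of $\Tr(\Phi^j)-2=L_j^2$ for $j$ odd is correct and is one piece of the picture, but it is not enough to carry the theorem on its own.
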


In the case of hyperbolic knot complements, after showing finiteness of the set $S_M$ from Theorem \ref{thm:main1}, Chinburg--Reid--Stover go on to study which primes appear in this set \cite[Thm 1.5]{CRS}.
Specifically, they are able to show that the primes contained in $S_M$ come from studying the similar condition to $(\star)$ over not just number field but also over extensions of the finite field $\bbF_\ell$ for rational primes $\ell$.
Exhibiting such a result uses machinery from integral models and we adapt their machinery to our setting to exhibit the following theorem (see Section \ref{sec:intmod} for more details).

\begin{restatable}{thm}{intmod}\label{thm:intmod}
Suppose that $\frak{C}$ is a curve component of $\frak{X}(M)_k$ which contains the character of an irreducible representation and for which the tautological Azumaya algebra $\mathcal{A}_{k(\widetilde{C})}$ extends over $\widetilde{C}$.
Let $S$ denote the set of odd rational primes which divide the least common multiple of the orders of $x$, $y$ in $H_1(M,\bbZ)_{tors}$ union $\{2\}$.
Then the tautological Azumaya algebra extends over $\widetilde{\frak{C}}_S$, where $\widetilde{\frak{C}}_S$ is a relatively minimal regular model of $\widetilde{C}$ over $\mathcal{O}_{k,S}$.
\end{restatable}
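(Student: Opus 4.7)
The plan is to adapt the integral model machinery from \cite{CRS} to the once-punctured torus bundle setting, with the extra care needed to accommodate the richer non-abelian reducible representation theory. First I would construct the relatively minimal regular model $\widetilde{\frak{C}}_S$ of $\widetilde{C}$ over $\mathcal{O}_{k,S}$ by standard results on arithmetic surfaces, then note that since $\mathcal{A}_{k(\widetilde{C})}$ is assumed to extend over the generic fiber $\widetilde{C}$, the obstruction to extending it over all of $\widetilde{\frak{C}}_S$ is controlled by the tame symbols at the height-one primes lying in the vertical special fibers above primes $\ell\notin S$. Hence it suffices to show: for every rational prime $\ell\notin S$ and every codimension-one point $\mathfrak{p}$ of $\widetilde{\frak{C}}_S$ supported in the fiber above $\ell$, the tame symbol of a local Hilbert symbol representation of $\mathcal{A}_{k(\widetilde{C})}$ is trivial at $\mathfrak{p}$.

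Next I would stratify these codimension-one points according to the type of representation whose character they parameterize, mimicking the dichotomy used in Theorems \ref{thm:main1} and \ref{thm:main2}. At points corresponding to (reductions of) characters of absolutely irreducible representations, the argument from \cite{CRS} transplants directly: the quaternion algebra $\mathcal{A}_\rho$ is already defined over the residue field, and the tame symbol vanishes. The remaining points are those corresponding to non-abelian reducible representations of Type A, B, or C as classified in Section \ref{sec:nonabclassification}. For these I would redo the tame symbol calculation underpinning Theorem \ref{thm:main1} but now in the mixed-characteristic local ring at $\mathfrak{p}$, reducing the relevant Hilbert symbol entries modulo $\ell$ and checking that the obstruction reduces to the mod-$\ell$ analogue of Conditions $(\star_1)$ and $(\star_2)$.

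The point of the choice of $S$ is precisely to ensure these mod-$\ell$ analogues are automatic. For Type A points, the condition $(\star_1)$ reduces to a statement about roots of the characteristic polynomial of $\Phi$ and whether $\bbQ(w)=\bbQ(w+1/w)$ holds after extending scalars to $\bbF_\ell$; as long as $\ell$ is odd this argument goes through since the relevant quadratic extension behaves as in characteristic zero, and $\ell=2$ has been excluded by placing $2\in S$. For Type B and C points, the condition $(\star_2)$ reduces to a statement about $n$-th roots of unity in $\bbF_\ell$; because $\ell$ does not divide $n$ (we have placed every odd prime divisor of $n$ into $S$), the element $\eta_n-\eta_n^{-1}$ specializes to the corresponding element in the residue field and the relevant equality is preserved. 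In both cases, once $\ell\notin S$, the hypothesis that $\mathcal{A}_{k(\widetilde{C})}$ extends over $\widetilde{C}$, which by Theorem \ref{thm:main2} forces the corresponding Condition to hold over $k$, forces the mod-$\ell$ Condition as well, and hence the tame symbol at $\mathfrak{p}$ vanishes.

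The main obstacle I expect is bookkeeping at the non-abelian reducible points: one must pass from the global Hilbert symbol used in Section \ref{sec:convthm} to a local Hilbert symbol over the complete local ring at $\mathfrak{p}$, verify that this local symbol specializes compatibly to the residue field, and then match the specialized symbol against the reduction of $(\star_1)$ or $(\star_2)$. A secondary delicate point is ensuring the regularity of $\widetilde{\frak{C}}_S$ at the points one lands on, as the tame symbol formalism in the form used in \cite{CRS} requires a regular two-dimensional base; this is where relative minimality of the model is used, and the exclusion of the two and the odd divisors of $n$ from the base should also be enough to avoid pathological degenerations at the torsion points of $H_1(M,\bbZ)$ that produce Type B and C characters.
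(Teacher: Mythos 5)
Your high-level decomposition is on the right track: separate the codimension-one points of $\widetilde{\frak{C}}_S$ into horizontal ones (generic fiber, already handled by the hypothesis that $\mathcal{A}_{k(\widetilde{C})}$ extends over $\widetilde{C}$) and vertical ones (residue characteristic $\ell\notin S$), and then use the choice of $S$ to kill the residual obstruction. However, the machinery you propose for the vertical points is both misdescribed and far too optimistic, and this is precisely where the paper does the work. A vertical codimension-one point $p$ of $\widetilde{\frak{C}}_S$ is the generic point of a component of a special fiber; its residue field $k(p)$ is the function field of a curve over a finite field, not a finite field, and such a point does not ``parameterize'' a character or a reduction of a character of a Type A, B, or C representation. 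Your plan to stratify vertical codimension-one points by representation type, and then ``redo the tame symbol calculation'' by reducing Hilbert symbol entries mod $\ell$, therefore does not even get off the ground. The actual ingredient is Theorem~\ref{thm:CRSanalogue}, the once-punctured torus bundle analogue of~\cite[Thm.~5.9]{CRS}: one shows (via~\cite[Lem.~5.8]{CRS}) that failure of extension at $p$ forces a faithful homomorphism $\rho_p:\pi_1(M)\to\widehat{D}^1$ into the norm-one units of a maximal order $\widehat{D}$ of the completed quaternion algebra, and then a delicate induction on the $J$-adic filtration of $\widehat{D}$ — using the exact presentation of $\pi_1(M)$ and the fact that $t$ and the longitude commute — forces either a root-of-unity obstruction of order dividing $n$, or that $\alpha=\psi_1(\rho_p(t))$ satisfies, after squaring, the characteristic polynomial of $\Phi$ in $k(p)$. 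Nothing in your sketch reconstructs this step, and without it there is no bridge from ``non-extension at $p$'' to any explicit algebraic condition.

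There is also a genuine logical gap in your final deduction: you argue that extension over $\widetilde{C}$ forces $(\star_1)$ or $(\star_2)$ over $k$ via Theorem~\ref{thm:main2}, and that this ``forces the mod-$\ell$ Condition as well.'' This implication is false. Over $k$, $(\star_1)$ can hold because $w-1/w\in k$ without $\Tr(\Phi)$ having the form $a^2+2$, and there is no mechanism that transports $w-1/w\in k$ into the residue field $k(p)$ of a vertical point. The paper instead proves the mod-$\ell$ statement separately and unconditionally: Lemma~\ref{lem:finfield} shows that if $\Tr(\Phi)=a^2+2$ then for every odd prime $\ell$ one has $\bbF_\ell(w)=\bbF_\ell(w+1/w)$, which together with the conclusion of Theorem~\ref{thm:CRSanalogue}(2) and the choice of $S$ rules out the vertical obstruction. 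The finite-field analysis is an independent computation, not a specialization of a characteristic-zero identity, and your proposal would need to supply it.
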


\noindent In particular, this theorem shows that the finite set $S_M$ from Corollary \ref{cor:main1} is contained in $S$ and so Theorem \ref{thm:intmod} gives an effective way to compute a candidate set of primes for the set $S_M$.

Finally we end the paper by discussing several examples in Section \ref{sec:examples} and some remaining open question in Section \ref{sec:questions}.\\

\noindent \textbf{Acknowledgements:} A special debt is owed to Matthew Stover for not only spending the time to explain his work with Chinburg and Reid but also for spending time helping the author develop several ideas at the genesis of this project.
The author was partially supported by NSF Grant DMS-2005438/2300370.


\section{Background}\label{sec:background}

\subsection{Once punctured torus bundles}\label{sec:1ptb}
Throughout we use $\mathring{T}$ to denote a once punctured torus with fundamental group $\pi_1(\mathring{T})=F_2$, the free group on two letters, and we always use the symbols $x$, $y$ to denote generators of $F_2$.

The main focus of this paper will be oriented once punctured torus bundles, which are bundles fibering over the circle $S^1$ with fiber $\mathring{T}$.
Such bundles can be written as $M_\phi=\mathring{T}\times[0,1]/(t,0)\sim(\phi(t),1)$, where $\phi\in\Aut(F_2)$ is the monodromy of the fiber.
The fundamental group of $M_\phi$ is therefore given by 
\begin{equation}\label{1ptbpresent}
\pi_1(M) = \langle t,x,y\ |\ txt^{-1} = \phi(x),\ tyt^{-1} = \phi(y)\rangle.
\end{equation}
By a theorem of Murasugi, the homeomorphism type of $M_\phi$ is completely determined by the conjugacy class of $\phi$ in the mapping class group, $\Mod^+(\mathring{T})=\SL_2(\bbZ)$, and therefore we may write $\phi$ as
\[
\phi = i^\ep R^{n_1} L^{n_2} \cdots L^{n_{2k}},
\]
where $\ep \in \{0, 1\}$, $n_j \neq 0$ for all $j$ except possibly $n_1$ or $n_{2k}$, and
\begin{align*}
i(x) &= x^{-1}, & i(y) &= y^{-1}, \\
R(x) &= x, & R(y) &= yx, \\
L(x) &= xy, & L(y) &= y.
\end{align*}
The image of these maps in $\Mod^+(\mathring{T})$ have the following form, which we abusively denote using the same letter
\begin{align*}
i &= \begin{pmatrix} -1 & 0 \\ 0 & -1 \end{pmatrix}, \\
R &= \begin{pmatrix} 1 & 1 \\ 0 & 1 \end{pmatrix}, \\
L &= \begin{pmatrix} 1 & 0 \\ 1 & 1 \end{pmatrix}.
\end{align*}
Therefore the associated representative of $\phi$ in $\Mod^+(\mathring{T})$ is given by the matrix
\begin{equation}\label{eqn:monmatrix}
\Phi = [\phi]= \begin{pmatrix} a & b \\ c & d \end{pmatrix} \in \SL_2(\bbZ).
\end{equation}
The once punctured torus bundle $M_\phi$ will carry a hyperbolic metric if and only if $\Phi$ is a hyperbolic matrix in $\SL_2(\bbZ)$, which is precisely when $|\Tr(\Phi)|>2$.
We will always assume throughout the paper that $M_\phi$ is a hyperbolic once punctured torus bundle and hence we always assume the condition on the trace.

As $M_\phi$ has one cusp, the boundary $\partial M_\phi$ is a torus and hence $\pi_1(\partial M_\phi)\cong\bbZ^2$.
The fundamental group $\pi_1(\partial M_\phi)$ is generated by the \emph{stable letter} $t$, which is also frequently called the \emph{meridian}, and the \emph{longitude} $l=[x,y]$.
Useful for us in Section \ref{sec:nonabclassification} will be that these two elements commute in $\pi_1(M_\phi)$.
From Equation \eqref{1ptbpresent}, it is straightforward to check that
\begin{equation}\label{eqn:homology}
H_1(M_\phi,\bbZ)\cong\pi_1(M)^{ab} \cong \bbZ \oplus F,
\end{equation}
where the $\bbZ$ factor is generated by (the class of) $t$ and
\[
F = \langle X, Y\ |\ X^{(-1)^\ep-a} = Y^c,\ Y^{(-1)^\ep-d} = X^b,\ [X, Y] = 1 \rangle,
\]
is a (possibly trivial) finite abelian group of order $|F|=|\Tr(\Phi)-2|$.
From this point on, we drop the subscript $\phi$ and always assume that $M$ is a hyperbolic once punctured torus bundle.


\subsection{Some notions from algebraic geometry}\label{sec:alggeoprelim}

Throughout this paper, we will use the language of schemes and varieties over number fields for which we will need some preliminaries, all of which can be found in \cite{Hartshorne,Liu}. 
We will always use $k$ to denote a fixed number field and, throughout this paper we consider schemes $X$ of finite type over $k$, so that $X$ admits a finite cover by affine open sets $U$ of the form $U=\Spec(k[x_1,\dots,x_n]/\mathcal{I})$.
Given $p\in X$, we denote by $\mathcal{O}_{X,p}$ the local ring at $p$, by $\frak{m}_p$ the maximal ideal of $\mathcal{O}_{X,p}$, and by $k(p)=\mathcal{O}_{X,p}/\frak{m}_p$ the residue field.
We call a point $p\in X$ a \emph{closed point} if $\overline{\{p\}}=\{p\}$ and we call $p\in X$ a \emph{codimension $1$} point if $\overline{\{p\}}$ is codimension $1$ in $X$.
We use the notation $X^{(1)}$ to denote the set of codimension $1$ points.
We call $X$ irreducible over $k$ if $X$ cannot be written as the union of two proper Zariski closed subschemes.
When $X$ is irreducible, it has a unique generic point $\xi$ such that $\overline{\{\xi\}}=X$ and we define the \emph{function field of $X$} as $k(X)=\mathcal{O}_{X,\xi}$.

As $k$ is a number field, if we moreover assume that $k\subset \bbC$ under a fixed embedding then there is a base change of $X$ to $\bbC$ which we always denote by $X_\bbC=X\times_{\Spec(k)}\Spec(\bbC)$.
We call $X$ \emph{geometrically irreducible} if $X_\bbC$ is irreducible over $\bbC$ and we call $X$ \emph{geometrically integral} if $X_\bbC$ is moreover integral.

Throughout the paper we will be analyzing the specific case when $X$ is a curve.
In this setting, if $p\in X$ is a closed point such that $\mathcal{O}_{X,p}$ is a regular local ring, then $\mathcal{O}_{X,p}$ is in fact a discrete valuation ring.
When this is the case, we call any choice of generator $\pi$ of the maximal ideal $\frak{m}_p$ a \emph{uniformizing parameter}.
Moreover, $\pi$ induces a discrete valuation on the fraction field $\mathrm{Frac}(\mathcal{O}_{X,p})$, and in the sequel we will use the notation $\ord_p(-)$ to denote this valuation.


\subsection{Spaces of representations, fields of definition, and tangent spaces}\label{sec:backrep}

This paper will be focused on studying the properties of specific schemes $\frak{R}(M)$ and $\frak{X}(M)$ over $\bbQ$ and we refer the reader to \cite{LubotzkyMagid} or \cite{Sikora} for the information below.
Define the \emph{representation scheme} and the \emph{character scheme}, respectively, by the formulas
\begin{equation}\label{eqn:schemetime}
\frak{R}(M)=\frak{Hom}(\pi_1(M),\SL_2), \quad \frak{X}(M)=\frak{Hom}(\pi_1(M),\SL_2)/\!/\SL_2,
\end{equation}
where the double bars in the latter denote the GIT quotient of $\frak{R}(M)$ by the action of conjugation by $\SL_2$.
These are affine schemes defined over $\bbQ$ with the property that, if $\bbQ[\frak{R}(M)]$ denotes the coordinate ring for $\frak{R}(M)$, then the coordinate ring of $\frak{X}(M)$ is given by the $\SL_2$-invariant functions $\bbQ[\frak{X}(M)]=\bbQ[\frak{R}(M)]^{\SL_2}$.

The ring $\bbQ[\frak{R}(M)]$ is, in general, non-reduced and we define the ring $\bbQ[R(M)]$ to be the quotient by its nilradical
$$\bbQ[R(M)]=\bbQ[\frak{R}(M)]/\frak{N}_{\bbQ[\frak{R}(M)]}=\bbQ[\Hom(\pi_1(M),\SL_2)].$$
The surjective map $\bbQ[\frak{R}(M)]\to \bbQ[R(M)]$similarly induces a surjective map
$$\bbQ[\frak{X}(M)]=\bbQ[\frak{R}(M)]^{\SL_2}\to \bbQ[R(M)]^{\SL_2},$$
whose kernel is precisely the nilradical of $\bbQ[\frak{X}(M)]$.
In particular, taking the spectrum of these coordinate rings gives the classical \emph{representation variety} and \emph{character variety} over $\bbQ$, defined by 
\begin{equation}\label{eqn:repvar}
R(M)=\Spec(\bbQ[R(M)])=\Hom(\pi_1(M),\SL_2),
\end{equation}
\begin{equation}\label{eqn:charvar}
X(M)=\Spec(\bbQ[R(M)]^{\SL_2})=\Hom(\pi_1(M),\SL_2)/\!/\SL_2.
\end{equation}
Importantly, the surjections on the level of coordinate rings induce closed embeddings $R(M)\hookrightarrow \frak{R}(M)$, $X(M)\hookrightarrow \frak{X}(M)$ and the images under these maps are precisely the set of closed points.
In the sequel we use the notation, $\Upsilon:\frak{R}(M)\to \frak{X}(M)$ to denote the associated projection and, given a representation $\rho\in R(M)$, we associate to $\rho$ its image under $R(M)\hookrightarrow \frak{R}(M)$ and write $\chi_\rho=\Upsilon(\rho)$.
Note that $\bbQ[\frak{X}(M)]$ is generated by the trace functions $I_\gamma$.
That is, given a word $\gamma\in\pi_1(M)$, $\bbQ[\frak{X}(M)]$ is generated by functions of the form $I_\gamma(\chi_\rho)=\Tr(\rho(\gamma))$.

In the specific case that $M$ is a once punctured torus bundle, $\mathfrak{X}(M)$ can be explicitly embedded in $\mathbb{A}^7_\bbQ$ via the map which associates to $I_\gamma$ its values on the subset $\{x,y,t,xy,xt,yt,xyt\}$ of $\pi_1(M)$ \cite{CS1}.
Therefore, throughout we always assume that $\frak{X}(M)$ is realized as an affine subscheme of $\mathbb{A}^7_\mathbb{Q}$ via this embedding.
In general, $\frak{X}(M)$ is not geometrically irreducible and moreover the irreducible components of $\frak{X}(M)$ are at most $1$-dimensional since $M$ has $1$ cusp.
Therefore assume that $\frak{C}_\bbC$ is an irreducible curve component of $\frak{X}(M)_\bbC\subset\mathbb{A}^7_\bbC$.
We point out that the notation $\frak{X}(M)_\bbC$ is unambiguous in this case because there is only one embedding of $\bbQ$ in $\bbC$.

From the data of $\frak{C}_\bbC$ and the affine embedding in $\mathbb{A}^7_\bbC$, Long--Reid \cite{LongReid2} extract a field $k$ called the \emph{field of definition of $\frak{C}_\bbC$} which is the unique minimal subfield $k$ such that the ideal $\mathcal{I}$ defining the embedding $\frak{C}_\bbC$ in $\mathbb{A}^7_\bbC$ is defined over $k$.
In particular, choosing generators for the ideal $\mathcal{I}$ with coefficients in $k$ determines a geometrically irreducible curve $\frak{C}$ for which $\frak{C}_\bbC=\frak{C}\times_k\bbC$.
Note importantly that, in our setup, the field of definition will always be a number field.
Moreover, note that $k$ depends explicitly on the affine embedding of $\frak{X}(M)_\bbC$ and need not be the minimal field of definition in the sense of algebraic geometry (e.g. \cite[IV$_2$ \S 4.8]{EGA}).
Indeed, there are examples where these two notions differ (see Section \ref{sec:dunfield} and Remark \ref{rem:dunfield} for instance).

Recall that the Zariski tangent space $T_p X$ to a scheme $X$ at a point $p$ is defined as $(\frak{m}_p/\frak{m}_p^2)^*$, which is the dual of $\frak{m}_p/\frak{m}_p^2$ considered as a $k(p)$-vector space.
In the future we will need to identify the Zariski tangent space to $\frak{R}(M)_\bbC$ at certain representations $\rho$ with the space of $1$-cocycles $Z^1(\pi(M),\frak{sl}_2(\bbC))$, which we now recall how to do.
Given any representation $\rho:\pi_1(M)\to \SL_2(\bbC)$, then $\rho$ defines the structure of a $\pi_1(M)$-module on the Lie algebra $\frak{sl}_2(\bbC)$ via $\gamma\cdot X=\mathrm{Ad}_{\rho(\gamma)}X$.
We use the notation $\frak{sl}_2(\bbC)^\rho$ to record this structure on $\frak{sl}_2(\bbC)$.
Recall that a $1$-cocycle $d$ is a map $d:\pi_1(M)\to \frak{sl}_2(\bbC)$ such that 
$$d(\gamma\delta)=d(\gamma)+\gamma\cdot d(\delta),$$
for all $\gamma,\delta\in\pi_1(M)$.
Then an observation of Weil \cite{Weil} (see also \cite[Thm 35]{Sikora}) shows that for a closed point $\rho\in \frak{R}(M)_\bbC$, we have an identification $T_\rho \frak{R}(M)_\bbC\cong Z^1(\pi(M),\frak{sl}_2(\bbC)^\rho)$.

Recall that the embedding $R(M)\hookrightarrow \frak{R}(M)$ has image the closed points of $\frak{R}(M)$ and the same is true of the basechanged morphism $R(M)_\bbC\hookrightarrow \frak{R}(M)_\bbC$.
Therefore, identifying $R(M)_\bbC$ with its image, it makes sense to also consider the Zariski tangent space $T_\rho R(M)_\bbC$.
From this and the observation of Weil, the following chain of inequalities holds
\begin{equation}\label{eqn:tangentineq}
\dim_\rho R(M)_\bbC\le\dim_\bbC T_\rho R(M)_\bbC\le\dim_\bbC T_\rho \frak{R}(M)_\bbC=\dim_\bbC Z^1(\pi_1(M),\frak{sl}_2(\bbC)^\rho),
\end{equation}
where $\dim_\rho R(M)_\bbC$ is the local dimension at $\rho$, i.e., the Krull dimension of the local ring $\mathcal{O}_{R(M)_\bbC,\rho}$ (see \cite[Pg 31]{LubotzkyMagid}).
When the second inequality is an equality, it will follow that the local ring $\mathcal{O}_{\frak{R}(M)_{\bbC},\rho}$ at $\rho$ is a reduced ring.
In this situation we call $\rho$ a \emph{reduced point of $\frak{R}(M)$}.
We point out that when both inequalities are equality, then $\rho$ is a regular point and in particular is a smooth point of $R(M)_\bbC$ which is contained in a unique component of dimension equal to $\dim_\bbC Z^1(\pi_1(M),\frak{sl}_2(\bbC)^\rho)$.


\subsection{Curve components in character schemes and the canonical component}\label{sec:curveback}

In general, the character scheme $\frak{X}(M)$ will not be geometrically irreducible so, as in Section \ref{sec:backrep}, we decompose it into irreducible components $\frak{X}(M)_\bbC=\cup_{i=1}^n (V_i)_\bbC$.
In the sequel we will only care about components $(V_i)_\bbC$ of dimension $1$, which we call \emph{curve components}, and moreover we will only care about components $(V_i)_\bbC$ which contain a character $\chi_\rho$ such that $\rho$ is an irreducible representation.

As in Section \ref{sec:backrep}, each such $(V_i)_\bbC$ has field of definition some number field $k_i$ so we fix once and for all the standing notation that $\frak{C}$ denotes a fixed choice of geometrically irreducible curve component of $\frak{X}(M)_k$, where $k$ is the field of definition of $\frak{C}$, which contains the character of an irreducible representation.
Note that $k$ will in general depend on $\frak{C}$, however for the reader's convenience we do not build this dependence into the notation.
As usual, we also write $k[\frak{C}]$ for the corresponding coordinate ring.

The component $\frak{C}$ can, in general, have non-reduced points and therefore we use the notation $C^{red}=\Spec(k[\frak{C}]^{red})$, where $k[\frak{\frak{C}}]^{red}=k[\frak{C}]/\frak{N}_{k[\frak{C}]}$ is the quotient of $k[\frak{C}]$ by its nilradical.
As in Section \ref{sec:backrep}, we have an induced map $C^{red}\to \frak{C}$.
The scheme $C^{red}$ need not be non-singular, however there is always a unique smooth projective model $\widetilde{C}$ of $C^{red}$ up to isomorphism.
Specifically, we define $\widetilde{C}$ as the smooth projective completion of the normalization of $C^{red}$.
This smooth projective curve comes equipped with a birational map $\widetilde{C}\dashrightarrow C^{red}$ which is an isomorphism on regular points of $C^{red}$.
We call the points $\mathcal{I}(\widetilde{C})\subset\widetilde{C}$ for which this map is not defined the \emph{ideal points}.
Moreover composition of this map with the map to $\frak{C}$ yields a birational map $\Pi_k:\widetilde{C}\dashrightarrow \frak{C}$.
Note that the points where $\Pi_k$ is undefined are also precisely the ideal points $\mathcal{I}(\widetilde{C})$.

The map $\Pi_k$ takes the generic point of $\widetilde{C}$ to that of $\frak{C}$ and hence the birationality induces an isomorphism of function fields $\Pi_k^*:k(\frak{C})\to k(\widetilde{C})$ defined by $\Pi_k^*(f)= f\circ\Pi_k$.
In the remainder of the paper, we will identify $k(\frak{C})$ with $k(\widetilde{C})$ using $\Pi_k^*$.
Importantly, for a fixed trace function $I_\gamma\in k(\frak{C})$ from Section \ref{sec:backrep}, we write $\widetilde{I}_\gamma=\Pi_k^*(I_\gamma)$ to be the corresponding function in $k(\widetilde{C})$.

Continue fixing a geometrically irreducible curve component $\frak{C}\subset X(M)_k$ and let $\Upsilon_k$ be the natural map $\Upsilon_k:\frak{R}(M)_k\to \frak{X}(M)_k$.
Then we need the following lemma of Chinburg, Reid, and Stover.

\begin{lem}[\cite{CRS}, Lemma 2.3]
There exists an irreducible curve $\frak{D}\subset \frak{R}(M)_k$ such that $\Upsilon_k(\frak{D})\subset \frak{C}$, the extension of function fields $k(\frak{D})/k(\frak{C})$ is a finite extension, and such that there exists a representation $\rho_{\frak{C}}:\pi_1(M)\to\SL_2(k(\frak{D}))$ with the property that $\chi_{\rho_\frak{C}}(\gamma)(\rho)=\chi_\rho(\gamma)=\Tr(\rho(\gamma))$.
\end{lem}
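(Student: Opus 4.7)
The plan is to construct $\frak{D}$ as a generic curve slice of the preimage $\Upsilon_k^{-1}(\frak{C})\subset \frak{R}(M)_k$ and then obtain $\rho_\frak{C}$ from the universal family of matrix coefficients on $\frak{R}(M)_k$ by restricting and passing to the generic point of $\frak{D}$.

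First I would set $Z=\Upsilon_k^{-1}(\frak{C})$, a closed subscheme of $\frak{R}(M)_k$ on which $\SL_{2,k}$ acts by conjugation, with categorical quotient $\frak{C}$. By hypothesis there is a character $\chi_{\rho_0}\in \frak{C}$ with $\rho_0$ irreducible, and since $\Upsilon_k$ is surjective on closed points, there is an irreducible component $Z_0\subseteq Z$, defined over $k$, containing $\rho_0$ and dominating $\frak{C}$. The stabilizer of $\rho_0$ in $\PGL_2$ is trivial, so its $\SL_2$-orbit has dimension $3$, and since $\Upsilon_k$ is a good $\SL_2$-quotient, the generic fiber of $Z_0\to \frak{C}$ coincides with this orbit; consequently $\dim Z_0=\dim \frak{C}+3=4$.

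Next I would use the affine embedding $\frak{R}(M)_k\hookrightarrow \mathbb{A}^N_k$ to intersect $Z_0$ with three sufficiently generic $k$-rational hyperplanes $H_1,H_2,H_3$, which is possible because $k$ is infinite. A Bertini-style argument shows that $Z_0\cap H_1\cap H_2\cap H_3$ is $1$-dimensional and its image in $\frak{C}$ remains dense. Choose an irreducible component $\frak{D}$ of this intersection that dominates $\frak{C}$; the dominant map $\Upsilon_k|_\frak{D}:\frak{D}\to \frak{C}$ of irreducible $k$-curves then gives a finite extension of function fields $k(\frak{D})/k(\frak{C})$ as required.

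For the tautological representation, recall that $\frak{R}(M)_k=\frak{Hom}(\pi_1(M),\SL_2)_k$ comes with a universal family: for each $\gamma\in \pi_1(M)$ there is a matrix $\widetilde{\rho}(\gamma)\in \SL_2(k[\frak{R}(M)])$ whose specialization at any closed point $\rho$ is $\rho(\gamma)\in \SL_2(\overline{k})$. Restricting these matrix entries along the closed immersion $\frak{D}\hookrightarrow \frak{R}(M)_k$ and then inverting nonzero functions on $\frak{D}$ yields a homomorphism $\rho_\frak{C}:\pi_1(M)\to \SL_2(k(\frak{D}))$. By the universal property, for any $\gamma\in \pi_1(M)$ the function $\Tr(\rho_\frak{C}(\gamma))\in k(\frak{D})$ specializes at a closed point $\rho\in \frak{D}$ to $\Tr(\rho(\gamma))=\chi_\rho(\gamma)$, proving the claimed compatibility of characters.

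The main technical subtlety is ensuring the entire construction descends to $k$ rather than being carried out over $\bbC$: one must choose the hyperplanes $H_i$ to be $k$-rational and generic enough that an irreducible $k$-component $\frak{D}$ of the slice still dominates $\frak{C}$ and meets the open locus of irreducible representations. This is routine because $k$ is an infinite number field, but deserves care so that the resulting $\frak{D}$ is a genuine $k$-scheme whose function field furnishes a representation over $k(\frak{D})$; an alternative route, if one wishes to avoid choices, is to exhibit an étale-local rational section of $\Upsilon_k$ over the open locus of irreducible characters using standard GIT for $\SL_2$-conjugation, and take $\frak{D}$ to be the image of such a section.
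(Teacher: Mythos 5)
The paper cites this result from \cite{CRS} without reproducing a proof, so there is no argument here to compare against; your construction must be judged on its own merits, and it is correct in outline. Two points are worth tightening. First, there is no reason the $k$-irreducible component of $\Upsilon_k^{-1}(\frak{C})$ containing a chosen lift $\rho_0$ is the one that dominates $\frak{C}$; it is cleaner, and sufficient, to simply select a $k$-irreducible component $Z_0$ of $\Upsilon_k^{-1}(\frak{C})$ that dominates $\frak{C}$ (one exists since $\Upsilon_k$ is surjective and $\frak{C}$ is irreducible), and then observe that the generic fiber of $Z_0\to\frak{C}$ is a $3$-dimensional $\PGL_2$-orbit of an absolutely irreducible representation --- using, as the paper does via Lemma 2.10 of \cite{CRS}, that $\frak{C}$ carries only finitely many characters of reducible representations --- which gives $\dim Z_0=4$. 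Second, when selecting $\frak{D}$ among the components of $Z_0\cap H_1\cap H_2\cap H_3$, the choice should be made among the components irreducible over $k$ rather than over $\overline{k}$; at least one such dominates $\frak{C}$ (since the images of the $k$-components cover the image of the slice), and this is exactly what guarantees $k(\frak{D})/k(\frak{C})$ is a genuine finite extension of function fields. The remainder --- restriction of the universal matrix coefficients on $\frak{R}(M)_k$ to $\frak{D}$ and passage to the generic point --- is standard and correct. The alternative you sketch at the end, lifting the generic character of $\frak{C}$ (which is absolutely irreducible) to a representation over a finite, in fact at most quadratic, extension $L$ of $k(\frak{C})$ and taking $\frak{D}$ to be the closure in $\frak{R}(M)_k$ of the corresponding $L$-point, is the more common route in this literature; it avoids Bertini in the affine, not-necessarily-geometrically-irreducible setting and produces the tautological representation directly rather than as an afterthought.
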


The representation $\rho_{\frak{C}}:\pi_1(M)\to\SL_2(k(\frak{D}))$ from the above lemma is called the \emph{tautological representation} and is defined by the formula
$$\rho_{\frak{C}}(\gamma)=\begin{pmatrix}
a_\gamma&b_\gamma\\
c_\gamma&d_\gamma\end{pmatrix},$$
where $a_\gamma,\dots,d_\gamma$ are the coordinate functions such that $a_\gamma(\rho),\dots,d_\gamma(\rho)$ give the value of the corresponding entry of the matrix $\rho(\gamma)$.

Chinburg--Reid--Stover are also able to describe the relationship between the trace field of a representation $\rho$ and the residue field of $\frak{C}$ at the character associated to $\chi_\rho$.
This will be useful for us throughout the paper, especially in the case that $\chi_\rho$ corresponds to a regular point.

\begin{lem}[\cite{CRS}, Lemma 2.5]\label{lem:residuefield}
Let $p\in \widetilde{C}\setminus \mathcal{I}(\widetilde{C})$, $\chi_\rho=\Pi_k(p)$, and let $k_\rho$ be the trace field of $\rho$. 
Then $k_\rho\subset k(\chi_\rho)\subset k(p)$ where the latter inclusion is equality when $p$ is a regular point of $\widetilde{C}$.
Moreover when $p$ is regular, $k(p)$ is generated by $k$ and $k_\rho$.
\end{lem}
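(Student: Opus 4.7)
The plan is to unpack the three assertions by exploiting the factorization $\Pi_k:\widetilde{C}\dashrightarrow C^{red}\hookrightarrow \frak{C}$ together with the fact that $k[\frak{X}(M)]$ is generated as a $k$-algebra by trace functions.

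First, for the inclusion $k_\rho\subset k(\chi_\rho)$, I would use that $\bbQ[\frak{X}(M)]$ is generated as a $\bbQ$-algebra by the trace functions $I_\gamma$; the excerpt even records that for a once-punctured torus bundle $\frak{X}(M)$ embeds in $\bbA^7_\bbQ$ using only seven such functions. After base change to $k$ and restriction to $\frak{C}$, the coordinate ring $k[\frak{C}]$ is generated over $k$ by the restrictions of the $I_\gamma$. Hence the residue field $k(\chi_\rho)$ contains every value $I_\gamma(\chi_\rho)=\Tr(\rho(\gamma))$, and since it also contains $k\supset\bbQ$, we obtain $k_\rho=\bbQ(\{\Tr(\rho(\gamma))\})\subset k(\chi_\rho)$.

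Second, for the inclusion $k(\chi_\rho)\subset k(p)$ together with its strengthening to an equality under the regularity hypothesis, I would exploit the two-step factorization. The closed immersion $C^{red}\hookrightarrow \frak{C}$ is a homeomorphism on underlying topological spaces whose kernel on structure sheaves is the nilradical, so it induces isomorphisms on residue fields at every point; this reduces the problem to comparing $\mathcal{O}_{C^{red},\chi_\rho}$ with $\mathcal{O}_{\widetilde{C},p}$. Since $p\notin\mathcal{I}(\widetilde{C})$, the normalization map $\widetilde{C}\to C^{red}$ is defined at $p$, which yields a local homomorphism $\mathcal{O}_{C^{red},\chi_\rho}\to\mathcal{O}_{\widetilde{C},p}$---in fact an inclusion of subrings of the common function field $k(\widetilde{C})$---and hence an injection of residue fields $k(\chi_\rho)\hookrightarrow k(p)$. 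When $p$ is regular, meaning $\chi_\rho$ is a regular point of $C^{red}$ so that the normalization is a local isomorphism there, this local homomorphism is itself an isomorphism, so the residue field extension collapses to $k(\chi_\rho)=k(p)$. Combining with the first step and the fact that the residue field of a closed point on a finite-type $k$-scheme is generated over $k$ by the images of any generators of its coordinate ring, we conclude $k(p)=k(\chi_\rho)=k(\{\Tr(\rho(\gamma))\}_\gamma)=k\cdot k_\rho$, the compositum inside $k(p)$.

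The main obstacle is pinning down the meaning of ``$p$ regular'': since $\widetilde{C}$ is smooth by construction, literal regularity of $\mathcal{O}_{\widetilde{C},p}$ is automatic, so the substantive content must be that $p$ does not lie over a singular point of $C^{red}$, ensuring that the normalization neither identifies $p$ with another preimage nor produces a nontrivial residue field extension there. Once this interpretation is fixed, each containment is a short diagram chase through the factorization of $\Pi_k$, and the final equality is immediate from the generation statement above.
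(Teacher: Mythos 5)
The paper cites this result from \cite{CRS} (their Lemma 2.5) without reproducing a proof, so there is no internal argument to compare yours against; your task is effectively to reconstruct the proof from scratch. Your reconstruction is correct. The first inclusion $k_\rho\subset k(\chi_\rho)$ is exactly as you say: $k[\frak{C}]$ is generated over $k$ by the trace functions $I_\gamma$, so the residue field at the closed point $\chi_\rho$ contains all the values $I_\gamma(\chi_\rho)=\Tr(\rho(\gamma))$ together with $\bbQ$, hence contains $k_\rho$. The second inclusion follows from the local homomorphism $\mathcal{O}_{C^{red},\chi_\rho}\to\mathcal{O}_{\widetilde{C},p}$ given by the normalization (recall $\widetilde{C}\setminus\mathcal{I}(\widetilde{C})$ is precisely the normalization $C^\#$), and the closed immersion $C^{red}\hookrightarrow\frak{C}$ is an isomorphism on residue fields since its kernel is the nilradical. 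When the image point is regular, $\mathcal{O}_{C^{red},\chi_\rho}$ is already normal so the normalization is a local isomorphism, forcing $k(\chi_\rho)=k(p)$, and the generation statement then follows since $k(\chi_\rho)$ is generated over $k$ by the images of the $I_\gamma$, i.e.\ by the $\Tr(\rho(\gamma))$, giving $k(p)=k\cdot k_\rho$.

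Your reading of the hypothesis is the right one and worth flagging: as literally written, ``$p$ is a regular point of $\widetilde{C}$'' is vacuous because $\widetilde{C}$ is smooth by construction. The substantive hypothesis is that $\Pi_k(p)$ is a regular point of $\frak{C}$ (equivalently of $C^{red}$). This is confirmed by the way the lemma is used later in the paper, for instance in the proof of Proposition~\ref{prop:noextendA}, where the authors invoke precisely ``$\chi_{\rho}$ is a regular point of $\frak{C}$'' to get $\mathcal{O}_{\widetilde{C},p}\cong\mathcal{O}_{\frak{C},\chi_\rho}$ and hence $k(p)\cong k(\chi_\rho)$.
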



\subsection{Azumaya algebras and tame symbols}\label{sec:azalg}

Given a ring $R$, an Azumaya algebra $A$ over $R$ is a finitely generated, faithful, projective $R$-algebra such that $A\otimes_R A^{opp}\cong\End_R(A)$, where $A^{opp}$ is the opposite algebra of $A$.
In \cite{Grothendieck}, Grothendieck defined an \emph{Azumaya algebra} $\mathcal{A}_X$ over a scheme $X$ as a sheaf of $\mathcal{O}_X$-algebras such that for each $p\in X$, the stalk $\mathcal{A}_p$ is an Azumaya algebra over the local ring $\mathcal{O}_{X,p}$.
Azumaya algebras over a fixed $X$ modulo a suitable equivalence relation define a group $\mathrm{Br}(X)$ called the Brauer group.
Specifically, we say that two Azumaya algebras $\mathcal{A}_X$, $\mathcal{A}'_X$ are equivalent if there exist locally free, finite rank sheaves of $\mathcal{O}_X$-modules $\mathcal{E}$, $\mathcal{E}'$ such that $\mathcal{A}_X\otimes_{\mathcal{O}_X}\End(\mathcal{E})\cong \mathcal{A}'_X\otimes_{\mathcal{O}_X}\End(\mathcal{E}')$.
The \emph{Brauer group} $\mathrm{Br}(X)$ is then the group of equivalence classes of Azumaya algebras over $X$ with group structure given by tensor product over $\mathcal{O}_X$.
Note that when $X=\mathrm{Spec}(L)$ where $L$ is a field, $\mathrm{Br}(X)=\mathrm{Br}(L)$ is precisely the classical Brauer group.

Given an Azumaya algebra $\mathcal{A}_X$ over a regular integral noetherian scheme $X$ with generic point $\xi$, we denote by $\mathcal{A}_\xi$ the stalk over $\xi$ which by definition is an Azumaya algebra over the function field $k(X)$.
Therefore there is a natural map 
\begin{align}
\Psi:\mathrm{Br}(X)&\to \mathrm{Br}(k(X)),\label{eqn:azumayamap}\\
[\mathcal{A}_X]&\mapsto[\mathcal{A}_\xi]\nonumber
\end{align}
which is injective.
By results of Grothendieck \cite[Prop 2.1]{GroBrauer3} and Gabber \cite{Gabber}, one can understand the image of $\Psi$.
\begin{thm}[\cite{Poonen}, Theorem 6.8.3]
With $X$ as above, the following sequence is exact
$$\xymatrix{
0\ar[r]&\mathrm{Br}(X)\ar[r]&\mathrm{Br}(k(X))\ar[r]^-{\oplus r_p}&\bigoplus_{p\in X^{(1)}}H^1(k(p),\mathbb{Q}/\bbZ)\ar[r]&},$$
where $r_p$ is a certain map $\Br(k(X))\to H^1(k(p),\bbQ/\bbZ)$.
The above comes with the caveats that one must exclude the $p$-primary part of all of the groups in the case that either 1) $X$ has dimension less than or equal to $1$ and some $k(p)$ is imperfect with characteristic $p$ or 2) the dimension of $X$ is at least 2 and some $k(p)$ is of characteristic $p$.
\end{thm}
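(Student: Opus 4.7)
The plan is to establish the exact sequence in three stages: (i) injectivity of $\Psi\colon\mathrm{Br}(X)\to\mathrm{Br}(k(X))$, (ii) the containment $\mathrm{Im}(\Psi)\subseteq\bigcap_p\ker(r_p)$, and (iii) surjectivity of $\Psi$ onto this intersection. The deep input is (iii), Grothendieck--Gabber purity for Brauer groups, so I would set up (i) and (ii) carefully by hand and then cite purity as a black box.

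For (i), the key input is the Auslander--Goldman theorem: for any regular noetherian local ring $R$ with fraction field $K$, the map $\mathrm{Br}(R)\to\mathrm{Br}(K)$ is injective. Regularity of $X$ ensures every stalk $\mathcal{O}_{X,p}$ is a regular local ring sharing the fraction field $k(X)$. If $\mathcal{A}_X$ satisfies $[\mathcal{A}_\xi]=0$ in $\mathrm{Br}(k(X))$, then each stalk $\mathcal{A}_p$ is trivial in $\mathrm{Br}(\mathcal{O}_{X,p})$, hence $\mathcal{A}_X$ is \'{e}tale locally isomorphic to $\mathrm{End}(\mathcal{E})$ for a locally free sheaf $\mathcal{E}$, and therefore represents the trivial class in $\mathrm{Br}(X)$.

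For (ii), I would construct $r_p$ at each codimension-one point $p\in X^{(1)}$ as follows. The stalk $\mathcal{O}_{X,p}$ is a DVR; passing to its henselization $\mathcal{O}_{X,p}^{\mathrm{h}}$ with fraction field $K_p^{\mathrm{h}}$, classical local theory yields a short exact sequence
\[
0\to\mathrm{Br}(\mathcal{O}_{X,p}^{\mathrm{h}})\to\mathrm{Br}(K_p^{\mathrm{h}})\to H^1(k(p),\mathbb{Q}/\mathbb{Z})\to 0,
\]
and pulling back via $\mathrm{Br}(k(X))\to\mathrm{Br}(K_p^{\mathrm{h}})$ defines $r_p$. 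For $[\mathcal{A}_X]\in\mathrm{Br}(X)$, the restriction $\mathcal{A}_X|_{\mathcal{O}_{X,p}}$ is Azumaya over $\mathcal{O}_{X,p}$, so its image in $\mathrm{Br}(K_p^{\mathrm{h}})$ lies in the kernel of the residue, giving $r_p(\Psi([\mathcal{A}_X]))=0$ for every such $p$.

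The main obstacle is (iii). Given $\alpha\in\mathrm{Br}(k(X))$ with $r_p(\alpha)=0$ for all $p\in X^{(1)}$, one must produce a global Azumaya algebra representing $\alpha$. I would take the maximal open $U\subseteq X$ on which $\alpha$ extends to an Azumaya algebra; if $U\neq X$, then there is a codimension-one point $p\in X\setminus U$, and vanishing of $r_p(\alpha)$ combined with purity applied to the regular local ring $\mathcal{O}_{X,p}$ forces $\alpha$ to extend across $p$, contradicting maximality of $U$. Purity itself -- that the restriction map on Brauer groups from a regular scheme to the complement of a closed subset of codimension at least two is an isomorphism -- was proved by Grothendieck in low dimensions and the smooth case, and by Gabber in general; the $p$-primary exceptions in the statement reflect exactly the hypotheses under which Gabber's theorem applies. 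I would not attempt to reprove purity, but would treat it as the key cited input and deduce the exact sequence by assembling it with (i) and (ii).
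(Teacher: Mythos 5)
The paper does not prove this statement; it cites it to Poonen/Grothendieck/Gabber, so there is no in-paper argument to compare against. Evaluating your proposal on its own: steps (i) and (ii) are correct and standard (Auslander--Goldman injectivity for regular local rings, and the residue constructed via henselization at each codimension-one point). Step (iii), however, has a structural gap. The claim ``if $U\neq X$, then there is a codimension-one point $p\in X\setminus U$'' is false once $\dim X\geq 2$: the maximal open set of extension can have complement of codimension $\geq 2$ (for instance, $X$ a regular surface and $X\setminus U$ a single closed point), in which case there is no codim-1 point of $X\setminus U$ at which to run your residue-vanishing argument, and your contradiction never gets off the ground. Moreover, at a codim-1 point $p$, extending $\alpha$ over $\mathcal{O}_{X,p}$ uses the Auslander--Goldman exactness $0\to\mathrm{Br}(\mathcal{O}_{X,p})\to\mathrm{Br}(K)\to H^1(k(p),\mathbb{Q}/\mathbb{Z})$ for the DVR, not purity; purity is a statement about removing a closed subset of codimension $\geq 2$ and simply does not apply to $\mathrm{Spec}\,\mathcal{O}_{X,p}$ minus its closed point.

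The correct architecture for (iii) separates the two ingredients you conflated. First, vanishing of $r_p(\alpha)$ for every $p\in X^{(1)}$ together with the DVR exact sequence shows $\alpha$ lies in $\mathrm{Br}(\mathcal{O}_{X,p})$ for every codim-1 point, hence spreads out to an Azumaya algebra on an open $U\subseteq X$ with $\mathrm{codim}(X\setminus U)\geq 2$. Second, apply Gabber purity once, to the pair $(X,X\setminus U)$, to conclude $\mathrm{Br}(U)\cong\mathrm{Br}(X)$ and hence that $\alpha$ comes from $\mathrm{Br}(X)$; this is the single place purity is invoked, and it is exactly the step that handles the codimension-$\geq 2$ complement your maximality argument cannot reach. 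Your identification of the $p$-primary caveats with the hypotheses of Gabber's theorem is correct.
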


In the sequel, we will only study the classes in $\mathrm{Br}(k(X))$ which are represented by a quaternion algebra.
Restricting to these classes of algebras, one sees that $r_p$ agrees with the map
\begin{align}
r_p:\mathrm{Br}(k(X))&\to k(p)^*/k(p)^{*2}\label{eqn:tamesymbol},\\
\{\alpha,\beta\}&\mapsto \{\alpha,\beta\}_p=\left[(-1)^{\ord_p(\alpha)\ord_p(\beta)}\frac{\alpha^{\ord_p(\beta)}}{\beta^{\ord_p(\alpha)}}\right]\nonumber
\end{align}
where $\{\alpha,\beta\}$ is any choice of Hilbert symbol.
The image of $r_p$ is independent of this choice.
The map $r_p$ is called the \emph{tame symbol} and we say that the tame symbol vanishes at a point $p$ if $r_p(\mathcal{A})= [1]$.
Triviality of the tame symbol will be the key tool in analyzing when a class of Azumaya algebra over $k(X)$ lies in the image of the map $\Psi$ from Equation \eqref{eqn:azumayamap}.

Specifying to our setting, we assume that $\frak{C}$ is a fixed geometrically irreducible curve component of $\frak{X}(M)_k$, where $k$ is the field of definition of $\frak{C}$.
Suppose that $\frak{C}$ contains the character of an irreducible representation and let $\rho_{\frak{C}}$ be the associated tautological representation, defined over some finite extension $k(\frak{D})$ of the function field $k(\frak{C})\cong k(\widetilde{C})$.
Then, as in \cite[\S 2.5]{CRS}, define a $k(\widetilde{C})$-quaternion subalgebra of $\mathrm{Mat}_2(k(\frak{D}))$ by
$$\mathcal{A}_{k(\widetilde{C})}=\left\{\sum_i a_i\rho_{\frak{C}}(\gamma_i)\mid a_i\in k(\widetilde{C}),~~\gamma_i\in\pi_1(M)\right\},$$
where the sums above are assumed to be finite.
We call this algebra the \emph{tautological Azumaya algebra associated to $\frak{C}$}.
This algebra gives a well defined class in the Brauer group $\mathrm{Br}(k(\widetilde{C}))$.

Whether or not the tautological Azumaya algebra lies in the image of $\Psi$ from Equation \eqref{eqn:azumayamap} is intimately related to when the set $S$ defined in the introduction exists.
Indeed, Chinburg, Reid, and Stover show the following where we recall the notation from Equation \eqref{eqn:quatalg}:
\begin{thm}[\cite{CRS}, Theorem 1.1(3)--(4)]
Let $\Psi:\mathrm{Br}(\widetilde{C})\to\mathrm{Br}(k(\widetilde{C}))$ be as in Equation \eqref{eqn:azumayamap} and let $C^\#$ denote the normalization of $C^{red}$.
Then the following hold:
\begin{enumerate}
\item Suppose that $\mathcal{A}_{k(\widetilde{C})}\notin\im(\Psi)$, then there is no finite set of rational primes $S$ such that the following holds: For all but finitely many reduced points $\chi_\rho\in \frak{C}(\overline\bbQ)$ with $\rho$ absolutely irreducible, the $k(\chi_\rho)$-quaternion algebra $\mathcal{A}_\rho\otimes_{k_\rho}k(\chi_\rho)$ is unramified outside of finite places of $k(\chi_\rho)$ lying over primes in $S$.
\item Suppose that $\mathcal{A}_{k(\widetilde{C})}\in\im(\Psi)$, then there exists a finite set of rational primes $S$ with the following property: given any $\chi_\rho\in C^\#(\overline\bbQ)$ such that $\rho$ is absolutely irreducible, any finite ramified prime $\frak{p}$ of the $k(p)$-quaternion algebra $\mathcal{A}_\rho\otimes_{k_\rho}k(p)$ lies over a rational prime in $S$.
\end{enumerate}
\end{thm}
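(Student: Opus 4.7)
The plan is to prove the two assertions separately using the exact sequence from Section 2.5, which identifies $\im(\Psi)$ with those classes in $\mathrm{Br}(k(\widetilde{C}))$ whose tame symbols vanish at every codimension $1$ point of $\widetilde{C}$. Part (2) will be proved directly by a spreading-out argument, while Part (1) will be proved by contrapositive: from the existence of the finite set $S$ bounding ramification loci, I deduce the vanishing of every tame symbol $r_p(\mathcal{A}_{k(\widetilde{C})})$.

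For (2), assume $\mathcal{A}_{k(\widetilde{C})}\in\im(\Psi)$, so there is an Azumaya algebra $\mathcal{A}_{\widetilde{C}}$ on $\widetilde{C}$ specializing to $\mathcal{A}_{k(\widetilde{C})}$ at the generic point. The first step is to fix a relatively minimal regular model $\widetilde{\frak{C}}_{S_0}$ of $\widetilde{C}$ over $\mathrm{Spec}(\mathcal{O}_{k,S_0})$ for a finite set $S_0$ of rational primes, using standard arithmetic surface theory. The second step is to enlarge $S_0$ to a finite set $S$ for which $\mathcal{A}_{\widetilde{C}}$ extends to an Azumaya algebra $\mathcal{A}_{\widetilde{\frak{C}}_S}$ on the integral model; the obstruction to such extension is cohomological and controlled by finitely many primes. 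Given any $\chi_\rho\in C^\#(\overline{\bbQ})$ with $\rho$ absolutely irreducible, $\chi_\rho$ spreads to a morphism $\mathrm{Spec}(\mathcal{O}_{k(\chi_\rho),S'})\to\widetilde{\frak{C}}_S$ for some finite set $S'$ of finite primes of $k(\chi_\rho)$ lying above $S$. Pulling back $\mathcal{A}_{\widetilde{\frak{C}}_S}$ along this section yields an Azumaya algebra over $\mathrm{Spec}(\mathcal{O}_{k(\chi_\rho),S'})$ whose specialization at each maximal ideal is an unramified quaternion algebra; since its generic fiber is Brauer-equivalent to $\mathcal{A}_\rho\otimes_{k_\rho}k(\chi_\rho)$ by construction of the tautological representation, every finite ramified prime of $\mathcal{A}_\rho\otimes_{k_\rho}k(\chi_\rho)$ must lie over $S$.

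For (1), assume a finite set $S$ as in the conclusion exists and fix a Hilbert symbol $\{\alpha,\beta\}$ for $\mathcal{A}_{k(\widetilde{C})}$; I aim to show $r_p\{\alpha,\beta\}$ vanishes at every closed point $p\in\widetilde{C}$. For a reduced absolutely irreducible character $\chi_\rho$, the specialized symbol $\{\alpha(\chi_\rho),\beta(\chi_\rho)\}$ represents $\mathcal{A}_\rho\otimes k(\chi_\rho)$, and by hypothesis all but finitely many such characters produce quaternion algebras whose finite ramification is contained in primes of $k(\chi_\rho)$ lying over $S$. Approach $p$ along $\widetilde{C}$ through a sequence $\chi_{\rho_n}$ avoiding the finite exceptional locus: a Hensel--continuity argument shows that if $r_p\{\alpha,\beta\}$ were nontrivial, the specializations $\{\alpha(\chi_{\rho_n}),\beta(\chi_{\rho_n})\}$ would necessarily develop ramification at primes of $k(\chi_{\rho_n})$ whose rational residue characteristics depend on $n$ in an unbounded fashion, contradicting the bound by $S$. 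Applying this to each codimension $1$ point yields vanishing of all tame symbols, and the exact sequence then places $\mathcal{A}_{k(\widetilde{C})}\in\im(\Psi)$.

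The main obstacle I anticipate is the converse direction (1). Part (2) is essentially an exercise in spreading out plus the \'etale-local nature of Azumaya algebras and should go through with standard methods. By contrast, Part (1) requires a precise density or reciprocity argument translating a local cohomological invariant at a single codimension $1$ point into an unbounded collection of ramification primes among the specialized quaternion algebras. The delicate point is that $\rho$ determines both the residue field $k(\chi_\rho)$ and the Hilbert symbol entries $\alpha(\chi_\rho),\beta(\chi_\rho)$ simultaneously, so one must control how the Hilbert symbol varies in families along $\widetilde{C}$ near $p$ while keeping careful track of the rational primes below the resulting ramification loci.
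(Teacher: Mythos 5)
This theorem is quoted verbatim from Chinburg--Reid--Stover; the paper you are reading does not supply its own proof, so the natural comparison is with the argument in \cite{CRS}.

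Your sketch of part (2) is essentially the approach in \cite{CRS} (and echoed in Section \ref{sec:intmod} above): spread the Azumaya algebra out to a relatively minimal regular model over $\mathcal{O}_{k,S}$ for a suitably enlarged finite set $S$, use properness to extend each closed point $\chi_\rho$ to a section over $\mathcal{O}_{k(\chi_\rho)}$ away from primes over $S$, and pull back. Two things should be made precise rather than asserted: why the class $[\mathcal{A}_{\widetilde{C}}]$ extends to the integral model after deleting finitely many fibers (the complement of the locus where it is Azumaya is closed, hence meets only finitely many vertical fibers), and why specializing the tautological Hilbert symbol at $\chi_\rho$ actually returns $\mathcal{A}_\rho\otimes_{k_\rho}k(\chi_\rho)$ --- this is where the reducedness and the passage to the normalization $C^\#$ are used. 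But the plan is sound.

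Part (1) has a genuine gap. There is no ``Hensel--continuity'' argument converting a single nontrivial tame symbol $r_p(\mathcal{A}_{k(\widetilde{C})})\ne 1$ at a closed point $p$ of the generic fiber into unbounded rational residue characteristics among the ramification loci of the specializations $\mathcal{A}_\rho\otimes k(\chi_\rho)$. The residue $r_p$ is a fixed class in $k(p)^*/k(p)^{*2}$ for a single number field $k(p)$; nothing local at $p$ forces nearby closed points $\chi_\rho$ --- whose residue fields vary --- to develop ramification at large primes, and ``approaching $p$ along a sequence'' is not a meaningful operation here. The mechanism in \cite{CRS} is global and arithmetic: over an integral model, nontriviality of the residue at $p$ produces a horizontal divisor along which the two-dimensional Brauer class ramifies, and one then invokes a theorem of Harari (cited explicitly in Section \ref{sec:questions} of this paper as the source of the existential statement) to produce infinitely many closed points whose specialized algebras ramify over an infinite set of rational primes. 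This weak-approximation-type input for Brauer classes on arithmetic surfaces is the indispensable ingredient missing from your argument. You correctly flagged (1) as the delicate direction, but the proposed ``continuity'' step does not supply it and would not survive an attempt to make it precise.
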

\noindent In particular, this shows that $S$ exists and is finite if and only if $\mathcal{A}_{k(\widetilde{C})}\in\im(\Psi)$.
Note moreover that when $\mathcal{A}_{\widetilde{C}}$ is such that $\Psi(\mathcal{A}_{\widetilde{C}})=\mathcal{A}_{k(\widetilde{C})}$, then the injectivity of $\Psi$ shows that its class is completely determined by $\mathcal{A}_{k(\widetilde{C})}$.
When the algebra $\mathcal{A}_{\widetilde{C}}$ exists, we say that \emph{the tautological Azumaya algebra $\mathcal{A}_{k(\widetilde{C})}$ extends over $\widetilde{C}$}.

To get a tangible grasp on the extension problem, we will want to work with explicit Hilbert symbols for $\mathcal{A}_{k(\widetilde{C})}$ in the sequel and for this we will frequently make use of technology built by Chinburg--Reid--Stover.
In particular, the following two lemmas will be important for us, the first of which shows that one has some control over the functions one chooses in a Hilbert symbol for $\mathcal{A}_{k(\widetilde{C})}$ provided that $\frak{C}$ satisfies some mild conditions.
In the following we continue the previous assumption on $\frak{C}$.

\begin{lem}[\cite{CRS}, Lemma 2.8]\label{lem:choosehilbert}
Assume that $g\in\pi_1(M)$ is such that $\widetilde{I}_g^2-4$ is not identically $0$ on $\widetilde{C}$.
Then there exists some $h\in\pi_1(M)$ and $\chi_\rho=\Upsilon(\rho)\in \frak{C}$ such that $\widetilde{I}_{[g,h]}-2$ is not identically zero on $\widetilde{C}$ and $\rho\vert_{\langle g,h\rangle}$ is irreducible.
Therefore 
\begin{equation}\label{eqn:azhilbsymb}
\mathcal{A}_{k(\widetilde{C})}=\left(\frac{\widetilde{I}_g^2-4,\widetilde{I}_{[g,h]}-2}{k(\widetilde{C})}\right),
\end{equation}
gives a Hilbert symbol for $\mathcal{A}_{k(\widetilde{C})}$.
\end{lem}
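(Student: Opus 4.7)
The plan is to produce the pair $(\rho,h)$ first via openness arguments on $\frak{C}$ combined with the irreducibility hypothesis, and then to extract the Hilbert symbol from the standard trace-zero presentation of the rank-$4$ quaternion algebra generated by $\rho_\frak{C}(g),\rho_\frak{C}(h)$ inside $\mathrm{Mat}_2(k(\frak{D}))$.

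For the existence step, I would observe that $Z_1=\{\chi\in\frak{C}:\widetilde{I}_g^2(\chi)=4\}$ is a proper Zariski closed subset of $\frak{C}$ by hypothesis, while the locus $U$ of characters of absolutely irreducible representations is a nonempty Zariski open subset (nonempty by the standing hypothesis on $\frak{C}$, and open because reducibility is cut out by the simultaneous vanishing of the trace functions $\widetilde{I}_{[\gamma,\delta]}-2$ for $\gamma,\delta$ running over a generating set of $\pi_1(M)$). Choose $\rho$ with $\chi_\rho\in U\setminus Z_1$; then $\rho$ is irreducible and $\rho(g)$ is non-scalar with two distinct fixed lines $L_1,L_2\in\bbP^1$. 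Set $H_i=\{h\in\pi_1(M):\rho(h)L_i=L_i\}$; each $H_i$ is a proper subgroup of $\pi_1(M)$, since otherwise $L_i$ would be $\rho$-invariant, contradicting irreducibility. A group is never the union of two proper subgroups, so I can pick $h\in\pi_1(M)\setminus(H_1\cup H_2)$; for such an $h$ the matrices $\rho(g),\rho(h)$ share no invariant line, making $\rho|_{\langle g,h\rangle}$ irreducible. The Fricke identity
\[
\Tr([A,B])-2=\Tr(A)^2+\Tr(B)^2+\Tr(AB)^2-\Tr(A)\Tr(B)\Tr(AB)-4
\]
evaluated at $A=\rho(g),B=\rho(h)$ then forces $\Tr(\rho([g,h]))\ne 2$, so $\widetilde{I}_{[g,h]}-2\not\equiv 0$ on $\widetilde{C}$.

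For the Hilbert symbol itself, I would work with the trace-zero elements $i=2\rho_\frak{C}(g)-\widetilde{I}_g\cdot\Id$ and $j_0=2\rho_\frak{C}(h)-\widetilde{I}_h\cdot\Id$ in $\mathrm{Mat}_2(k(\frak{D}))$. Cayley--Hamilton gives $i^2=(\widetilde{I}_g^2-4)\,\Id$, and the $\SL_2$ identity $AB+BA=\Tr(A)B+\Tr(B)A-(\Tr(A)\Tr(B)-\Tr(AB))\,\Id$ computes the anticommutator $ij_0+j_0i$ explicitly as a scalar multiple of $\Id$. Replacing $j_0$ by its trace-form orthogonal projection $j=j_0-\lambda i$ kills the anticommutator, and substituting Fricke's identity then yields $j^2=-4(\widetilde{I}_{[g,h]}-2)(\widetilde{I}_g^2-4)^{-1}\,\Id$. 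Both $i^2$ and $j^2$ are nonzero in $k(\widetilde{C})$ by the previous paragraph and the hypothesis, so $\{1,i,j,ij\}$ is $k(\widetilde{C})$-linearly independent and hence a basis of the $4$-dimensional quaternion algebra $\mathcal{A}_{k(\widetilde{C})}$. The resulting Hilbert symbol $(\widetilde{I}_g^2-4,\,-4(\widetilde{I}_{[g,h]}-2)/(\widetilde{I}_g^2-4))$ is rewritten in the claimed form $(\widetilde{I}_g^2-4,\widetilde{I}_{[g,h]}-2)$ by the Brauer-group identity $(u,v/u)=(u,-v)$ and absorption of the square factor $-4$.

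The most delicate point is confirming that $U\setminus Z_1$ is nonempty, which reduces to the reducible locus in $\frak{C}$ being a proper Zariski closed subset; this in turn uses that $\frak{C}$ contains at least one irreducible character, which is one of the standing hypotheses on $\frak{C}$. Everything beyond this is routine bookkeeping with Cayley--Hamilton, Fricke's identity, and the quadratic-absorption identities for Hilbert symbols in $\Br(k(\widetilde{C}))$.
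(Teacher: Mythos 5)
The paper does not prove this statement --- it cites it directly as Lemma 2.8 of Chinburg--Reid--Stover --- so there is no ``paper's own proof'' to compare against here. Your argument is correct and is the expected one: the existence of $(\rho,h)$ via genericity on $\frak{C}$ together with the fact that a group is never the union of two proper subgroups, followed by the classical trace--zero/Cayley--Hamilton construction of a Hilbert symbol and the Fricke identity to compute $j^2$.

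One small wording slip worth fixing: in the final step you speak of ``absorption of the square factor $-4$,'' but $-4$ is not a square. What actually happens is $\left(u,\,-\tfrac{4v}{u}\right)=(u,-4)\cdot(u,v/u)=(u,-4)\cdot(u,-v)=(u,4v)=(u,v)$, where the genuine square factor absorbed is $4=2^2$, and the two extra signs from $-4$ and $-v$ cancel against each other. The computation you give is right, only the phrasing mislabels which factor is the square.
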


The second of these two lemmas will allow us to not only specify our choice of $g$ but also that of $h$.

\begin{lem}[\cite{CRS}, Corollary 2.9]\label{lem:CRShilb}
Fix $g,h\in\pi_1(M)$ such that $\widetilde{I}^2_g-4$ is not identically zero on $\widetilde{C}$ and such that there exists some $\chi_\rho=\Upsilon(\rho)\in \frak{C}$ for which $\rho\vert_{\langle g,h\rangle}$ is an irreducible representation.
Then $\mathcal{A}_{k(\widetilde{C})}$ can be described by the Hilbert symbol in Equation \eqref{eqn:azhilbsymb} with this choice of $g,h$.
\end{lem}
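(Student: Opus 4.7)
The plan is to construct a standard quaternion basis $\{1,i,j,ij\}$ inside $\mathcal{A}_{k(\widetilde{C})}$ whose structure constants realize the claimed Hilbert symbol. First I would verify that both entries are nonzero elements of $k(\widetilde{C})$. The non-vanishing of $\widetilde{I}_g^2-4$ is assumed. For $\widetilde{I}_{[g,h]}-2$, at the character $\chi_\rho\in\frak{C}$ where $\rho|_{\langle g,h\rangle}$ is irreducible, the classical fact that two elements of $\SL_2(\bbC)$ whose commutator has trace $2$ must share a common eigenvector forces $\Tr(\rho([g,h]))\neq 2$, so $\widetilde{I}_{[g,h]}-2$ does not vanish identically on $\widetilde{C}$. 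Thus the right-hand side of Equation \eqref{eqn:azhilbsymb} defines a legitimate quaternion algebra over $k(\widetilde{C})$.

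Next, working with $A:=\rho_{\frak{C}}(g)$ and $B:=\rho_{\frak{C}}(h)$ inside $\mathcal{A}_{k(\widetilde{C})}\subset\mathrm{Mat}_2(k(\frak{D}))$, set $i:=2A-\widetilde{I}_g\cdot 1$. The Cayley--Hamilton relation $A^2=\widetilde{I}_g A-1$ in $\SL_2$ gives $i^2=(\widetilde{I}_g^2-4)\cdot 1$. I would then look for $j$ of the form $j=\lambda B+\mu AB+\nu A+\sigma\cdot 1$ with $\lambda,\mu,\nu,\sigma\in k(\widetilde{C})$ determined by the standard $\SL_2$ trace identities (notably $\Tr(XY)+\Tr(XY^{-1})=\Tr(X)\Tr(Y)$), arranged so that $j$ has zero trace and anti-commutes with $i$. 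A direct expansion then places $j^2$ in $k(\widetilde{C})\cdot 1$, and the Fricke identity
\[
\widetilde{I}_{[g,h]}=\widetilde{I}_g^2+\widetilde{I}_h^2+\widetilde{I}_{gh}^2-\widetilde{I}_g\widetilde{I}_h\widetilde{I}_{gh}-2
\]
lets one rewrite the resulting scalar as a square in $k(\widetilde{C})^\times$ times $\widetilde{I}_{[g,h]}-2$. Hilbert-symbol classes being insensitive to squares, this produces the desired isomorphism type of the subalgebra generated by $\{1,i,j,ij\}$.

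Finally, since $\mathcal{A}_{k(\widetilde{C})}$ is four-dimensional over $k(\widetilde{C})$, to conclude that the subalgebra above is all of $\mathcal{A}_{k(\widetilde{C})}$ it suffices to verify that $\{1,i,j,ij\}$ is $k(\widetilde{C})$-linearly independent. Specializing at the character $\chi_\rho$ at which $\rho|_{\langle g,h\rangle}$ is irreducible, Burnside's density theorem ensures that $\{I,\rho(g),\rho(h),\rho(gh)\}$ spans $\mathrm{Mat}_2(\bbC)$; expanding $j$ and $ij$ in terms of $A,B,AB,I$, this immediately transfers to linear independence of the specializations of $\{1,i,j,ij\}$, and hence of the elements themselves over $k(\widetilde{C})$. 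Therefore $\mathcal{A}_{k(\widetilde{C})}$ is isomorphic to the Hilbert symbol of Equation \eqref{eqn:azhilbsymb} for the chosen $g,h$. The main obstacle is the explicit solution for the coefficients $\lambda,\mu,\nu,\sigma$ making $j$ both trace-free and anti-commuting with $i$, and the subsequent identification of $j^2$ via the Fricke relation; this is essentially a finite bookkeeping exercise with $\SL_2$-trace identities, but must be carried out carefully to keep track of the squares modulo which the Hilbert symbol is invariant.
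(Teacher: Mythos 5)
Your proposal is correct and follows the standard quaternion-basis construction (the Maclachlan--Reid argument): the paper does not reprove this lemma but cites \cite{CRS}, whose proof is this same construction. One small imprecision: with the most natural choice of $j$ (the off-diagonal projection of $B$ after diagonalizing $A$), one finds $j^2 = -(\widetilde{I}_{[g,h]}-2)/(\widetilde{I}_g^2-4)$ rather than a square times $\widetilde{I}_{[g,h]}-2$, so in addition to square-insensitivity one also needs the standard Hilbert-symbol move $\left(\frac{a,b}{F}\right)\cong\left(\frac{a,-ab}{F}\right)$ (equivalently, replace $j$ by $ij$ to get $(ij)^2=\widetilde{I}_{[g,h]}-2$ exactly); this is routine and does not affect the validity of your argument.
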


We briefly record a simple observation that will be useful in the sequel when we choose the functions in Equation \eqref{eqn:azhilbsymb}, especially for a canonical component of $\frak{X}(M)_k$.

\begin{cor}\label{cor:canoncomp}
If $\rho_0$ is any lift of the discrete faithful representation of $\pi_1(M)$ to $\SL_2(\bbC)$, then $\rho_0\vert_{\langle x,y\rangle}$, $\rho_0\vert_{\langle x,t\rangle}$, and $\rho_0\vert_{\langle y,t\rangle}$ are all irreducible representations.
\end{cor}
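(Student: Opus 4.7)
The plan is to reduce irreducibility to non-solvability and then use hyperbolicity of the monodromy to rule out solvability of each two-generator subgroup. Concretely, a representation $\rho:H\to\SL_2(\bbC)$ is reducible precisely when $\rho(H)$ has a common eigenvector, in which case $\rho(H)$ is contained (after conjugation) in the Borel subgroup of upper-triangular matrices and is therefore solvable. Since $\rho_0$ is faithful (as $M$ is hyperbolic of finite volume, by Mostow--Prasad rigidity applied to $\pi_1(M)$ acting discretely on $\bbH^3$), solvability of $\rho_0(H)$ forces solvability of $H$. So it suffices to check that each of $\langle x,y\rangle$, $\langle x,t\rangle$, $\langle y,t\rangle$ is non-solvable in $\pi_1(M)$.

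For $\langle x,y\rangle$, this is immediate since $x$ and $y$ generate the fiber group $F_2$, which is a non-abelian free group and hence not solvable. For $\langle x,t\rangle$ and $\langle y,t\rangle$, the plan is to exhibit a non-solvable subgroup inside each. Using the relation $txt^{-1}=\phi(x)$ from Equation \eqref{1ptbpresent}, the subgroup $\langle x,t\rangle$ contains $\langle x,\phi(x)\rangle\le F_2$, and likewise $\langle y,t\rangle$ contains $\langle y,\phi(y)\rangle$. Each of these is a two-generator subgroup of the free group $F_2$, hence is itself free; such a group is solvable if and only if it has rank at most one, i.e., the two generators satisfy a relation.

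The main step is then to rule out these rank-one cases using hyperbolicity. The key observation is that $\langle x,\phi(x)\rangle$ has rank one in $F_2$ precisely when $\phi(x)\in\langle x\rangle$, which (since $F_2$ is torsion free) forces $\phi(x)=x^{\pm 1}$. Abelianizing, this means $\Phi\cdot\begin{smm}1\\0\end{smm}=\pm\begin{smm}1\\0\end{smm}$, i.e., in the notation of Equation \eqref{eqn:monmatrix}, $c=0$ and $a=\pm 1$, whence $d=\pm 1$ from $\det\Phi=1$, and thus $\Tr(\Phi)=\pm 2$. This contradicts the standing hyperbolicity assumption $|\Tr(\Phi)|>2$. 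The identical argument with $y$ in place of $x$ handles $\langle y,\phi(y)\rangle$, using $\Phi\cdot\begin{smm}0\\1\end{smm}=\pm\begin{smm}0\\1\end{smm}$ to force $\Tr(\Phi)=\pm 2$. So both $\langle x,\phi(x)\rangle$ and $\langle y,\phi(y)\rangle$ are free of rank two, hence non-solvable, and the required irreducibility follows.

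I expect no real obstacle here; the only subtle point is to remember that reducibility in $\SL_2(\bbC)$ really does force the image into a Borel (so solvable) and to invoke faithfulness of $\rho_0$ to pass back to the abstract group. The hyperbolicity-of-$\Phi$ computation is elementary linear algebra.
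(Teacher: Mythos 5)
Your proof is correct but takes a genuinely different route from the paper's. The paper handles $\langle x,y\rangle$ the same way (non-solvability of $F_2$), but for $\langle x,t\rangle$ and $\langle y,t\rangle$ it switches to an explicit computation: it writes $\rho_0$ in J\o rgensen's normalization, uses the trace condition $\Tr(\rho_0([x,y]))=-2$ and the fact that $\rho_0(t)$ commutes with $\rho_0(l)$ to pin $\rho_0(t)$ down as an upper-triangular parabolic with nonzero off-diagonal entry, and then reads off irreducibility directly from the matrix entries. You instead run the solvability reduction uniformly for all three subgroups: $\langle x,t\rangle\supseteq\langle x,\phi(x)\rangle\leq F_2$, this is free of rank $\leq 2$, and if it had rank $\leq 1$ then (using that $x$ is primitive in $F_2$, so not a proper power) one would have $\phi(x)\in\langle x\rangle$, which after abelianizing and using $\det\Phi=1$ forces $\Tr(\Phi)=\pm 2$, contradicting hyperbolicity. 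Your argument is more conceptual and avoids the explicit normalization entirely, relying only on faithfulness of $\rho_0$ and $|\Tr(\Phi)|>2$; the paper's computation has the advantage of being completely self-contained and transparent. One small slip in your write-up: the inference $\phi(x)\in\langle x\rangle\Rightarrow\phi(x)=x^{\pm1}$ is not a consequence of torsion-freeness (torsion-freeness only gives $\langle x\rangle\cong\bbZ$). The correct justification is either that $\phi$ is an automorphism so $\phi(x)$ is primitive and hence not a proper power, or, more simply, to abelianize $\phi(x)=x^n$ directly to get $a=n$, $c=0$, and then $\det\Phi=nd=1$ forces $n=d=\pm1$ over $\bbZ$. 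Since you perform the abelianization step anyway, the argument is unaffected; just fix the parenthetical.
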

\begin{proof}
That $\rho_0\vert_{\langle x,y\rangle}$ is irreducible is clear since $F_2=\langle x,y\rangle$ is not solvable.
We therefore must show that $\rho_0\vert_{\langle x,t\rangle}, \rho_0\vert_{\langle y,t\rangle}$ is irreducible.

Note that the longitude $l=[x,y]$ has the property that $\chi_{\rho_0}=\Tr(\rho_0(l))=-2$.
Using J\o rgensen's normalization \cite[\S 2]{Jorgensen} (see also \cite[Lem 6]{Bowditch}), up to conjugation we may assume that
\begin{align*}
\rho_0(x)&=\begin{pmatrix}
z_x-z_yz_{xy}^{-1}&z_xz_{xy}^{-2}\\
z_x&z_yz_{xy}^{-1}
\end{pmatrix},\\
\rho_0(y)&=\begin{pmatrix}
z_y-z_xz_{xy}^{-1}&-z_yz_{xy}^{-2}\\
-z_y&z_xz_{xy}^{-1}
\end{pmatrix},\\
\rho_0(l)&=\begin{pmatrix}
-1&-2\\
0&-1
\end{pmatrix},
\end{align*}
where $z_g=\chi_{\rho_0}(g)=\Tr(\rho_0(g))$.
Note that the condition on the trace of the longitude implies that $z_x,z_y,z_{xy}$ satisfy the relationship $z_{x}^2+z_y^2+z_{xy}^2=z_xz_yz_{xy}$ so this gives a genuine $\SL_2(\bbC)$-representation of $\langle x,y\rangle$.
Moreover since $\rho_0$ is the discrete faithful representation, it follows that $z_x,z_y,z_{xy}\neq 0$ since $x,y,xy$ each generate infinite cyclic subgroups.

The stable letter $t$ must commute with $l$ and from this it is straightforward to verify that
\begin{equation}\label{eqn:discretestable}
\rho_0(t)=\begin{pmatrix}
\pm 1&z_0\\
0&\pm 1\end{pmatrix},
\end{equation}
for some $z_0\in\bbC^*$.
The conclusion that $z_0\neq 0$ again follows from the fact that $t$ generates an infinite cyclic subgroup.
As $z_x,z_y\neq 0$, it is then clear from Equation \eqref{eqn:discretestable} that $\rho\vert_{\langle x,t\rangle}$, $\rho\vert_{\langle y,t\rangle}$ are irreducible representations.
\end{proof}

In \cite{CRS}, Chinburg--Reid--Stover also show that to understand when the tautological Azumaya algebra extends over $\widetilde{C}$ it is enough to study this problem at characters of non-abelian reducible representations.

\begin{prop}[\cite{CRS}, Proposition 4.1]\label{prop:idealirrepextend}
The tautological Azumaya algebra extends over characters of irreducible representations and ideal points.
Precisely, let $p\in \widetilde{C}$, such that either $p\in\mathcal{I}(\widetilde{C})$ or $\Pi_k(p)=\chi_\rho$ for an irreducible representation $\rho$, then $r_p(\mathcal{A}_{k(\widetilde{C})})=[1]$, where $r_p$ is as in Equation \eqref{eqn:tamesymbol}.
\end{prop}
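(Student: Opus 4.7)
My approach to this proposition is to handle the two cases --- $p$ a character of an irreducible representation versus $p$ an ideal point --- separately, in each instance computing the tame symbol on the specific Hilbert symbol presentation of $\mathcal{A}_{k(\widetilde{C})}$ afforded by Lemma \ref{lem:choosehilbert} and Corollary \ref{lem:CRShilb}. Since the tame symbol formula from Equation \eqref{eqn:tamesymbol} is entirely controlled by the valuations and leading coefficients of the two trace expressions $\widetilde{I}_g^2 - 4$ and $\widetilde{I}_{[g,h]} - 2$ at $p$, the strategy is to choose $g$ and $h$ so that these quantities force the output to land in the trivial class of $k(p)^*/k(p)^{*2}$.

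In the case where $\Pi_k(p) = \chi_\rho$ with $\rho$ irreducible, I would first pick $g \in \pi_1(M)$ such that $\rho(g)$ has distinct eigenvalues --- this is possible since the image of an absolutely irreducible representation cannot be contained in a single Borel subgroup of $\SL_2(\bbC)$ --- and then choose $h$ so that $\rho|_{\langle g, h\rangle}$ remains irreducible. Corollary \ref{lem:CRShilb} lets me use this pair in the Hilbert symbol \eqref{eqn:azhilbsymb}. Evaluating at $p$, one has $\widetilde{I}_g(p)^2 - 4 = \Tr(\rho(g))^2 - 4 \neq 0$, and the classical identity that $\Tr([A,B]) = 2$ if and only if $\langle A,B \rangle$ is reducible gives $\widetilde{I}_{[g,h]}(p) - 2 \neq 0$. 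Hence both functions are units at $p$, all valuations in the tame symbol formula vanish, and the symbol is trivially $[1]$.

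The ideal-point case is where the real content lies. Since $\Pi_k$ does not extend over an ideal point $p$, at least one of the seven affine trace coordinates $\widetilde{I}_x, \widetilde{I}_y, \widetilde{I}_t, \widetilde{I}_{xy}, \widetilde{I}_{xt}, \widetilde{I}_{yt}, \widetilde{I}_{xyt}$ defining the embedding of $\frak{X}(M)_k$ in $\mathbb{A}^7_k$ must have a pole at $p$; I would take such an element as $g$ and apply Lemma \ref{lem:choosehilbert} to produce a compatible $h$. The structural observation I plan to exploit is that if $\pi$ is a uniformizer at $p$ and we expand $\widetilde{I}_g = c\pi^{-d_g} + O(\pi^{-d_g+1})$ with $d_g > 0$, then $\widetilde{I}_g^2 - 4$ has valuation exactly $-2d_g$ (automatically even) with leading coefficient $c^2$ --- the subtracted $4$ is negligible compared to the pole of $\widetilde{I}_g^2$, so the leading coefficient is forced to be a perfect square in $k(p)^*$. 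Setting $m = \ord_p(\widetilde{I}_{[g,h]} - 2)$, the sign $(-1)^{(-2d_g)m}$ in Equation \eqref{eqn:tamesymbol} is trivially $+1$, and after clearing uniformizers in $(\widetilde{I}_g^2 - 4)^m/(\widetilde{I}_{[g,h]} - 2)^{-2d_g}$, the residue in $k(p)^*$ reduces to $(c^2)^m \cdot u^{2d_g}$ for a unit $u$, which is manifestly a square. The main delicacy I anticipate is bookkeeping in this ideal-point calculation --- confirming that $g$ can indeed be selected from the seven generating coordinates, and verifying that the ``leading coefficient is a square'' phenomenon genuinely survives reduction to $k(p)$ after the uniformizer powers cancel.
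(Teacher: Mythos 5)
Your proof is correct. Both branches of the argument hold up: at a character $\chi_\rho$ of an irreducible representation you choose $g$ with $\rho(g)$ having distinct eigenvalues and $h$ with $\rho|_{\langle g,h\rangle}$ irreducible, which makes Corollary~\ref{lem:CRShilb} applicable with this $g$, $h$ (since the existence clause in that corollary is certified by the very character $\chi_\rho$ at $p$); both entries of the resulting Hilbert symbol then have valuation zero at $p$, so the tame symbol is trivially $[1]$. At an ideal point, your observation that one of the seven coordinate trace functions $\widetilde{I}_g$ must have a pole at $p$ is exactly right, and the resulting identity $\ord_p(\widetilde{I}_g^2-4)=-2d_g$ with square leading coefficient forces the residue in $k(p)^*/k(p)^{*2}$ to be a square no matter what the second slot contributes.

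One remark on the closing caveat: the ``delicate bookkeeping'' you flag at the ideal-point step is not actually a concern, and your own computation shows why. You never need to control $h$ or the value of $m = \ord_p(\widetilde{I}_{[g,h]}-2)$; whatever $m$ is, the residue of $\widetilde\kappa^m\widetilde\nu^{2d_g}$ is $(c_\kappa)^m(c_\nu)^{2d_g}$ with $c_\kappa = c^2$, so the class is $(c^m c_\nu^{d_g})^2 \in k(p)^{*2}$ unconditionally. Also worth knowing: the well-definedness under change of uniformizer, which you implicitly rely on, holds because replacing $\pi$ by $u\pi$ scales leading coefficients by powers of $\bar u$, and those cancel mod squares exactly because $\ord_p(\widetilde\kappa)$ is even. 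This proposition is stated in the paper by citation to CRS rather than with its own proof, so there is no in-text argument to compare against line by line; but your route via the explicit Hilbert symbol of Lemma~\ref{lem:choosehilbert}/Corollary~\ref{lem:CRShilb} and the formula in Equation~\eqref{eqn:tamesymbol} is the expected one and is consistent with how the same technology is deployed in the proof of Theorem~\ref{thm:main1}.
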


\noindent To be precise, the proof of \cite[Prop 4.1]{CRS} is stated in the case of hyperbolic knot complement in $S^3$, however its proof goes through identically in more generality since the only property they use of $\frak{C}$ is that it contains the character of an irreducible representation.
Note however that Proposition \ref{prop:idealirrepextend} gives no control over the explicit form of the functions in the Hilbert symbol in Equation \eqref{eqn:azhilbsymb} and so one really wants to know both the information from Lemma \ref{lem:CRShilb} and Proposition \ref{prop:idealirrepextend}.
To the author's understanding, this is why \cite[Prop 4.1]{CRS} is stated for hyperbolic knot complements.

At all points for which the tame symbol does not automatically vanish, we have the following.

\begin{lem}[\cite{CRS}, Lemma 2.10, Proposition 2.11(1)]\label{lem:CRSnonab}
There exists finitely many characters $\chi_\rho$ of reducible representations on $\frak{C}$.
Moreover, for every such character there exists a non-abelian reducible representation $\rho'$ such that $\chi_{\rho'}=\chi_\rho$.
\end{lem}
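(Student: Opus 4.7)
The plan is to treat the two assertions separately. For the finiteness statement, the starting observation is that a representation $\rho: \pi_1(M) \to \SL_2$ is reducible if and only if $\Tr(\rho([g,h])) = 2$ for every pair $g, h \in \pi_1(M)$, since two matrices in $\SL_2$ share a common eigenvector precisely when their commutator is unipotent. Consequently, the characters of reducible representations on $\frak{C}$ form the common zero locus of the family of trace functions $\{I_{[g,h]} - 2\}_{g,h \in \pi_1(M)}$. By hypothesis, $\frak{C}$ contains a character $\chi_{\rho_0}$ of an irreducible representation $\rho_0$, for which some commutator $\rho_0([g,h])$ has trace different from $2$. The function $I_{[g,h]} - 2 \in k[\frak{C}]$ is therefore a non-zero regular function on the geometrically irreducible curve $\frak{C}$, and its zero set is a proper Zariski-closed, hence finite, subset of $\frak{C}$ containing all the reducible characters.

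For the existence of a non-abelian reducible representative, fix a reducible character $\chi_\rho$ on $\frak{C}$ and, after conjugation, put $\rho$ into upper triangular form, so that $\rho(\gamma) = \begin{smm} \alpha(\gamma) & u(\gamma) \\ 0 & \alpha(\gamma)^{-1} \end{smm}$ for some abelian character $\alpha: \pi_1(M) \to \overline{k}^*$ and some map $u: \pi_1(M) \to \overline{k}$. If $u \not\equiv 0$ there is nothing to show, so assume $\rho$ is diagonal. By the finiteness just proved the irreducible locus is Zariski-dense in $\frak{C}$, so $\chi_\rho$ is a limit along $\frak{C}$ of characters of irreducible representations. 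Lifting along a local section of $\Upsilon: \frak{R}(M) \to \frak{X}(M)$ near $\chi_\rho$, as furnished by the curve $\frak{D}$ in the tautological representation construction, one obtains a one-parameter family $\rho_t$ of irreducible representations with $\chi_{\rho_t} \to \chi_\rho$. Putting each $\rho_t$ into upper triangular form and then conjugating by a well-chosen diagonal matrix $\mathrm{diag}(\lambda_t, \lambda_t^{-1})$, the off-diagonal entries can be normalized to remain bounded and non-zero in the limit, yielding the desired non-abelian reducible $\rho'$ with $\chi_{\rho'} = \chi_\rho$.

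The main obstacle is the rescaling step in the second part. Na\"ively putting $\rho_t$ into upper triangular form and passing to the limit may yield back the diagonal representation $\rho$ rather than a non-abelian one, so the scale $\lambda_t$ must be tuned to the rate at which $\rho_t$ degenerates. The cleanest way to organize this uses the identification from Section \ref{sec:backrep} of $T_\rho \frak{R}(M)_\bbC$ with $Z^1(\pi_1(M), \frak{sl}_2(\bbC)^\rho)$: a tangent vector at $\rho$ arising from the family $\rho_t$ gives, after projection onto the strictly upper triangular part of $\frak{sl}_2$, a cocycle which is non-zero modulo coboundaries (else $\rho_t$ would be conjugate to diagonal, contradicting irreducibility), and this cocycle supplies the off-diagonal entry of the sought non-abelian $\rho'$. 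Verifying that the resulting exponentiated representation is a genuine homomorphism and genuinely non-abelian is then a direct cocycle computation.
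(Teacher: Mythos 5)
The paper itself gives no proof for this lemma; it simply cites \cite{CRS} (Lemma 2.10 and Proposition 2.11(1)), so there is no ``paper's proof'' to compare against. Evaluated on its own terms, the first half of your argument is correct and standard: the Culler--Shalen criterion that $\rho$ is reducible iff $\Tr(\rho([g,h]))=2$ for all $g,h$ exhibits the reducible locus as contained in the zero set of the single regular function $I_{[g,h]}-2$ for any $g,h$ witnessing the irreducibility of $\rho_0$, and on a geometrically irreducible curve this zero set is finite.

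The second half has a genuine gap, concentrated precisely at the rescaling step you flag. The proposed ``cleanest way'' via the tangent cocycle does not work: a one-parameter family of irreducible representations degenerating to a diagonal $\rho$ can have a purely diagonal tangent vector, so the projection of the tangent cocycle onto $\frak{n}_+$ can vanish identically (hence be a coboundary) without any contradiction. For instance, with $\rho_t(g)$ constant diagonal and $\rho_t(h)=\begin{smm}1+t & t^2 \\ t^2 & \ast\end{smm}$, the family is irreducible for $t\ne0$ but $\dot\rho_0$ is diagonal. The implicit claim ``else $\rho_t$ would be conjugate to diagonal, contradicting irreducibility'' conflates the first-order deformation with the whole family; a coboundary tangent vector only says the deformation is trivial to first order modulo conjugation, not that the $\rho_t$ themselves are abelian. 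There is also a smaller issue with ``lifting along a local section of $\Upsilon$ furnished by $\frak{D}$'': the curve $\frak{D}$ is constructed at the level of function fields, and the map $\frak{D}\to\frak{C}$ need not behave well over the specific point $\chi_\rho$ without further work (e.g.\ passing to a normalization or a valuation ring). The rescaling can in fact be carried out directly: fix $g$ with $I_g(\chi_\rho)\ne\pm2$, conjugate so that $\rho_t(g)$ is diagonal, and conjugate by a diagonal matrix so that $\max_j|b_j(t)|=1$, where $b_j,c_j$ are the off-diagonal entries of $\rho_t$ on a fixed finite generating set. The quantities $b_jc_k+b_kc_j$ are invariant under diagonal conjugation and, being expressible through trace functions of products of generators, tend to $0$ as $\chi_t\to\chi_\rho$; since some $|b_j|=1$ along a subsequence, one deduces $c_j\to0$ first for that index and then for all others, giving a bounded, non-abelian, upper-triangular limit $\rho'$ with $\chi_{\rho'}=\chi_\rho$. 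That is the missing step, and it cannot be replaced by the first-order cohomological statement you proposed.
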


\noindent In particular, the problem of studying the existence of the set $S$ reduces to that of studying the tame symbol at characters of non-abelian reducible representations on $\frak{C}$.
The remainder of the paper will be spent analyzing such characters for hyperbolic once punctured torus bundles.


\section{Classification of non-abelian reducible representations of once punctured torus bundles}\label{sec:nonabclassification}

As remarked at the end of Section \ref{sec:azalg}, in order to understand the extension problem for the tautological Azumaya algebra we must understand the non-abelian reducible representations of $\pi_1(M)$.
Therefore in this section, we classify non-abelian reducible representations of the fundamental groups of once hyperbolic punctured torus bundles into $\SL_2(\bbC)$.
We will show that these representations bifurcate into three distinct types, depending on whether the image of the stable letter $t$ is plus or minus the identity matrix, is infinite order quasi-unipotent, or neither.
When $t$ has parabolic but not quasi-unipotent image, we will find an explicit form of these representations up to conjugation.
When $t$ does have quasi-unipotent image, we will give a qualitative description of these representations and give infinite families of examples.

Throughout the remainder of this section, we suppose that
\[
\rho : \pi_1(M) \to \SL_2(\bbC),
\]
is a non-abelian reducible representation.
Up to conjugation we can, and do, assume that $\rho$ has upper-triangular image. We will fix
\begin{align*}
\rho(x) &= \begin{pmatrix} \al & r \\ 0 & \al^{-1} \end{pmatrix}, \\
\rho(y) &= \begin{pmatrix} \beta & s \\ 0 & \beta^{-1} \end{pmatrix},\\
\rho(t) &= \begin{pmatrix} w & u \\ 0 & w^{-1} \end{pmatrix},
\end{align*}
for $\al, \beta, w \in \bbC^*$ and $r,s,u \in \bbC$.
Moreover, by using a further conjugation we will assume that either
$$\rho(t) = \begin{pmatrix} w & 1 \\ 0 & w^{-1} \end{pmatrix},$$
or $\rho(t)$ is plus or minus the identity.
To be precise, we can always conjugate $\rho(t)$ to this form unless $u=0$ and $w=\pm 1$, hence the above condition.
The following simple lemma will be useful in the classification of all such $\rho$.

\begin{lem}\label{lem:RootsOf1}
With $\rho$ as above, $\al$ and $\beta$ are roots of unity.
\end{lem}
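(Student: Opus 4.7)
The plan is to exploit the fact that the diagonal of an upper-triangular representation defines an ordinary character of $\pi_1(M)$. Let $\chi : \pi_1(M) \to \bbC^*$ denote the homomorphism sending $\gamma$ to the upper-left entry of $\rho(\gamma)$; this is a genuine homomorphism because upper-triangular matrices project homomorphically onto their diagonal. Since $\bbC^*$ is abelian, $\chi$ factors through the abelianization $H_1(M,\bbZ) \cong \bbZ \oplus F$ recalled in Equation \eqref{eqn:homology}. From the description in Section \ref{sec:1ptb}, the classes of $x$ and $y$ both lie in the finite torsion subgroup $F$ (of order $|\Tr(\Phi) - 2|$), so $\alpha = \chi(x)$ and $\beta = \chi(y)$ have finite order in $\bbC^*$, as desired.

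If one prefers a direct derivation that bypasses the homology calculation, apply $\chi$ to the defining relations $txt^{-1} = \phi(x)$ and $tyt^{-1} = \phi(y)$ from Equation \eqref{1ptbpresent}. Since $\chi$ is abelian, $\chi(txt^{-1}) = \chi(x) = \alpha$, while $\chi(\phi(x)) = \alpha^a \beta^c$ and $\chi(\phi(y)) = \alpha^b \beta^d$, with exponents read off from the action of $\Phi = \begin{pmatrix} a & b \\ c & d\end{pmatrix}$ on the abelianization of $F_2$. This produces the system
$$\alpha^{a-1}\beta^{c} = 1, \qquad \alpha^{b}\beta^{d-1} = 1,$$
whose matrix of exponents is $\Phi - I$ with determinant $2 - \Tr(\Phi)$. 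The hyperbolicity hypothesis $|\Tr(\Phi)|>2$ forces $\det(\Phi - I) \neq 0$, so these two independent multiplicative relations cut $\langle\alpha,\beta\rangle \subset \bbC^*$ down to a finite subgroup, confirming that $\alpha$ and $\beta$ are roots of unity.

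There is no serious obstacle here: the entire argument is a one-line abelianization observation once one notes that the hyperbolicity assumption is exactly what ensures $H_1(M,\bbZ)$ has rank one (equivalently, $\det(\Phi-I)\neq 0$), so that the classes of $x$ and $y$ are forced into the torsion part.
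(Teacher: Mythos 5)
Your first paragraph reproduces the paper's proof essentially verbatim: project to the diagonal to obtain a homomorphism $\pi_1(M)\to\bbC^*$, which factors through $H_1(M,\bbZ)$, and invoke that $x,y$ are torsion there. The second paragraph is just an in-line verification of that torsion claim (and of why hyperbolicity gives $\det(\Phi-I)\neq 0$), so it is the same argument spelled out rather than a genuinely different route.
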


\begin{pf}
The map
\[
\begin{pmatrix} a & b \\ 0 & a^{-1} \end{pmatrix} \mapsto a,
\]
is a homomorphism from the group of upper-triangular matrices to $\bbC^*$. Composition with $\rho$ defines a homomorphism $\pi_1(M) \to \bbC^*$. Since $\bbC^*$ is abelian and $x,y$ have finite order in the abelianization of $\pi_1(M)$, it follows that their images in $\bbC^*$ must be roots of unity. This proves the lemma.
\end{pf}


\subsection{Type A representations}\label{sec:typea}

We now classify a particular subclass of representations which we call \emph{Type A representations}.
These are representations where the image of the stable letter is parabolic but not quasi-unipotent.
More precisely, we will analyze the setting where
$$\rho(t) = \begin{pmatrix} w & 1 \\ 0 & w^{-1} \end{pmatrix},$$
for $w$ not a root of unity.
In fact, the analysis below only needs $w\neq\pm 1$.

\begin{lem}\label{lem:abeliansetup}
If $w\neq \pm1$ then $\rho(x)$ and $\rho(y)$ commute and one of the following holds:
\begin{enumerate}
\item If $r,s\neq 0$, then $\al=\pm 1$ if and only if $\beta=\pm 1$.
\item If $r=0$, then either $\al=\pm 1$ or $s=0$.
\item If $s=0$, then either $\beta=\pm 1$ or $r=0$.
\end{enumerate}
\end{lem}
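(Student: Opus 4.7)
The plan is to leverage the fact that the stable letter $t$ commutes with the longitude $l = [x,y]$ in $\pi_1(M)$, as recalled at the end of Section~\ref{sec:1ptb} (both generate $\pi_1(\partial M) \cong \bbZ^2$). Consequently $\rho(t)$ must commute with $\rho([x,y])$ in $\SL_2(\bbC)$, and this is the only topological input I will need; the rest is a direct matrix calculation in the upper triangular subgroup.

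First I would compute $\rho([x,y])$ explicitly. Since $\rho(x), \rho(y)$ are upper triangular of determinant one, so is $\rho([x,y])$, and its diagonal entries are automatically $1$ (the diagonal characters of $\rho(x)$ and $\rho(y)$ cancel in the commutator). Multiplying $\rho(x)\rho(y)\rho(x)^{-1}\rho(y)^{-1}$ out yields
$$\rho([x,y]) = \begin{pmatrix} 1 & m \\ 0 & 1 \end{pmatrix}, \qquad m = \al\beta\bigl[s(\al - \al^{-1}) - r(\beta - \beta^{-1})\bigr].$$
Next I would compare $\rho(t)\rho([x,y])$ with $\rho([x,y])\rho(t)$; equating the $(1,2)$-entries gives $m(w - w^{-1}) = 0$. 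Since $w \neq \pm 1$ we have $w \neq w^{-1}$, forcing $m = 0$, and since $\al,\beta \in \bbC^*$ this is equivalent to the identity
$$s(\al - \al^{-1}) = r(\beta - \beta^{-1}). \qquad (\ast)$$

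This single identity does double duty: it is simultaneously the condition that $\rho(x)$ and $\rho(y)$ commute (as one checks by comparing the $(1,2)$-entries of $\rho(x)\rho(y)$ and $\rho(y)\rho(x)$) and the tool to read off the three enumerated statements. For (1), when $r, s \neq 0$, the hypothesis $\al = \pm 1$ annihilates the left side of $(\ast)$ and forces $\beta - \beta^{-1} = 0$, i.e.\ $\beta = \pm 1$, and vice versa. For (2), setting $r = 0$ in $(\ast)$ gives $s(\al - \al^{-1}) = 0$, whence $s = 0$ or $\al = \pm 1$. Case (3) is symmetric.

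I do not anticipate any genuine obstacle here: once the commutation of $t$ and $l$ is invoked, the entire argument collapses to the one linear relation $(\ast)$, and the case analysis is immediate. The only mild point requiring care is verifying that $\rho(t)$ being a regular element of $\SL_2(\bbC)$ (via $w \neq \pm 1$) is what lets us extract $m = 0$ from commutation with a unipotent element — but this is clear from the eigenvalue computation above.
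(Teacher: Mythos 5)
Your proof is correct and follows essentially the same route as the paper: both exploit the commutation of $t$ with the longitude $l=[x,y]$ to force the upper-right entry of $\rho(l)$ to vanish, then read off the three cases from the resulting identity $s(\al-\al^{-1})=r(\beta-\beta^{-1})$. The only cosmetic difference is that the paper computes the full commutator $[\rho(t),\rho(l)]$ to obtain the factor $w^2-1$, whereas you compare $(1,2)$-entries of $\rho(t)\rho(l)$ and $\rho(l)\rho(t)$ directly to get the factor $w-w^{-1}$; both are nonzero precisely when $w\neq\pm1$.
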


\begin{pf}
Write $l=[x,y]$ and notice that $l$ is the longitude, which commutes with $t$.
Then
\begin{align}
\rho(l) &= \begin{pmatrix} 1 & \al \beta( (\al - \al^{-1})s + (\beta^{-1} - \beta)r) \\ 0 & 1 \end{pmatrix} \label{eqn:upperright},\\
&= \begin{pmatrix} 1 & v \\ 0 & 1 \end{pmatrix},\nonumber
\end{align}
so we have
\begin{equation}\label{eq:MeridianLongitudeCommutator}
[\rho(t), \rho(l)] = \begin{pmatrix} 1 & (w^2 - 1)v \\ 0 & 1 \end{pmatrix},
\end{equation}
which must be the identity matrix.
Since $w^2 - 1 \neq 0$, we must have $v = 0$ and in particular $\rho(x)$ and $\rho(y)$ commute.
Therefore Equation \eqref{eqn:upperright} yields that
$$\al \beta( (\al - \al^{-1})s + (\beta^{-1} - \beta)r) =0,$$
from which the conclusions of the lemma are immediate.
\end{pf}

\begin{prop}\label{prop:norootrep}
If $w\neq \pm1$ and either $\al\neq\pm 1$ or $\beta\neq \pm 1$, then $\rho(\pi_1(M))$ is abelian.
\end{prop}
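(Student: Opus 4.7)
The plan is to leverage Lemma \ref{lem:abeliansetup}, which already gives that $\rho(x)$ and $\rho(y)$ commute, and combine it with the monodromy relations in $\pi_1(M)$ to force $\rho(t)$ into a common abelian subgroup with $\rho(x),\rho(y)$. Since the hypothesis and the conclusion are symmetric in $x$ and $y$, I would assume without loss of generality that $\alpha\neq\pm 1$. Because $\langle\rho(x),\rho(y)\rangle$ is abelian, the restriction $\rho|_{F_2}$ factors through $F_2^{\mathrm{ab}}=\bbZ^2$, so from the relations $txt^{-1}=\phi(x)$, $tyt^{-1}=\phi(y)$ one obtains the identity
\[
\rho(t)\rho(x)\rho(t)^{-1}=\rho(x)^a\rho(y)^c,\qquad \rho(t)\rho(y)\rho(t)^{-1}=\rho(x)^b\rho(y)^d,
\]
where $\Phi=\begin{smm} a&b\\ c&d\end{smallmatrix}\right)$ is the monodromy.

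The main step is to change basis by an upper-triangular matrix $P$ which diagonalises $\rho(x)$; such a $P$ exists because $\rho(x)$ is upper triangular with distinct eigenvalues $\alpha,\alpha^{-1}$, the $\alpha$-eigenvector being the standard $(1,0)^T$ and the $\alpha^{-1}$-eigenvector having the form $(p,q)^T$ with $q\neq 0$. Since $\alpha\neq\pm 1$, any matrix commuting with $\rho(x)$ must be diagonal in this new basis, so $\rho(y)$ is diagonal there as well, and because $P$ is upper triangular $\rho(t)$ remains upper triangular. In the new basis the monodromy relation $\rho(t)\rho(x)\rho(t)^{-1}=\rho(x)^a\rho(y)^c$ has diagonal right-hand side. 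A direct one-line calculation with an upper-triangular $\rho(t)=\begin{smm}c&a\\ 0&c^{-1}\end{smallmatrix}\right)$ in this basis gives
\[
\rho(t)\rho(x)\rho(t)^{-1}=\begin{smm}\alpha & ac(\alpha^{-1}-\alpha)\\ 0 & \alpha^{-1}\end{smallmatrix}\right),
\]
and since $\alpha-\alpha^{-1}\neq 0$ and $c\neq 0$ the off-diagonal entry vanishing forces $a=0$. Thus $\rho(t)$ is diagonal in the new basis, hence commutes with the diagonal matrices $\rho(x),\rho(y)$, and therefore $\rho(\pi_1(M))$ is abelian.

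The step I expect to require the most care is the verification that the diagonalising change of basis can be chosen upper triangular, so that the upper-triangular shape of $\rho(t)$ is preserved; without this, $\rho(t)$ could in principle swap the two eigenspaces of $\rho(x)$ (an ``anti-diagonal'' possibility that is otherwise compatible with the trace constraints). Once this reduction is in place, the argument collapses to the elementary linear-algebra fact that a conjugate of a diagonal matrix with distinct eigenvalues by an upper-triangular matrix is diagonal only when the conjugator is diagonal, and the symmetry between $x$ and $y$ handles the case $\beta\neq\pm 1$ in exactly the same way.
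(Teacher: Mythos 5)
Your proof is correct, and it reaches the same conclusion as the paper's via essentially the same mechanism (diagonalize the fiber subgroup, then force the off-diagonal entry of $\rho(t)$ to vanish), but it streamlines the argument in a nontrivial way. The paper splits into the cases $\beta\neq\pm 1$ and $\beta=\pm 1$: in the first it invokes Lemma~\ref{lem:RootsOf1} plus the power computation in Equation~\eqref{eqn:calculation} to show $\rho(\langle x,y\rangle)$ is a \emph{finite} abelian group and hence simultaneously diagonalizable, and in the second it uses parts (1)--(2) of Lemma~\ref{lem:abeliansetup} to first show $\rho(y)=\pm\mathrm{Id}$. Your observation that an upper-triangular matrix commuting with a diagonal $\rho(x)$ having distinct eigenvalues must itself be diagonal handles both cases uniformly and avoids the roots-of-unity and finite-group detour entirely. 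You are also right to flag, and verify, that the diagonalizer can be chosen upper triangular --- the paper uses this fact implicitly when it writes $\rho(t)$ as upper triangular after the conjugation, so your version makes that point explicit. Two small stylistic notes: the letters $a,c$ are doing double duty in your writeup (entries of $\Phi$ and entries of $\rho(t)$), which is harmless here but worth renaming; and the reduction to the monodromy relation $\rho(t)\rho(x)\rho(t)^{-1}=\rho(x)^a\rho(y)^c$ is a little more than you need --- it suffices, as in the paper, to observe that $\rho(txt^{-1})=\rho(\phi(x))$ lies in the now-diagonal group $\rho(\langle x,y\rangle)$.
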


\begin{pf}
Without loss of generality we will assume that $\al\neq\pm 1$.
Recall that by Lemma \ref{lem:abeliansetup}, $\rho(x)$ and $\rho(y)$ commute.

First assume that $\beta\neq \pm1$, then we first claim that $\rho(\langle x,y\rangle)$ is a finite abelian group.
Indeed, if $\eta\neq\pm 1$, then a simple induction argument shows that
\begin{equation}\label{eqn:calculation}
\begin{pmatrix} \eta & z \\ 0 & \eta^{-1} \end{pmatrix}^m = \begin{pmatrix} \eta^m & \displaystyle{z \eta^{-(m-1)} \sum_{i = 0}^{m-1} \eta^{2 i}} \\ 0 & \eta^{-m} \end{pmatrix},
\end{equation}
for any natural number $m$ and any $z\in\bbC$. If $\eta$ is an $m^{th}$ root of unity, then $\eta^2$ is also an $m^{th}$ root of unity, and the above expression then reduces to the identity matrix. It follows from Lemma \ref{lem:RootsOf1} that there exists $m,n\in\bbN$ such that $\rho(x)$ (resp.\ $\rho(y)$) has order dividing $m$ (resp.\ $n$), and hence $\rho(\langle x,y\rangle)$ is a finite abelian group as claimed.

It then follows that $\rho(\langle x,y \rangle)$ is simultaneously diagonalizable, and hence we can replace $\rho$ with a conjugate and assume that
\begin{align*}
\rho(x) &= \begin{pmatrix} \al & 0 \\ 0 & \al^{-1} \end{pmatrix}, \\
\rho(y) &= \begin{pmatrix} \beta & 0 \\ 0 & \beta^{-1} \end{pmatrix}, \\
\rho(t) &= \begin{pmatrix} w & u \\ 0 & w^{-1} \end{pmatrix},
\end{align*}
for some $u \in \bbC$ and the same $w$ as above. However, then
\[
\rho(t x t^{-1}) = \begin{pmatrix} \al & (\al^{-1} - \al) u w \\ 0 & \al^{-1} \end{pmatrix} \in \rho(\langle x, y \rangle).
\]
Since $(\al^{-1} - \al)$ and $w$ are nonzero, we must have $u = 0$, i.e., $\rho(t)$ is also diagonal.
This completes the proof when $\al, \beta\neq \pm 1$.

Now assume that $\beta=\pm 1$, then Lemma \ref{lem:abeliansetup}(1) shows that one of $r$ or $s$ is zero.
Moreover, Lemma \ref{lem:abeliansetup}(2) shows therefore that $s=0$ in either case and hence $\rho(y)$ is the identity or its negative.
Moreover the calculation in Equation \eqref{eqn:calculation} shows that $\rho(x)$ again has finite order and hence we may further conjugate $\rho$ so that $\rho(x)$ is diagonal.
As $\rho(y)=\pm \mathrm{Id}$ this conjugation is such that $\rho(\langle x,y\rangle)$ is diagonal and therefore we have that $\rho(t)$ must be diagonal as well, identically as above.
This completes the proof.
\end{pf}


We are now in a position to classify non-abelian reducible representations for which $w\neq\pm 1$.

\begin{thm}\label{thm:typearep}
Suppose that $\rho: \pi_1(M) \to \SL_2(\bbC)$ is a non-abelian reducible representation with
\[
\rho(t) = \begin{pmatrix} w & 1 \\ 0 & w^{-1} \end{pmatrix},
\]
with $w \neq \pm 1$.
Then up to conjugation $\rho$ is of the form
$$\rho(x) = (-1)^{\ep_1} \begin{pmatrix} 1 & r \\ 0 & 1 \end{pmatrix},\quad\rho(y) = (-1)^{\ep_2} \begin{pmatrix} 1 & s \\ 0 & 1 \end{pmatrix},\quad\rho(t) = \begin{pmatrix} w & 0 \\ 0 & w^{-1} \end{pmatrix},$$
and $\ep_1, \ep_2 \in \{0, 1\}$.
Moreover $w^2$ is an eigenvalue of $ \Phi$, with notation as in Equation \eqref{eqn:monmatrix}, and $(r,s)$ is a left eigenvector of $ \Phi$ with eigenvalue $w^2$.
Conversely, if $w^2$ is an eigenvalue of $ \Phi$ then the above defines a non-abelian reducible representation of $\pi_1(M)$.
\end{thm}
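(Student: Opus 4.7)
The plan is to combine Lemma~\ref{lem:RootsOf1} and Proposition~\ref{prop:norootrep} to pin down the form of $\rho(x)$ and $\rho(y)$, then perform one further conjugation that diagonalizes $\rho(t)$ while preserving the shape of $\rho(x)$ and $\rho(y)$, and finally extract the eigenvector condition from the bundle relations $txt^{-1}=\phi(x)$ and $tyt^{-1}=\phi(y)$.

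First I would apply Proposition~\ref{prop:norootrep}: since $w\ne\pm 1$ and $\rho$ is non-abelian, the proposition forces $\al,\beta\in\{\pm 1\}$, so that
\[
\rho(x) = (-1)^{\ep_1}\begin{smm}1 & r \\ 0 & 1\end{smm}, \qquad \rho(y) = (-1)^{\ep_2}\begin{smm}1 & s \\ 0 & 1\end{smm},
\]
for some $\ep_i\in\{0,1\}$ and $r,s\in\bbC$. Since $w\ne\pm 1$, the eigenvalues of $\rho(t)$ are distinct and $e_1$ is already an eigenvector for $w$, so there is an upper-triangular $P$ with $P^{-1}\rho(t)P=\operatorname{diag}(w,w^{-1})$. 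A short direct calculation shows that conjugating an upper-triangular $\pm$-unipotent matrix by an upper-triangular matrix produces a matrix of the same form, with only the off-diagonal parameter rescaled. Thus after this conjugation all three images have the shape given in the statement, with possibly new values of $r$ and $s$.

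With $\rho(t)=\operatorname{diag}(w,w^{-1})$ one sees immediately that $\rho(t)\rho(x)\rho(t)^{-1}$ has upper-right entry $w^2 r$ with sign $(-1)^{\ep_1}$, and similarly for $y$. Since $\rho(x)$ and $\rho(y)$ now commute, the map $F_2=\langle x,y\rangle\to\bbC$ sending $\gamma$ to the upper-right entry of $\rho(\gamma)$ factors through the abelianization; combined with the convention of Equation~\eqref{eqn:monmatrix} (first column of $\Phi$ records the abelianization of $\phi(x)$), this shows that $\rho(\phi(x))$ has upper-right entry $ar+cs$ and $\rho(\phi(y))$ has upper-right entry $br+ds$. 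Comparing with $w^2r$ and $w^2s$ via the bundle relations yields
\[
(r,s)\,\Phi = w^2(r,s),
\]
which is precisely the assertion that $w^2$ is an eigenvalue of $\Phi$ with left eigenvector $(r,s)$. Non-abelianness of $\rho$ forces $(r,s)\ne(0,0)$, as otherwise the entire image would be diagonal.

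For the converse, given a left eigenvector $(r,s)\ne 0$ of $\Phi$ with eigenvalue $w^2$, one verifies directly that the stated assignments respect the defining relations of $\pi_1(M)$: the upper-right entries on both sides of the bundle relations match by the eigenvector identity, while the diagonal entries match provided $\ep_1,\ep_2$ are chosen compatibly with the induced sign character $\pi_1(M)^{\mathrm{ab}}\to\{\pm 1\}$; the trivial choice $\ep_1=\ep_2=0$ always suffices and gives a non-abelian representation because $(r,s)\ne 0$. The only mild obstacle is the parity bookkeeping for the signs, a mod-$2$ constraint imposed by the entries of $\Phi$; the essential content of the theorem is the eigenvalue equation, which is immediate from the calculation above.
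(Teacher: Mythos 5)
Your proof is correct and follows essentially the same route as the paper's: apply Proposition~\ref{prop:norootrep} to force $\al,\beta\in\{\pm 1\}$, compare the off-diagonal parameters on both sides of the bundle relations to extract the left-eigenvector equation $(r,s)\Phi=w^2(r,s)$, and use $w\neq\pm 1$ to further conjugate $\rho(t)$ to a diagonal matrix. The only cosmetic difference is that you diagonalize $\rho(t)$ before computing the eigenvector relation while the paper does it after, and you more explicitly handle the sign bookkeeping $\ep_1,\ep_2$ in the converse (which the paper leaves implicit); neither difference is substantive.
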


\begin{rem}
Before beginning the proof we mention that the fact that the top right entry of $\rho(t)$ changed from $1$ to $0$ is not a typo.
At the end of the proof of Theorem \ref{thm:typearep}, we will show that further conjugation can ensure that this holds.
\end{rem}

\begin{pf}
By Proposition \ref{prop:norootrep} it is clear that we are reduced to the case where $\al=\pm 1$ and $\beta=\pm 1$.
Moreover if $r=s=0$ then $\rho(x)$ and $\rho(y)$ would be the identity or its negative and hence $\rho(\langle x,y\rangle)$ is contained in the center of $\SL_2(\bbC)$.
Thus $\rho$ would be abelian, so we may assume that at least one of $r,s$ is non-zero.

By Lemma \ref{lem:abeliansetup}, $\rho(\langle x, y \rangle)$ is abelian and we compute that
\begin{align*}
\rho(t) \rho(x) \rho(t)^{-1} &= (-1)^{\ep_1} \begin{pmatrix} 1 & w^2 r \\ 0 & 1 \end{pmatrix}, \\
\rho(\phi(x)) &= (-1)^{\ep_3} \begin{pmatrix} 1 & a r + c s \\ 0 & 1 \end{pmatrix}, \\
\rho(t) \rho(y) \rho(t)^{-1} &= (-1)^{\ep_2} \begin{pmatrix} 1 & w^2 s \\ 0 & 1 \end{pmatrix}, \\
\rho(\phi(y)) &= (-1)^{\ep_4} \begin{pmatrix} 1 & b r + d s \\ 0 & 1 \end{pmatrix},
\end{align*}
for some $\epsilon_3,\epsilon_4\in\{0,1\}$.
Note that for this to give a genuine representation into $\SL_2(\bbC)$ we must have that $\epsilon_1=\epsilon_3$, $\epsilon_2=\epsilon_4$ and in particular this implies that
\begin{align*}
(a - w^2) r + c s &= 0, \\
b r + (d - w^2) s &= 0.
\end{align*}
This means precisely that the vector $(r, s)^T$ is in the cokernel of the matrix
\[
\Phi-w^2\Id=\begin{pmatrix} a - w^2 & b \\ c & d - w^2 \end{pmatrix}.
\]
Since at least one of the entires in $(r,s)$ is non-zero, this matrix is necessarily singular.
Equivalently, we see that $w^2$ is an eigenvalue of $\Phi$.

We have now shown that, up to conjugation, $\rho$ is of the form
$$\rho(x) = (-1)^{\ep_1} \begin{pmatrix} 1 & r \\ 0 & 1 \end{pmatrix},\quad\rho(y) = (-1)^{\ep_2} \begin{pmatrix} 1 & s \\ 0 & 1 \end{pmatrix},\quad\rho(t) = \begin{pmatrix} w & 1 \\ 0 & w^{-1} \end{pmatrix}.$$
Since $|\Tr(\Phi)|>2$, it follows that $w^2$ is not a root of unity.
In particular, $w\neq \pm1$, and hence we can further conjugate this representation by the matrix
$$\begin{pmatrix}
1&(w-w^{-1})^{-1}\\
0&1
\end{pmatrix},$$
to obtain the representation 
$$\rho(x) = (-1)^{\ep_1} \begin{pmatrix} 1 & r \\ 0 & 1 \end{pmatrix},\quad\rho(y) = (-1)^{\ep_2} \begin{pmatrix} 1 & s \\ 0 & 1 \end{pmatrix},\quad\rho(t) = \begin{pmatrix} w & 0 \\ 0 & w^{-1} \end{pmatrix},$$
which is of the form required in the statement of the theorem.

For the backward direction, if $w^2$ is an eigenvalue of $\Phi$, the above suffices to show that $\rho$ is indeed a homomorphism.
\end{pf}


\subsection{Type B representations}\label{sec:typeb}
We now analyze non-abelian representations where $\rho(t)$ is infinite order and has root of unity eigenvalues.
In particular, $\rho(t)$ is quasi-unipotent.
As we saw in Section \ref{sec:typea}, this ensures that $w=\pm 1$ as otherwise $w$ would satisfy the characteristic polynomial for $\Phi$.
In particular, these are representations of the form
\begin{align*}
\rho(x) &= \begin{pmatrix} \al & r \\ 0 & \al^{-1} \end{pmatrix}, \\
\rho(y) &= \begin{pmatrix} \beta & s \\ 0 & \beta^{-1} \end{pmatrix}, \\
\rho(t) &= \begin{pmatrix} \pm 1 & 1 \\ 0 & \pm 1 \end{pmatrix},
\end{align*}
with $\al, \beta$ roots of unity and $r, s \in \bbC$.
We can always replace $\rho(t)$ with $-\rho(t)$ and still have a representation of $\pi_1(M)$ for which $\rho(x)$, $\rho(y)$ have the same form. Therefore, until the statement of the theorem, we instead assume throughout that
\[
\rho(t) = \begin{pmatrix} 1 & \de \\ 0 & 1 \end{pmatrix},
\]
for $\de \in \{\pm 1\}$. Moreover, we can conjugate by a diagonal matrix with eigenvalues $\pm i$ to ensure that $\de = 1$.
Therefore for the time being we work with representations for which
$$\rho(t) = \begin{pmatrix}  1 & 1 \\ 0 &  1 \end{pmatrix}.$$
In this case it is automatic that $\rho(l)$ commutes with $\rho(t)$, where $l$ is again the longitude $[x,y]$, hence we cannot assume that $\rho(l)$ is trivial.
Furthermore, notice that $\rho(\pi_1(M))$ is abelian if $\al, \beta \in \{\pm 1\}$.
Therefore, we assume without loss of generality that $\al$ is a primitive $m^{th}$ root of unity for $m > 2$ and that $\beta$ is a (possibly trivial) primitive root of unity.
As usual, the arguments in the reverse case will be exactly the same.

Conjugating $\rho$ further, as in the end of the proof of Theorem \ref{thm:typearep}, we can moreover assume that
\begin{align*}
\rho(x) &= \begin{pmatrix} \al & 0 \\ 0 & \al^{-1} \end{pmatrix}, \\
\rho(y) &= \begin{pmatrix} \beta & s \\ 0 & \beta^{-1} \end{pmatrix}, \\
\rho(t) &= \begin{pmatrix} 1 & 1 \\ 0 & 1 \end{pmatrix}.
\end{align*}
Now a straightforward computation shows that
\begin{align*}
\rho(t x t^{-1}) &= \begin{pmatrix} \al & \al^{-1} - \al \\ 0 & \al^{-1} \end{pmatrix}, \\
\rho(t y t^{-1}) &= \begin{pmatrix} \beta & s + \beta^{-1} - \beta \\ 0 & \beta^{-1} \end{pmatrix}.
\end{align*}
Moreover, we can write
\begin{align*}\rho(\phi(x)) &= \begin{pmatrix} \alpha^{a}\beta^{c} & s\cdot f(\alpha,\beta) \\ 0 & \alpha^{-a}\beta^{-c} \end{pmatrix},\\
\rho(\phi(y)) &= \begin{pmatrix} \alpha^{b}\beta^{d} & s\cdot g(\alpha,\beta) \\ 0 & \alpha^{-b}\beta^{-d} \end{pmatrix},\end{align*}
for some rational functions $f(\alpha,\beta)$, $g(\alpha,\beta)$ in $\alpha$, $\beta$ which depend only on $\phi$. 
We therefore have the following relations
\begin{align}
\rho(\phi(x)) &= \begin{pmatrix} \alpha^{a}\beta^{c} & s\cdot f(\alpha,\beta) \\ 0 & \alpha^{-a}\beta^{-c} \end{pmatrix} =\begin{pmatrix} \al & \al^{-1} - \al \\ 0 & \al^{-1} \end{pmatrix}\label{eqn:Commutator},\\
\rho(\phi(y)) &= \begin{pmatrix} \alpha^{b}\beta^{d} & s\cdot g(\alpha,\beta) \\ 0 & \alpha^{-b}\beta^{-d} \end{pmatrix} = \begin{pmatrix} \beta & s + \beta^{-1} - \beta \\ 0 & \beta^{-1} \end{pmatrix}\label{eqn:Commutator2}.
\end{align}
Combining the results of this section and bearing in mind that we may have replaced $\rho(t)$ by $-\rho(t)$, we have the following theorems which are stated using the notation of Equations \eqref{eqn:Commutator} and \eqref{eqn:Commutator2}.

\begin{thm}\label{thm:wtfrepspm1}
Suppose there exists a non-abelian reducible representation $\rho:\pi_1(M)\to\SL_2(\bbC)$ in which $\rho(y)$ and $\rho(t)$ have eigenvalues contained in $\{-1,1\}$.
Then up to conjugation $\rho$ is of the form
$$\rho(x)= \begin{pmatrix} \al & 0 \\ 0 & \al^{-1} \end{pmatrix},\quad \rho(y) = \begin{pmatrix} \pm 1 & s \\ 0 & \pm 1 \end{pmatrix}, \quad \rho(t) = \begin{pmatrix}  \pm 1 & 1 \\ 0 &  \pm 1 \end{pmatrix},$$
where $\alpha$ is a primitive $m^{th}$ root of unity for $m>2$ dividing $|a+d-2|$, $s\neq 0$, and the equations
\begin{equation}\label{eqn:wtfpm1}
\alpha^{a-1}(\pm 1)^c=1,\quad \alpha^{b}(\pm 1)^{c-1}=1,\quad g(\alpha,\pm 1)=1,
\end{equation}
all hold where $g(\alpha,\pm 1)$ is a unique rational function in $\alpha$ associated to $\phi$.
Similarly, if $\rho(x)$ and $\rho(t)$ have eigenvalues contained in $\{-1,1\}$ then the identical statement holds with the places of $\rho(x)$ and $\rho(y)$ interchanged.
\end{thm}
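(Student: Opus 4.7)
The plan is to exploit the normal form already established in the paragraphs preceding the theorem. That setup has reduced by conjugation to
\[
\rho(x) = \begin{pmatrix} \alpha & 0 \\ 0 & \alpha^{-1} \end{pmatrix}, \quad \rho(y) = \begin{pmatrix} \beta & s \\ 0 & \beta^{-1} \end{pmatrix}, \quad \rho(t) = \begin{pmatrix} 1 & 1 \\ 0 & 1 \end{pmatrix},
\]
with $\alpha$ a primitive $m^{th}$ root of unity for $m > 2$ and $\beta$ some (possibly trivial) root of unity. The hypothesis that $\rho(y)$ and $\rho(t)$ have eigenvalues in $\{\pm 1\}$ is exactly the assumption $\beta \in \{\pm 1\}$, while the $\pm$ signs appearing on $\rho(t)$ and $\rho(y)$ in the theorem statement record the preliminary normalization of allowing the replacement $\rho(t) \mapsto -\rho(t)$ noted just before the setup, together with the fact that $\rho(y)$ could have been conjugated into a diagonal role instead.

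Next I would substitute $\beta = \pm 1$ directly into the matrix identities \eqref{eqn:Commutator} and \eqref{eqn:Commutator2}. Comparing diagonal entries immediately yields relations of the form $\alpha^{a-1}(\pm 1)^c = 1$ and $\alpha^b(\pm 1)^{d-1} = 1$, which are the first two relations of \eqref{eqn:wtfpm1}. Comparing the off-diagonal entry of \eqref{eqn:Commutator2} gives
\[
s \cdot g(\alpha, \pm 1) = s + (\pm 1)^{-1} - (\pm 1) = s,
\]
so provided $s \neq 0$ this produces the third relation $g(\alpha, \pm 1) = 1$. The condition $s \neq 0$ is itself forced: if $s$ vanished, then the off-diagonal entry comparison in \eqref{eqn:Commutator} would read $0 = \alpha^{-1} - \alpha$, contradicting the fact that $\alpha$ has order $m > 2$.

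For the divisibility $m \mid |a+d-2|$, I would observe that the map $\chi : g \mapsto (\text{top-left entry of }\rho(g))$ is a group homomorphism $\pi_1(M) \to \bbC^*$, hence factors through the abelianization. By Equation \eqref{eqn:homology}, $H_1(M,\bbZ) \cong \bbZ \oplus F$ with $|F| = |\Tr(\Phi) - 2| = |a+d-2|$, the $\bbZ$ factor generated by (the class of) $t$ and $F$ generated by the classes of $x$ and $y$. The restriction $\chi|_F$ is then a character of the finite abelian group $F$, and since $\chi(x) = \alpha$ has order $m$, this forces $m$ to divide $|\chi(F)|$, hence $m \mid |F| = |a+d-2|$.

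Finally, the symmetric statement in which $\rho(x)$ and $\rho(t)$ have eigenvalues in $\{\pm 1\}$ follows by repeating the entire argument of Section \ref{sec:typeb} with the roles of $x$ and $y$ interchanged; the asymmetric assumption that $\alpha$ has order $> 2$ in the preliminary setup was made only for concreteness. I do not anticipate any substantial new obstacle beyond careful bookkeeping of the signs that arise from the eigenvalues of $\rho(t)$ and $\rho(y)$, and the verification that the top-right entries of $\rho(\phi(x))$ and $\rho(\phi(y))$ really do have the form $s \cdot f(\alpha, \beta)$ and $s \cdot g(\alpha, \beta)$ for rational functions depending only on $\phi$, which is already implicit in the construction preceding the theorem.
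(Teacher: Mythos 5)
Your proposal is correct and takes essentially the same route as the paper: the theorem is stated in the paper without a separate proof environment, as a summary of the computations in the surrounding paragraphs of Section~\ref{sec:typeb}, and your proposal is precisely the fleshing-out of those computations together with the divisibility argument from Lemma~\ref{lem:RootsOf1}. The substitution $\beta=\pm1$ into \eqref{eqn:Commutator}--\eqref{eqn:Commutator2}, the observation that $\beta^{-1}-\beta=0$ kills the extra term so that $sg(\alpha,\pm1)=s$, the contradiction $\alpha^{-1}-\alpha=0$ forcing $s\neq 0$, and the homomorphism-to-$\bbC^*$ argument for $m\mid|a+d-2|$ are all the intended steps.

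One point worth flagging explicitly: your diagonal-entry comparison of \eqref{eqn:Commutator2} gives $\alpha^b\beta^{d-1}=1$, hence $\alpha^b(\pm1)^{d-1}=1$, whereas the theorem as printed has $\alpha^b(\pm1)^{c-1}=1$. Your version is the correct one. It agrees with the paper's own description of $F$ after Equation~\eqref{eqn:homology}, where the relator coming from $\phi(y)$ is $Y^{(-1)^\epsilon-d}=X^b$, and it is confirmed by Example~\ref{ex:tunnelone}: there $a=j+1$, $b=c=j$... actually $b=1$, $c=j$, $d=1$, with $\alpha=1$, $\beta=\eta_j$; the relation $\alpha^b\beta^{d-1}=\eta_j^{0}=1$ holds, while $\alpha^b\beta^{c-1}=\eta_j^{j-1}\neq 1$ for $j>1$. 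So the theorem statement in the paper has a typographical slip ($c-1$ for $d-1$), and you should not try to reconcile your derivation with it. Apart from noting that discrepancy, your proof is complete modulo the sign bookkeeping you already acknowledge, which is indeed routine.
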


\begin{thm}\label{thm:wtfreps}
Suppose there exists a non-abelian reducible representation $\rho:\pi_1(M)\to\SL_2(\bbC)$ in which $\rho(t)$ has eigenvalues contained in $\{-1,1\}$ and neither $\rho(x)$ nor $\rho(y)$ do.
Then up to conjugation $\rho$ is of the form
$$\rho(x)= \begin{pmatrix} \al & 0 \\ 0 & \al^{-1} \end{pmatrix},\quad \rho(y) = \begin{pmatrix} \beta & s \\ 0 & \beta^{-1} \end{pmatrix}, \quad \rho(t) = \begin{pmatrix}  \pm 1 & 1 \\ 0 & \pm  1 \end{pmatrix},$$
where $\alpha,\beta\neq \pm 1$ are $m^{th}$ roots of unity for $m$ dividing $|a+d-2|$, $s\neq 0$, and the equations
\begin{equation}\label{eqn:wtf}
\alpha^{a-1}(\beta)^c=1,\quad \alpha^{b}(\beta)^{c-1}=1,\quad s=\frac{\al^{-1}-\al}{f(\alpha,\beta)}=\frac{\beta^{-1}-\beta}{g(\alpha,\beta)-1},
\end{equation}
all hold where $f(\alpha,\beta)$, $g(\alpha,\beta)$ are unique rational functions in $\alpha$, $\beta$ associated to $\phi$.
\end{thm}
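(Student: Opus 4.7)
The normalization performed just above the theorem statement already puts $\rho$ into the form
\[
\rho(x) = \begin{pmatrix} \al & 0 \\ 0 & \al^{-1} \end{pmatrix},\quad \rho(y) = \begin{pmatrix} \beta & s \\ 0 & \beta^{-1} \end{pmatrix},\quad \rho(t) = \begin{pmatrix} 1 & 1 \\ 0 & 1 \end{pmatrix},
\]
with $\al,\beta$ roots of unity by Lemma \ref{lem:RootsOf1} and, by hypothesis, $\al,\beta \notin \{\pm 1\}$. The $\pm 1$ appearing on the diagonal of $\rho(t)$ in the conclusion records the earlier freedom to replace $\rho(t)$ by $-\rho(t)$, which preserves the displayed forms of $\rho(x)$ and $\rho(y)$, so it is enough to carry out the argument for this normalized $\rho(t)$ and re-instate the sign at the end.

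From this normal form I would simply compare matrix entries in the defining relations $t x t^{-1} = \phi(x)$ and $t y t^{-1} = \phi(y)$, using the expansions already recorded in \eqref{eqn:Commutator} and \eqref{eqn:Commutator2}. Matching diagonal entries yields the two multiplicative constraints $\al^{a-1}\beta^c=1$ and $\al^{b}\beta^{d-1}=1$; matching off-diagonal entries yields $s\cdot f(\al,\beta)=\al^{-1}-\al$ and $s\cdot(g(\al,\beta)-1)=\beta^{-1}-\beta$, which rearrange into the two expressions for $s$ displayed in \eqref{eqn:wtf}. Nonvanishing of $s$ is automatic: if $s=0$ the first off-diagonal equation would force $\al^{-1}-\al=0$, i.e.\ $\al=\pm 1$, contradicting the hypothesis. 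The rational functions $f,g$ depend only on $\phi$ because $\phi(x)$ and $\phi(y)$ are fixed words in $x,y$, and expanding the corresponding product of normalized upper-triangular matrices factors the parameter $s$ out of both upper-right entries uniquely.

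For the order bound $m \mid |a+d-2|$, the two diagonal constraints say exactly that $(\al,\beta)$ lies in the kernel of the multiplicative endomorphism of $(\bbC^*)^2$ induced by the integer matrix $\Phi - I$. Restricted to roots of unity, a Smith normal form argument identifies this kernel with a finite abelian group of order
\[
|\det(\Phi - I)| \;=\; |(a-1)(d-1)-bc| \;=\; |a+d-2|,
\]
using $\det\Phi=1$; this is consistent with the description of $H_1(M,\bbZ)_{tors}$ in \eqref{eqn:homology}. Consequently any root-of-unity solution has common order $m$ dividing $|a+d-2|$. The main obstacle is really only administrative: carefully tracking the conjugations and the replacement $\rho(t)\mapsto -\rho(t)$ used to arrive at the normal form, and giving a clean Smith normal form identification. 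Everything else reduces to direct matrix computation from the presentation \eqref{1ptbpresent}.
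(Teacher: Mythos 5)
Your proof is correct and follows essentially the same route as the paper: the theorem in the paper is really a summary of the normal-form reduction and the relation-matching carried out in the text just above it, culminating in Equations \eqref{eqn:Commutator}--\eqref{eqn:Commutator2}, and you reproduce exactly that computation. Two small notes. First, you correctly derive $\alpha^{b}\beta^{d-1}=1$ from the diagonal of \eqref{eqn:Commutator2}; this silently corrects what appears to be a typographical slip in the theorem statement (which writes $\alpha^{b}\beta^{c-1}=1$), and it also parallels the analogous slip in Theorem \ref{thm:wtfrepspm1}. Second, your Smith normal form argument for $m\mid|a+d-2|$ is a hands-on rederivation of the fact that $|H_1(M,\bbZ)_{tors}|=|F|=|\Tr(\Phi)-2|$, which the paper simply reads off from Equation \eqref{eqn:homology} together with Lemma \ref{lem:RootsOf1} (the homomorphism to $\bbC^*$ factors through $H_1$, so the orders of $\alpha,\beta$ divide $|F|$); both are valid, and yours has the advantage of exhibiting $(\alpha,\beta)$ directly as an element of $\ker\bigl((\Phi-\Id)\colon (\bbC^*)^2_{tors}\to(\bbC^*)^2_{tors}\bigr)$. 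The nonvanishing of $s$ is also fine, though it is cleaner to phrase it without invoking $f$: if $s=0$ then $\rho|_{\langle x,y\rangle}$ is diagonal, so $\rho(\phi(x))$ is diagonal, yet $\rho(txt^{-1})$ has off-diagonal entry $\alpha^{-1}-\alpha\neq 0$, a contradiction.
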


\begin{rem}
One can write down explicitly what the functions $f(\alpha,\beta)$, $g(\alpha,\beta)$ are based on the spelling of the words $\phi(x)$, $\phi(y)$ however the statement is complicated and, since we will not need this in this paper, we choose to leave Theorem \ref{thm:wtfrepspm1} and \ref{thm:wtfreps} at the current level of precision.
\end{rem}

We point out that, though the latter conditions of Equations \eqref{eqn:wtfpm1} and \eqref{eqn:wtf} look complicated in general, there are infinite families of manifolds with non-abelian reducible representations fitting the criteria of Theorem \ref{thm:wtfrepspm1} and the criteria of Theorem \ref{thm:wtfreps}.
We now describe these families and their representations in detail.

\begin{ex}\label{ex:tunnelone}
Let $M_j$ be a once punctured torus bundle of tunnel number one.
Then $\phi_j=RL^j$ and therefore
$$\phi_j(x)=x(yx)^j,\quad\phi_j(y)=yx.$$
Notice in particular that when $j=1$, $M_1$ is the figure eight knot complement.
The monodromy matrix of $\phi_j$ is given by 
$$\Phi_j=\begin{pmatrix}
j+1&1\\
j&1
\end{pmatrix},$$
and therefore $\alpha$ and $\beta$ are always $j^{th}$ roots of unity (in fact, it always follows that $\alpha=1$).
Provided $j>1$, we claim that the non-abelian representations
\begin{align*}
\rho_j(x) &=\begin{pmatrix} \al & s \\ 0 & \al^{-1}  \end{pmatrix}= \begin{pmatrix} 1 & \eta_j^{-2}-1 \\ 0 & 1  \end{pmatrix}, \\
\rho_j(y) &= \begin{pmatrix} \beta & 0 \\ 0 & \beta^{-1} \end{pmatrix}= \begin{pmatrix} \eta_j & 0 \\ 0 & \eta_j^{-1} \end{pmatrix}, \\
\rho_j(t) &= \begin{pmatrix} 1 & 1 \\ 0 & 1 \end{pmatrix},
\end{align*}
give a family of representations of $\pi_1(M_j)$ fitting the criteria of Theorem \ref{thm:wtfrepspm1}.
We remark that in the notation of Theorem \ref{thm:wtfrepspm1} the positions of $\alpha$, $\beta$ are swapped so in particular $\alpha=1$, $\beta=\eta_j$, $s=\eta_j^{-2}-1$, and $g(1,\beta)=\sum_{i=0}^j\beta^{j-2i}$.
Indeed it is straightforward to check that
$$\rho_j(tyt^{-1})=\rho_j(\phi_j(y))=\rho_j(yx),$$
and that
$$\rho_j(\phi_j(x))=\begin{pmatrix}
1&s\sum_{i=0}^j\eta_j^{j-2i}\\
0&1
\end{pmatrix}=\begin{pmatrix}
1&s\\
0&1
\end{pmatrix}=\rho_j(txt^{-1}),$$
where the middle inequality follows since $\eta_j^2$ is a $j^{th}$ root of unity.
\end{ex}

\begin{ex}\label{ex:nfold}
Let $N_j$ be the once punctured torus bundle with monodromy given by $\phi_j=(RL)^j$. 
When $j=1$, $N_{1}$ is the figure eight knot complement and all other $N_j$ are $j$-fold cyclic covers of $N_1$.
One can compute that the monodromy matrix of $\phi_j$ is given by 
 $$\Phi_j=\begin{pmatrix}
2&1\\
 1&1
 \end{pmatrix}^j=\begin{pmatrix}
F_{2j+1}&F_{2j}\\
 F_{2j}&F_{2j-1}
 \end{pmatrix},$$
 where $F_j$ is the $j^{th}$ Fibonacci number, indexed so that $F_1=F_2=1$.
The abelianization is therefore given by
\begin{equation}\label{eqn:fibabel}
\pi_1(N_j)^{ab}=H_1(N_j,\bbZ)=\begin{cases}
\bbZ\oplus\bbZ/F_j\bbZ\oplus\bbZ/5F_j\bbZ,&\text{j even},\\
\bbZ\oplus\bbZ/G_j\bbZ\oplus\bbZ/G_j\bbZ,&\text{j odd},
\end{cases}\end{equation}
where $G_j$ is the $j^{th}$ Lucas number $G_j=F_{j-1}+F_{j+1}$ with the convention that $G_1=1$ (see \cite[\S 3]{MR2}).
We restrict ourselves to even $j$, in which case the image of $x,y$ in $\pi_1(N_j)^{ab}$ have order $5F_j$.
Let $\alpha=\eta_5$, a primitive $5^{th}$ root of unity, and let $\beta=\alpha^{2}$.
Each $\pi_1(N_j)$ can be realized as the subgroup of $\pi_1(N_1)$ generated by $\langle x,y,t^j\rangle$. 
We then obtain a non-abelian reducible representation $\rho_j$ with the following images of the generators
 \begin{align*}
 \rho(x)&=\begin{pmatrix} \al & 0 \\ 0 &  \al^{-1} \end{pmatrix},\\
 \rho(y)&= \begin{pmatrix}  \beta & \frac{1-\al^{-1}}{m} \\ 0 &  \beta^{-1} \end{pmatrix},\\
 \rho(t_j)&=\begin{pmatrix} 1 & 1 \\ 0 & 1 \end{pmatrix},
 \end{align*}
where $m$ is such that $j=2m$.
The subscript on the element $t_j$ above is meant to indicate that this is the stable letter of $\pi_1(N_j)$, which is the same element as $t^j$ when $\pi_1(N_j)$ is considered as a subgroup of $\pi_1(N_1)$.
Notice in particular that this produces an infinite family of representations for which the criteria of Theorem \ref{thm:wtfreps} is satisfied.
\end{ex}


\subsection{Type C representations}\label{sec:typec}

The final type of non-abelian reducible representation of $\pi_1(M)$ are those where $\rho(t)=\pm \Id$, which we assume for the remainder of the subsection.
As in Section \ref{sec:typeb}, this implies we can write
\begin{align*}
\rho(x) &= \begin{pmatrix} \al & r \\ 0 & \al^{-1} \end{pmatrix}, \\
\rho(y) &= \begin{pmatrix} \beta & s \\ 0 & \beta^{-1} \end{pmatrix}, \\
\rho(t) &= \pm\begin{pmatrix} 1 & 0 \\ 0 & 1 \end{pmatrix}.
\end{align*}
It is straightforward to see that if $\rho(x)$, $\rho(y)$ have eigenvalues contained in $\{-1,1\}$ then $\rho(\pi_1(M))$ is abelian hence we assume that at least one of $\al,\beta\neq \pm1$.
For the time being assume that $\al\neq \pm 1$.
Then $\rho$ is conjugate to a representation where $\rho(x)$ is similarly diagonal and by a further conjugation we may assume that the upper right entry of $\rho(y)$ is 1, provided $\rho$ is non-abelian.
Therefore, up to conjugation, $\rho$ is of the form
\begin{align*}
\rho(x) &= \begin{pmatrix} \al & 0 \\ 0 & \al^{-1} \end{pmatrix}, \\
\rho(y) &= \begin{pmatrix} \beta & 1 \\ 0 & \beta^{-1} \end{pmatrix}, \\
\rho(t) &= \pm\begin{pmatrix} 1 & 0 \\ 0 & 1 \end{pmatrix}.
\end{align*}
Applying the similar analysis to Section \ref{sec:typeb} we obtain the following.

\begin{thm}\label{thm:typecrep}
Suppose there exists a non-abelian reducible representation $\rho:\pi_1(M)\to\SL_2(\bbC)$ for which $\rho(t)=\pm \Id$. Then up to conjugation $\rho$ is either of the form
$$\rho(x)  = \begin{pmatrix} \al & 0 \\ 0 & \al^{-1} \end{pmatrix},\quad
\rho(y) = \begin{pmatrix} \beta & 1 \\ 0 & \beta^{-1} \end{pmatrix},\quad
\rho(t) = \pm\begin{pmatrix} 1 & 0 \\ 0 & 1 \end{pmatrix},$$
where $\al$ is a primitive $m^{th}$ root of unity for $m>2$ dividing $|a+b-2|$ and $\beta$ is a (possibly trivial) primitive root of unity with order dividing $|a+b-2|$ or of the form
$$\rho(x)  = \begin{pmatrix} \pm 1 & 1 \\ 0 & \pm 1 \end{pmatrix},\quad
\rho(y) = \begin{pmatrix} \beta & 0 \\ 0 & \beta^{-1} \end{pmatrix},\quad
\rho(t) = \pm\begin{pmatrix} 1 & 0 \\ 0 & 1 \end{pmatrix},
$$
where $\beta$ is a primitive $m^{th}$ root of unity for $m>2$ dividing $|a+b-2|$..
\end{thm}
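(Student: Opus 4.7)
The plan is to parallel the analysis in Section~\ref{sec:typeb}, with the crucial simplification that $\rho(t)=\pm\Id$ is now central. In particular, the defining relations of $\pi_1(M)$ reduce to the matrix identities $\rho(x)=\rho(\phi(x))$ and $\rho(y)=\rho(\phi(y))$, and no further commutator conditions are needed.

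First I would reduce to the case of a single non-trivial eigenvalue. If both $\al,\beta\in\{\pm1\}$, then $\rho(x)$, $\rho(y)$, and $\rho(t)$ are all upper-triangular with eigenvalues in $\{\pm1\}$ and hence commute pairwise, so $\rho$ is abelian. Thus at least one of $\al,\beta$ is not $\pm1$; the two forms in the theorem arise from the two choices, and by symmetry I would carry out the argument for $\al\neq\pm1$ (the first form). Since $\rho(x)$ then has distinct eigenvalues, conjugate to put $\rho(x)=\mathrm{diag}(\al,\al^{-1})$. Writing $\rho(y)=\bigl(\begin{smallmatrix}\beta & s\\ 0 & \beta^{-1}\end{smallmatrix}\bigr)$, if $s=0$ then $\rho(\pi_1(M))$ is generated by commuting diagonal and scalar matrices, hence abelian; so $s\neq0$, and a further conjugation by $\mathrm{diag}(\lambda,\lambda^{-1})$ with $\lambda^2=s^{-1}$ rescales $s$ to $1$ without disturbing either $\rho(x)$ or $\rho(t)$.

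The key computation is then to impose $\rho(\phi(x))=\rho(x)$ and $\rho(\phi(y))=\rho(y)$. As in Section~\ref{sec:typeb}, the diagonal entries of $\rho(\phi(x))$ and $\rho(\phi(y))$ depend only on the total $x$- and $y$-exponents in $\phi(x)$ and $\phi(y)$, which are encoded by the entries of $\Phi$. Equating diagonals yields
\begin{equation*}
\al^{a-1}\beta^{c}=1,\qquad \al^{b}\beta^{d-1}=1.
\end{equation*}
These two relations say precisely that the pair $(\al,\beta)\in(\bbC^*)^2$, viewed as a character $\chi\colon\bbZ^2\to\bbC^*$ sending $(m,n)\mapsto\al^m\beta^n$, factors through the quotient $\bbZ^2/(\Phi-\Id)^T\bbZ^2$.

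The main (very mild) obstacle is identifying the order of this quotient. We compute $\det(\Phi-\Id)=(a-1)(d-1)-bc=2-(a+d)=2-\Tr(\Phi)$, which is nonzero by hyperbolicity ($|\Tr(\Phi)|>2$). Hence $\bbZ^2/(\Phi-\Id)^T\bbZ^2$ is a finite abelian group of order $|\Tr(\Phi)-2|=|a+d-2|$, so $\chi$ takes values in the group of $|a+d-2|$-th roots of unity. Evaluating $\chi$ on $(1,0)$ and $(0,1)$ shows that the orders of $\al$ and $\beta$ both divide $|a+d-2|$. Combined with the standing assumption $\al\neq\pm1$, the order of $\al$ is some $m>2$ dividing $|a+d-2|$, and $\beta$ is a (possibly trivial) root of unity with order dividing the same quantity. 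This gives the first form in the statement, and the second form is obtained symmetrically by starting from $\beta\neq\pm1$, diagonalizing $\rho(y)$, and normalizing $\rho(x)$ to $\bigl(\begin{smallmatrix}\pm1 & 1\\0 & \pm1\end{smallmatrix}\bigr)$.
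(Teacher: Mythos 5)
Your proof is correct and follows essentially the same route the paper takes (the paper's own proof is implicit, being reduced to the line ``applying the similar analysis to Section~\ref{sec:typeb} we obtain the following''; your argument fills in precisely that analysis). The reduction to the case where exactly one of $\al,\beta$ has distinct eigenvalues, the diagonalization and rescaling of the off-diagonal entry to $1$, and the extraction of the diagonal constraints $\al^{a-1}\beta^c = 1$, $\al^b\beta^{d-1}=1$ all match the structure of Theorems~\ref{thm:wtfrepspm1} and~\ref{thm:wtfreps}. Your route to the divisibility claim is slightly different and cleaner than what the paper suggests: rather than invoking Lemma~\ref{lem:RootsOf1} together with the presentation of $F$ in Equation~\eqref{eqn:homology}, you observe directly that the constraints make $(\al,\beta)$ a character of $\bbZ^2/(\Phi-\Id)\bbZ^2$ and compute $|\det(\Phi-\Id)|=|\Tr(\Phi)-2|$ by hand; both arguments encode the same underlying fact (the finite part of $H_1(M,\bbZ)$ has order $|\Tr(\Phi)-2|$), but yours is self-contained. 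Finally, note that your computation yields $|a+d-2|$, whereas the statement of Theorem~\ref{thm:typecrep} says $|a+b-2|$; given the other classification theorems all state $|a+d-2|=|\Tr(\Phi)-2|$, the $|a+b-2|$ appears to be a typo, and your derivation gives the intended quantity.
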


Note that examples of such representations exist, for instance one can see that the following gives a representation of Type C.

\begin{ex}\label{ex:typec}
Continuing the notation from Example \ref{ex:nfold}, we let $N_{3j}$ be the once punctured torus bundle with monodromy given by $\phi_{3j}=(RL)^{3j}$ for any $j\in\bbN$.
Following Equation \eqref{eqn:fibabel}, one can check that these are precisely the cyclic covers of the figure eight knot for which the images of $x$ and $y$ in the abelianization have order divisible by $4$.
This follows from the sixfold periodicity of $F_j$ and $G_j$ modulo $4$.
Taking $\alpha=\beta=i$, one can check that there is a non-abelian reducible representation of $\pi_1(N_{3j})$ obtained by setting
\begin{align*}
\rho(x)&=\begin{pmatrix} i & 0 \\ 0 &  -i \end{pmatrix},\\
\rho(y)&= \begin{pmatrix}  i & 1 \\ 0 &  -i \end{pmatrix},\\
\rho(t)&=\begin{pmatrix} 1 & 0 \\ 0 & 1 \end{pmatrix},
\end{align*}
which one checks satisfies the relations of $\pi_1(N_{3j})$.
This gives an infinite family of representations satisfying the conditions of Theorem \ref{thm:typecrep}.
One can also check that characters of the above representations are not characters of Type B representations, so in particular this is a distinct family of non-abelian reducible representation.
\end{ex}

As a corollary of Lemma \ref{lem:CRSnonab}, Theorems \ref{thm:typearep}, and Theorems \ref{thm:wtfrepspm1}--\ref{thm:typecrep} we have the following, where the proof follows from examining the possible images of $t$ under any non-abelian reducible representation.

\begin{cor}\label{cor:typeaorb}
Suppose that $\frak{C}\subset \frak{X}(M)_k$ is a fixed geometrically irreducible component such that $\frak{C}_\bbC$ contains the character of an irreducible representation.
If $\chi_\rho\in \frak{C}$ is a character of a reducible representation, then there exists a representation $\rho'$ of Type A, Type B, or Type C such that $\chi_{\rho}=\chi_{\rho'}$.
\end{cor}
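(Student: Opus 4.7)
The plan is to combine Lemma \ref{lem:CRSnonab} with the trichotomy of classification theorems from this section, parsed by the conjugacy type of the image of the stable letter $t$.

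First, I would invoke Lemma \ref{lem:CRSnonab} to produce a non-abelian reducible representation $\rho'$ with $\chi_{\rho'} = \chi_\rho$. Since $\rho'$ is reducible it stabilizes a line, so after replacing $\rho'$ by a conjugate (which does not change its character) I may assume its image lies in the upper-triangular subgroup of $\SL_2(\bbC)$. Write
\[
\rho'(t) = \begin{pmatrix} w & u \\ 0 & w^{-1} \end{pmatrix}, \qquad w \in \bbC^*, \ u \in \bbC.
\]
The key point, which is purely linear-algebraic, is that for an upper-triangular element of $\SL_2(\bbC)$ exactly one of the following three mutually exclusive possibilities holds: (a) $w \neq \pm 1$; (b) $w = \pm 1$ but $u \neq 0$; (c) $\rho'(t) = \pm \Id$.

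Next, I would dispatch each case using the relevant theorem of Section~3. In case (a), since $w - w^{-1} \neq 0$, conjugation by a suitable upper-triangular matrix brings $\rho'(t)$ into the normal form $\begin{smm} w & 1 \\ 0 & w^{-1} \end{smm}$ set up at the start of Section \ref{sec:typea}, whether or not $u$ was originally zero; Theorem \ref{thm:typearep} then realises $\rho'$ up to conjugation as a Type A representation. In case (b), conjugation by an appropriate diagonal matrix normalises $u$ to $1$, placing $\rho'$ in the setting of Section \ref{sec:typeb}, so that Theorems \ref{thm:wtfrepspm1} and \ref{thm:wtfreps} exhibit $\rho'$ up to conjugation as a Type B representation. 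In case (c) we are immediately in the hypothesis of Theorem \ref{thm:typecrep}, which identifies $\rho'$ up to conjugation with a Type C representation.

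In each case the resulting Type A/B/C representation $\rho''$ is conjugate to $\rho'$, so $\chi_{\rho''} = \chi_{\rho'} = \chi_\rho$, which completes the argument. There is no real obstacle: all the content has been packaged into Lemma \ref{lem:CRSnonab} and Theorems \ref{thm:typearep}, \ref{thm:wtfrepspm1}, \ref{thm:wtfreps}, \ref{thm:typecrep}, and the only subtlety worth remarking on is the normalisation step in case (a), which is why the trichotomy is phrased in terms of $w$ rather than in terms of the given form of $\rho'(t)$.
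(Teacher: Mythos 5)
Your argument is correct and matches the route the paper gestures at: the paper's own justification for Corollary \ref{cor:typeaorb} is the single remark that it ``follows from examining the possible images of $t$ under any non-abelian reducible representation,'' relying on Lemma \ref{lem:CRSnonab} and Theorems \ref{thm:typearep}--\ref{thm:typecrep}, which is exactly the trichotomy you spell out. Your extra care with the normalization of $\rho'(t)$ when $u=0$ and $w\neq\pm1$ (conjugating by $\begin{smm}1 & (w^{-1}-w)^{-1}\\ 0&1\end{smm}$ to reach $\begin{smm}w&1\\0&w^{-1}\end{smm}$ while preserving upper-triangularity) is precisely the point the paper's setup paragraph before Lemma \ref{lem:RootsOf1} records, so you have filled in the same details the author left implicit.
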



\section{The proof of Theorem \ref{thm:main1}}

In this section, we prove Theorem \ref{thm:main1} which we now recall for the reader's convenience.

\mainone*
Before proceeding to the proof of this theorem, we will first need to prove Proposition \ref{prop:galois} which, via Corollary \ref{cor:goodtraceextend}, will help connect square roots of eigenvalues of the monodromy matrix $\Phi$ from Equation \eqref{eqn:monmatrix} with a certain field extension property related to the tame symbol.

\begin{prop}\label{prop:galois}
Let $w$ be a root of the polynomial $f(x)=x^4-c x^2+1$ for some $c\in\bbZ$ such that $|c|> 2$.
Then $\bbQ(w)=\bbQ(w+1/w)$ if and only if $c=a^2+2$ for some $a\in\bbN$.
\end{prop}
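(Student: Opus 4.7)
The plan is to exploit the reciprocal (palindromic) structure of $f(x) = x^4 - cx^2 + 1$. Since $f$ is palindromic, if $w$ is a root so is $1/w$, and the standard substitution $u = x + 1/x$ collapses $f$ to a quadratic: dividing $f(w)=0$ by $w^2$ gives $w^2 + w^{-2} = c$, and hence $(w+1/w)^2 = c + 2$. Therefore $\bbQ(w+1/w) = \bbQ(\sqrt{c+2})$, which has degree at most $2$ over $\bbQ$. Using the tower $\bbQ \subseteq \bbQ(w+1/w) \subseteq \bbQ(w)$, the equality $\bbQ(w) = \bbQ(w+1/w)$ forces $[\bbQ(w):\bbQ] \leq 2$ (so $f$ must be reducible over $\bbQ$), together with $w + 1/w \notin \bbQ$ (since $w$ itself is irrational: $f(\pm 1) = 2 - c \neq 0$ by the assumption $|c|>2$). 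So the question reduces to a careful factorization analysis of $f$ in $\bbQ[x]$.

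The next step is to enumerate the possible monic quadratic factorizations $(x^2 + px + q)(x^2 + p'x + q')$ of $f$. Coefficient comparison forces $p + p' = 0$, $qq' = 1$, and $p(q-q') = 0$, so either $p = 0$ or $q = q' = \pm 1$. The $p = 0$ subcase requires $c^2 - 4$ to be a rational square; but for integer $c$ with $|c|>2$ this is ruled out by the elementary fact that $(c-n)(c+n) = 4$ has no integer solutions with $|c|>2$. The subcase $q = q' = 1$ yields $c = p^2 - 2$, i.e., $c+2$ is a perfect square; the subcase $q = q' = -1$ yields $c = p^2 + 2$, which is precisely the condition in the proposition.

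The final step is to compare these remaining cases against the equality of fields. In the case $c + 2 = p^2$, we have $w+1/w \in \bbQ$ while $w \notin \bbQ$, so the fields differ. In the case $c = a^2 + 2$, the root $w$ satisfies $x^2 \pm ax - 1 = 0$, so $w = (\pm a + \sqrt{a^2+4})/2$ and $\bbQ(w) = \bbQ(\sqrt{a^2+4})$; since $(w+1/w)^2 = c+2 = a^2+4$ is not a rational square (another application of the $(n-a)(n+a)=4$ argument), we conclude $\bbQ(w+1/w) = \bbQ(\sqrt{a^2+4}) = \bbQ(w)$. If neither subcase occurs, then $f$ is irreducible, $[\bbQ(w):\bbQ] = 4 > 2 \geq [\bbQ(w+1/w):\bbQ]$, and the fields cannot coincide. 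The main obstacle I anticipate is not conceptual but bookkeeping: keeping the three factorization cases organized and applying the elementary integer-squareness lemma correctly in each one.
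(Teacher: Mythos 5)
Your proof is correct and follows essentially the same route as the paper: both rest on the identity $(w+1/w)^2 = c+2$ (so $\bbQ(w+1/w)$ has degree at most $2$), and both reduce the question to a factorization analysis of $f(x)$ over $\bbZ$, yielding the three cases $f$ irreducible, $c=a^2-2$, and $c=a^2+2$. The only cosmetic difference is that you derive the list of possible factorizations systematically from coefficient comparison and separately dispatch the $p=0$ subcase via the observation that $c^2-4$ is never a nonzero square, whereas the paper simply writes down the two palindromic factorizations and (implicitly) relies on Gauss's lemma; the substance of the argument, including the integer-squareness lemma for $c+2 = a^2+4$, is the same.
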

\begin{proof}
By the rational roots test and the assumption on $c$, there are visibly no roots of $f(x)$ in $\bbQ$.
Therefore if $f(x)$ reduces over $\bbQ$ then it must factor as one of the following
\begin{equation}\label{eqn:polyfactor}
f(x)=\begin{cases}
(x^2+ax+1)(x^2-ax+1),&c=a^2-2,\\
(x^2+ax-1)(x^2-ax-1),&c=a^2+2,
\end{cases}
\end{equation}
for some $a\in\bbZ$.
The condition that $|c|>2$ implies that $a\neq 0$.

Assume first that $c\neq a^2\pm 2$ and therefore that $f(x)$ is irreducible.
Then for any root $w$ of $f(x)$ we conclude that $[\bbQ(w):\bbQ]=4$ and from the equation for $f(x)$ it follows that
$$w^2+w^{-2}=\frac{c\pm\sqrt{c^2-4}}{2}+\frac{c\mp\sqrt{c^2-4}}{2}=c.$$
Therefore $\bbQ(w^2+w^{-2})=\bbQ$ and moreover as
$$(w+w^{-1})^2-2=w^2+w^{-2},$$
it is evident that $\bbQ(w+w^{-1})$ is at most quadratic over $\bbQ$.
By degree considerations it follows that $\bbQ(w+w^{-1})\subsetneq\bbQ(w)$.

Now assume that $f(x)$ is reducible and factors as
$$f(x)=(x^2+ax+1)(x^2-ax+1),\quad c=a^2-2.$$
From the assumption that $|c|>2$, it follows that $|a|\ge 3$ and therefore the rational roots test again shows that both factors of $f(x)$ are irreducible.
A straightforward computation shows in this case that $w+w^{-1}=\pm a\in\bbQ$.
As each factor of $f(x)$ is irreducible it follows that $\bbQ(w+w^{-1})\subsetneq\bbQ(w)$

Finally, assume that $f(x)$ is reducible and factors as
$$f(x)=(x^2+ax-1)(x^2-ax-1),\quad c=a^2+2.$$
Now the condition that $|c|>2$ implies that $a\neq 0$ and we conclude again that each factor is irreducible.
In this case, we obtain that
$$w+w^{-1}=\sqrt{a^2+4}\in\bbQ(w)\setminus\bbQ,$$
and degree considerations therefore show that $\bbQ(w)=\bbQ(w+w^{-1})$.
Combining this with the above analysis we see that $\bbQ(w)=\bbQ(w+w^{-1})$ if and only if $c=a^2+2$ for some $a\in\bbN$, as required.
\end{proof}

Using the condition of the previous proposition, we can now analyze the tame symbol at any Type A representation. 
In what follows, we remind the reader that $\Pi_k:\widetilde{C}\dashrightarrow \frak{C}$ is the birational morphism as in Section \ref{sec:curveback}.

\begin{cor}\label{cor:goodtraceextend}
Let $\frak{C}$ be a geometrically irreducible curve component of $\frak{X}(M)_k$ with field of definition $k$ such that $\frak{C}$ contains the character of an irreducible representation and let $\widetilde{C}$ be the corresponding smooth projective completion.
Let $p\in\widetilde{C}$ be such that $\chi_\rho=\Pi_k(p)\in \frak{C}$ where $\rho$ is a representation of Type A.
If $\Tr(\Phi)=a^2+2$ for some $a\in\bbN$, then the tautological Azumaya algebra $\mathcal{A}_{k(\widetilde{C})}$ extends over $p$.
\end{cor}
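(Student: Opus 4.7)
The plan is to establish the corollary by showing directly that the tame symbol $r_p(\mathcal{A}_{k(\widetilde{C})})$ is trivial in $k(p)^*/k(p)^{*2}$; by the Grothendieck--Gabber exact sequence recalled earlier, this suffices for the algebra to extend over $p$. I take $g = t$ in the Hilbert symbol description for $\mathcal{A}_{k(\widetilde{C})}$. By Theorem \ref{thm:typearep}, the Type A hypothesis gives $\Tr(\rho(t)) = w + w^{-1}$ where $w^2$ is an eigenvalue of $\Phi$, and the assumption $|\Tr(\Phi)| > 2$ forces $w \ne \pm 1$. Consequently
\[
(\widetilde{I}_t^2 - 4)(p) = (w + w^{-1})^2 - 4 = (w - w^{-1})^2 \ne 0,
\]
so $\widetilde{I}_t^2 - 4$ is not identically zero on $\widetilde{C}$ and Lemma \ref{lem:choosehilbert} produces some $h \in \pi_1(M)$ with
\[
\mathcal{A}_{k(\widetilde{C})} = \left(\frac{\widetilde{I}_t^2 - 4,\ \widetilde{I}_{[t,h]} - 2}{k(\widetilde{C})}\right).
\]

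Setting $\alpha = \widetilde{I}_t^2 - 4$ and $\beta = \widetilde{I}_{[t,h]} - 2$, the nonvanishing $\alpha(p) \ne 0$ forces $\ord_p(\alpha) = 0$, so the formula in Equation \eqref{eqn:tamesymbol} collapses to
\[
r_p(\mathcal{A}_{k(\widetilde{C})}) = \bigl[\alpha(p)^{\ord_p(\beta)}\bigr] \in k(p)^*/k(p)^{*2}.
\]
It therefore suffices to show that $\alpha(p) = (w - w^{-1})^2$ is a square in $k(p)$.

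This is where the trace hypothesis enters. Since $w^2$ is a root of the characteristic polynomial $\lambda^2 - \Tr(\Phi)\lambda + 1$, the element $w$ is a root of $x^4 - \Tr(\Phi) x^2 + 1$. Proposition \ref{prop:galois} applied with $c = \Tr(\Phi) = a^2 + 2$ then yields $\bbQ(w) = \bbQ(w + w^{-1})$. By Lemma \ref{lem:residuefield}, $k(p) \supseteq k_\rho$ and $w + w^{-1} = \Tr(\rho(t)) \in k_\rho$, so $w \in k(p)$ and hence $w - w^{-1} \in k(p)$. Thus $\alpha(p) = (w - w^{-1})^2$ is a square in $k(p)^*$, the tame symbol vanishes, and $\mathcal{A}_{k(\widetilde{C})}$ extends over $p$.

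The main obstacle is not the tame symbol computation itself but arranging for the square root $w$ of the $\Phi$-eigenvalue $w^2$ to be visible in the residue field $k(p)$ given only that $w + w^{-1}$ lies there; Proposition \ref{prop:galois} identifies $\Tr(\Phi) = a^2 + 2$ as the precise arithmetic condition that makes this promotion possible.
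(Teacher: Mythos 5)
Your proof is correct and follows the same argument as the paper: take $g = t$, invoke Lemma \ref{lem:choosehilbert}, observe $\ord_p(\widetilde{I}_t^2-4)=0$, and use Proposition \ref{prop:galois} together with Lemma \ref{lem:residuefield} to place $w$ (hence $w - w^{-1}$) inside $k(p)$. The only cosmetic difference is that you avoid the paper's split on the parity of $\ord_p(\widetilde\nu)$ by noting directly that it suffices for $\alpha(p)$ to be a square in $k(p)^*$, which is a small streamlining of the same computation.
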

\begin{proof}
Let $\rho_\frak{C}$ be the associated tautological representation over $\frak{C}$ as in Section \ref{sec:curveback}.
Recall by Theorem \ref{thm:typearep} that at a Type A representation, $\rho(t)$ has eigenvalues $ w^{\pm 1}$ where $w^2$ is an eigenvalue of $\Phi$.
Since $|\Tr(\Phi)|>2$ we know that $w\neq\pm 1$ and it therefore follows that $\chi_\rho(t)\neq \pm 2$, thus $\rho_\frak{C}(t)\neq \pm\mathrm{Id}$.
Hence $\widetilde{I}_t^2-4$ is not identically zero on $\widetilde{C}$ so by Lemma \ref{lem:choosehilbert} there exists some $h\in\pi_1(M)$ such that $\mathcal{A}_{k(\widetilde{C})}$ has Hilbert symbol
$$\mathcal{A}_{k(\widetilde{C})}=\left(\frac{\widetilde\kappa,\widetilde\nu}{k(\widetilde{C})}\right)=\left(\frac{\widetilde{I}_t^2-4,\widetilde{I}_{[t,h]}-2}{k(\widetilde{C})}\right).$$
We will show that the tame symbol is trivial at $p$.

Since $I_t(\chi_\rho)\neq\pm 2$, it follows that $I_t(\chi_\rho)^2-4\neq 0$ and since
$$\widetilde{I}_t(p)= I_t(\Pi_k(p))=I_t(\chi_\rho),$$
we find that $\ord_{p}(\widetilde\kappa)=0$.
Similarly, as the representation $\rho$ is reducible, $I_{[t,h]}(\chi_\rho)=2$ and hence $\ord_{p}(\widetilde\nu)>0$.
When $\ord_{p}(\widetilde\nu)$ is even, Equation \eqref{eqn:tamesymbol} shows that the tame symbol $\{\widetilde\kappa,\widetilde\nu\}_{p}$ is trivial.
When $\ord_{p}(\widetilde\nu)$ is odd, we compute that
$$\{\widetilde\kappa,\widetilde\nu\}_{p}=\left[\widetilde\kappa(p)\right]=\left[\widetilde{I}_t(p)^2-4\right]\in k(p)^*/k(p)^{*2}.$$
The condition that $\bbQ(w+1/w)=\bbQ(w)$ implies that
$$k(w+1/w)=k(w)=k(w-1/w),$$
and since by Lemma \ref{lem:residuefield} we have the inclusion of fields, $k(w+1/w)\subset k(p)$, it follows that
\begin{equation}\label{eqn:tamestar1}
\{\widetilde\kappa,\widetilde\nu\}_{p}=\left[(w+1/w)^2-4\right]=\left[(w-1/w)^2\right]=[1]\in k(p)^*/k(p)^{*2}.
\end{equation}
We have therefore shown in both cases that $\{\widetilde\kappa,\widetilde\nu\}_{p}=1$ and hence $\mathcal{A}_{k(\widetilde{C})}$ extends over $p$ as required.
\end{proof}

\begin{rem}
In fact, we will show that Corollary \ref{cor:goodtraceextend} is an if and only if provided that $k$ satisfies some additional hypotheses. This is the content of Lemma \ref{prop:noextendA}.
\end{rem}

We are now in a position to complete the proof of Theorem \ref{thm:main1}.

\begin{proof}[Proof of Theorem \ref{thm:main1}]
Let $\frak{C}$ be a fixed geometrically irreducible component of $\frak{X}(M)_k$ containing the character of an irreducible representation of $\pi_1(M)$, where $k$ is the field of definition of $\frak{C}$.
Let $\mathcal{A}_{k(\widetilde{C})}$ be the associated tautological Azumaya algebra.
By Proposition \ref{prop:idealirrepextend}, $\mathcal{A}_{k(\widetilde{C})}$ extends over all points $p\in\widetilde{C}$ with the exception of perhaps those for which $\Pi_k(p)$ is the character of a non-abelian reducible representation, so it suffices to study the behavior of $\mathcal{A}_{k(\widetilde{C})}$ at the latter.
Corollary \ref{cor:typeaorb} implies that for such points, $\Pi_k(p)=\chi_\rho$ is the character of a Type A, B, or C representation.

When $\Pi_k(p)=\chi_\rho$ and $\rho$ is a Type A representation, Corollary \ref{cor:goodtraceextend} shows that $\mathcal{A}_{k(\widetilde{C})}$ extends over $p$ in the case that $\Tr(\Phi)=a^2+2$ and similarly Equation \eqref{eqn:tamestar1} shows that $\mathcal{A}_{k(\widetilde{C})}$ extends over $p$ in the case that $(w-1/w)\in k$.
When $\rho$ is a Type B or Type C representation, let $g$ be the choice of $x$ or $y$ such that $\rho(g)$ has primitive $m^{th}$ root of unity eigenvalues which are not contained in $\{-1,1\}$.
At least one of $x,y$ must have this property.
Note that $m\mid n$ for $n$ as in Condition $(\star_2)$.
Since $\chi_\rho(g)\neq\pm 1$, $\widetilde{I}_g^2-4$ is not identically zero on $\widetilde{C}$ and Lemma \ref{lem:choosehilbert} shows that there is some $h\in\pi_1(M)$ such that $\mathcal{A}_{k(\widetilde{C})}$ has a Hilbert symbol of the form
$$\mathcal{A}_{k(\widetilde{C})}=\left(\frac{\widetilde\kappa,\widetilde\nu}{k(\widetilde{C})}\right)=\left(\frac{\widetilde{I}_g^2-4,\widetilde{I}_{[g,h]}-2}{k(\widetilde{C})}\right).$$
Again since $I_g(\chi_\rho)\neq\pm 2$, $\ord_p(\widetilde\kappa)=0$ and since $\rho$ is reducible, $\ord_p(\widetilde\nu)>0$.
Since $k(\eta_m)\subset k(\eta_n)\subset k(p)$, it follows that the tame symbol
$$\{\widetilde\kappa,\widetilde\nu\}_{p}=\left[\widetilde\kappa(p)\right]=\left[(\eta_m-1/\eta_m)^2\right]=[1]\in k(p)^*/k(p)^{*2},$$
is trivial by Lemma \ref{lem:residuefield}.
Therefore $\mathcal{A}_{k(\widetilde{C})}$ extends over all points $p\in \widetilde{C}$ and hence there exists an extension of $\mathcal{A}_{k(\widetilde{C})}$ to the Azumaya algebra $\mathcal{A}_{\widetilde{C}}$ over the smooth projective completion $\widetilde{C}$.  
\end{proof}


\section{Abelian representations and the Alexander invariant}\label{sec:convthm}

In this section we will prove Theorem \ref{thm:main2}, the partial converse to Theorem \ref{thm:main1}.
The key difficulty in proving Theorem \ref{thm:main2} is that, in our setting, establishing non-triviality of a tame symbol is more subtle than establishing its triviality.
Specifically, one needs to know not only the values of the functions defining a Hilbert symbol for $\mathcal{A}_{k(\widetilde{C})}$ at a point $p\in\widetilde{C}$ but one also needs to understand their valuations in the corresponding local ring, the latter of which presented no issue in the proof of Theorem \ref{thm:main1}.
Moreover, the discrepancy between $\frak{C}$ and $\widetilde{C}$ will also appear and we will need to additionally know that Type A, B, and C representations are reduced points of $\frak{C}$ so that the corresponding local rings are isomorphic.

To this end, Section \ref{sec:HPrecap} will be devoted to work of Heusener--Porti which shows that Type A, B, and C representations are reduced points of the $\PSL_2(\bbC)$-character schemes, Section \ref{sec:replift} will discuss how to lift this result to the $\SL_2(\bbC)$-character scheme $\frak{X}(M)_\bbC$, and finally Section \ref{sec:localcoord} will connect this work to tame symbols of the tautological Azumaya algebra.


\subsection{The Alexander invariant and deformations of non-abelian representations}\label{sec:HPrecap}

Given a non-abelian reducible representation $\overline{\rho}:\pi_1(M)\to \PSL_2(\bbC)$ we can always conjugate $\overline{\rho}$ so that it is in the standard form
$$\overline{\rho}(\gamma)=\begin{pmatrix}
a_\gamma&b_\gamma\\
0&a^{-1}_\gamma\end{pmatrix},$$
where $b_\gamma\neq 0$ for some $\gamma\in\pi_1(M)$.
Such a representation always gives rise to an abelian representation $\overline{\rho}^{ab}:\pi_1(M)\to\PSL_2(\bbC)$ with $\overline{\chi_{\overline{\rho}}}=\overline{\chi_{\overline{\rho}^{ab}}}$ by forgetting the top right entry, that is, by defining
$$\overline{\rho}^{ab}(\gamma)=\begin{pmatrix}
a_\gamma&0\\
0&a^{-1}_\gamma\end{pmatrix}.$$
In \cite{HeusenerPorti}, Heusener--Porti study the inverse question of when such an abelian representation has the same character as a non-abelian reducible representation.
In this subsection we recall their results, focusing on the case of once punctured torus bundles but remark that they hold more generally for one cusped 3-manifolds with torus cusp.
We will state all of their results for the $\PSL_2(\bbC)$-character scheme, as that is how they are written in \cite{HeusenerPorti}, and use the notation $\overline{\rho}$ to indicate that this is a $\PSL_2(\bbC)$-representation.

Suppose that we have a non-trivial homomorphism $f:\pi_1(M)\to\bbC^*$ and let $f^{\frac{1}{2}}:\pi_1(M)\to\bbC^*$ be a map (not necessarily a homomorphism) which satisfies the equation $(f^{\frac{1}{2}}(\gamma))^2=f(\gamma)$ for all $\gamma\in\pi_1(M)$.
Then we can define an abelian representation $\overline{\rho}_f:\pi_1(M)\to\PSL_2(\bbC)$ via the formula
\begin{equation}\label{eqn:abrep}
\overline{\rho}_f(\gamma)=\pm\begin{pmatrix}
f^{\frac{1}{2}}(\gamma)&0\\
0&f^{-\frac{1}{2}}(\gamma)\end{pmatrix},
\end{equation}
where $f^{-\frac{1}{2}}(\gamma)=(f^{\frac{1}{2}}(\gamma))^{-1}$ for all $\gamma\in\pi_1(M)$, provided that $ f^{\frac{1}{2}}(\gamma\delta)=\pm f^{\frac{1}{2}}(\gamma)f^{\frac{1}{2}}(\delta)$ for all $\gamma,\delta\in\pi_1(M)$.
The inclusion of the $\pm$ above owes to the fact that $\pm\mathrm{Id}$ are the same element of $\PSL_2(\bbC)$.

Note that the non-triviality assumption on $f$ implies that $f^{\pm\frac{1}{2}}(\gamma)\neq \pm 1$ for some $\gamma$, hence the question of when $\overline{\rho}_f$ has the same character as a non-abelian reducible is equivalent to the existence of a map $d:\pi_1(M)\to\mathbb{C}$ such that 
\begin{equation}\label{eqn:nonabrep}
\overline{\rho}^d_f(\gamma)=\pm\begin{pmatrix}
1&d(\gamma)\\
0&1\end{pmatrix}\overline{\rho}_f(\gamma)=\pm\begin{pmatrix}
f^{\frac{1}{2}}(\gamma)&f^{-\frac{1}{2}}(\gamma)d(\gamma)\\
0&f^{-\frac{1}{2}}(\gamma)\end{pmatrix},
\end{equation}
is a non-abelian representation.
The calculations
\begin{equation}\label{eqn:dcomputation}\overline{\rho}^d_f(\gamma\delta)=\pm\begin{pmatrix}
f^{\frac{1}{2}}(\gamma\delta)&f^{-\frac{1}{2}}(\gamma\delta)d(\gamma\delta)\\
0&f^{-\frac{1}{2}}(\gamma\delta)\end{pmatrix}
,\end{equation}
$$
\overline{\rho}^d_f(\gamma)\overline{\rho}^d_f(\delta)=\pm\begin{pmatrix}
f^{\frac{1}{2}}(\gamma)f^{\frac{1}{2}}(\delta)&f^{-\frac{1}{2}}(\delta)(f^{\frac{1}{2}}(\gamma)d(\delta)+f^{-\frac{1}{2}}(\gamma)d(\gamma))\\
0&f^{-\frac{1}{2}}(\gamma)f^{-\frac{1}{2}}(\delta)\end{pmatrix},
$$
show that $\overline{\rho}_f^d$ is a homomorphism if and only if 
$$ f^{-\frac{1}{2}}(\gamma\delta)d(\gamma\delta)=\pm f^{-\frac{1}{2}}(\delta)(f^{\frac{1}{2}}(\gamma)d(\delta)+f^{-\frac{1}{2}}(\gamma)d(\gamma)),$$
where $ f^{\frac{1}{2}}(\gamma\delta)=\pm f^{\frac{1}{2}}(\gamma)f^{\frac{1}{2}}(\delta)$.
This is equivalent to
$$ d(\gamma\delta)=f^{\frac{1}{2}}(\gamma)(f^{\frac{1}{2}}(\gamma)d(\delta)+f^{-\frac{1}{2}}(\gamma)d(\gamma))=d(\gamma)+f(\gamma)d(\delta).$$
Let $\bbC_f$ denote $\bbC$ regarded as a $\pi_1(M)$-module via left multiplication under the homomorphism $f:\pi_1(M)\to \mathbb{C}^*$, then we conclude that the condition that $\overline{\rho}_f^d$ is a homomorphism is equivalent to the condition that $d\in Z^1(\pi_1(M),\bbC_f)$, where the latter set denotes the $1$-cocycles.

From Equation \eqref{eqn:nonabrep} and a straightforward linear algebra argument, one also deduces that $\overline{\rho}^d_f$ is abelian if and only if there exists $\tau\in\bbC^*$ and a matrix
$$A=\begin{pmatrix}
1&\tau\\
0&1\end{pmatrix},$$
such that $A \overline{\rho}_f^d(\gamma)A^{-1}$ is a diagonal matrix with eigenvalues $f^{\pm\frac{1}{2}}(\gamma)$.
Examining the upper right entry of this conjugation, this condition is equivalent to requiring that
$$f^{-\frac{1}{2}}(\gamma)d(\gamma)=\tau(f^{\frac{1}{2}}(\gamma)-f^{-\frac{1}{2}}(\gamma)),$$
which is to say that there exists $\tau\in\bbC^*$ such that
$$d(\gamma)=-(1-f(\gamma))\tau,$$
for each $\gamma\in\pi_1(M)$.
Hence we see that $\overline{\rho}_f^d$ is abelian if and only if $d\in B^1(\pi_1(M),\bbC_f)$, the $1$-coboundaries.

Now suppose that $d\in Z^1(\pi_1(M),\bbC_f)$ and let $d'=cd+b\in Z^1(\pi_1(M),\bbC_f)$ for any $c\in \bbC^*$, $b\in B^1(\pi_1(M),\bbC_f)$.
We claim that the representations $\overline{\rho}_f^{d}$, $\overline{\rho}_f^{d'}$ are conjugate.
Indeed, we can assume that $d,d'\notin B^1(\pi_1(M),\bbC_f)$ else we are done by the previous paragraph.
Write $b(\gamma)=(1-f(\gamma))\tau$ for some $\tau\in\bbC^*$ and let $\omega\in\bbC^*$ be such that $\omega^2=c$.
Then
\begin{align*}
\begin{pmatrix}
1&\tau\\
0&1\end{pmatrix}&\begin{pmatrix}
\omega&0\\
0&\frac{1}{\omega}\end{pmatrix}\overline{\rho}_f^{d}(\gamma)\begin{pmatrix}
\frac{1}{\omega}&0\\
0&\omega\end{pmatrix}
\begin{pmatrix}
1&-\tau\\
0&1\end{pmatrix}
=\begin{pmatrix}
1&\tau\\
0&1\end{pmatrix}\overline{\rho}_f^{cd}(\gamma)\begin{pmatrix}
1&-\tau\\
0&1\end{pmatrix},\\
&=\begin{pmatrix}
1&\tau\\
0&1\end{pmatrix}\begin{pmatrix}
f^{\frac{1}{2}}(\gamma)&f^{-\frac{1}{2}}(\gamma)cd(\gamma)\\
0&f^{-\frac{1}{2}}(\gamma)\end{pmatrix}\begin{pmatrix}
1&-\tau\\
0&1\end{pmatrix},\\
&=\begin{pmatrix}
f^{\frac{1}{2}}(\gamma)&f^{-\frac{1}{2}}(\gamma)cd(\gamma)+\tau(f^{-\frac{1}{2}}(\gamma)-f^{\frac{1}{2}}(\gamma))\\
0&f^{-\frac{1}{2}}(\gamma)\end{pmatrix},\\
&=\overline{\rho}_f^{cd+b}(\gamma),\\
&=\overline{\rho}_f^{d'}(\gamma).
\end{align*}
Therefore, $d$ and $d'$ give rise to conjugate representations.
Combining all of the above facts shows the following lemma, which is implicit in \cite{HeusenerPorti}.

\begin{lem}\label{lem:nonabeliancocycle}
Let $f:\pi_1(M)\to\bbC^*$ be a non-trivial homomorphism and define $\overline{\rho}_f$ as in Equation \eqref{eqn:abrep}, then there exists a non-abelian reducible representation $\overline{\rho}_f^d:\pi_1(M)\to\PSL_2(\bbC)$ of the form in Equation \eqref{eqn:nonabrep} such that $\chi_{\overline{\rho}_f}=\chi_{\overline{\rho}_f^d}$ if and only if $H^1(\pi_1(M),\bbC_f)>0$.
Moreover, up to conjugation the representation $\overline{\rho}_f^d$ depends only on the $\bbC$-span of the class of $d$ in $H^1(\pi_1(M),\bbC_f)$.
\end{lem}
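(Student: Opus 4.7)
The plan is to observe that the three explicit calculations carried out immediately before the statement already contain the essential content of the lemma, so the proof consists of packaging them into the stated biconditional together with the uniqueness assertion. Concretely, I will invoke, in order: the homomorphism check from Equation \eqref{eqn:dcomputation}, the conjugation-by-a-unipotent computation characterizing when $\overline{\rho}_f^d$ is abelian, and the explicit conjugation relating $\overline{\rho}_f^d$ and $\overline{\rho}_f^{cd+b}$.

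For the biconditional, note first that $\overline{\rho}_f^d$ from Equation \eqref{eqn:nonabrep} is upper triangular with the same diagonal entries $f^{\pm 1/2}(\gamma)$ as $\overline{\rho}_f$, so whenever $\overline{\rho}_f^d$ is a well-defined homomorphism its character automatically equals $\chi_{\overline{\rho}_f}$. The first calculation, comparing $\overline{\rho}_f^d(\gamma\delta)$ with $\overline{\rho}_f^d(\gamma)\overline{\rho}_f^d(\delta)$, shows that $\overline{\rho}_f^d$ is a genuine homomorphism precisely when $d$ satisfies $d(\gamma\delta) = d(\gamma) + f(\gamma)d(\delta)$, i.e.\ when $d \in Z^1(\pi_1(M),\bbC_f)$. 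The second calculation, conjugating $\overline{\rho}_f^d$ by an arbitrary unipotent, shows that $\overline{\rho}_f^d$ is abelian exactly when $d(\gamma) = -(1-f(\gamma))\tau$ for some fixed $\tau \in \bbC$, which is the defining condition for $d \in B^1(\pi_1(M),\bbC_f)$. Combining these, a non-abelian reducible $\overline{\rho}_f^d$ of the required form exists if and only if $Z^1(\pi_1(M),\bbC_f)\setminus B^1(\pi_1(M),\bbC_f) \neq \emptyset$, which is equivalent to $H^1(\pi_1(M),\bbC_f) > 0$.

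For the moreover statement, the third calculation provides an explicit element of $\PSL_2(\bbC)$ conjugating $\overline{\rho}_f^d$ to $\overline{\rho}_f^{cd+b}$ for every $c \in \bbC^*$ and every $b \in B^1(\pi_1(M),\bbC_f)$, realized as the product of a diagonal matrix with entries $\omega^{\pm 1}$ (where $\omega^2 = c$) and a unipotent absorbing the coboundary $b$. Since $d' = cd + b$ ranges over precisely those cocycles whose class lies in $\bbC\cdot[d] \subset H^1(\pi_1(M),\bbC_f)$, the conjugacy class of $\overline{\rho}_f^d$ depends only on this $\bbC$-span.

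The main subtlety, rather than an obstacle, is bookkeeping of the $\pm$ ambiguities arising from the choice of square root $f^{1/2}$; these contribute only central matrices and are harmless in $\PSL_2(\bbC)$, so the signs in Equations \eqref{eqn:abrep} and \eqref{eqn:nonabrep} cancel consistently. No further difficulty arises and the lemma follows.
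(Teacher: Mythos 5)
Your proposal is correct and matches the paper's treatment: the paper gives no separate proof but simply asserts the lemma follows by "combining all of the above facts," namely the three computations you cite (the cocycle condition from \eqref{eqn:dcomputation}, the coboundary characterization of abelian $\overline{\rho}_f^d$, and the explicit conjugation between $\overline{\rho}_f^d$ and $\overline{\rho}_f^{cd+b}$). Your packaging of these into the biconditional and the uniqueness claim, together with the remark that the $\PSL_2$ sign ambiguities are harmless, is exactly the intended argument.
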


\noindent It is this latter condition that Heusener--Porti analyze.
In order to state their results we must define a family of invariants of $\pi_1(M)$ which they call the Alexander invariants.

We again remind the reader that we are working specifically in the case where $M$ is a hyperbolic once-punctured torus bundle and therefore we can choose an isomorphism $H_1(M,\bbZ)\cong\bbZ\oplus F$, where $F$ is a finite group and $\bbZ$ is generated by the class of the stable letter $t$.
Let $\phi_1:\pi_1(M)\to\bbZ$ denote the projection onto the first factor, which takes an element $\gamma\in\pi_1(M)$ to $t^{s_\gamma}$, where $s_\gamma$ is the exponent sum of the letters $t$, $t^{-1}$ in $\gamma$.
Similarly, let $\phi_2:\pi_1(M)\to F$ denote projection onto the second factor.
Then given any fixed homomorphism $\sigma:F\to \Un(1)$, we get an induced representation
\begin{align*}
\phi_\sigma:\pi_1(M)&\to \bbC[z,z^{-1}]^*,\\
\gamma&\mapsto \sigma(\phi_2(\gamma))t^{\phi_1(\gamma)}
\end{align*}
where we are considering $\Un(1)$ embedded in $\mathbb{C}^*$ in the standard way.
Hence we may define the twisted chain complex $C_*(\widetilde{M},\bbZ)\otimes_{\pi_1(M)}\bbC[z,z^{-1}]$, where $\widetilde{M}$ denotes the universal cover of $M$, and taking homology of this complex gives the twisted homology groups $H^{\phi_\sigma}_*(M)$.
As $\bbC[z,z^{-1}]$ is a PID, finite generation of $H^{\phi_\sigma}_1(M)$ yields a decomposition 
$$H^{\phi_\sigma}_1(M)=\bigoplus_{i=0}^s \frac{\bbC[z,z^{-1}]}{r_i\bbC[z,z^{-1}]},$$
where $r_i\in\bbC[z,z^{-1}]$ and $r_{i+1}\mid r_i$ for each $i$.
Using this, we define the \emph{$m^{th}$ Alexander invariant} by the formula $\Delta_m^{\phi_\sigma}(z)=r_mr_{m+1}\dots r_s$.
The invariant $\Delta_m^{\phi_\sigma}$ is well defined up to a unit in $\bbC[z,z^{-1}]$, that is, well defined up to multiplication by $at^j$ for $a\in\bbC^*$ and $j\in \bbZ$.
To avoid having to constantly remind the reader about the fact that $\Delta_m^{\phi_\sigma}$ is only defined up to a unit, in the sequel we will always consider $\Delta_m^{\phi_\sigma}$ to be normalized so that either $\Delta_m^{\phi_\sigma}$ is $1$ (which occurs when $\Delta_m^{\phi_\sigma}$ is itself a unit) or so that $\Delta_m^{\phi_\sigma}$ is a monic polynomial which does not have $0$ as a root.
There is a unique such choice of representative for $\Delta_m^{\phi_\sigma}$.

Despite the formulation of the definition above, there is a standard way to compute $\Delta_m^{\phi_\sigma}$ using Fox derivatives.
Let $\Pi:F_3\to \pi_1(M)$ be the surjective map from the free group on 3 letters given by $x_1\mapsto x$, $x_2\mapsto y$ and $x_3\mapsto t$, where we rewrite the presentation for $\pi_1(M)$ as $\pi_1(M)=\langle x,y,t\mid R_1, R_2\rangle$ with
$$R_1=txt^{-1}(\phi(x))^{-1},\quad R_2=tyt^{-1}(\phi(y))^{-1}.$$
Using $\frac{\partial R_j}{\partial x_i}$ to denote the Fox derivative with respect to $x_i$, we obtain a matrix
\begin{equation}\label{eqn:jac}
J^{\phi_\sigma}=\begin{pmatrix}
\phi_\sigma\left(\Pi\left(\frac{\partial R_1}{\partial x_1}\right)\right)&\phi_\sigma\left(\Pi\left(\frac{\partial R_1}{\partial x_2}\right)\right)&\phi_\sigma\left(\Pi\left(\frac{\partial R_1}{\partial x_3}\right)\right)\\
\phi_\sigma\left(\Pi\left(\frac{\partial R_2}{\partial x_1}\right)\right)&\phi_\sigma\left(\Pi\left(\frac{\partial R_2}{\partial x_2}\right)\right)&\phi_\sigma\left(\Pi\left(\frac{\partial R_2}{\partial x_3}\right)\right)\\
\end{pmatrix},\end{equation}
with entries in $\bbC[z]$ from which \cite[\S 3]{HeusenerPorti} shows that $\Delta_m^{\phi_\sigma}(z)$ is precisely the greatest common divisor of the $(2-m)$-minors of $J^{\phi_\sigma}$.

\begin{rem}\label{rem:fieldindep}
This definition was written for $\bbC$ to follow the work of \cite{HeusenerPorti}.
However it holds more generally when $\bbC$ is replaced with any field $\bbK$, provided that we replace the $\sigma$ from above by a homomorphism $\sigma:F\to \bbK^*$.
Indeed, $\bbK[z,z^{-1}]$ is still a PID and $H^{\phi_\sigma}_1(M)$ is still a finitely generated torsion module.
\end{rem}

\begin{ex}
As in Example \ref{ex:tunnelone}, let $M_j$ be the once-punctured torus bundle with monodromy $RL^j$ so that
$$\pi_1(M_j)=\langle x,y,t\mid txt^{-1}=x(yx)^j,~tyt^{-1}=yx\rangle.$$
It follows that $F=\mathbb{Z}/j\mathbb{Z}$ with notation from Equation \eqref{eqn:homology} and every $\sigma_i:F\to\Un(1)$ is given by $\sigma_i(x)=1$ and $\sigma_i(y)=\eta_j^i$, where $\eta_j$ is a primitive $j^{th}$ root of unity and $0\le i<j$.
Therefore
\begin{align*}
\phi_{\sigma_i}:\pi_1(M_j)&\to\bbC[z,z^{-1}],\\
x&\mapsto 1,\\
y&\mapsto \eta_j^i,\\
t&\mapsto z,
\end{align*}
and one computes from Equation \eqref{eqn:jac} that
$$J^{\phi_{\sigma_i}}=\begin{pmatrix}
z-(1+\eta_j^{-i}+\dots+\eta_j^{-ji})&-(\eta_j^{-i}+\eta_j^{-2i}+\dots+\eta_j^{-ji})&0\\
-\eta_j^i&z-1&1-\eta_j^i
\end{pmatrix}.$$
Evaluating this when $i=0$ gives $\Delta_0^{\phi_{\sigma_0}}(z)=z^2-(j+2)z+1$ and evaluating when $i\neq 0$ gives $\Delta_0^{\phi_{\sigma_i}}(z)=1$ under our normalization.
\end{ex}

The observant reader will notice that $\Delta_0^{\phi_{\sigma_0}}(z)$ in the previous example is precisely the characteristic polynomial of the monodromy matrix $\Phi$.
We now prove this in general and also record a few other properties of the $\Delta_m^{\phi_\sigma}$ which will be useful later.
Recall in what follows that we are still considering a specific normalization of $\Delta_m^{\phi_\sigma}$ and not their values up to multiplication by a unit.

\begin{lem}\label{lem:foxderiv}
Let $M$ be a hyperbolic once-punctured torus bundle with homology $H_1(M,\mathbb{Z})=\bbZ\oplus F$ and with associated monodromy $\Phi$ as in Equation \eqref{eqn:monmatrix}.
Fix some $\sigma:F\to \Un(1)$ and recall that $\phi_2$ is the projection to the second factor of $H_1(M,\bbZ)$.
Then the following hold:
\begin{enumerate}
\item $\phi_2(\sigma(\langle x,y\rangle))=1$ if and only if $\Delta_0^{\phi_\sigma}(z)$ is the characteristic polynomial of $\Phi$.
\item $\phi_2(\sigma(\langle x,y\rangle))$ is non-trivial if and only if $\Delta_0^{\phi_\sigma}(z)$ is a polynomial of degree at most $1$ in $z$.
\item $\Delta_1^{\phi_\sigma}(z)$ is $1$.
\end{enumerate}
\end{lem}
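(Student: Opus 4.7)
The plan is to compute the Jacobian matrix $J^{\phi_\sigma}$ from \eqref{eqn:jac} explicitly using the derivation rules of Fox calculus, and then read off the relevant minors in the two cases depending on whether $\sigma$ is trivial on $\phi_2(\langle x,y\rangle)$ or not.  Setting $u = \sigma(\phi_2(x))$ and $v = \sigma(\phi_2(y))$, and using that $\phi(x), \phi(y) \in \langle x,y\rangle = F_2$ have abelianizations in $H_1(\mathring T)=\bbZ^2$ equal to the columns $(a,c)^T$ and $(b,d)^T$ of $\Phi$, applying the product rule $\partial(w_1 w_2)/\partial w = \partial w_1/\partial w + w_1\cdot\partial w_2/\partial w$ to $R_1 = txt^{-1}\phi(x)^{-1}$ and $R_2 = tyt^{-1}\phi(y)^{-1}$ (and then simplifying using $\Pi(R_i) = 1$ in $\pi_1(M)$) should yield
\begin{equation*}
J^{\phi_\sigma} = \begin{pmatrix} z - \phi_\sigma(\partial \phi(x)/\partial x) & -\phi_\sigma(\partial \phi(x)/\partial y) & 1 - u^a v^c \\ -\phi_\sigma(\partial \phi(y)/\partial x) & z - \phi_\sigma(\partial \phi(y)/\partial y) & 1 - u^b v^d\end{pmatrix}.
\end{equation*}
Note that the first two columns carry no $z$-dependence beyond the diagonal $z$'s, since $\phi(x), \phi(y)$ are words in $x, y$ alone.

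For (1) and (2), I would argue by a dichotomy.  When $u = v = 1$, the standard Fox calculus fact that derivatives abelianize to exponent sums under $x,y \mapsto 1$ reduces $J^{\phi_\sigma}$ to $\left(\begin{smallmatrix} z-a & -c & 0 \\ -b & z-d & 0\end{smallmatrix}\right)$; the only nonvanishing $2\times 2$ minor is $(z-a)(z-d)-bc = z^2 - \Tr(\Phi)z + 1$, which is the characteristic polynomial of $\Phi$ and is already in normalized form, giving the forward direction of (1).  When $\sigma$ is nontrivial on $\phi_2(\langle x, y\rangle)$, I would first verify that at least one of $1 - u^av^c$, $1 - u^bv^d$ is nonzero: if both vanished, the kernel of the homomorphism $\bbZ^2\to\bbC^*$ sending $(m,n) \mapsto u^mv^n$ would contain both columns of $\Phi$, which span $\bbZ^2$ because $\det\Phi = 1$, forcing $u = v = 1$.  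With a nonzero constant in the third column, the $2\times 2$ minor obtained by deleting one of the first two columns has degree exactly $1$ in $z$ (its $z$-coefficient is exactly that third-column entry, up to sign), so $\Delta_0^{\phi_\sigma}$ divides a nonzero degree-$1$ polynomial, proving (2).  Since the two cases are mutually exclusive and jointly exhaustive, this also yields the converse of (1).

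For (3), $\Delta_1^{\phi_\sigma}$ is the gcd of the six entries of $J^{\phi_\sigma}$, so it suffices to exhibit a nonzero constant divisor.  When $u = v = 1$, the entries include the constants $-b, -c$, and they cannot both vanish: else $\Phi$ would be diagonal with $ad = 1$, giving $|\Tr(\Phi)| = 2$ and contradicting hyperbolicity.  When $\sigma$ is nontrivial, the third-column argument above supplies a nonzero constant entry.  Either way the gcd is a unit in $\bbC[z,z^{-1}]$, hence equals $1$ under the fixed normalization.  The main obstacle I anticipate is simply the Fox-derivative bookkeeping needed to justify the precise shape of $J^{\phi_\sigma}$ for general $\sigma$, particularly identifying the third-column entries as $1 - \phi_\sigma(\phi(x))$ and $1 - \phi_\sigma(\phi(y))$; this reduces to computing $\partial R_1/\partial t = 1 - txt^{-1}$ (and its analogue for $R_2$) together with the identity $\Pi(txt^{-1}) = \phi(x)$ in $\pi_1(M)$, but it must be carried out carefully to handle the signs arising from Fox derivatives of inverses.
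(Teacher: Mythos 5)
Your proposal is correct and follows essentially the same strategy as the paper: compute the Fox Jacobian $J^{\phi_\sigma}$ explicitly, observe that only the first two columns carry $z$-dependence, evaluate the residual Fox derivatives as exponent sums when $\sigma$ is trivial on $\phi_2(\langle x,y\rangle)$ to recover the characteristic polynomial, and in the complementary case extract a nonzero constant in the third column to bound the degree of $\Delta_0^{\phi_\sigma}$ and trivialize $\Delta_1^{\phi_\sigma}$. The only stylistic difference worth noting is in how you handle the third column: you record the entries as $1 - u^a v^c$, $1 - u^b v^d$ (obtained from $txt^{-1} = \phi(x)$, $tyt^{-1} = \phi(y)$ in $\pi_1(M)$) and then argue that nonvanishing follows because the columns of $\Phi$ span $\bbZ^2$, whereas the paper evaluates $\phi_\sigma(txt^{-1}) = \phi_\sigma(x) = u$ directly using commutativity of $\bbC[z,z^{-1}]$, so its third column reads $1-u$, $1-v$ and nonvanishing is immediate when $\sigma$ is nontrivial on $\phi_2(\langle x,y\rangle)$. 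Both routes are valid since $u^a v^c = u$ and $u^b v^d = v$ automatically (these identities are the abelianized relators), but the paper's is a touch more streamlined. Your matrix $\left(\begin{smallmatrix} z-a & -c & 0 \\ -b & z-d & 0\end{smallmatrix}\right)$ actually has the off-diagonal entries placed correctly (the $(1,2)$ entry is minus the $y$-exponent sum of $\phi(x)$, namely $-c$); the paper writes $-b$ and $-c$ in transposed positions, but as the determinant is unchanged this has no effect on the conclusion.
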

\begin{proof}

Following the computation in Equation \eqref{eqn:jac} and using basic properties of Fox derivatives, we see that $J^{\phi_\sigma}$ is given by
$$\begin{pmatrix}
z+\phi_\sigma(x)\phi_\sigma\left(\Pi\left(\frac{\partial (\phi(x))^{-1}}{\partial x_1}\right)\right)&\phi_\sigma(x)\phi_\sigma\left(\Pi\left(\frac{\partial (\phi(x))^{-1}}{\partial x_2}\right)\right)&1-\phi_\sigma(x)\\
\phi_\sigma(y)\phi_\sigma\left(\Pi\left(\frac{\partial (\phi(y))^{-1}}{\partial x_1}\right)\right)&z+\phi_\sigma(y)\phi_\sigma\left(\Pi\left(\frac{\partial (\phi(y))^{-1}}{\partial x_2}\right)\right)&1-\phi_\sigma(y)\\
\end{pmatrix},$$
where in the above we are using that $\bbC[z,z^{-1}]$ is abelian and hence $\phi_\sigma(txt^{-1})=\phi_\sigma(x)$, $\phi_\sigma(tyt^{-1})=\phi_\sigma(y)$.
Notice that from this it is straightforward to see that $\Delta_0^{\phi_\sigma}(z)$ is a polynomial of degree at most $2$ and a necessary condition for its degree to equal $2$ is $1-\phi_\sigma(x)=1-\phi_\sigma(y)=0$, which is to say that $\phi_2(\sigma(\langle x,y\rangle))=1$.
We now exhibit (1)--(3).

For (1), suppose that $\phi_2(\langle x,y\rangle)=1$, so that $J^{\phi_\sigma}$ simplifies to
\begin{multline}\label{eqn:foxderivcomp}
\begin{pmatrix}
z-\phi_\sigma(\phi(x))\phi_\sigma\left(\Pi\left(\frac{\partial (\phi(x))}{\partial x_1}\right)\right)&-\phi_\sigma(\phi(x))^{-1}\phi_\sigma\left(\Pi\left(\frac{\partial (\phi(x))}{\partial x_2}\right)\right)&0\\
-\phi_\sigma(\phi(y))^{-1}\phi_\sigma\left(\Pi\left(\frac{\partial (\phi(y))}{\partial x_1}\right)\right)&z-\phi_\sigma(\phi(y))\phi_\sigma\left(\Pi\left(\frac{\partial (\phi(y))}{\partial x_2}\right)\right)&0\\
\end{pmatrix}\\
=\begin{pmatrix}
z-\phi_\sigma\left(\Pi\left(\frac{\partial (\phi(x))}{\partial x_1}\right)\right)&-\phi_\sigma\left(\Pi\left(\frac{\partial (\phi(x))}{\partial x_2}\right)\right)&0\\
-\phi_\sigma\left(\Pi\left(\frac{\partial (\phi(y))}{\partial x_1}\right)\right)&z-\phi_\sigma\left(\Pi\left(\frac{\partial (\phi(y))}{\partial x_2}\right)\right)&0\\
\end{pmatrix}.
\end{multline}
The conclusion then follows from the fact that, upon evaluating $\phi_\sigma$, each of the leftover derivatives in the above equation are simply the exponent sum in the corresponding variable (see the next paragraph) and hence $J^{\phi_\sigma}$ becomes
\begin{equation}\label{eqn:foxmatrix}
\begin{pmatrix}
z-a&-b&0\\
-c&z-d&0\\
\end{pmatrix},
\end{equation}
where $a$, $b$, $c$, $d$ are as in Equation \eqref{eqn:monmatrix}.
Therefore $\Delta_0^{\phi_\sigma}(z)=z^2-\Tr(\Phi)z+1$, the characteristic polynomial of $\Phi$, as required.

For the details of the calculation indicated above, first note that given some word $v\in F_2$, we have that
\begin{align*}
\frac{\partial}{\partial x} (x^\alpha y^\beta v)&=\frac{\partial}{\partial x}(x^\alpha y^\beta)+x^\alpha y^\beta\frac{\partial}{\partial x}v=\frac{\partial}{\partial x}x^\alpha+x^\alpha y^\beta\frac{\partial}{\partial x}v,\\
\frac{\partial}{\partial y} (y^\beta x^\alpha v)&=\frac{\partial}{\partial y}(y^\beta x^\alpha)+y^\beta x^\alpha\frac{\partial}{\partial y}v=\frac{\partial}{\partial y}y^\beta+y^\beta x^\alpha\frac{\partial}{\partial y}v.
\end{align*}
Now suppose that we are given a word $w=x^{a_1}y^{b_1}\dots x^{a_n}y^{b_n}$ where $a_i,b_i\in\mathbb{Z}$ are all non-zero except possibly $a_1,b_n$.
Then using the previous computation, it follows that
\begin{align*}
\frac{\partial }{\partial x}w&=\frac{\partial }{\partial x} x^{a_1}+x^{a_1}y^{b_1}\frac{\partial }{\partial x}(x^{a_2}y^{b_2}\dots x^{a_n}y^{b_n}),\\
&=\frac{\partial }{\partial x} x^{a_1}+x^{a_1}y^{b_1}\left(\frac{\partial }{\partial x}x^{a_2}+x^{a_2}y^{b_2}\left(\dots+x^{a_{n-1}}y^{b_{n-1}}\frac{\partial }{\partial x}x^{a_n}\right)\right),
\end{align*}
and similarly 
\begin{align*}
\frac{\partial }{\partial y}w&=x^{a_1}\frac{\partial }{\partial y}\left( y^{b_1}x^{a_2}\dots x^{a_n} y^{b_n}\right),\\
&=x^{a_1}\left[\frac{\partial }{\partial y} y^{b_1}+y^{b_1}x^{a_2}\frac{\partial }{\partial y}(y^{b_2}\dots x^{a_n}y^{b_n})\right],\\
&=x^{a_1}\left[\frac{\partial }{\partial y} y^{b_1}+y^{b_1}x^{a_2}\left(\frac{\partial }{\partial y}y^{b_2}+y^{b_2}x^{a_3}\left(\dots+y^{b_{n-1}}x^{a_{n}}\frac{\partial }{\partial y}y^{b_n}\right)\right)\right].
\end{align*}
Moreover one readily checks that
$$\frac{\partial }{\partial x} x^\alpha=\begin{cases}
1+x+\dots+x^{\alpha-1},&\alpha>0\\
-x^{-1}-x^{-2}-\dots-x^{\alpha},&\alpha<0\end{cases},$$
which notably has $\alpha$ terms in each sum.
The similar computation holds for $y$.
In particular, if we evaluate $\frac{\partial w}{\partial x}, \frac{\partial w}{\partial y}$ at $x=y=1$, it follows that
\begin{equation}\label{eqn:foxeval}
\left.\frac{\partial w}{\partial x}\right\vert_{x=1,y=1}=\sum_{i=1}^n\frac{\partial}{\partial x} x^{a_i}=\sum_{i=1}^na_i,\quad\left.\frac{\partial w}{\partial y}\right\vert_{x=1,y=1}=\sum_{i=1}^n\frac{\partial}{\partial y} y^{b_i}=\sum_{i=1}^nb_i,
\end{equation}
which is precisely the exponent sum of $x$ or $y$ in $w$.
Returning to the situation at hand where we evaluate Equation \eqref{eqn:foxderivcomp} at $\phi_\sigma$ when $\phi_2(\langle x,y\rangle )=1$, it follows that $\phi_\sigma(\phi(x))=\phi_\sigma(\phi(y))=1$ and from the previous calculation (specifically Equation \eqref{eqn:foxeval}) it follows that the terms $\Pi\left(\frac{\partial (\phi(x))}{\partial x_i}\right)$, $\Pi\left(\frac{\partial (\phi(y))}{\partial x_i}\right)$ are precisely the exponent sums of $\phi(x)$, $\phi(y)$ with respect to $x$ when $i=1$ and with respect to $y$ when $i=2$.
From this one deduces the calculation in Equation \eqref{eqn:foxmatrix}.

For (2), note that if $\sigma$ is such that $\phi_2(\sigma(\langle x,y\rangle))\not\equiv1$ then at least one of $\phi_\sigma(x)$, $\phi_\sigma(y)$ is not equal to $1$.
In particular, at least one of $1-\phi_\sigma(x)$, $1-\phi_\sigma(y)$ is non-zero and therefore two of the three $2$-dimensional minors of $J^{\phi_\sigma}$ are polynomials of degree at most $1$ in $z$.
As $\Delta_0^{\phi_\sigma}(z)$ is the greatest common divisor of these minors, it must also have this property.

For (3), again we notice that if $\phi_2(\sigma(\langle x,y\rangle))\not\equiv1$, then at least one of $1-\phi_\sigma(x)$, $1-\phi_\sigma(y)$ is non-zero.
In this case, $\Delta_1^{\phi_\sigma}(z)$ is the greatest common divisor of all of the entries of $J^{\phi_\sigma}$ and it follows that $\Delta_1^{\phi_\sigma}(z)$ is in $\mathbb{C}^*$.
If $\phi_2(\sigma(\langle x,y\rangle))\equiv 1$, then the computation from part (1) shows the corresponding result since, when $M$ is a hyperbolic once punctured torus bundle, at least one of $b$, $c$ must be non-zero.
In either case $\Delta_1^{\phi_\sigma}(z)$ is in $\bbC^*$ and hence by our normalization above, we have that $\Delta_1^{\phi_\sigma}(z)$ is $1$.
\end{proof}

\begin{defn}
Let $f:\pi_1(M)\to\mathbb{C}^*$ be any homomorphism and let $\sigma=f\vert_F:F\to \Un(1)$.
Then we say that $f$ is a \emph{zero of the $m^{th}$ Alexander invariant of order $r$} if $f(t)\in\mathbb{C}^*$ is a root of order r of $\Delta^{\phi_\sigma}_m(z)$, where $t$ is the stable letter of $\pi_1(M)$.
If $f$ is a zero of order $1$, then we call it a \emph{simple zero}.
\end{defn}

\noindent Note that an immediate corollary of Lemma \ref{lem:foxderiv} is that any zero of $\Delta_0^{\phi_\sigma}(z)$ is simple.

We remark that this definition is independent of choices of isomorphisms that were made previously, as shown in \cite[\S 4]{HeusenerPorti}.
A key lemma for us is the following.

\begin{lem}[\cite{HeusenerPorti}, Lemma 4.6]\label{lem:HP4.6}
With $f$ as above, $H^1(\pi_1(M),\bbC_f)=m+1$ if and only if $f$ is a zero of the $m^{th}$ Alexander invariant and not a zero of the $(m+1)^{st}$ Alexander invariant.
\end{lem}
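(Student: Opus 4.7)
The plan is to compute $H^1(\pi_1(M),\bbC_f)$ directly from the presentation $\pi_1(M)=\langle x,y,t\mid R_1,R_2\rangle$ using Fox calculus, then read off its dimension in terms of the rank of $J^{\phi_\sigma}$ specialized at $z=f(t)$. The Fox--Lyndon partial free resolution of $\bbZ$ over $\bbZ[\pi_1(M)]$ attached to this presentation, after tensoring with $\bbC_f$ and dualizing, yields a complex
$$\bbC\xrightarrow{\delta_0}\bbC^3\xrightarrow{\delta_1}\bbC^2,$$
whose cohomology in the middle is $H^1(\pi_1(M),\bbC_f)$. Here $\delta_0$ sends $c\mapsto(c(f(x)-1),\,c(f(y)-1),\,c(f(t)-1))$, while $\delta_1$ is (up to transpose) the matrix $J^f$ obtained from $J^{\phi_\sigma}$ of Equation \eqref{eqn:jac} by specializing $z=f(t)$.

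Next I would identify cocycles and coboundaries explicitly. A $1$-cocycle $d$ is determined by $(d(x),d(y),d(t))\in\bbC^3$, and the cocycle conditions $d(R_1)=d(R_2)=0$ become, via the standard Fox-calculus identity, the linear system $J^f\cdot(d(x),d(y),d(t))^T=0$. Hence $\dim Z^1=3-\mathrm{rank}(J^f)$. Since $f$ is non-trivial and $x,y,t$ generate $\pi_1(M)^{ab}$, at least one of $f(x)-1,\,f(y)-1,\,f(t)-1$ is non-zero, so $\delta_0$ is injective and $\dim B^1=1$. Combining these gives
$$\dim H^1(\pi_1(M),\bbC_f)=2-\mathrm{rank}(J^f).$$

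To finish, I would translate rank conditions into orders of vanishing of the Alexander invariants using the description already recalled in the paper: $\Delta_m^{\phi_\sigma}(z)$ is the greatest common divisor of the $(2-m)$-minors of $J^{\phi_\sigma}$. Therefore $f$ is a zero of $\Delta_m^{\phi_\sigma}$ if and only if every $(2-m)$-minor of $J^f$ vanishes, if and only if $\mathrm{rank}(J^f)<2-m$. Asking that $f$ be a zero of $\Delta_m^{\phi_\sigma}$ but \emph{not} of $\Delta_{m+1}^{\phi_\sigma}$ thus pins $\mathrm{rank}(J^f)=1-m$ exactly, and substitution into the displayed formula yields $\dim H^1(\pi_1(M),\bbC_f)=m+1$. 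The reverse implication is the same equivalence read backwards, since the integer $\dim H^1(\pi_1(M),\bbC_f)$ uniquely determines $\mathrm{rank}(J^f)$ and hence which Alexander invariants vanish at $f(t)$.

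The main obstacle I anticipate is less mathematical than bookkeeping: orienting the two dualizations so that $\delta_1$ really equals $J^f$ (up to transpose), checking that injectivity of $\delta_0$ corresponds precisely to non-triviality of $f$ so that $\dim B^1=1$, and keeping straight the indexing convention in ``$(2-m)$-minors.'' Once these conventions are fixed, the proof reduces to the linear-algebra identity $\dim\ker J^f=3-\mathrm{rank}(J^f)$ together with the minor-GCD characterization of the $\Delta_m^{\phi_\sigma}$.
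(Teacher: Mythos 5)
Your proof is correct. The paper does not actually prove this lemma---it cites it directly from Heusener--Porti, remarking only that the indices in the published statement of \cite[Lem.\ 4.6]{HeusenerPorti} need to be shifted to obtain the form used here. Your Fox-calculus argument is the standard way to establish the corrected statement, and it tracks the route Heusener--Porti themselves take: identify $Z^1(\pi_1(M),\bbC_f)$ with $\ker J^f$ where $J^f$ is the Jacobian of Equation~\eqref{eqn:jac} specialized at $z=f(t)$; note $\dim B^1=1$ because $f$ is nontrivial on generators; conclude $\dim H^1 = 2-\mathrm{rank}(J^f)$; and translate the rank condition into vanishing of $(2-m)$-minors using that $\Delta_m^{\phi_\sigma}$ is their gcd (over the PID $\bbC[z,z^{-1}]$, the gcd vanishes at $f(t)\in\bbC^*$ precisely when every minor does). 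The bookkeeping points you flagged---injectivity of $\delta_0$, orientation of the dualization so $\delta_1=J^f$ with rows indexed by relators and columns by generators, and the $(2-m)$-minor indexing forcing $\mathrm{rank}(J^f)=1-m$---all come out as you describe, and the bijection $\mathrm{rank}\mapsto 2-\mathrm{rank}$ between $\{0,1,2\}$ and itself gives the ``if and only if.''
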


We point out for the reader that the statement of \cite[Lem 4.6]{HeusenerPorti} has the incorrect indices.
Following their work in the previous sections and the proof of that lemma, one arrives at the statement in Lemma \ref{lem:HP4.6} which is what they use in the remainder of their paper.
With this in mind we get the following immediate corollary.

\begin{cor}
Let $f:\pi_1(M)\to\bbC^*$ be a non-trivial homomorphism and $\overline{\rho}_f$ an abelian representation satisfying Equation \eqref{eqn:abrep}.
Then there exists a non-abelian reducible representation $\overline{\rho}_f^d:\pi_1(M)\to\PSL_2(\bbC)$ such that $\chi_{\overline{\rho}_f}=\chi_{\overline{\rho}_f^d}$ if and only if $f$ is a zero of the $0^{th}$ Alexander invariant.
Moreover, when $f$ is a zero of the $0^{th}$ Alexander invariant, $\overline{\rho}_f^d$ is unique up to conjugation.
\end{cor}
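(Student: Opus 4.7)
The plan is to chain together the three pieces of machinery just developed: Lemma \ref{lem:nonabeliancocycle} (which converts the existence question into one about $H^1(\pi_1(M),\bbC_f)$), Lemma \ref{lem:HP4.6} (which translates dimensions of that cohomology into being a zero of a specific Alexander invariant), and Lemma \ref{lem:foxderiv}(3) (which pins down that $\Delta_1^{\phi_\sigma}$ is identically $1$). There is essentially no new content to prove; the work is in assembling the equivalences correctly, so this will be a short syllogistic argument.

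For the forward direction, I would start by observing that if a non-abelian reducible $\overline{\rho}_f^d$ with $\chi_{\overline{\rho}_f}=\chi_{\overline{\rho}_f^d}$ exists, then by Lemma \ref{lem:nonabeliancocycle} one has $H^1(\pi_1(M),\bbC_f)>0$. Apply Lemma \ref{lem:HP4.6} with $m\ge 0$ the least integer such that $f$ is a zero of $\Delta_m^{\phi_\sigma}$ but not $\Delta_{m+1}^{\phi_\sigma}$. If $m\ge 1$, then $f$ must be a zero of $\Delta_1^{\phi_\sigma}$, which is impossible because Lemma \ref{lem:foxderiv}(3) asserts $\Delta_1^{\phi_\sigma}=1$ under our normalization; hence $m=0$ and $f$ is a zero of the $0^{th}$ Alexander invariant. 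For the reverse direction, suppose $f$ is a zero of $\Delta_0^{\phi_\sigma}$. Then since $\Delta_1^{\phi_\sigma}=1$, $f$ is vacuously not a zero of $\Delta_1^{\phi_\sigma}$, so Lemma \ref{lem:HP4.6} gives $\dim_\bbC H^1(\pi_1(M),\bbC_f)=1$. Applying Lemma \ref{lem:nonabeliancocycle} produces the desired non-abelian reducible $\overline{\rho}_f^d$.

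For the uniqueness clause, I would use the second half of Lemma \ref{lem:nonabeliancocycle}, which states that up to conjugation $\overline{\rho}_f^d$ depends only on the $\bbC$-span of $[d]$ in $H^1(\pi_1(M),\bbC_f)$. Non-abelianness of $\overline{\rho}_f^d$ forces $[d]\ne 0$ (since abelian representations correspond precisely to cocycles in $B^1(\pi_1(M),\bbC_f)$). Because $H^1(\pi_1(M),\bbC_f)$ is one-dimensional by the paragraph above, every nonzero class has the same $\bbC$-span, namely all of $H^1$, so all such representations are mutually conjugate.

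The only potential obstacle is bookkeeping between the various index conventions and the indexing warning flagged just after Lemma \ref{lem:HP4.6}; since I am using the statement as quoted, this should not actually cause trouble, but I would want to double-check that I am applying the lemma with the corrected indices rather than the original indexing in \cite{HeusenerPorti}. Otherwise this is a direct application of the three lemmas.
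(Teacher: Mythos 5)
Your argument is correct and follows exactly the same chain as the paper's proof: Lemma \ref{lem:foxderiv}(3) rules out $f$ being a zero of $\Delta_1^{\phi_\sigma}$, Lemma \ref{lem:HP4.6} then forces $H^1(\pi_1(M),\bbC_f)=1$ precisely when $f$ is a zero of $\Delta_0^{\phi_\sigma}$, and Lemma \ref{lem:nonabeliancocycle} converts this into existence and uniqueness of $\overline{\rho}_f^d$. You have simply spelled out the equivalences in slightly more detail than the paper's two-line version; there is no substantive difference.
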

\begin{proof}
Lemma \ref{lem:foxderiv}(3) shows that any such $f$ is never a zero of $\Delta_1^{\phi_\sigma}(z)$ from which the first statement of the lemma follows immediately from Lemma \ref{lem:HP4.6}.
Moreover, since $H^1(\pi_1(M),\bbC_f)=1$ in this case, the second statement follows from Lemma \ref{lem:nonabeliancocycle}.
\end{proof}

We end this subsection by summarizing the main result of \cite{HeusenerPorti}, which is to show that any non-abelian reducible representation $\overline{\rho}^d_f$ is a reduced point of the $\PSL_2(\bbC)$-representation scheme and its associated character is a reduced point of the $\PSL_2(\bbC)$-character scheme.
For this, we define the $\PSL_2(\bbC)$-representation and character schemes (respectively) via
$$\overline{\frak{R}}(M)_\bbC=\frak{hom}(\pi_1(M),\PSL_2)_{\bbC},\quad \overline{\frak{X}}(M)_\bbC=\overline{\frak{R}}(M)_\bbC/\!/\PSL_2(\bbC),$$
where again the latter denotes the GIT quotient.
These are the analogues of the schemes defined in Equation \eqref{eqn:schemetime}.
The main results of Heusener--Porti can then be combined into the following statement.

\begin{thm}[\cite{HeusenerPorti}, Theorems 1.2 and 1.3]\label{thm:HPmain}
If $f$ is a simple zero of the Alexander invariant, then $\overline{\rho}_f$ is contained in exactly two irreducible components of $\overline{\frak{R}}(M)_\bbC$, one component containing irreducible representations and the other containing only abelian representations. Moreover, the representation $\overline{\rho}_f$ is a regular point of each of these irreducible components.

If $\overline{\chi_{\overline{\rho}_f}}$ denotes the associated character in $\overline{\frak{X}}(M)_\bbC$, then $\overline{\chi_{\overline{\rho}_f}}$ is contained in exactly two irreducible curve components of $\overline{\frak{X}}(M)_\bbC$ which are the quotients of those above.
Moreover, $\overline{\chi_{\overline{\rho}_f}}$, is a regular point of each of these components.
\end{thm}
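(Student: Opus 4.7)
The plan is to prove the result by producing two explicit candidate components through $\overline{\rho}_f$ in $\overline{\frak{R}}(M)_\bbC$, controlling their dimensions directly, and matching these against a computation of the Zariski tangent space $Z^1(\pi_1(M), \frak{sl}_2(\bbC)^{\overline{\rho}_f})$. Descent to the character scheme is then formal via the Luna slice theorem, since $\overline{\rho}_f$ is semisimple and hence has closed $\PSL_2$-orbit.

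First I would produce the two components. The abelian component $V_{\mathrm{ab}}$ is the closure of the $\PSL_2$-orbit of abelian representations factoring through $H_1(M,\bbZ) \cong \bbZ \oplus F$; since $\Hom(H_1(M,\bbZ), \bbC^*)$ is one-dimensional and the stabilizer of a non-central diagonal representation in $\PSL_2$ is one-dimensional, $V_{\mathrm{ab}}$ has dimension three. For the non-abelian component, I would invoke Lemma \ref{lem:nonabeliancocycle} together with Lemma \ref{lem:HP4.6} and Lemma \ref{lem:foxderiv}(3) to produce, up to conjugation, a unique non-abelian reducible $\overline{\rho}_f^d$ with $\chi_{\overline{\rho}_f^d} = \chi_{\overline{\rho}_f}$ (the simple-zero hypothesis forces $\dim H^1(\pi_1(M), \bbC_f) = 1$). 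Classical hyperbolic deformation theory then gives a four-dimensional component $V_{\mathrm{irr}}$ containing $\overline{\rho}_f^d$ whose generic representation is irreducible, and since $\overline{\rho}_f$ is the semisimplification of $\overline{\rho}_f^d$ it lies in the closure of the $\PSL_2$-orbit of $\overline{\rho}_f^d$, hence in $V_{\mathrm{irr}}$.

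The main technical step is the tangent space computation. Because $\overline{\rho}_f$ is diagonal, the adjoint representation splits as a $\pi_1(M)$-module,
\begin{equation*}
\frak{sl}_2(\bbC)^{\overline{\rho}_f} \cong \bbC_f \oplus \bbC \oplus \bbC_{f^{-1}},
\end{equation*}
and hence $Z^1$ decomposes accordingly. Lemma \ref{lem:HP4.6} gives $\dim H^1(\bbC_f) = 1$, so $\dim Z^1(\bbC_f) = 1 + \dim B^1(\bbC_f) = 2$; the middle summand contributes $\dim Z^1(\bbC) = \dim H^1(M,\bbC) = 1$; and the third summand is handled by Poincar\'e-Lefschetz duality on $M$ together with the vanishing $H^*(\partial M, \bbC_f) = 0$ (a consequence of $f(t) \neq 1$ and $l \in [\pi_1(M), \pi_1(M)]$), yielding $\dim H^1(\bbC_{f^{-1}}) = 1$ and hence $\dim Z^1(\bbC_{f^{-1}}) = 2$. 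Summing gives $\dim Z^1 = 5$, matching $\dim V_{\mathrm{ab}} + \dim V_{\mathrm{irr}} - \dim(\PSL_2 \cdot \overline{\rho}_f) = 3 + 4 - 2$. Combined with \eqref{eqn:tangentineq}, this forces $T_{\overline{\rho}_f} V_{\mathrm{ab}} + T_{\overline{\rho}_f} V_{\mathrm{irr}} = Z^1$ with intersection equal to $B^1$, so $\overline{\rho}_f$ is a regular point of each component and no other component of $\overline{\frak{R}}(M)_\bbC$ contains it.

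For the character scheme, the passage is formal. Since $\overline{\rho}_f$ has closed orbit, the Luna slice theorem identifies the local structure of the GIT quotient near $\overline{\chi_{\overline{\rho}_f}}$ with the stabilizer-invariants of the normal slice; the images of $V_{\mathrm{ab}}$ and $V_{\mathrm{irr}}$ in $\overline{\frak{X}}(M)_\bbC$ are then one-dimensional irreducible components, smooth at $\overline{\chi_{\overline{\rho}_f}}$. The hardest step I foresee is the computation of $\dim H^1(\pi_1(M), \bbC_{f^{-1}})$: while Lemma \ref{lem:HP4.6} directly controls $H^1(\bbC_f)$, its dual requires a Poincar\'e-Lefschetz duality argument specific to the cusped hyperbolic setting, and the final dimension matching is sensitive to the precise interplay between $B^1$, the $1$-dimensional stabilizer, and the weight decomposition of $\frak{sl}_2$.
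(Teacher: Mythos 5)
The statement you are proving is cited verbatim in the paper from Heusener--Porti and is not proved there, so there is no in-paper argument to compare against; what you have written is a re-derivation of Heusener--Porti's Theorems 1.2 and 1.3. Your outline does follow their framework --- the weight decomposition $\frak{sl}_2(\bbC)^{\overline{\rho}_f}\cong\bbC_f\oplus\bbC\oplus\bbC_{f^{-1}}$, the identification $\dim H^1(\pi_1(M),\bbC_{f^{\pm 1}})=1$ via the Alexander invariant and Poincar\'e--Lefschetz duality, the resulting $\dim Z^1=5$, and the descent via the Luna slice theorem are all ingredients of the actual proof.

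The gap is in the sentence where you claim the equality $\dim V_{\mathrm{ab}}+\dim V_{\mathrm{irr}}-\dim(\PSL_2\cdot\overline{\rho}_f)=5=\dim Z^1$ ``forces'' $T_{\overline{\rho}_f}V_{\mathrm{ab}}+T_{\overline{\rho}_f}V_{\mathrm{irr}}=Z^1$ with intersection $B^1$, regularity of $\overline{\rho}_f$ on each branch, and the absence of any third component. None of that follows from the dimension count. The available inequalities are one-sided: $\dim_{\overline{\rho}_f}V\leq\dim T_{\overline{\rho}_f}V\leq 5$ and $B^1\subseteq T_{\overline{\rho}_f}V$ for every component $V$ through $\overline{\rho}_f$. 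Nothing you have written rules out, e.g., $T_{\overline{\rho}_f}V_{\mathrm{irr}}=Z^1$ with $\overline{\rho}_f$ a singular point of $V_{\mathrm{irr}}$, or a third branch whose tangent cone also sits inside $Z^1$. The actual content of Heusener--Porti's proof is the step you skip: an analysis of the second-order obstruction map on $Z^1$. Writing a cocycle as $u=u_++u_0+u_-$ in the weight decomposition, they compute that the obstruction $[u,u]$ to integrability reduces to the cup product $[u_+]\cup[u_-]\in H^2(\pi_1(M),\bbC)$, which by nondegeneracy of the duality pairing and the one-dimensionality of $H^1(\bbC_{f^{\pm 1}})$ vanishes precisely when $[u_+]=0$ or $[u_-]=0$. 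This exhibits the quadratic cone at $\overline{\rho}_f$ as the union of two linear subspaces of dimensions $3$ and $4$, which is what forces the local ring to be reduced with exactly two smooth branches. The tangent-space dimension computation is a prerequisite for that argument, not a substitute for it. A secondary issue: your vanishing $H^*(\partial M,\bbC_f)=0$ uses $f(t)\neq 1$, and the simple-zero hypothesis on the twisted Alexander invariant does not rule out $f(t)=1$ --- indeed $f(t)=1$ is exactly what happens at Type C characters (Theorem \ref{thm:typecrep}), where $\Delta_0^{\phi_\sigma}$ has degree at most one by Lemma \ref{lem:foxderiv}(2) and can vanish at $z=1$. Heusener--Porti handle that case with a modified long exact sequence for the pair $(M,\partial M)$, which your sketch would need to incorporate.
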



\subsection{Lifting representations to $\SL_2(\bbC)$}\label{sec:replift}

The discussion in Section \ref{sec:HPrecap} was focused strictly on representations into $\PSL_2(\bbC)$.
Therefore in order to transfer these results to $\frak{X}(M)_\bbC$, we will need to lift the representations $\overline{\rho}_f$, $\overline{\rho}_f^d$ to $\SL_2(\bbC)$.
First we make some general comments, then go on to discuss our specific setting.

Let $\overline{V}\subset \overline{\frak{R}}(M)_\bbC$ be the subset of representations into $\PSL_2(\bbC)$ which admit a lift to $\SL_2(\bbC)$.
By \cite[Thm 4.1]{Culler}, $\overline{V}$ is a union of irreducible components of $\overline{\frak{R}}(M)_\bbC$ and hence is a union of Zariski closed subsets.
Moreover $\overline{V}$ is conjugation invariant and hence projects to a closed subset $\overline{W}$ of the $\PSL_2(\bbC)$-character scheme $\overline{\frak{X}}(M)_\bbC$.
There is a natural action of $H^1(\pi_1(M),\mu_2)$ on the $\SL_2(\bbC)$-representation scheme $\frak{R}(M)_\bbC$.
Indeed for each $\epsilon\in H^1(\pi_1(M),\mu_2)$ and $\rho\in \frak{R}(M)_\bbC$, $\epsilon\cdot \rho (\gamma)=\epsilon(\gamma)\rho(\gamma)$ gives another point in $\frak{R}(M)_\bbC$.
This induces an action on $\frak{X}(M)_\bbC$ and it follows from \cite[Prop 4.2]{HeusenerPorti2} that
\begin{equation}\label{eqn:GITquot}
\overline{W}=\frak{X}(M)_\bbC\slash\!\slash H^1(\pi_1(M),\mu_2)\subset\overline{\frak{X}}_\bbC.
\end{equation}
Moreover the map from $\frak{X}(M)_\bbC\to \overline{W}$ is a branched covering with branch locus contained in the set of $\Ad$-reducible representations and is a regular $|H^1(\pi_1(M),\mu_2)|$ cover away from this locus \cite[Pg 513--514]{MorganShalen3}.
Recall that $\Ad$-reducible representations are either reducible in the typical sense or fix an unoriented geodesic under the natural action of $\SL_2(\bbC)$ on $\bbH^3$ (see \cite[Lem 3.12]{HeusenerPorti2}).

We caution the reader that \cite[Rem 4.3]{HeusenerPorti2} describing the branch locus in that same paper is not correct, so we briefly clarify the point following the discussion in \cite{MorganShalen3}.
Given $\epsilon\in H^1(\pi_1(M),\mu_2)$, let $S(\epsilon)$ denote the subset of $\frak{X}(M)_\bbC$ of characters $\chi$ which satisfy the property that $\chi(\gamma)=0$ whenever $\epsilon(\gamma)=-1$.
Then $S(\epsilon)$ is a Zariski closed subset of $\frak{X}(M)_\bbC$ and, as $H^1(\pi_1(M),\mu_2)$ is finite (in fact, is at most order $8$ in our setting), it follows that $\cup_{\epsilon\neq 1} S(\epsilon)$ is also Zariski closed, where we use $\epsilon\neq 1$ to mean that $\epsilon$ is not the trivial map.
It is straightforward to check that the map $\frak{X}(M)_\bbC\to \overline{W}$ branches over $\cup_{\epsilon\neq 1} S(\epsilon)$ and is a regular cover on its complement.
In fact in the cases under consideration, where we restrict this map to components of irreducible representations, $\cup_{\epsilon\neq 1} S(\epsilon)$ is clearly proper and therefore a finite union of points corresponding to characters of non-abelian reducible representations which have the requisite trace condition.

Now fix a Type A, B, or C representation $\rho\in \frak{R}(M)_\bbC$ and let $\sigma_\rho:\pi_1(M)\to\bbC^*$ denote the homomorphism from Lemma \ref{lem:RootsOf1}, which maps an element $\gamma$ to the eigenvalue of $\rho(\gamma)$ in the upper left entry.
From the work done in Section \ref{sec:nonabclassification}, we have seen that in order for a representation $\rho:\pi_1(M)\to\SL_2(\bbC)$ to be non-abelian reducible, one of $x$, $y$, or $t$, has eigenvalues not equal to $\pm 1$ and hence $\sigma_\rho(\gamma)\not\equiv 1$.
Denote by $\overline{\rho}:\pi_1(M)\to\PSL_2(\bbC)$ the representation formed by the composition of $\rho$ with the natural projection $\SL_2(\bbC)\to\PSL_2(\bbC)$.
Taking $f=\sigma_\rho^2$ and $f^{\pm\frac{1}{2}}=\sigma_\rho^{\pm 1}$ in the language of Section \ref{sec:HPrecap}, we see that any non-abelian reducible representation $\rho$ of Type A, B, or C gives rise to a representation $\overline{\rho}:\pi_1(M)\to\PSL_2(\bbC)$ which can be written as $\overline{\rho}^d_{f}$ from Equation \eqref{eqn:nonabrep} for some $d\in H^1(\pi_1(M),\bbC_f)$.
Conversely, any representation $\overline{\rho}^d_f$ which lifts to $\SL_2(\bbC)$ factors through a Type A, B, or C representation and hence is of the form $\overline{\rho}^d_{f}$ for some $\sigma:\pi_1(M)\to\bbC^*$.
Moreover, since $H^1(\pi_1(M),\bbC_f)=1$, there exists precisely one conjugacy class of each of these representations.

Using the reducedness from Theorem \ref{thm:HPmain}, the description of Equation \eqref{eqn:GITquot}, and the fact that $\frak{X}(M)_\bbC\to \overline{W}$ is a branched covering, we obtain the following useful theorem.

\begin{thm}\label{thm:HPsl2c}
Let $\rho$ be a fixed Type A, B, or C representation with associated character $\chi_\rho$ in the $\SL_2(\bbC)$-character scheme $\frak{X}(M)_\bbC$.
If $\chi_\rho$ lies on an irreducible curve $\frak{C}_\bbC$ which contains the character of an irreducible representation, then $\chi_\rho$ is a reduced point of $\frak{C}_\bbC$.
Moreover, there is an irreducible component $\frak{D}_\bbC\subset\frak{R}(M)_\bbC$ of dimension $4$ which projects to $\frak{C}_\bbC$ for which $\rho\in \frak{D}_\bbC$ is a reduced point.
\end{thm}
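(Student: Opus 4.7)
The plan is to import Theorem \ref{thm:HPmain} of Heusener--Porti in the $\PSL_2(\bbC)$ setting and transfer its conclusions to $\SL_2(\bbC)$ using the covers from Section \ref{sec:replift}. First, I observe that for any Type A, B, or C representation $\rho$, Lemma \ref{lem:RootsOf1} provides a non-trivial character $\sigma_\rho : \pi_1(M) \to \bbC^*$, and on passing to $\PSL_2(\bbC)$ the representation $\overline{\rho}$ is of the form $\overline{\rho}_f^d$ with $f = \sigma_\rho^2$. The non-abelianness of $\overline{\rho}$ forces $H^1(\pi_1(M), \bbC_f) \ge 1$ via Lemma \ref{lem:nonabeliancocycle}, and Lemmas \ref{lem:HP4.6} and \ref{lem:foxderiv}(3) then show $f$ is a \emph{simple} zero of the $0^{th}$ Alexander invariant.

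Theorem \ref{thm:HPmain} now yields that $\overline{\rho}_f$ is a regular point of an irreducible component $\overline{\frak{D}}_\bbC \subset \overline{\frak{R}}(M)_\bbC$ that also contains irreducible representations, and $\overline{\chi_\rho} = \overline{\chi_{\overline{\rho}_f}}$ is a regular point of the corresponding irreducible curve component $\overline{\frak{C}}_\bbC \subset \overline{\frak{X}}(M)_\bbC$. Since $\overline{\frak{C}}_\bbC$ is a curve and $\overline{\frak{D}}_\bbC$ fibers over it via the $\PGL_2$-quotient with generic $3$-dimensional orbits, $\dim \overline{\frak{D}}_\bbC = 4$. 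Moreover, the conjugation calculation in Section \ref{sec:HPrecap} shows $\overline{\rho} = \overline{\rho}_f^d$ is $\PGL_2$-conjugate to $\overline{\rho}_f^{td}$ for every $t \ne 0$, with $\overline{\rho}_f^{td} \to \overline{\rho}_f$ as $t \to 0$; openness and $\PGL_2$-invariance of the smooth locus of $\overline{\frak{D}}_\bbC$ then force $\overline{\rho}$ itself to be a regular point of $\overline{\frak{D}}_\bbC$.

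To transfer these statements to $\SL_2(\bbC)$, I use that $\rho$ lifts $\overline{\rho}$, so $\overline{\rho} \in \overline{V}$. Any two lifts differ by a nontrivial element of $H^1(\pi_1(M), \mu_2)$, so this finite group acts freely on the fiber and $\frak{R}(M)_\bbC \to \overline{V}$ is an étale cover. Consequently $\rho$ is a regular point of the unique irreducible component $\frak{D}_\bbC \subset \frak{R}(M)_\bbC$ covering $\overline{\frak{D}}_\bbC$, and $\dim \frak{D}_\bbC = 4$; this gives the ``moreover'' clause. For reducedness of $\chi_\rho$ on $\frak{C}_\bbC$, the quotient $\frak{X}(M)_\bbC \to \overline{W}$ is étale away from the finite branch locus $\bigcup_{\epsilon \ne 1} S(\epsilon) \cap \frak{C}_\bbC$, and off this locus smoothness of $\overline{\chi_\rho}$ in $\overline{\frak{C}}_\bbC$ pulls back directly.

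The main obstacle is the branched case $\chi_\rho \in \bigcup_{\epsilon \ne 1} S(\epsilon)$, which genuinely occurs (e.g., for the Type C representations of Example \ref{ex:typec} with $\alpha = \beta = i$, giving $\chi_\rho(x) = 0$). I would resolve this via Luna's étale slice theorem at the unique closed orbit $\SL_2 \cdot \rho^{ab}$ in $\Upsilon^{-1}(\chi_\rho) \cap \frak{D}_\bbC$: since $\rho^{ab} \in \frak{D}_\bbC$ is also a regular point (by the same étale-cover argument), its stabilizer is the $1$-dimensional diagonal torus $T$, so a normal slice $N$ at $\rho^{ab}$ is smooth of dimension $2$ and $\frak{X}(M)_\bbC$ is étale-locally $N /\!/ T$ near $\chi_\rho$. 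The weights of $T$ on $T_{\rho^{ab}} N$ come from the $\frak{sl}_2$-adjoint action and are of opposite nonzero signs, so $N /\!/ T$ is reduced of Krull dimension $1$, delivering the reducedness of $\chi_\rho$ on $\frak{C}_\bbC$.
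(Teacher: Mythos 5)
The paper itself offers no explicit proof of this theorem; it merely asserts that it follows from Theorem \ref{thm:HPmain}, Equation \eqref{eqn:GITquot}, and the branched covering $\frak{X}(M)_\bbC\to\overline{W}$. Your proposal is therefore best compared against that one-sentence assertion, and in that light it is both consistent with the intended route and substantially more careful. Two of your additions are genuine improvements. First, the conjugation-plus-openness argument (sliding $\overline{\rho}_f^{td}\to\overline{\rho}_f$ as $t\to 0$ and using $\PGL_2$-invariance of the smooth locus) is needed because Theorem \ref{thm:HPmain} as stated only addresses the abelian point $\overline{\rho}_f$, not the non-abelian $\overline{\rho}_f^d$; you correctly observed that HP's theorem plus irreducibility of $H^1(\pi_1(M),\bbC_f)$ propagates regularity to $\overline{\rho}_f^d$. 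Second, you correctly noticed that the branched case genuinely occurs and that the paper says nothing about it: for instance, in Example \ref{ex:typec} the character $\epsilon'$ with $\epsilon'(x)=\epsilon'(y)=-1$, $\epsilon'(t)=1$ is a valid homomorphism (since $F_{6j+1}+F_{6j}$ and $F_{6j}+F_{6j-1}$ are both odd) which fixes $\chi_\rho$, so $\chi_\rho$ does lie in $\bigcup_{\epsilon\ne 1}S(\epsilon)$. Addressing this via Luna's \'etale slice theorem at the closed orbit of $\rho^{ab}$ is the right move, and your verification of hypotheses for HP (via Lemma \ref{lem:nonabeliancocycle}, Lemma \ref{lem:HP4.6}, and Lemma \ref{lem:foxderiv}(3), together with the observation that all zeros of $\Delta_0^{\phi_\sigma}$ are simple) is correct.

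There is, however, one imprecise step worth tightening. You write that the slice $N$ at $\rho^{ab}$ is smooth of dimension $2$ and that $\frak{X}(M)_\bbC$ is \'etale-locally $N/\!/T$ near $\chi_\rho$. If $N$ denotes the slice inside $\frak{D}_\bbC$ (which is what you need for smoothness of dimension $2$), then Luna gives an \'etale local model for $\frak{D}_\bbC/\!/\SL_2$, not directly for $\frak{X}(M)_\bbC$ or for $\frak{C}_\bbC$; you still need to identify $\mathcal{O}_{\frak{D}_\bbC/\!/\SL_2,\,\chi_\rho}$ with $\mathcal{O}_{\frak{C}_\bbC,\chi_\rho}$, which requires a word about how the irreducible (possibly non-reduced) component $\frak{C}_\bbC$ of $\frak{X}(M)_\bbC$ relates to the scheme-theoretic GIT image of the chosen component $\frak{D}_\bbC$. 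Alternatively, applying Luna to all of $\frak{R}(M)_\bbC$ at $\rho^{ab}$ produces a $3$-dimensional slice which is \emph{not} smooth (it is \'etale-locally the union of a plane with $T$-weights $\{2,-2\}$ and a line with weight $0$, meeting at the origin, since $H^1(\pi_1(M),\frak{sl}_2(\bbC)^{\rho^{ab}})\cong\bbC_{-2}\oplus\bbC_0\oplus\bbC_{2}$ has dimension $3$); its $T$-quotient is a nodal curve, and the branch corresponding to $\frak{C}_\bbC$ is the reduced ring $\bbC[w_+w_-]$, which is the conclusion you want. Either fix is routine, but without one the final sentence of the proposal does not literally land on $\frak{C}_\bbC$. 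As a smaller remark, the claim that the $T$-weights on $T_{\rho^{ab}}N$ are ``of opposite nonzero signs'' deserves a justification (namely that the weight-zero direction of $H^1$ is tangent to the abelian component, which meets $\frak{D}_\bbC$ transversally at $\rho^{ab}$), though the reducedness conclusion actually holds for any pair of weights with at most one equal to zero.
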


Recall that the statement that $\chi_\rho$ is regular is the statement that the second inequality in Equation \eqref{eqn:tangentineq} is actually an equality.
Hence $\mathcal{O}_{\frak{C}_\bbC,\chi_\rho}$ is a regular local ring of dimension $1$, that is, a discrete valuation ring.


\subsection{The tame symbol at Type A, B, and C representations.}\label{sec:localcoord}

To prove Theorem \ref{thm:main2} we will need some additional information about the structure of $\frak{X}(M)_\bbC$ at Type A, B, and C representations.
For this, let $\rho$ be a representation of either Type A, B, or C contained in an irreducible curve $\frak{C}_\bbC$ which contains the character of an irreducible representation.
Then, as in Theorem \ref{thm:HPsl2c}, there is a component $\frak{D}_\bbC\subset \frak{R}(M)_\bbC$ such that $\rho\in \frak{D}_\bbC$ and such that $\frak{D}_\bbC$ is regular at $\rho$ and of dimension $4$.

As discussed in Section \ref{sec:backrep}, the Zariski tangent space $T_\rho\frak{D}_\bbC$ can be described using the space of 1-cocycles $Z^1(\pi_1(M),\frak{sl}_2(\bbC)^\rho)$ and we would first like to produce an explicit basis for the latter.
Fix $f:\pi_1(M)\to\bbC^*$ and write $\bbC_-$ to denote $\bbC$ with the structure of a $\pi_1(M)$-module via $\gamma\cdot z=f(\gamma)^{-1}z$, where we remark that this is a well defined left action since the image of $f$ lies in $\bbC^*$ which is abelian.
Write $\frak{b}_+$ for the Borel subalgebra of upper triangular matrices in $\frak{sl}_2(\mathbb{C})$, that is, elements of $\frak{b}_+$ are matrices in the Lie algebra $\frak{sl}_2(\bbC)$ with a $0$ in the lower left entry.
Then we need the following result of Heusener--Porti.

\begin{thm}[\cite{HeusenerPorti}, Corollary 5.4]\label{cor:HPcor54}
If $f:\pi_1(M)\to\bbC^*$ is a simple zero of the Alexander invariant, then the projection $\mathrm{pr}:\frak{sl}_2(\bbC)\to\frak{sl}_2(\bbC)/\frak{b}_+\cong \bbC_-$ induces an isomorphism
$$H^1(\pi_1(M),\frak{sl}_2(\bbC)^\rho)\cong H^1(\pi_1(M),\bbC_-)\cong \bbC.$$
Moreover, $H^1(\pi_1(M),\bbC_-)$ is generated by any $1$-cocycle $d_-(\gamma)\in Z^1(\pi_1(M),\bbC_-)$ such that 
\begin{equation}\label{eqn:rhominus}
\rho_-(\gamma)=\begin{pmatrix}
1&0\\
d_-(\gamma)&1
\end{pmatrix}\rho_f(\gamma)=\pm\begin{pmatrix}
1&0\\
d_-(\gamma)&1
\end{pmatrix}\begin{pmatrix}
f^{\frac{1}{2}}(\gamma)&0\\
0&f^{-\frac{1}{2}}(\gamma)\end{pmatrix},
\end{equation}
is a non-abelian reducible representation. 
\end{thm}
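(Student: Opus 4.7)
The plan is to exploit the three-step filtration of $\frak{sl}_2(\bbC)^\rho$ as a $\pi_1(M)$-module induced by the upper-triangular form of $\rho=\rho_f^d$:
$$0\subset\frak{n}_+\subset\frak{b}_+\subset\frak{sl}_2(\bbC),$$
where $\frak{n}_+$ is spanned by the raising operator $e_+$ and $\frak{b}_+$ is the Borel subalgebra of upper-triangular matrices. A direct computation of $\Ad_{\rho(\gamma)}$ on the standard basis $e_+,h,e_-$ identifies the graded pieces as $\frak{n}_+\cong\bbC_f$, $\frak{b}_+/\frak{n}_+\cong\bbC_0$, and $\frak{sl}_2(\bbC)/\frak{b}_+\cong\bbC_-$, and further identifies the extension class of $0\to\bbC_f\to\frak{b}_+\to\bbC_0\to 0$ in $\Ext^1_{\pi_1(M)}(\bbC_0,\bbC_f)=H^1(\pi_1(M),\bbC_f)$ with a non-zero multiple of the class $[d]$ of the cocycle defining $\rho$; this class is non-zero because $\rho$ is non-abelian.

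Next I would chase the long exact sequences attached to both short exact sequences in the filtration. By the hypothesis on $f$ and Lemma~\ref{lem:HP4.6}, $\dim H^1(\pi_1(M),\bbC_f)=1$, and applying the same reasoning to $\sigma^{-1}$ (whose Alexander invariant shares the roots of $\Delta_0^{\phi_\sigma}$ in reciprocal pairs since $\det\Phi=1$) gives $\dim H^1(\pi_1(M),\bbC_-)=1$. In the LES of $0\to\bbC_f\to\frak{b}_+\to\bbC_0\to 0$, the connecting map $H^0(\pi_1(M),\bbC_0)=\bbC\to H^1(\pi_1(M),\bbC_f)=\bbC$ is cup product with the extension class and therefore sends $1\mapsto[d]\neq 0$, so it is an isomorphism; this gives $H^0(\pi_1(M),\frak{b}_+)=0$ and embeds $H^1(\pi_1(M),\frak{b}_+)$ into $H^1(\pi_1(M),\bbC_0)=\bbC$. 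A Poincar\'e--Lefschetz duality argument for the orientable 3-manifold with torus boundary $M$, combined with the non-degeneracy of cup product against $[d]$, shows that the next connecting map $H^1(\pi_1(M),\bbC_0)\to H^2(\pi_1(M),\bbC_f)$ is also an isomorphism, yielding $H^1(\pi_1(M),\frak{b}_+)=0$; combined with $H^2(\pi_1(M),\bbC_0)=0$ (from $\chi(M)=0$ and $H^3=0$), one similarly gets $H^2(\pi_1(M),\frak{b}_+)=0$.

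Feeding these vanishings into the LES of $0\to\frak{b}_+\to\frak{sl}_2(\bbC)\to\bbC_-\to 0$ immediately produces the desired isomorphism $\mathrm{pr}_*\colon H^1(\pi_1(M),\frak{sl}_2(\bbC)^\rho)\xrightarrow{\sim} H^1(\pi_1(M),\bbC_-)\cong\bbC$. For the moreover statement, a transpose of the computation in \eqref{eqn:dcomputation} shows that \eqref{eqn:rhominus} defines a homomorphism iff $d_-$ satisfies the cocycle condition $d_-(\gamma\delta)=d_-(\gamma)+f^{-1}(\gamma)d_-(\delta)$ for $\bbC_-$, and that $\rho_-$ is abelian iff $d_-\in B^1(\pi_1(M),\bbC_-)$; hence any $d_-$ producing a non-abelian $\rho_-$ represents a non-zero and therefore generating class in the one-dimensional $H^1(\pi_1(M),\bbC_-)$.

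The main obstacle I expect is verifying non-vanishing of the second connecting map $H^1(\pi_1(M),\bbC_0)\to H^2(\pi_1(M),\bbC_f)$. This amounts to non-degeneracy of a cup-product pairing on a once-punctured torus bundle with torus cusp, and I would handle it either by a careful application of Poincar\'e--Lefschetz duality for 3-manifolds with torus boundary or by a direct computation using the Fox-calculus description of these cohomology groups introduced in Section~\ref{sec:HPrecap}. All other ingredients in the plan are essentially routine given the Alexander-invariant input from Lemma~\ref{lem:HP4.6}.
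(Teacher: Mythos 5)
This statement appears in the paper only as a cited result---\cite{HeusenerPorti}, Corollary~5.4---and the paper supplies no proof of its own, so there is no in-paper argument to compare against. Your reconstruction is nonetheless essentially the route Heusener and Porti take in the cited source: the three-step filtration $\frak{n}_+\subset\frak{b}_+\subset\frak{sl}_2(\bbC)$ preserved by $\Ad_{\rho_f^d}$, the identification of the graded quotients with $\bbC_f$, $\bbC_0$ and $\bbC_-$, the recognition that the extension class of $0\to\bbC_f\to\frak{b}_+\to\bbC_0\to 0$ is (a multiple of) $[d]$, and the long exact sequence chase anchored in $\dim H^1(\pi_1(M),\bbC_f)=1$ from the simple-zero hypothesis via Lemma~\ref{lem:HP4.6}.

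Two caveats, one minor and one substantive. The minor one: your claim that $\dim H^1(\pi_1(M),\bbC_-)=1$ ``since $\det\Phi=1$'' only literally works in the regime of Lemma~\ref{lem:foxderiv}(1), where $\Delta_0^{\phi_\sigma}(z)$ actually equals the characteristic polynomial of $\Phi$ (the Type~A situation). In the degenerate regime of Lemma~\ref{lem:foxderiv}(2), where the invariant has degree at most one, you need the genuine twisted-Alexander duality $\Delta_0^{\phi_\sigma}(z)\doteq\Delta_0^{\phi_{\sigma^{-1}}}(z^{-1})$ rather than the determinant shortcut. The substantive one, which you yourself flag: the whole argument pivots on the connecting homomorphism $\delta_1:H^1(\pi_1(M),\bbC_0)\to H^2(\pi_1(M),\bbC_f)$---cup product with $[d]$---being an isomorphism. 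This is not a routine consequence of Poincar\'e--Lefschetz duality; it is precisely where Heusener--Porti invest the simple-zero hypothesis, via nondegeneracy of the pairing $H^1(M;\bbC_f)\times H^1(M,\partial M;\bbC_{f^{-1}})\to\bbC$ together with an analysis of the restriction maps to the boundary torus. Without actually carrying out that duality-and-boundary argument, the proof remains a skeleton: your diagnosis of where the difficulty lies is accurate, but the decisive step is left at the level of a plan rather than a proof.
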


By the same computations as in Section \ref{sec:HPrecap}, the condition that $\rho_-(\gamma)$ is a non-abelian reducible representation is equivalent to the condition that $d_-(\gamma)$ is non-trivial in $H^1(\pi_1(M),\bbC_-)$.
In the next proposition, we use $b_\rho$ and $c_\rho$ to denote the coordinate functions $b_\rho,c_\rho:\pi_1(M)\to\bbC$ defined by the equation
$$\rho(\gamma)=\begin{pmatrix}
a_\rho(\gamma)&b_\rho(\gamma)\\
c_\rho(\gamma)&d_\rho(\gamma)\end{pmatrix}.$$
We then have the following, which is the analogue of \cite[Lem 5.8]{HPS} attuned to our setting.

\begin{prop}\label{prop:localinverse}
Given a fixed representation $\rho$ of Type A, B, or C, then there are fixed explicit and distinct choices $h_1, h_2\in\pi_1(M)$ such that the map
\begin{align*}
\tau:\frak{R}(M)_\bbC&\to \bbC^4,\\
\rho&\mapsto(b_\rho(h_1),b_\rho(h_2),c_\rho(h_1),c_\rho(h_2))\end{align*}
is invertible in a sufficiently small analytic neighborhood of $\rho$ in $\frak{D}_\bbC$.
Specifically, $h_1$, $h_2$ are chosen as follows
\begin{enumerate}
\item if $\rho$ is of Type A, then $h_1=t$ and $h_2$ is the product of $t$ and any choice of $x$, $y$ which is not mapped to the identity via $\rho$,
\item if $\rho$ is of Type B, then $h_1$ is any choice of $x$, $y$ which does not have eigenvalues $\pm 1$ and $h_2=h_1t$,
\item if $\rho$ is of Type C, then $h_1$ is as in Type B and $h_2$ is the remaining choice of $x$, $y$ unless that choice has eigenvalues $\pm 1$ in which case $h_2=xy$.
\end{enumerate}
\end{prop}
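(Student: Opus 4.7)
The approach is the holomorphic inverse function theorem. Since $\frak{D}_\bbC$ is a regular $4$-dimensional analytic variety at $\rho$ by Theorem~\ref{thm:HPsl2c} and the codomain $\bbC^4$ is also $4$-dimensional, to prove that $\tau\vert_{\frak{D}_\bbC}$ is a local biholomorphism near $\rho$ it suffices to show that the differential $d\tau_\rho \colon T_\rho \frak{D}_\bbC \to \bbC^4$ is a linear isomorphism for the specified choices of $h_1, h_2$. The plan is first to identify $T_\rho\frak{D}_\bbC$ concretely: reducedness of $\rho$ together with Weil's identification (Section~\ref{sec:backrep}) gives $T_\rho\frak{D}_\bbC \cong Z^1(\pi_1(M), \frak{sl}_2(\bbC)^\rho)$, and a direct check in each of the three types shows that $\Ad\circ\rho$ has no nonzero fixed vector on $\frak{sl}_2(\bbC)$. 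Combining this with the computation $\dim H^1 = 1$ from Corollary~\ref{cor:HPcor54} gives $\dim Z^1 = 4$ and a direct sum decomposition $Z^1 = B^1 \oplus \bbC\,\xi_-$, where $\xi_-$ is any cocycle whose class generates $H^1$ and is represented, up to coboundary, by the lower-triangular cocycle $d_-$ of that corollary.

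The next step is to compute $d\tau_\rho$ explicitly in a chosen basis. A coboundary $\xi_u(\gamma) = u - \Ad_{\rho(\gamma)} u$ corresponds to the conjugation deformation $\rho_s = \exp(su)\,\rho\,\exp(-su)$, whose first-order change in $\rho(\gamma) = \begin{smm}\al_\gamma & \beta_\gamma \\ 0 & \al_\gamma^{-1}\end{smm}$ is $[u, \rho(\gamma)]$. For $u = \begin{smm}b & a \\ c & -b\end{smm}$, a direct computation yields $[u,\rho(\gamma)]_{12} = a(\al_\gamma^{-1} - \al_\gamma) + 2b\beta_\gamma$ and $[u,\rho(\gamma)]_{21} = c(\al_\gamma - \al_\gamma^{-1})$; in particular the $a$ and $b$ directions act only on the $b_\rho$ entries and the $c$ direction only on the $c_\rho$ entries. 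The $\xi_-$ direction contributes to $c_\rho$ with variation $d_-(\gamma)\,\al_\gamma$ (and potentially to $b_\rho$, which can be cleared by a suitable choice of representative or by column operations against the $a, b$ columns). Consequently, in the basis corresponding to $(e_+, e_0, e_-, \xi_-)$ of $Z^1$, after standard column reduction the Jacobian of $\tau$ decouples into two $2\times 2$ blocks
\[
J_{\mathrm{UL}} = \begin{pmatrix} \al_{h_1}^{-1}-\al_{h_1} & 2\beta_{h_1} \\ \al_{h_2}^{-1}-\al_{h_2} & 2\beta_{h_2} \end{pmatrix}, \qquad J_{\mathrm{BR}} = \begin{pmatrix} \al_{h_1}-\al_{h_1}^{-1} & d_-(h_1)\al_{h_1} \\ \al_{h_2}-\al_{h_2}^{-1} & d_-(h_2)\al_{h_2} \end{pmatrix},
\]
so that the problem reduces to verifying $\det J_{\mathrm{UL}} \cdot \det J_{\mathrm{BR}} \ne 0$ for the prescribed $h_1, h_2$.

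The final step is the case-by-case verification that both blocks are nonsingular. In Type~A, $h_1 = t$ gives $\beta_{h_1} = 0$ and $\al_{h_1}^2 = w^2 \neq 1$, while $h_2 \in \{tx, ty\}$ (picking the generator not sent to $\pm\Id$) produces $\beta_{h_2} \neq 0$ by direct matrix multiplication, making $J_{\mathrm{UL}}$ nonsingular; non-singularity of $J_{\mathrm{BR}}$ follows from the cocycle identity $d_-(h_2) = d_-(t) + f(t)^{-1} d_-(x)$ (or $y$) together with the observation that $d_-$ cannot vanish on $\langle x, y\rangle$ (else $\rho_-$ would be abelian), which forces linear independence of the two rows. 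Type~B is handled analogously, with $h_1$ the generator having non-$\pm 1$ root-of-unity eigenvalues and $h_2 = h_1 t$ inheriting the parabolic contribution of $\rho(t)$. Type~C splits into two sub-cases according to whether the remaining generator has eigenvalues in $\{\pm 1\}$: if not, take $h_2$ to be that generator; otherwise the fallback $h_2 = xy$ is engineered to reintroduce $h_1$'s nontrivial eigenvalues into the relevant Jacobian entries, circumventing the degeneration the simpler choice would cause. The main obstacle throughout is this simultaneous non-vanishing verification, particularly in the Type~C fallback, where careful bookkeeping with the explicit form of $d_-$ from Corollary~\ref{cor:HPcor54} and the specific structure of the representations from Theorems~\ref{thm:typearep}--\ref{thm:typecrep} is required to ensure both $2\times 2$ blocks remain nonsingular.
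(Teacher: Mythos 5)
Your proposal follows the same essential route as the paper: invoke regularity of $\rho$ in $\frak{D}_\bbC$ from Theorem~\ref{thm:HPsl2c}, identify $T_\rho\frak{D}_\bbC$ with $Z^1(\pi_1(M),\frak{sl}_2(\bbC)^\rho)$ via Weil's observation, split $Z^1 = B^1 \oplus \bbC\,\xi_-$ using Corollary~\ref{cor:HPcor54}, compute the differential of $\tau$, and check it is nonsingular case by case. Your observation that the resulting $4\times 4$ matrix is block upper-triangular (the $e_1,e_2$ coboundary directions touch only the $(1,2)$ entries, the $e_3$ coboundary touches only the $(2,1)$ entries, and the $\xi_-$ column lands in the second block) is correct and is a cleaner way to organize the computation than the paper's display, whose stray $z_0^2$ entry in the $(3,2)$ position should in fact be $0$ under this convention (harmless, since the determinant factors the same way).

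The one place your sketch is genuinely thin is the non-vanishing of $\det J_{\mathrm{BR}}$. You reduce it to $d_-(h_2)-d_-(h_1)\ne 0$, which by the cocycle law is $f(h_1)^{-1}d_-(g)\ne 0$ (Type~A) or $f(h_1)^{-1}d_-(t)\ne 0$ (Type~B). Your justification ``$d_-$ cannot vanish on $\langle x,y\rangle$, else $\rho_-$ would be abelian'' only yields that \emph{some} element of $\langle x,y\rangle$ has $d_-\ne 0$, not the specific $g$ you chose; and it says nothing at all about $d_-(t)$ in Type~B (where $\rho_-\vert_{\langle x,y\rangle}$ could in principle carry all the non-abelianity). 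What actually closes the gap: in Type~A, $(d_-(x),d_-(y))$ is a left eigenvector of $\Phi$ for the eigenvalue $w^{-2}$, and since $b,c\ne 0$ for any hyperbolic monodromy both components are nonzero, so $d_-(g)\ne 0$ for either choice of $g$; in Types~B and C the paper reads off $d_-(h_2)\ne 0$ from the fact that, after conjugation, $\rho_-$ can be taken to be the transpose-inverse of $\rho$, so the lower-left entry of $\rho_-(h_2)$ tracks the nonzero upper-right entry of $\rho(h_2)$. You defer this under ``careful bookkeeping,'' but since it is the crux of the nonsingularity claim, the argument you actually gave is not sufficient as stated and should be replaced by one of these concrete checks.
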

\begin{proof}
We will argue by producing an explicit basis $\{v_1,v_2,v_3,v_4\}$ of the $4$ dimensional space $Z^1(\pi_1(M),\frak{sl}_2(\bbC)^\rho)$ and showing that, under the identification of $Z^1(\pi_1(M),\frak{sl}_2(\bbC)^\rho)$ with the Zariski tangent space $T_\rho \frak{D}_\bbC$, the Jacobian is non-zero.
The proposition will then follow from the implicit function theorem.

First assume that $\rho$ is a Type A representation, where we recall that
$$\rho(x) = (-1)^{\ep_1} \begin{pmatrix} 1 & r \\ 0 & 1 \end{pmatrix},\quad\rho(y) = (-1)^{\ep_2} \begin{pmatrix} 1 & s \\ 0 & 1 \end{pmatrix},\quad\rho(t) = \begin{pmatrix} w & 0 \\ 0 & w^{-1} \end{pmatrix},$$
for $w\neq \pm 1$ and $(r,s)\neq (0,0)$.
Let $h_1=t$, $g$ be a fixed choice of $x$, $y$ such that $b_{\rho}(g)\neq 0$, which is possible since one of $r$, $s$ is non-zero, and $h_2=tg$.
Note that then 
$$\rho(h_2)=(-1)^\epsilon\begin{pmatrix} w&z_0\\0&w^{-1}\end{pmatrix},$$
for some $z_0\in\bbC^*$ and some $\epsilon\in\{0,1\}$.

The $3$ dimensional subspace of 1-coboundaries $B^1(\pi_1(M),\frak{sl}_2(\bbC)^\rho)$ therefore has a basis $\{v_1,v_2,v_3\}$ given by
$$v_i(\gamma)=e_i-\Ad_{\rho(\gamma)}e_i,\quad \gamma\in\pi_1(M),$$
where the $e_i$ are defined using the following standard basis
$$e_1=\begin{pmatrix}
0&1\\
0&0
\end{pmatrix},\quad e_2=\begin{pmatrix}
1&0\\
0&-1
\end{pmatrix},\quad e_3=\begin{pmatrix}
0&0\\
1&0
\end{pmatrix},$$
of $\frak{sl}_2(\bbC)$.
By computing the values of each $v_i$ on the generators $x,y,t$, it is straightforward to check that these functions are linearly independent and hence span all of $B^1(\pi_1(M),\frak{sl}_2(\bbC)^\rho)$.

To complete $\{v_1,v_2,v_3\}$ to a basis of $Z^1(\pi_1(M),\frak{sl}_2(\bbC)^\rho)$, it suffices to find a $1$-cocycle $v_4$ which is non-trivial in $H^1(\pi_1(M),\frak{sl}_2(\bbC)^\rho)$.
For this, we use Theorem \ref{cor:HPcor54}.
Recall that any non-abelian reducible representation is conjugate to $\rho=\rho_f^d$ for some cocycle $d\in Z^1(\pi_1(M),\bbC_f)$.
As remarked in Section \ref{sec:HPrecap}, any choice of $d\in Z^1(\pi_1(M),\bbC_f)$ which is non-trivial in $H^1(\pi_1(M),\bbC_f)$ will give rise to the same conjugacy class of representation $\rho$, since $H^1(\pi_1(M),\bbC_f)$ is $1$-dimensional.

All of the computations from Section \ref{sec:HPrecap} hold similarly for $H^1(\pi_1(M),\bbC_-)$ and therefore, by composing with a suitable conjugation, we can assume that $\rho_-$ from Equation \eqref{eqn:rhominus} is simply the transpose inverse of the representation $\rho$.
In particular, using the definition of $d_-(-)$ from Equation \eqref{eqn:rhominus}, we see that $d_-(h_2)\neq 0$ for our chosen $h_2$.
As our transpose inverse representation $\rho_-$ is non-abelian by construction, the class $[d_-]\in H^1(\pi_1(M),\bbC_-)$ is non-trivial and
using the isomorphism
\begin{align*}
\mathrm{pr}^*:H^1(\pi_1(M),\frak{sl}_2(\bbC)^\rho)&\to H^1(\pi_1(M),\bbC_-),\\
u&\mapsto \mathrm{pr}\circ u
\end{align*}
from Theorem \ref{cor:HPcor54}, we conclude that $(\mathrm{pr}^*)^{-1}([d_-])\in H^1(\pi_1(M),\frak{sl}_2(\bbC)^\rho)$ is a non-trivial class.
Since $d_-(h_2)\neq 0$, it follows that we can choose some $v_4\in Z^1(\pi_1(M),\frak{sl}_2(\bbC)^\rho)$ representing $(\mathrm{pr}^*)^{-1}([d_-])$ such that
$$v_4(h_2)=\begin{pmatrix}
*&*\\
b_0&*\end{pmatrix},$$
for some $b_0\in\bbC^*$.
Therefore $\{v_1,\dots,v_4\}$ forms a basis of $Z^1(\pi_1(M),\frak{sl}_2(\bbC)^\rho)$.

Under the identification of $Z^1(\pi_1(M),\frak{sl}_2(\bbC)^\rho)$ with the analytic tangent space, $T_\rho^{an}\frak{R}(M)_\bbC$, $v_i\cdot \rho\in T_\rho \frak{D}_\bbC$ is the vector tangent to the curve
$$\rho_c(\gamma)=\exp(c v_i)\rho(\gamma)=(1+cv_i(\gamma)+\frac{c^2v_i(\gamma)^2}{2!}+\dots)\rho(\gamma),$$
at $c=0$.
Using this expression one computes, in the basis given by the $v_i$, that the Jacobian of $\tau$ is the matrix
$$\begin{pmatrix}
1-w^2&1-w^2&0&0\\
0&\pm2z_0w&0&0\\
0&z_0^2&1-w^{-2}&1-w^{-2}\\
*&*&*&b_0(1-w^{-2})
\end{pmatrix},$$
which has determinant $\pm 2wb_0z_0(1-w^2)(1-w^{-2})^2$.
Since $b_0,z_0\neq 0$ and $w\neq 0,\pm 1$, we see that $\tau$ is locally invertible at $\rho$.

When $\rho$ is a Type B representation, at least one of $x$ or $y$ has eigenvalues which are roots of unity and not equal to $\pm 1$.
Let $h_1$ be a choice of $x$, $y$ for which this holds and let $h_2=h_1t$.
In particular we see that
$$\rho(h_1)=\begin{pmatrix} \al & 0 \\ 0 & \al^{-1} \end{pmatrix},\quad \rho(h_2)=\begin{pmatrix} \pm\al & \al \\ 0 & \pm\al^{-1} \end{pmatrix},$$
and the above arguments go through with this choice of $h_1$, $h_2$.
Again, we compute that the Jacobian of $\tau$ as the matrix
$$\begin{pmatrix}
1-\alpha^2&1-\al^2&0&0\\
0&\pm2\al^2&0&0\\
0&\al^2&1-\alpha^{-2}&1-\al^{-2}\\
*&*&*&b_0(1-\al^{-2})
\end{pmatrix}.$$
This has determinant $\pm 2b_0\al^2(1-\al^2)(1-\al^{-2})^2$ which is again non-zero since $\al^2\neq\pm 0,1$ and $b_0\neq 0$.

When $\rho$ is a representation of Type C, similar to the previous case, we let $h_1$ be a choice of $x$, $y$ which has eigenvalues which are not equal to $\pm 1$.
Let $g$ be the unique element of $\{x,y\}\setminus \{h_1\}$.
If $\rho(g)$ has eigenvalues $\pm 1$ then setting $h_2=h_1g$, we see that 
$$\rho(h_2)=\begin{pmatrix}\pm\al&\al\\0&\pm \al^{-1}\end{pmatrix},$$
and we can argue identically to the previous paragraph.
Otherwise, we let $h_2=g$ so that
$$\rho(h_1)=\begin{pmatrix} \al & 0 \\ 0 & \al^{-1} \end{pmatrix},\quad \rho(g)=\begin{pmatrix} \beta & 1 \\ 0 & \beta^{-1} \end{pmatrix}.$$
The identical logic shows that the Jacobian of $\tau$ is
$$\begin{pmatrix}
1-\alpha^2&1-\beta^2&0&0\\
0&2\beta&0&0\\
0&1&1-\alpha^{-2}&1-\beta^{-2}\\
*&*&*&b_0(1-\beta^{-2})
\end{pmatrix}.$$
This has determinant $ 2\beta b_0(1-\al^2)(1-\al^{-2})(1-\beta^{-2})$ which is again non-zero since $\al,\beta\neq 0,\pm 1$  and $b_0\neq 0$. 
\end{proof}

\begin{lem}\label{lem:generalcoord}
Let $\rho$ be a representation of Type A, B, or C and let $h_1$, $h_2$ be the corresponding choice as in Proposition \ref{prop:localinverse}.
Suppose the character $\chi_{\rho}$ lies on an irreducible curve $\frak{C}_\bbC\subset \frak{X}(M)_\bbC$ which contains the character of an irreducible representation.
Let $p\in\widetilde{C}$ be such that $\Pi_\bbC(p)=\chi_\rho$, then the local ring $\mathcal{O}_{\widetilde{C}_\bbC,p}$ has uniformizing parameter given by
$$\widetilde\lambda=-\frac{\widetilde{I}_{[h_1,h_2]}-2}{\widetilde{I}_{h_1}^2-4}.$$
\end{lem}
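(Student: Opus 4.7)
The plan is to show $\ord_p(\widetilde\lambda) = 1$. By Theorem~\ref{thm:HPsl2c}, $\chi_\rho$ is a reduced regular point of $\frak{C}_\bbC$, so $\mathcal{O}_{\widetilde{C}_\bbC, p} \cong \mathcal{O}_{\frak{C}_\bbC, \chi_\rho}$ is a discrete valuation ring. In each of the three Types, Proposition~\ref{prop:localinverse}'s choice of $h_1$ ensures that $\rho(h_1)$ has a pair of eigenvalues distinct from $\pm 1$, so $\widetilde{I}_{h_1}(p)^2 - 4 \ne 0$ and the denominator of $\widetilde\lambda$ is a unit at $p$. Hence $\ord_p(\widetilde\lambda) = \ord_p(\widetilde{I}_{[h_1, h_2]} - 2)$, and since $\rho([h_1, h_2])$ is unipotent (as $\rho$ is reducible) this order is at least $1$. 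The goal is to prove equality by exhibiting a tangent vector to $\frak{C}_\bbC$ at $\chi_\rho$ along which the differential of $\widetilde{I}_{[h_1,h_2]} - 2$ does not vanish.

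For the computation I would use the identity $I_{[A,B]} - 2 = -\det(AB - BA)$ valid on $\SL_2(\bbC)$ (a quick check after diagonalizing $A$). Writing $\Delta_{h_i} = a_{h_i} - d_{h_i}$ and expanding this determinant in the local coordinates $(b_{h_1}, b_{h_2}, c_{h_1}, c_{h_2})$ on $\frak{D}_\bbC$ supplied by Proposition~\ref{prop:localinverse}, and using that at $\rho$ the entries $b_{h_1}, c_{h_1}, c_{h_2}$ vanish while $b_{h_2}$ takes a specific non-zero value, the first-order expansion reads
\[
\widetilde{I}_{[h_1,h_2]} - 2 \;=\; b_{h_2}\,\Delta_{h_1}\bigl(\Delta_{h_2}\, c_{h_1} - \Delta_{h_1}\, c_{h_2}\bigr) + (\text{higher order}),
\]
while $\widetilde{I}_{h_1}^2 - 4 = \Delta_{h_1}^2 + O(2)$. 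Dividing presents $\widetilde\lambda$, to leading order, as a non-zero linear combination of the coordinates $c_{h_1}$ and $c_{h_2}$.

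The transverse direction to the $\SL_2$-conjugation orbit at $\rho$, which projects to a generator of $T_{\chi_\rho}\frak{C}_\bbC$, is the cocycle $v_4 \in Z^1(\pi_1(M), \frak{sl}_2(\bbC)^\rho)$ built in the proof of Proposition~\ref{prop:localinverse}; its class generates $H^1(\pi_1(M),\frak{sl}_2(\bbC)^\rho)$ by Theorem~\ref{cor:HPcor54}. Expanding $(I + c\, v_4(h_i))\rho(h_i)$ to first order and using that $\rho(h_i)$ is upper triangular, the first-order perturbation of the coordinate $c_{h_i}$ equals $a_{h_i}\cdot c_-(h_i)$, where $c_- = \mathrm{pr}\circ v_4$ is the non-trivial cocycle in $Z^1(\pi_1(M), \bbC_-)$ provided by Theorem~\ref{cor:HPcor54}. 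Substituting these variations into the leading expansion, the proof reduces to verifying that $\Delta_{h_2}\, a_{h_1}\, c_-(h_1) - \Delta_{h_1}\, a_{h_2}\, c_-(h_2)$ is non-zero in each Type.

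The main obstacle is checking this last non-vanishing. In Type A, where $\Delta_{h_1} = \pm\Delta_{h_2}$ and $a_{h_1} = \pm a_{h_2}$, the expression collapses to a non-zero multiple of $c_-(h_1) - c_-(h_2) = -f(t)^{-1} c_-(g)$, where $g \in \{x,y\}$ is the generator chosen so that $\rho(g) \ne \pm I$. Because $f$ is trivial on $F_2 = \langle x, y\rangle$ in this Type, the restriction $c_-|_{F_2}$ is a genuine homomorphism, and the defining relations of $\pi_1(M)$ force $(c_-(x), c_-(y))$ to span the $w^{-2}$-eigenline of $\Phi^T$; hyperbolicity of $\Phi \in \SL_2(\bbZ)$ precludes $\Phi$ from being triangular (else $|\Tr(\Phi)|\le 2$), so both components of this eigenvector are non-zero and $c_-(g) \ne 0$. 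Types B and C are handled analogously, using the explicit normal forms from Theorems~\ref{thm:wtfrepspm1}, \ref{thm:wtfreps}, and \ref{thm:typecrep} to pin down the cocycle condition for $c_-$ and with a small amount of extra case analysis for the Type C sub-case in which $h_2 = xy$.
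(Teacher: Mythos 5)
Your proposal takes a genuinely different route from the paper's. The paper works with the explicit analytic chart $\tau$ from Proposition~\ref{prop:localinverse} and substitutes the $1$-parameter arc $\sigma(z)=\tau^{-1}(0,z_0,0,z/z_0)$, which forces $\rho_z(h_1)$ to stay diagonal; the trace identity $(4-\Tr(\rho_z(h_1))^2)\,z=\Tr([\rho_z(h_1),\rho_z(h_2)])-2$ then gives $\lambda\circ\overline{\sigma}=\mathrm{id}_{\bbD}$ \emph{exactly}, which immediately forces $k=1$ without ever invoking the cocycle $v_4$ or the projected class $c_-=\mathrm{pr}\circ v_4$. Your approach instead differentiates $\lambda$ along $v_4$; the identity $I_{[A,B]}-2=-\det(AB-BA)$, the leading-order expansion, and the formula $\delta c_{h_i}=a_{h_i}\,c_-(h_i)$ are all correct, as is the reduction to $\Delta_{h_2}a_{h_1}c_-(h_1)-\Delta_{h_1}a_{h_2}c_-(h_2)\ne 0$ (and one checks easily that this quantity is invariant under adding a coboundary to $c_-$, so it is well-posed). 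Your Type A verification via the $w^{-2}$-eigenline of $\Phi^T$ is correct. What the paper's route buys is that it never needs any of this: the identity is exact, not first-order, and is insensitive to the choice of $c_-$.

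The one genuine gap is the ``handled analogously'' clause for Types B and C. The Type A eigenvector argument leans on $f|_{\langle x,y\rangle}\equiv 1$, which makes $c_-|_{F_2}$ a homomorphism; in Types B and C that hypothesis fails, $c_-|_{F_2}$ is only a twisted cocycle, and the condition you actually need is of a different shape: $c_-(t)\ne 0$ in Type B (and in the $h_2=xy$ sub-case of C), and $c_-(h_2)\ne 0$ in the remaining Type C sub-case. Because $f(t)=1$ in Types B and C, every coboundary kills $t$, so $c_-(t)\ne 0$ is not automatic from $[c_-]\ne 0$ in the way your sentence implies. The cleanest repair is the one already latent in the proof of Proposition~\ref{prop:localinverse}: take the representative $c_-$ coming from the transpose-inverse $\rho_-=(\rho^T)^{-1}$, for which the needed entry of $v_4(h_2)$ (respectively of $v_4(t)$, of $v_4(g)$) is visibly non-zero by inspection of $(\rho(h_2)^T)^{-1}$. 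Either spell that out, or, more economically, adopt the paper's arc $\sigma$ and avoid the cocycle analysis altogether.
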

\begin{proof}
Since $\chi_\rho$ is a regular point of $\frak{C}_\bbC$, the map $\Pi_\bbC$ induces an isomorphism between the discrete valuation rings $\mathcal{O}_{\widetilde{C}_\bbC,p}$ and $\mathcal{O}_{\frak{C}_\bbC,\chi_\rho}$.
It will therefore suffice to show that 
$$\lambda=-\frac{I_{[h_1,h_2]}-2}{I_{h_1}^2-4},$$
is a uniformizing parameter for $\mathcal{O}_{\frak{C}_\bbC,\chi_\rho}$.
By construction, for all Types A, B, and C we have that
\begin{equation}\label{eqn:generalrep}
\rho(h_1) = \begin{pmatrix} \al & 0 \\ 0 & \al^{-1} \end{pmatrix},\quad\rho(h_2) = \begin{pmatrix} \beta & z_0 \\ 0 & \beta^{-1} \end{pmatrix},
\end{equation}
where $\al\neq \pm 1$ and $z_0\in\bbC^*$.
Though we will not need it in what follows, we always have that $\beta\notin\{-1,0,1\}$.
From Equation \eqref{eqn:generalrep} it follows that $\ord_p(\lambda)>0$ and hence we may write $\lambda=u \frak{t}^k$ in $\mathcal{O}_{\frak{C}_\bbC,\chi_\rho}$ where $k\in\bbN$, $u$ is a unit, and $\frak{t}$ is a generator of the maximal ideal of $\mathcal{O}_{\frak{C}_\bbC,\chi_\rho}$.
To show that $\lambda$ is a uniformizing parameter is to show that $k=\ord_p(\lambda)=1$.

Using the notation of Proposition \ref{prop:localinverse}, the map
$$\tau(\rho)=(b_\rho(h_1),b_\rho(h_2),c_\rho(h_1),c_\rho(h_2)),$$
is locally invertible at $\rho$, where $\rho$ corresponds to the point $(0,z_0,0,0)\in\bbC^4$.
By the local invertibility of $\tau$, there exists a small disk $\mathbb{D}\subset\mathbb{C}$ centered at the origin, such that
\begin{align*}
\sigma:\mathbb{D}&\to \frak{R}(M)_\bbC,\\
z&\mapsto \tau^{-1}(0,z_0,0,z/z_0)
\end{align*}
is well defined.
Explicitly, $\sigma$ locally gives a 1-complex parameter family of representations $\sigma(z)=\rho_z:\pi_1(M)\to\SL_2(\bbC)$ such that
$$\rho_z(h_1)=\begin{pmatrix}
a_2(z)&0\\
0&d_2(z)\end{pmatrix},\quad\rho_z(h_2)=\begin{pmatrix}
a_1(z)&z_0\\
z/z_0&d_1(z)\end{pmatrix}.$$
Moreover, one can see from the determinant condition that $a_1(z)d_1(z)-z=1$ and $a_2(z)d_2(z)=1$.
In particular, a straightforward calculation shows that the equation
$$(4-\Tr(\rho_z(h_1))^2)z=\Tr([\rho_z(h_1),\rho_z(h_2)])-2,$$
holds for all $z\in\mathbb{D}$.

Composing $\sigma$ with the natural projection $\Upsilon_\bbC:\frak{R}(M)_\bbC\to \frak{X}(M)_\bbC$ defines a map $\overline{\sigma}:\mathbb{D}\to \frak{X}(M)_\bbC$.
Given this and our definition of $\lambda$ above, we have that $\lambda\circ\overline\sigma=\id_\mathbb{D}$.
Recall that $\lambda$ can be written as $u \frak{t}^k$ in the local ring $\mathcal{O}_{\frak{C}_\bbC,\chi_\rho}$.
If $k>1$, then the composition $\lambda\circ\overline\sigma$ could not be identity $\id_\mathbb{D}$ and so we conclude that $k=1$.
Therefore $\lambda$ is a uniformizing parameter of $\mathcal{O}_{\frak{C}_\bbC,\chi_\rho}$, completing the proof.
\end{proof}

The previous lemma produced a uniformizing parameter for the local ring at characters of Type A, B, or C representations lying on $\frak{C}_\bbC$, or equivalently $\widetilde{C}_\bbC$. 
However to compute the tame symbol of $\frak{C}$ over its field of definition $k$, we will need a lemma which allows us to conclude that the function $\widetilde{\lambda}$ from Lemma \ref{lem:generalcoord} is actually a uniformizing parameter for the local ring on $\frak{C}$ and $\widetilde{C}$ over $k$.
In what follows, recall that $\frak{C}_\bbC=\frak{C}\times_k \bbC$ is the basechange of a geometrically irreducible curve $\frak{C}\subset \frak{X}(M)_k$ defined over $k$ and write $\Theta:\frak{C}_\bbC\to \frak{C}$ for the corresponding map of schemes.

\begin{lem}\label{lem:transferparam}
Let $p\in \frak{C}_\bbC$, $q\in \frak{C}$ be regular closed points such that $\Theta(p)=q$.
Moreover let $\lambda\in k(\frak{C})$ be a rational function such that $\lambda$ is a uniformizing parameter of the discrete valuation ring $\mathcal{O}_{\frak{C}_\bbC,p}$.
Then $\lambda$ is a uniformizing parameter of $\mathcal{O}_{\frak{C},q}$.
\end{lem}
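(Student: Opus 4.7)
The plan is to show that the base-change map $\Theta^\ast: \mathcal{O}_{\frak{C},q} \to \mathcal{O}_{\frak{C}_\bbC,p}$ is an unramified extension of discrete valuation rings --- specifically, that $\frak{m}_q \mathcal{O}_{\frak{C}_\bbC,p} = \frak{m}_p$ --- and then to conclude by comparing the two valuations of $\lambda$. Since $p$ and $q$ are regular closed points on curves, both $\mathcal{O}_{\frak{C},q}$ and $\mathcal{O}_{\frak{C}_\bbC,p}$ are DVRs, and the hypothesis $\Theta(p) = q$ ensures that $\Theta^\ast$ is a flat local ring homomorphism.

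For the unramifiedness, I would examine the scheme-theoretic fiber of $\Theta$ over $q$, which is $\mathrm{Spec}(k(q) \otimes_k \bbC)$. Since $k$ is a number field, hence of characteristic zero, $k(q)/k$ is a finite separable extension, so $k(q) \otimes_k \bbC$ splits as a finite product of copies of $\bbC$ and is therefore reduced. Localizing at the prime corresponding to $p$ yields the residue field $k(p) = \bbC$, which forces $\mathcal{O}_{\frak{C}_\bbC,p}/\frak{m}_q \mathcal{O}_{\frak{C}_\bbC,p} \cong k(p) = \mathcal{O}_{\frak{C}_\bbC,p}/\frak{m}_p$, and hence $\frak{m}_q \mathcal{O}_{\frak{C}_\bbC,p} = \frak{m}_p$. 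In particular, any uniformizer of $\mathcal{O}_{\frak{C},q}$ will push forward to a uniformizer of $\mathcal{O}_{\frak{C}_\bbC,p}$, so $\Theta^\ast$ will be valuation-preserving on $k(\frak{C})^\times$.

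Given this, the lemma will reduce to a single valuation computation. I would write $\lambda = u \pi_q^n$ for a fixed uniformizer $\pi_q$ of $\mathcal{O}_{\frak{C},q}$, a unit $u \in \mathcal{O}_{\frak{C},q}^\times$, and $n = \ord_q(\lambda) \in \bbZ$. Applying $\Theta^\ast$ yields $\lambda = \Theta^\ast u \cdot (\Theta^\ast \pi_q)^n$ in $\bbC(\frak{C}_\bbC)^\times$, and since $\Theta^\ast u$ is a unit (locality of $\Theta^\ast$) and $\Theta^\ast \pi_q$ is a uniformizer in $\mathcal{O}_{\frak{C}_\bbC,p}$, this gives $\ord_p(\lambda) = n$. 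Combined with the hypothesis $\ord_p(\lambda) = 1$, we conclude $n = 1$, so $\lambda$ is a uniformizer of $\mathcal{O}_{\frak{C},q}$ as required.

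The only technical point to nail down will be the reducedness of $k(q) \otimes_k \bbC$, which is automatic in characteristic zero; the rest of the argument is essentially formal manipulation of DVR valuations under a flat local extension with trivial ramification index.
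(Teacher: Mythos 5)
Your proof is correct, but it takes a more elaborate route than the paper's. The paper argues purely ideal-theoretically: since $\Theta^\#$ is a local homomorphism, $\Theta^\#(\frak{m}_q) \subseteq \frak{m}_p$ and hence $\Theta^\#(\frak{m}_q^2) \subseteq \frak{m}_p^2$; therefore if $\lambda$ lay in $\frak{m}_q^2$, its image $\lambda \otimes 1$ would lie in $\frak{m}_p^2$, contradicting that it is a uniformizer. Combined with $(\Theta^\#)^{-1}(\frak{m}_p) = \frak{m}_q$ (which gives $\lambda \in \frak{m}_q$), this shows $\lambda \in \frak{m}_q \setminus \frak{m}_q^2$ directly. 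Your argument instead establishes the stronger structural fact that the extension of DVRs is unramified, via a computation of the scheme-theoretic fiber over $q$ and its reducedness in characteristic $0$, and then applies the formula $\ord_p = e \cdot \ord_q$. This is valid but invokes more machinery (flatness, separability, the fiber computation) than the problem demands: the very hypothesis that some $\lambda \in k(\frak{C})$ maps to a uniformizer already forces the ramification index to be $1$, so unramifiedness is a consequence here rather than a needed input. What your approach buys is transparency — it isolates the geometric reason (a reduced fiber in characteristic zero) why the lemma is true — whereas the paper's is shorter and makes no appeal to flatness or the characteristic of $k$.
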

\begin{proof}
Indeed, that $p$ and $q$ are closed points implies that $\frak{m}_q\subset k[\frak{C}]$ and $\frak{m}_p\subset \bbC[\frak{C}_\bbC]$ are maximal ideals.
Moreover the map $\Theta$ is induced by the ring inclusion $\Theta^\#:k[\frak{C}]\to k[\frak{C}]\otimes_k\bbC\cong\bbC[\frak{C}_\bbC]$ defined by $\Theta^\#(f)=f\otimes 1$.
By hypothesis, $\lambda\otimes 1$ is a uniformizing parameter of $\mathcal{O}_{\bbC,p}$ and therefore generates the maximal ideal $\frak{m}_p\mathcal{O}_{\bbC,p}$.
Since $(\Theta^\#)^{-1}(\frak{m}_p)=\frak{m}_q$ and since $\lambda= (\Theta^\#)^{-1}(\lambda\otimes 1)$, we see that $\lambda$ generates $\frak{m}_q\mathcal{O}_{\frak{C},q}$ in $\mathcal{O}_{\frak{C},q}$.
Thus $\lambda$ is a uniformizing parameter for $\mathcal{O}_{\frak{C},q}$ as required.
\end{proof}

\begin{prop}\label{prop:noextendA}
Suppose that $\rho$ is a Type A representation such that the character $\chi_\rho$ lies on $\frak{C}$ and let $p\in\widetilde{C}$ such that $\Pi_k(p)=\chi_\rho$.
Let $h_1$, $h_2$ be the corresponding choices from Proposition \ref{prop:localinverse} and suppose that there exist $\rho'\in\frak{C}_\bbC$ for which $\rho'\vert_{\langle h_1,h_2\rangle}$ is an irreducible representation.
Then if Condition $(\star_1)$ does not hold for $\rho$, the tautological Azumaya algebra $\mathcal{A}_{k(\widetilde{C})}$ does not extend over $p$.
\end{prop}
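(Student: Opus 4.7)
The plan is to exhibit an explicit Hilbert symbol for $\mathcal{A}_{k(\widetilde{C})}$, compute its tame symbol at $p$ with respect to a uniformizing parameter coming from the local coordinates developed in Lemma \ref{lem:generalcoord}, and then verify that this tame symbol is nontrivial precisely when $(\star_1)$ fails. Using the hypothesis that some $\rho' \in \frak{C}_\bbC$ restricts irreducibly to $\langle h_1, h_2\rangle$, together with the fact that $\chi_\rho(h_1) = \chi_\rho(t) = w + w^{-1} \neq \pm 2$ (so $\widetilde{I}_{h_1}^2 - 4$ is not identically zero on $\widetilde{C}$), Lemma \ref{lem:CRShilb} lets me write
\[
\mathcal{A}_{k(\widetilde{C})} = \left(\frac{\widetilde\kappa,\widetilde\nu}{k(\widetilde{C})}\right), \qquad \widetilde\kappa = \widetilde{I}_{h_1}^2 - 4, \qquad \widetilde\nu = \widetilde{I}_{[h_1,h_2]} - 2.
\]

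Next, I would extract the local behavior of $\widetilde\kappa, \widetilde\nu$ at $p$. By Theorem \ref{thm:HPsl2c} the point $p$ is regular, so Lemma \ref{lem:generalcoord} identifies $\widetilde\lambda = -\widetilde\nu/\widetilde\kappa$ as a uniformizing parameter in $\mathcal{O}_{\widetilde{C}_\bbC, p}$, and Lemma \ref{lem:transferparam} transfers this back to a uniformizer of $\mathcal{O}_{\widetilde{C}, p}$. Since $w \neq \pm 1$, one has $\widetilde\kappa(p) = (w - w^{-1})^2 \neq 0$, so $\ord_p(\widetilde\kappa) = 0$ and $\ord_p(\widetilde\nu) = \ord_p(\widetilde\lambda) = 1$. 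Plugging the odd valuation $1$ and the zero valuation into the tame symbol formula in Equation \eqref{eqn:tamesymbol} collapses it to
\[
r_p(\mathcal{A}_{k(\widetilde{C})}) = \{\widetilde\kappa,\widetilde\nu\}_p = [\widetilde\kappa(p)] = [(w-w^{-1})^2] \in k(p)^*/k(p)^{*2}.
\]

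The residue field at $p$ I would identify via Lemma \ref{lem:residuefield}: since $p$ is regular, $k(p) = k \cdot k_\rho$, and direct inspection of the generating traces on $\{x,y,t,xy,xt,yt,xyt\}$ for a Type A representation (all equal to $\pm 2$ or $\pm(w+w^{-1})$) shows $k_\rho = \bbQ(w+w^{-1})$, giving $k(p) = k(w+w^{-1})$. Triviality of the tame symbol is therefore equivalent to $w - w^{-1} \in k(w+w^{-1})$. A Galois-theoretic analysis parallel to Proposition \ref{prop:galois}, but now carried out with $k$ in place of $\bbQ$ and exploiting $|\Tr(\Phi)| > 2$, shows that this containment is equivalent to $(\star_1)$; hence if $(\star_1)$ fails then $r_p(\mathcal{A}_{k(\widetilde{C})}) \neq [1]$ and $\mathcal{A}_{k(\widetilde{C})}$ cannot extend over $p$ by the exact sequence in Section \ref{sec:azalg}.

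The main obstacle is this last field-theoretic step. One must redo the factorization analysis of $f(X) = X^4 - \Tr(\Phi) X^2 + 1$ over the potentially nontrivial field of definition $k$ and track exactly when $(w-w^{-1})^2$ becomes a square after adjoining only $w+w^{-1}$ to $k$, being careful about the subtle case where $k$ already contains some auxiliary square root arising from the discriminant of the characteristic polynomial of $\Phi$. All the preceding steps are formal once the Hilbert symbol, the uniformizer, and the residue field have been pinned down, but this algebraic extraction of $(\star_1)$ is where the real content of the converse lives.
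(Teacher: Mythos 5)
Your proposal tracks the paper's proof of Proposition \ref{prop:noextendA} almost line for line: Lemma \ref{lem:CRShilb} to obtain the Hilbert symbol $\left(\frac{\widetilde{I}_{h_1}^2-4,\ \widetilde{I}_{[h_1,h_2]}-2}{k(\widetilde{C})}\right)$, Theorem \ref{thm:HPsl2c} for regularity of $\chi_\rho$, Lemma \ref{lem:generalcoord} together with Lemma \ref{lem:transferparam} for the uniformizer $-\widetilde\nu/\widetilde\kappa$, valuations $\ord_p(\widetilde\kappa)=0$, $\ord_p(\widetilde\nu)=1$, and the tame symbol $r_p(\mathcal{A}_{k(\widetilde{C})})=[(w-w^{-1})^2]$. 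Your explicit identification $k(p)=k(w+w^{-1})$ via Lemma \ref{lem:residuefield} is a genuine improvement in clarity over the paper, which leaves this implicit.

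Where you are right to hesitate is the last field-theoretic step, and here the paper is no more explicit than you are. Triviality of the tame symbol means $w-w^{-1}\in k(w+w^{-1})$, while $(\star_1)$ says $w-w^{-1}\in k$; writing $c=\Tr(\Phi)$, these are $\sqrt{c-2}\in k(\sqrt{c+2})$ and $\sqrt{c-2}\in k$. If $\sqrt{c+2}\notin k$, then $\sqrt{c-2}\in k(\sqrt{c+2})$ holds iff $\sqrt{c-2}\in k$ \emph{or} $\sqrt{c^2-4}\in k$, so the conclusion of the proposition needs $\sqrt{c^2-4}\notin k$. Over $\bbQ$ this is automatic, since for $|c|>2$ the integer $c^2-4$ is never a perfect square --- which is exactly why Proposition \ref{prop:galois} closes cleanly --- but over a general field of definition $k$ it is an additional arithmetic condition not furnished by the hypotheses. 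The paper's proof simply states ``the hypotheses now guarantee that this latter quantity is not in $k(\chi_\rho)^{*2}$'' without comment. So your instinct that the ``auxiliary square root arising from the discriminant'' (that is, $\sqrt{\Tr(\Phi)^2-4}$) is where the real content hides is correct; your proposal, like the paper, asserts rather than proves that this obstruction does not arise. To fully close the argument one would either show $\sqrt{\Tr(\Phi)^2-4}\notin k$ always holds for a field of definition of a canonical component (for instance if $k$ is totally imaginary, as wondered in Section \ref{sec:questions}), or one should read $(\star_1)$ as the a priori weaker condition $w-w^{-1}\in k(w+w^{-1})$.
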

\begin{proof}
Let $\rho_\frak{C}$ be the tautological representation associated to $\frak{C}$.
Since $\widetilde{I}_{h_1}(\chi_\rho)\neq\pm 2$, we conclude that $\rho_\frak{C}(h_1)\neq\pm\mathrm{Id}$ and so by the irreducibility assumption and Lemma \ref{lem:CRShilb} it follows that $\mathcal{A}_{k(\widetilde{C})}$ has Hilbert symbol given by
$$\mathcal{A}_{k(\widetilde{C})}=\left(\frac{\widetilde\kappa,\widetilde\nu}{k(\widetilde{C})}\right)=\left(\frac{\widetilde{I}_{h_1}^2-4,\widetilde{I}_{[h_1,h_2]}-2}{k(\widetilde{C})}\right).$$
Recall that $\chi_{\rho}$ is a regular point of $\frak{C}$ and hence the local ring $\mathcal{O}_{\frak{C},\chi_\rho}$ is a discrete valuation ring such that $\mathcal{O}_{\widetilde{C},p}\cong \mathcal{O}_{\frak{C},\chi_\rho}$.
It follows that $k(p)\cong k(\chi_\rho)$ and we therefore instead argue using the local ring $\mathcal{O}_{\frak{C},\chi_\rho}$.
In particular, if $\kappa=I_{h_1}^2-4$, $\nu=I_{[h_1,h_2]}-2$ then $\ord_p(\widetilde\kappa)=\ord_{\chi_\rho}(\kappa)$, $\ord_p(\widetilde\nu)=\ord_{\chi_\rho}(\nu)$, and $\{\widetilde\kappa,\widetilde\nu\}_p=\{\kappa,\nu\}_{\chi_\rho}$ under the identification of residue fields.

Since $I_{h_1}(\chi_\rho)\neq\pm 2$, it follows that $\ord_{\chi_\rho}(\kappa)=0$.
Moreover $\nu=0$ at $\chi_\rho$ and hence $\ord_{\chi_\rho}(\nu)>0$.
By Lemma \ref{lem:generalcoord}, $\ord_{\chi_\rho}(-\nu/\kappa)=\ord_{\chi_{\rho}}(\nu)=1$ and therefore the tame symbol at $\chi_\rho$ is given by
$$\{\kappa,\nu\}_{\chi_\rho}=\kappa=(w+1/w)^2-4=w^2-2+1/w^2\in k(\chi_\rho)^*/k(\chi_\rho)^{*2}.$$
The hypotheses now guarantee that this latter quantity is not in $k(\chi_\rho)^{*2}$ and so the tame symbol $\{\kappa,\nu\}_{\chi_\rho}$ is non-trivial.
This completes the proof.
\end{proof}

\begin{prop}\label{prop:noextendB}
Suppose that $\rho$ is a Type B or C representation such that the character $\chi_\rho$ lies on $\frak{C}$ and let $p\in\widetilde{C}$ such that $\Pi_k(p)=\chi_\rho$.
Let $h_1$, $h_2$ be the corresponding choices from Proposition \ref{prop:localinverse} and suppose that there exist $\rho'\in\frak{C}_\bbC$ for which $\rho'\vert_{\langle h_1,h_2\rangle}$ is an irreducible representation.
Then if Condition $(\star_2)$ does not hold for $\rho$, the tautological Azumaya algebra $\mathcal{A}_{k(\widetilde{C})}$ does not extend over $p$.
\end{prop}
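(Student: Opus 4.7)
The plan is to adapt the argument from Proposition \ref{prop:noextendA} to the Type B or C setting. By the classification in Theorems \ref{thm:wtfrepspm1}, \ref{thm:wtfreps}, and \ref{thm:typecrep}, the choice of $h_1$ from Proposition \ref{prop:localinverse} makes $\rho(h_1)$ have eigenvalues $\alpha^{\pm 1}$ for $\alpha$ a primitive $m$-th root of unity with $m > 2$ and $m \mid n$. In particular, $I_{h_1}(\chi_\rho) = \alpha + \alpha^{-1} \neq \pm 2$, so $\widetilde{I}_{h_1}^2 - 4$ is not identically zero on $\widetilde{C}$; combined with the hypothesis that $\rho'\vert_{\langle h_1, h_2 \rangle}$ is irreducible for some $\rho' \in \frak{C}_\bbC$, Lemma \ref{lem:CRShilb} gives the Hilbert symbol
$$\mathcal{A}_{k(\widetilde{C})} = \left(\frac{\widetilde{I}_{h_1}^2 - 4,\ \widetilde{I}_{[h_1,h_2]} - 2}{k(\widetilde{C})}\right).$$

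Next I would invoke Theorem \ref{thm:HPsl2c} to conclude that $\chi_\rho$ is a regular point of $\frak{C}$, so that (together with Lemma \ref{lem:transferparam}) the local ring $\mathcal{O}_{\widetilde{C}, p}$ may be replaced with the isomorphic $\mathcal{O}_{\frak{C}, \chi_\rho}$ and the tame symbol computed directly using $\kappa = I_{h_1}^2 - 4$ and $\nu = I_{[h_1, h_2]} - 2$. Since $\kappa(\chi_\rho) \neq 0$ we have $\ord_{\chi_\rho}(\kappa) = 0$, while Lemma \ref{lem:generalcoord} shows that $-\nu/\kappa$ is a uniformizing parameter at $\chi_\rho$, forcing $\ord_{\chi_\rho}(\nu) = 1$. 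Substituting into the formula for $r_p$ from Equation \eqref{eqn:tamesymbol} yields
$$\{\kappa, \nu\}_{\chi_\rho} = [\kappa(\chi_\rho)] = \bigl[(\alpha + \alpha^{-1})^2 - 4\bigr] = \bigl[(\alpha - \alpha^{-1})^2\bigr] \in k(\chi_\rho)^*/k(\chi_\rho)^{*2},$$
and the problem reduces to showing this class is non-trivial when Condition $(\star_2)$ fails.

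The main obstacle I anticipate is precisely this final translation: showing that $\eta_n - \eta_n^{-1} \notin k$ implies $\alpha - \alpha^{-1} = \eta_m - \eta_m^{-1} \notin k(\chi_\rho)$, so that $[(\alpha - \alpha^{-1})^2]$ is non-trivial modulo squares. By Lemma \ref{lem:residuefield} the residue field $k(\chi_\rho)$ is generated by $k$ together with the trace field $k_\rho$, and from the upper-triangular normal forms recorded in Section \ref{sec:nonabclassification} one reads off that $k_\rho \subseteq k(\eta_m + \eta_m^{-1})$. A short Galois-theoretic analysis of the tower $k(\eta_m + \eta_m^{-1}) \subseteq k(\eta_m)$, which is at most quadratic and generated by $\eta_m - \eta_m^{-1}$, combined with the divisibility $m \mid n$ and the observation that the eigenvalues of both $\rho(x)$ and $\rho(y)$ lie inside $k(\eta_n)$, should reduce matters to checking that $\eta_m - \eta_m^{-1} \in k(\chi_\rho)$ forces $\eta_n - \eta_n^{-1} \in k$, contradicting the failure of $(\star_2)$. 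This bookkeeping between the various cyclotomic subfields attached to the different generators $x$, $y$, $t$ is where I expect the technical heart of the argument to lie.
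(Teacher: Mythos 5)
Your proposal follows exactly the route the paper takes: the paper's own proof of Proposition~\ref{prop:noextendB} consists of the single sentence ``The proof is identical to that of Proposition~\ref{prop:noextendA} where $w$ is replaced by a root of unity.'' You have reconstructed precisely the argument of Proposition~\ref{prop:noextendA} in the Type~B/C setting — Lemma~\ref{lem:CRShilb} with $h_1,h_2$ from Proposition~\ref{prop:localinverse} for the Hilbert symbol, regularity of $\chi_\rho$ from Theorem~\ref{thm:HPsl2c} plus Lemma~\ref{lem:transferparam} to pass between $\widetilde{C}$ and $\frak{C}$, Lemma~\ref{lem:generalcoord} to see $\ord_{\chi_\rho}(\nu)=1$, and Equation~\eqref{eqn:tamesymbol} to reduce to $[(\alpha-\alpha^{-1})^2]$ in $k(\chi_\rho)^*/k(\chi_\rho)^{*2}$. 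All of that is correct and matches the paper.

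The one place you rightly hesitate — going from ``$(\star_2)$ fails,'' i.e.\ $\eta_n-\eta_n^{-1}\notin k$, to $\eta_m-\eta_m^{-1}\notin k(\chi_\rho)$ — is also exactly where the paper is silent (the proof of Proposition~\ref{prop:noextendA} closes with ``the hypotheses now guarantee''). Your observation that $k_\rho\subseteq k(\eta_m+\eta_m^{-1})$ is correct and worth recording: since $\rho$ is upper-triangular, the eigenvalue map $\gamma\mapsto$ (upper-left entry) is a homomorphism taking values in the group generated by $\alpha$, $\beta$, and $-1$, which lies inside the $m$-th (or $2m$-th) roots of unity; every trace is therefore $\pm(\zeta+\zeta^{-1})$ for such a $\zeta$ and lands in the real cyclotomic field. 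However, the final implication you outline — that $\eta_m-\eta_m^{-1}\in k(\chi_\rho)$ would force $\eta_n-\eta_n^{-1}\in k$ — does not follow from $m\mid n$ alone without more input (for a proper divisor $m$ of $n$ the two failure conditions are not equivalent in general). So your sketch correctly locates the technical heart and supplies the key containment $k_\rho\subseteq k(\eta_m+\eta_m^{-1})$, but you should either prove that implication using the specific constraints from Theorems~\ref{thm:wtfrepspm1}--\ref{thm:typecrep} relating $m$ to $n$, or recognize (as the paper implicitly does) that the relevant failure condition is the one for the particular $m$ attached to $\rho$.
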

\begin{proof}
The proof is identical to that of Proposition \ref{prop:noextendA} where $w$ is replaced by a root of unity. 
\end{proof}

Note that in each possible choice of $h_1$, $h_2$ from Proposition \ref{prop:localinverse}, the subgroup $\langle h_1, h_2\rangle$ of $\pi_1(M)$ is equal to $\langle x,y\rangle$, $\langle x,t\rangle$, or $\langle y,t\rangle$.
Therefore, combining Propositions \ref{prop:noextendA} and \ref{prop:noextendB} we have now proved Theorem \ref{thm:main2}, which we restate for the reader's convenience.

\maintwo*

As remarked in the introduction, Corollary \ref{cor:canoncomp} shows that this theorem applies to any canonical component and hence yields Corollary \ref{cor:maintwocon}.

\begin{rem}\label{rem:maintwo}
In fact, one can slightly weaken the hypothesis in Theorem \ref{thm:main2}.
Specifically, instead of asking for the existence of a single representation $\rho'$ which is an irreducible representation when restricted to the three of $\langle x,y\rangle$, $\langle x,t\rangle$, $\langle y,t\rangle$ we can consider the conditions separately.
That is, if a character of a Type A representation $\rho_A$ occurs, we can ask for a representation $\rho'_A$ for which $\rho'_A\vert_{\langle h_1,h_2\rangle}$ is irreducible for the choices from Proposition \ref{prop:localinverse} and similarly for Type B and C characters.
This is a priori weaker then the condition stated in Theorem \ref{thm:main2}, however we know of no examples that exhibit the difference and the stronger condition is sufficient to handle the canonical component, as shown in Corollary \ref{cor:canoncomp}.
\end{rem}


\section{Integral models}\label{sec:intmod}

We now turn our attention to effectively computing the set $S_M$ of finite rational primes in Theorem \ref{thm:main1} in the case that the tautological Azumaya algebra extends over $\widetilde{C}$.
In order to compute an appropriate set $S$ containing $S_M$, we must analyze a certain integral model $\frak{C}_S$ of $\frak{C}$ over the ring of $S$-integers $\mathcal{O}_{k,S}$.
Many of the ideas of this section are borrowed from \cite[\S 5]{CRS} and translated into our setting.

Following Chinburg--Reid--Stover, throughout this section we let $S$ be as in the hypothesis of Theorem \ref{thm:intmod} and let $\frak{C}$ be a fixed geometrically integral component of $\frak{X}(M)_k$ containing the character of an irreducible representation with field of definition $k$.
Recall that $\widetilde{C}$ is the unique smooth projective curve birational to $\frak{C}$.
In this setting, $\widetilde{C}$ admits a relatively minimal regular model $\widetilde{\frak{C}}_S$ over $\mathcal{O}_{k,S}$ which is unique up to isomorphism if the genus of $\widetilde{C}$ is at least $1$ (see for instance \cite[\S10, Prop 1.8]{Liu}).
Precisely, $\widetilde{\frak{C}}_S$ is a normal fibered surface over $\Spec(\mathcal{O}_{k,S})$ such that $\widetilde{\frak{C}}_S\to \Spec(\mathcal{O}_{k,S})$ has no exceptional divisors and such that the generic fiber $(\widetilde{\frak{C}}_S)_\xi=\widetilde{\frak{C}}_S\times_{\Spec(k)}\Spec(k_\xi)$ is isomorphic to $\widetilde{C}$.
Since $k(\widetilde{C})\cong k(\widetilde{\frak{C}}_S)$ the tautological Azumaya algebra induces a class in the Brauer group $\Br(k(\widetilde{\frak{C}}_S))$ and this section is devoted to studying the corresponding extension problem for the class of the tautological Azumaya algebra to $\widetilde{\frak{C}}_S$.

The following theorem is a mild generalization of \cite[Thm 5.9]{CRS} to our setting and the proof follows similar lines.
We therefore will try to remain notationally consistent with the proof therein and, for the convenience of the reader familiar with the proof in \cite{CRS}, we give a short list of the necessary modifications in Remark \ref{rem:CRSint}.
In the following statement recall that $F$ is the finite part of $H_1(M,\bbZ)$ as in Equation \eqref{eqn:monmatrix}.

\begin{thm}\label{thm:CRSanalogue}
Suppose that $p$ is a codimension $1$ point of $\frak{C}_S$ over which the tautological Azumaya algebra $\mathcal{A}_{k(\widetilde{C})}$ does not extend.
Then there is some $\alpha\in \overline{k(p)}$, where $\overline{k(p)}$ is the algebraic closure of the residue field $k(p)$, such that the extension $k(p)(\alpha)$ is a separable quadratic extension of $k(p)$ and one of the following holds:
\begin{enumerate}
\item $\alpha$ is an $n^{th}$ root of unity, where $n$ is the least common multiple of the orders of $x$ and $y$ in $F$,
\item There exists a homomorphism $\sigma:F\to \overline{k(p)}^*$, such that $\alpha^2$ is a root of the Alexander invariant $\Delta^{\phi_\sigma}_0(z)$ considered as an element of $\overline{k(p)}[z,z^{-1}]$.
Moreover, $\Delta_0(z)^{\phi_\sigma}$ is the characteristic polynomial of $\Phi$.
\end{enumerate}
\end{thm}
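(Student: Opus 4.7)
The plan is to reduce to analyzing a specialization of the tautological representation in positive characteristic, and then invoke the classification of non-abelian reducible representations from Section \ref{sec:nonabclassification} in that setting. Since $\mathcal{A}_{k(\widetilde{C})}$ extends over $\widetilde{C}$ by hypothesis, the tame symbol is trivial at every codimension $1$ point of the generic fiber $(\widetilde{\frak{C}}_S)_\xi \cong \widetilde{C}$. Hence any codimension $1$ point $p$ of $\widetilde{\frak{C}}_S$ at which $\mathcal{A}_{k(\widetilde{C})}$ fails to extend must be the generic point of an irreducible component of a special fiber over some prime $\frak{p} \subset \mathcal{O}_{k,S}$. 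Let $\ell$ denote the residue characteristic of $\frak{p}$; by the definition of $S$, the prime $\ell$ is odd and does not divide $n$, so every quadratic extension of $k(p)$ is automatically separable and the $n$-th roots of unity remain distinct modulo $\ell$.

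I would then choose a Hilbert symbol for $\mathcal{A}_{k(\widetilde{C})}$ of the form $(\widetilde{I}_g^2 - 4, \widetilde{I}_{[g,h]} - 2)/k(\widetilde{C})$ via Lemma \ref{lem:choosehilbert}, for suitable $g, h$ taken from $\{x, y, t\}$ following the proof of Theorem \ref{thm:main1}. Computing the tame symbol $r_p$ via Equation \eqref{eqn:tamesymbol}, the assumption of non-triviality yields a class $[\beta] \neq [1]$ in $k(p)^*/k(p)^{*2}$. Writing $\beta = \alpha^2$ with $\alpha$ in an algebraic closure $\overline{k(p)}$, the extension $k(p)(\alpha)/k(p)$ is a genuine separable quadratic extension.

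Next, I would reduce the tautological representation $\rho_\frak{C}$ modulo $\frak{p}$ (after possibly extending scalars) to obtain a representation $\rho_p : \pi_1(M) \to \SL_2(K)$ valued in a field $K$ of characteristic $\ell$ containing $k(p)$. A straightforward check shows that Proposition \ref{prop:idealirrepextend} generalizes to characteristic $\ell$, since its proof uses only that $I_{[g,h]} - 2$ vanishes on reducible representations. Non-triviality of the tame symbol therefore forces $\rho_p$ to be non-abelian reducible. The classification arguments of Section \ref{sec:nonabclassification} are purely algebraic and go through verbatim over $K$ once one knows $\ell \neq 2$ and $\ell \nmid n$, since the only nonzero quantities invoked in those arguments (such as $w^2-1$, $\al - \al^{-1}$, the entries used to conjugate to standard form) remain nonzero modulo $\ell$ under these hypotheses. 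Thus $\rho_p$ is of Type A, B, or C over $K$.

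Finally I would extract $\alpha$ case by case. If $\rho_p$ is of Type A, Theorem \ref{thm:typearep} shows $\rho_p(t)$ has eigenvalues $w^{\pm 1}$ with $w^2$ an eigenvalue of $\Phi$; taking $\alpha = w$ and $\sigma$ trivial, Lemma \ref{lem:foxderiv}(1) identifies $\Delta_0^{\phi_\sigma}(z)$ with the characteristic polynomial of $\Phi$, giving case (2). If $\rho_p$ is of Type B or C, then by Theorems \ref{thm:wtfrepspm1}--\ref{thm:typecrep}, one of $\rho_p(x)$, $\rho_p(y)$ has primitive $m$-th root-of-unity eigenvalues with $m \mid n$, and $\alpha$ is such a root of unity, yielding case (1). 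The main obstacle will be rigorously carrying the classification of Section \ref{sec:nonabclassification} across to characteristic $\ell$ — verifying that each conjugation and commutator computation survives reduction modulo $\frak{p}$ — together with confirming that the element $\alpha^2$ extracted from the tame symbol matches the eigenvalue datum produced by the classification.
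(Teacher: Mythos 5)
Your proposal has a fundamental technical gap at the step where you "reduce the tautological representation $\rho_{\frak{C}}$ modulo $\frak{p}$ (after possibly extending scalars) to obtain a representation $\rho_p : \pi_1(M) \to \SL_2(K)$." This cannot be done at a codimension $1$ point $p$ where $\mathcal{A}_{k(\widetilde{C})}$ fails to extend: the obstruction to extension is precisely that the $\widehat{\mathcal{O}}_{\frak{C}_S,p}$-algebra generated by $\rho_{\frak{C}}(\pi_1(M))$ in $\widehat{A}_p = \mathcal{A}_{k(\widetilde{C})}\otimes \widehat{F}_p$ does not have residue ring $M_2(k(p))$ — if it did, the tame symbol would vanish and the algebra would extend at $p$. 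Instead, after completing, $\rho_{\frak{C}}$ factors through the norm-one units $\widehat{D}^1$ of a maximal order $\widehat{D}$ of a \emph{division} algebra, whose unique maximal two-sided ideal $J$ has $\widehat{D}/J$ a separable quadratic field extension of $k(p)$. The reduction of the representation mod $J$ therefore lands in the abelian group $(\widehat{D}/J)^*$, not in $\SL_2$ of a field, and is \emph{abelian} rather than non-abelian reducible. Consequently the classification of Section~\ref{sec:nonabclassification} cannot be applied to any such reduction: there is nothing of Type A, B, or C to classify, because the ``non-abelianness'' of $\rho_{\frak{C}}$ survives only in the deeper terms $J \supset J^2 \supset \cdots$ of the filtration and not in the residue representation.

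The paper's proof is built around exactly this issue. It works with the maximal order $\widehat{D}$ and the quotients $W_s = \psi_s(\rho_p(\pi_1(M)))$ where $\psi_s : \widehat{D}^1 \to (\widehat{D}/J^s)^*$, selects $s_0$ maximal so that $W_{s_0}$ is abelian, proves the key claim that $W_1 \not\subset k(p)$ (this is where the condition $\ell \nmid n$ is genuinely used), and then splits into two cases: if $\psi_1(\rho_p(t))\in k(p)$, some word $x^a y^b$ reduces to an $n$-th root of unity $\alpha \notin k(p)$ (giving item (1)); if $\psi_1(\rho_p(t)) = \alpha \notin k(p)$, a direct computation with the commutation relations $txt^{-1} = x^a y^c$, $tyt^{-1} = x^b y^d$ at the level of $W_{s_0+1}$ produces the equation $\alpha^4 - (a+d)\alpha^2 + 1 = 0$ (giving item (2)). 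Your sketch never engages with the order-theoretic filtration, and the claim that ``non-triviality of the tame symbol forces $\rho_p$ to be non-abelian reducible'' has the logic backwards; beyond that, the plan to carry Section~\ref{sec:nonabclassification} over to characteristic $\ell$ is flagged as the main obstacle but not executed. As written, the proof cannot be repaired without rebuilding it on the completion/maximal-order framework.
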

\begin{proof}
We use the notation that $\mathcal{O}_{\frak{C}_S,p}$ denotes the local ring at $p$, $\widehat{\mathcal{O}}_{\frak{C}_S,p}$ denotes the completion of this ring, $\widehat{F}_p=\mathrm{Frac}(\widehat{\mathcal{O}}_{\frak{C}_S,p})$ is the corresponding fraction field, and $\widehat{A}_p=\mathcal{A}_{k(\widetilde{C})}\otimes_{\mathcal{O}_{\frak{C}_S,p}}\widehat{F}_p$.
It follows from \cite[Lem 5.8]{CRS} and the first two paragraphs of the proof of \cite[Thm 5.9]{CRS} that $\widehat{A}_p$ is a quaternion algebra over $\widehat{F}_p$ which cannot be extended to an Azumaya algebra over $\widehat{\mathcal{O}}_{\frak{C}_S,p}$ and moreover that there is a maximal $\widehat{\mathcal{O}}_{\frak{C}_S,p}$-order $\widehat{D}$ of $\widehat{A}_p$ such that $\pi_1(M)$ admits an injective homomorphism
$$\rho_p:\pi_1(M)\to \widehat{D}^1,$$
where $\widehat{D}^1$ is the group of elements of reduced norm $1$.

The maximal order $\widehat{D}$ has a unique maximal $2$-sided ideal $J$ \cite[Prop 5.3]{CRS} and therefore for all $s\ge 0$, we can form the commutative diagram
\begin{equation}\label{eqn:shortexact}
\xymatrix{&&\widehat{D}^1\ar[d]_-{\psi_{s+1}}\ar[dr]^-{\psi_{s}}&&\\
1\ar[r]&\ker(\psi_{s+1})\ar[r]^-\iota&\widehat{D}/J^{s+1}\ar[r]^-{\overline\psi_{s+1}}\ar[r]&\widehat{D}/J^s\ar[r]&1,}
\end{equation}
where $\iota$ is simply inclusion of subgroups.
Any non-trivial element of $E_{s+1}=\ker(\overline{\psi}_{s+1})\cap \psi_{s+1}(\widehat{D}^1)$ has a representative of the form $1+a_sJ^s\pmod{J^{s+1}}$ for some $a_s\notin J$.
Moreover a straightforward calculation shows that $E_{s+1}$ is abelian since if $a=1+a_sJ^s\pmod{J^{s+1}}$ and $a'=1+a_s'J^s\pmod{J^{s+1}}$ are elements of $E_{s+1}$ then
$$aa'=1+(a_s+a_s')J^s=1+(a_s'+a_s)J^s=a'a\pmod{J^{s+1}}.$$

There is a well defined action of $\psi_s(\widehat{D}^1)$ on $E_{s+1}$ by conjugation which factors through the projection $\widehat{D}/J^s\to \widehat{D}/J$.
Indeed, let $d\in \psi_s(\widehat{D}^1)$ and let $\widetilde{d}=d_0+d_1J+\dots +d_{s}J^s+\dots$ be any element in $\widehat{D}^1$ such that $\psi_s(\widetilde{d})=d$.
Then if $a=1+a_sJ^s\pmod{ J^{s+1}}$ is any element of $E_{s+1}$, the computation
$$\widetilde{d}a\widetilde{d}^{-1}=1+d_0a_sJ^s\overline{d}_0\pmod{J^{s+1}},$$
shows that this conjugation is independent of representative $\widetilde{d}$ for $d$ and moreover that this conjugation only depends on the class of $\widetilde{d}$ in $\widehat{D}/J$.
Therefore, the conjugation action of $\psi_s(\widehat{D}^1)$ on $E_{s+1}$ factors through the map $\widehat{D}/J^s\to \widehat{D}/J$ as required.

We can form the similar commutative diagram as Equation \eqref{eqn:shortexact} using the homomorphism $\rho_p$.
In particular, let $W_s=\psi_s(\rho_p(\pi_1(M)))$ then we have the commutative diagram
$$\xymatrix{&&\pi_1(M)\ar[d]_-{\psi_{s+1}\circ\rho_p}\ar[dr]^-{\psi_{s}\circ\rho_p}&&\\
1\ar[r]&Q_s\ar[r]^-\iota&W_{s+1}\ar[r]^-{\overline\psi_{s+1}}\ar[r]&W_s\ar[r]&1,}$$
where we continue to use the notation $\overline\psi_{s+1}$ even though the domain has been restricted and where $Q_s=W_{s+1}\cap E_{s+1}$.
Note that $Q_s$ is abelian as it is a subgroup of $E_{s+1}$.
Also note that $\widehat{D}/J$ is a quadratic field extension of $k(p)$ and hence $W_1$ is abelian.
Consequently we may fix a natural number $s_0$ such that
$$s_0=\max\{s\in\bbN\mid W_j\text{ is abelian for all }j\le s\},$$
where the above set is non-empty since $W_1$ is abelian.
Note that the maximum exists because $\rho_p$ is faithful.
\medskip

\noindent\textbf{Claim: $W_{1}=\psi_{1}(\rho_p(\pi_1(M)))$ is not contained in $k(p)$.}
\smallskip

\noindent\textbf{Proof of claim:}
Suppose for contradiction that $W_{1}=\psi_{1}(\rho_p(\pi_1(M)))$ is contained in $k(p)$ and recall our assumption that the characteristic of $k(p)$ is prime to $n$.
We first show that $\psi_{s_0+1}(\rho_p(x))$, $\psi_{s_0+1}(\rho_p(y))$ are of the form $x_0+x_{s_0}J^{s_0}$, $y_0+y_{s_0}J^{s_0}$ in $W_{s_0+1}$, respectively.
Indeed, write $x_0+x_1J+\dots+ x_{s_0}J^{s_0}$ with $x_i\notin J$.
Then since $W_{s_0}$ is abelian, $x^m\equiv 1\pmod{J^{s_0}}$ for some $m$ dividing $n$, and hence the calculation
\begin{equation}\label{eqn:torsionmods}
x^m\equiv x^m_0+mx_0^{m-1}x_1J+\dots\equiv 1\pmod{J^{s_0}},
\end{equation}
shows that $x_0^m=1$ and $mx_0^{m-1}x_1\in J^{s_0-1}$.
Here we have used the fact that $\psi_{s_0+1}(\rho_p(x))$ is contained in $k(p)$ to write the above equation, since we need that $x_1Jx_0=x_0x_1J$.
As $J$ is maximal, we therefore conclude that $x_1= 0$ by the condition on the characteristic of $k(p)$ since $x_0,x_1\notin J$ by construction.
A straightforward induction argument shows that, in fact,  $\psi_{s_0+1}(\rho_p(x))$ is of the form $x_0+x_{s_0}J^{s_0}$.
The identical argument holds for $\psi_{s_0+1}(\rho_p(y))$.

Writing $\psi_{s_0+1}(\rho_p(t))=t_0+t_1J+\dots+t_{s_0}J^s$ with each $t_i\notin J$, it now follows immediately that $\psi_{s_0+1}(\rho_p(x))$, $\psi_{s_0+1}(\rho_p(y))$, $\psi_{s_0+1}(\rho_p(t))$ all commute mod ${J^{s_0+1}}$ since $x_0,y_0,t_0\in k(p)$ and therefore commute with $J$.
This shows that $W_{s_0+1}$ is abelian, contradicting the maximality of $s_0$.
\medskip

Continuing the proof, the claim above implies that the action by conjugation of elements in $W_1$ on $Q_{s_0}$ is non-trivial.
We then have two cases, depending on whether the image of the stable letter lands in $k(p)$ or not.

\medskip 

\textbf{Case 1: $\psi_1(\rho_p(t))\in k(p)$.}
The composition $\psi_1\circ\rho_p$ factors through $H_1(M,\bbZ)$ since $W_1$ is abelian and hence the confluence of the condition that $\psi_1(\rho_p(t))\in k(p)$ and that $\psi_1(\rho_p(\pi_1(M)))\notin k(p)$ implies that there exists an element of $\pi_1(M)$ of the form $x^ay^b$ such that $\psi_1(\rho_p(x^ay^b))=\alpha\notin k(p)$.
Moreover any element of the form $x^ay^b$ has order dividing $n$ in homology and therefore $\alpha$ is an $n^{th}$ root of unity.
This combined with \cite[Prop 5.3]{CRS} gives conclusion (1) of the theorem.

\medskip

\textbf{Case 2: $\psi_1(\rho_p(t))\notin k(p)$.}
Let $\alpha=\psi_1(\rho_p(t))$ then $k(p)(\alpha)$ is a separable quadratic extension of $k(p)$ \cite[Prop 5.3]{CRS}.
We will show directly that $\alpha^2$ satisfies the characteristic polynomial for $\Phi$.
Denote by $\overline{t}=t_0+t_1J+\dots+ t_{s_0}J^{s_0}$ the image of $t$ in $W_{s_0+1}$, where $t_0=\alpha$ by assumption.
First we show that the image of $\langle x,y\rangle$ in $W_{s_0+1}$ is an abelian group.
Indeed, we know that $t$ and the longitude $l=[ x,y]$ commute in $\pi_1(M)$ and hence so does their image in $W_{s_0+1}$.
Write $\overline{l}$ for the image of $l$ in $W_{s_0+1}$.
As $W_{s_0}$ is abelian, $\overline{l}$ is contained in $E_{s_0+1}=\ker(\overline\psi_{s_0+1})\cap\psi_{s_0+1}(\widehat{D}^1)$ and therefore we conclude that $\overline{l}=1+l_{s_0}J^{s_0}\pmod{J^{s_0+1}}$.
One computes that
\begin{equation}\label{eqn:conjact}
\overline{t}( 1+l_{s_0}J^{s_0})\overline{t}^{-1}=1+t_0 l_{s_0}J^{s_0}t_0^{-1}=1+t_0^2l_{s_0}J^{s_0}\pmod{J^{s_0+1}},
\end{equation}
since $\psi_1(\rho_p(t))=\alpha=t_0\notin k(p)$, $s_0$ is odd \cite[Prop 5.3(iv)]{CRS}, and the reduced norm of $\alpha$ is $1$ so that $J\alpha^{-1}=\alpha J$.
In particular, since $t_0\notin k(p)$ it follows that $t_0^2\neq1$, hence in order for $\overline{t}$ and $\overline{l}$ to commute we necessarily have that $l_{s_0}=0$.
As such, it follows that the image of $x$ and $y$ under $\psi_{s_0+1}$ commute in $W_{s_0+1}$.
From this and the fact that $W_{s_0}$ is abelian, we conclude that
$$1+c_{s_0}J^{s_0}=\psi_{s_0+1}([t,x])=\psi_{s_0+1}(\phi(x)x^{-1})=\psi_{s_0+1}(x^{a-1}y^c)\pmod{J^{s_0+1}},$$
for some $c_{s_0}$.

Let $\overline{x}$, $\overline{y}$ denote the image of $x$, $y$ in $W_{s_0+1}$ under $\psi_{s_0+1}$, respectively.
We claim that $\overline{x}$, $\overline{y}$ are of the form $x_0+x_{s_0}J^{s_0}$, $y_0+y_{s_0}J^{s_0}$, respectively.
Indeed, let $\overline{x}=x_0+x_1J+\dots +x_{s_0}J^{s_0}$ where $x_0\neq 0$, then it follows that
\begin{align}
\overline{x}\psi_{s_0+1}(x^{a-1}y^c)&=\psi_{s_0+1}(x^{a-1}y^c)\overline{x}\pmod{J^{s_0+1}},\nonumber\\
\overline{x}+x_0c_{s_0}J^{s_0}&=\overline{x}+c_{s_0}J^{s_0}x_0\pmod{J^{s_0+1}}\label{eqn:modj},
\end{align}
and hence either $c_{s_0}=0$ or $x_0\in k(p)^*$.
Indeed, recall that $s_0$ is odd and if $c_{s_0}\neq 0$ then the condition that $x_0$ must commute with $J^{s_0}$ is the condition that $x_0\in k(p)^*$.
Similarly, if we write
$$1+d_{s_0}J^{s_0}=\psi_{s_0+1}([t,y])\pmod{J^{s_0+1}},$$
then either $d_{s_0}=0$ or $y_0\in k(p)^*$.
We now show that $x_0, y_0\in k(p)^*$, in which case the argument in the paragraph surrounding Equation \eqref{eqn:torsionmods} shows that $\overline{x}=x_0+x_{s_0}J^{s_0}$, $\overline{y}=y_0+y_{s_0}J^{s_0}$ yielding the above claim.

Note that we cannot have that $c_{s_0}=d_{s_0}=0$ as then $W_{s_0+1}$ would be abelian, contradicting the choice of $s_0$.
If $c_{s_0}\neq 0$ and $d_{s_0}\neq 0$ then we are done by the previous paragraph.
Therefore to exhibit the claim it suffices to show that if $c_{s_0}=0$, $d_{s_0}\neq 0$ then $x_0,y_0\in k(p)^*$, as the symmetric argument will apply in the case that $c_{s_0}\neq 0$, $d_{s_0}=0$.

Suppose that $c_{s_0}=0$.
Then since $d_{s_0}\neq 0$ it follows that $y_0\in k(p)^*$ and hence, as remarked above, we conclude that $\overline{y}=y_0+y_{s_0}J^{s_0}$.
Additionally, $y_{s_0}\neq 0$ since otherwise $t$ and $y$ commute, contradicting that $d_{s_0}\neq 0$.
Since $x$ and $y$ commute mod $J^{s_0+1}$, it follows that
$$\overline{x}~\!\overline{y}=\overline{x}y_0+x_0y_{s_0}J^{s_0}=y_0\overline{x}+y_{s_0}J^{s_0}x_0=\overline{y}~\!\overline{x}\pmod{J^{s_0+1}}.$$
In particular, $x_0y_{s_0}J^{s_0}=y_{s_0}J^{s_0}x_0$ and since $s_0$ is odd it follows, as above, that $x_0\in k(p)^*$ as well.
Therefore we see that $x_0,y_0\in k(p)^*$ as required.

One now readily computes that $\overline{x}^m=x_0+mx_0^{m-1}x_{s_0}J^{s_0}$ for any $m\in\mathbb{N}$ and similarly for $\overline{y}$, where we are allowing for the case that one of $x_{s_0}, y_{s_0}$ is zero.
A straightforward computation therefore shows that
$$x_0+t_0^2x_{s_0}J^{s_0}=\overline{t}\overline{x}\overline{t}^{-1}=\overline{x}^a\overline{y}^c=x_0^ay_0^{c}+(ax_0^{a-1}y_0^cx_{s_0}+cx_0^{a}y_0^{c-1}y_{s_0})J^{s_0}\pmod{J^{s_0+1}},$$
and similarly 
$$y_0+t_0^2y_{s_0}J^{s_0}=\overline{t}\overline{y}\overline{t}^{-1}=\overline{x}^b\overline{y}^d=x_0^by_0^{d}+(bx_0^{b-1}y_0^dx_{s_0}+dx_0^{b}y_0^{d-1}y_{s_0})J^{s_0}\pmod{J^{s_0+1}}.$$
Note that since $W_1$ is abelian it follows that $x_0^{a-1}y_0^c=x_0^by_0^{d-1}=1\in k(p)$ and hence
\begin{align}
x_{s_0}(t_0^2-a)&=cx_0y_0^{-1}y_{s_0},\label{eqn:xs}\\
y_{s_0}(t_0^2-d)&=by_0x_0^{-1}x_{s_0}.\label{eqn:ys}
\end{align}
Recall that since we are considering hyperbolic once punctured torus bundles, we have by definition that $b,c\neq 0$ in $\bbZ$ and since $x_0^{a-1}y_0^c=1$, it follows that $x_0,y_0\neq 0$.

Suppose first that the characteristic of $k(p)$ divides $c$. 
Then $x_{s_0}(t_0^2-a)=0$ and hence either $x_{s_0}=0$ or $t_0^2-a=0$.
In the latter case, we are done since $ad-bc=1$ in $\bbZ$ implies that $ad=1$ in $k(p)$ and therefore
\begin{equation}\label{eqn:t0char}
(t_0^2-a)(t_0^2-d)=t_0^4-(a+d)t_0^2+1=0,
\end{equation}
in $k(p)$.
This is precisely the condition that $t_0^2=\alpha^2$ satisfies the characteristic polynomial of $\Phi$.
Therefore we assume that $x_{s_0}=0$.
From Equation \eqref{eqn:ys} one concludes that either $y_{s_0}=0$ or $t_0^2-d=0$. 
In the former case, it follows that $W_{s_0+1}$ is abelian so it must be that $t_0^2-d=0$.
But then Equation \eqref{eqn:t0char} again holds and we complete the proof.
The symmetric argument holds when the characteristic of $k(p)$ divides $b$.

Therefore assume that the characteristic of $k(p)$ does not divide $b$ or $c$.
Then it follows from Equations \eqref{eqn:xs} and \eqref{eqn:ys} that $x_{s_0}=0$ if and only if $y_{s_0}=0$.
If $x_{s_0}=y_{s_0}=0$, one concludes that $W_{s_0+1}$ is abelian as before and therefore, we must have $x_{s_0},y_{s_0}\neq 0$.
Multiplying Equations \eqref{eqn:xs} and \eqref{eqn:ys} together and using that $ad-bc=1$, we conclude that
$$t_0^4-(a+d)t_0^2+1=0,$$
and hence again $\alpha^2$ satisfies the characteristic polynomial of $\Phi$.
This completes the proof.
\end{proof}

\begin{rem}\label{rem:CRSint}
There are some small but key changes in the adaptation of the above proof from \cite[Thm 5.9]{CRS}, which we now detail.
First, the argument is identical to that in \cite{CRS} until the claim that the image of $W_1$ is not contained in $k(p)$, which is where we imposed the condition that the residue field characteristic is prime to the torsion in $H_1(M,\bbZ)$.
In \cite{CRS}, $M$ is always a hyperbolic knot complement and therefore $H_1(M,\bbZ)\cong \bbZ$, which importantly is not just abelian but cyclic. 
In particular, this implies that $W_{s_0}$ is cyclic since it factors through homology.
To the author's understanding, the argument in \cite{CRS} then proceeds by the logic that if $\alpha\equiv \gamma^m\pmod{J^s}$, $\beta\equiv \gamma^{m'}\pmod{J^{s_0}}$ then
$$\alpha\beta=\gamma^{m+m'}+\beta_0\alpha_{s_0}J^{s_0}+\alpha_0\beta_{s_0}J^{s_0}=\beta\alpha\pmod{J^{s_0+1}},$$
provided that $W_1$, and in particular $\alpha_0$, $\beta_0$, is contained in $k(p)$.
Such an argument is not afforded to us in the case that $H_1(M,\bbZ)$ is merely abelian, so we must impose the condition on characteristic. 
We suspect that this condition is actually unnecessary (see Question \ref{ques:amidumb}).

Second, in \cite[Thm 5.9]{CRS} the proof is concluded using general facts about fitting ideals and the structure of $\Gamma'/[\Gamma',\Gamma']$ as a $\bbZ[\pi_1(M)^{ab}]$, where $\Gamma'=[\pi_1(M),\pi_1(M)]$. 
One can indeed make the similar argument along these lines in our setting, where the Alexander polynomial is replaced by $\Delta_0^\phi(z)$, however we instead opted to give a concrete computation in the proof presented.
\end{rem}

We now give one final simple lemma which analyzes the rational primes for which Condition (2) of Theorem \ref{thm:CRSanalogue} holds.

\begin{lem}\label{lem:finfield}
Suppose that $\Tr(\Phi)=a^2+2$ for some $a\in\bbN$ and let $T$ denote the largest set of rational primes such that for each $\ell\in T$ the following holds: if $z$ is a root of the characteristic polynomial for $\Phi$ in the algebraic closure $\overline{\bbF}_\ell$ and $w\in \overline{\bbF}_\ell$ is such that $w^2=z$ then $\mathbb{F}_\ell(w)\neq\mathbb{F}_\ell(w+1/w)$.
Then $T=\{2\}$ if $\Tr(\Phi)$ is odd and $T=\emptyset$ if $\Tr(\Phi)$ is even.
\end{lem}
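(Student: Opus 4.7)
The plan is to leverage the factorization established in Proposition~\ref{prop:galois}. Since $\Tr(\Phi) = a^2 + 2$, any square root $w$ of a root $z$ of the characteristic polynomial of $\Phi$ is a root of $f(x) = x^4 - (a^2+2)x^2 + 1$, and by the computation in Proposition~\ref{prop:galois} this polynomial factors over $\mathbb{Z}$ as
$$f(x) = (x^2 + ax - 1)(x^2 - ax - 1).$$
This factorization survives reduction modulo any prime $\ell$, so the first step is to read off from it that $w$ satisfies $w^2 \pm aw - 1 = 0$, and hence $w - 1/w = \pm a \in \mathbb{F}_\ell$ for every prime $\ell$.

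Next, I would write
$$w + \tfrac{1}{w} \;=\; (w - \tfrac{1}{w}) + \tfrac{2}{w} \;=\; \pm a + \tfrac{2}{w},$$
and split into cases according to $\ell$. For odd $\ell$, the scalar $2$ is a unit in $\mathbb{F}_\ell$, so $\mathbb{F}_\ell(w+1/w)$ contains $2/w$ and thus $w$ itself (noting $w \neq 0$ since $z \neq 0$). Hence $\mathbb{F}_\ell(w) = \mathbb{F}_\ell(w+1/w)$, so no odd prime lies in $T$.

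For $\ell = 2$ the term $2/w$ vanishes, so the analysis is more delicate and splits further according to the parity of $a$. If $a$ is even, so that $\Tr(\Phi)$ is even, the polynomial $f$ reduces to $(x+1)^4$ over $\mathbb{F}_2$; thus $w = 1$, both fields coincide with $\mathbb{F}_2$, and $2 \notin T$. If $a$ is odd, so that $\Tr(\Phi)$ is odd, both factors of $f$ collapse over $\mathbb{F}_2$ to $x^2 + x + 1$, the minimal polynomial of a primitive cube root of unity. Then $w \in \mathbb{F}_4 \setminus \mathbb{F}_2$, while the identity $w^{-1} = w + 1$ in $\mathbb{F}_4$ gives $w + 1/w = 2w + 1 = 1 \in \mathbb{F}_2$, so the fields differ and $2 \in T$. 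Combining these cases yields the claim. There is no serious obstacle once the factorization of $f$ is in hand; the only point requiring any care is the degenerate behavior at $\ell = 2$, where the $+1/w$ and $-1/w$ terms become indistinguishable and one must reduce $f$ explicitly.
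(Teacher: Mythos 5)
Your proof is correct, and the $\ell=2$ analysis coincides with the paper's. For odd $\ell$ you take a genuinely cleaner route: from the integral factorization $f(x) = (x^2+ax-1)(x^2-ax-1)$ you observe that $w$ satisfies $w^2 \mp aw - 1 = 0$, hence $w - 1/w = \pm a \in \bbF_\ell$, and since $2$ is a unit the equality $w + 1/w = (w - 1/w) + 2/w$ lets you recover $w$ from $w+1/w$, giving $\bbF_\ell(w) = \bbF_\ell(w+1/w)$ uniformly. The paper instead mirrors the structure of Proposition~\ref{prop:galois} and splits into subcases according to whether $\ell \mid a^2+4$: when $\ell \nmid a^2+4$ it distinguishes the quadratic-residue and non-residue behavior of $a^2+4$, and when $\ell \mid a^2+4$ it produces $\lambda \equiv a/2$ with $\lambda^2 \equiv -1$ to see $f \equiv (x+\lambda)^2(x-\lambda)^2$ and $w \in \bbF_\ell$. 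Your version avoids these subcases entirely (in particular you never need the discriminant $a^2+4$), at the small cost of the explicit manipulation $1/w = \pm a + w$, and both arrive at the same conclusion that $T \subset \{2\}$. One point worth keeping in mind when you invoke the factorization: over $\bbF_\ell$ the two quadratic factors need not be coprime (they share a root when $\ell \mid a^2+4$, and coincide mod $2$ for $a$ odd), but this does not affect your argument since you only need $w$ to satisfy at least one of them.
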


\begin{proof}
The characteristic polynomial of $\Phi$ is $x^2-(a^2+2)x+1$ and therefore $w$ satisfies $x^4-(a^2+2)x^2+1$.
As in Proposition \ref{prop:galois}, $x^4-(a^2+2)x^2+1$ factors over $\bbZ$ as $(x^2+ax-1)(x^2-ax-1)$ and hence $w$ satisfies one of these two equations in $\overline{\bbF}_\ell[x]$.

Suppose first that $\ell$ is an odd prime.
If $\ell\nmid a^2+4$ then the argument given for $\bbQ$ in Proposition \ref{prop:galois} goes through in a straightforward manner for $\bbF_\ell$.
Indeed, either $a^2+4$ is a quadratic residue and $\mathbb{F}_\ell[w]=\mathbb{F}_\ell=\mathbb{F}_\ell[w+1/w]$ or $a^2+4$ is not a quadratic residue and $\mathbb{F}_\ell[w]=\mathbb{F}_\ell[w+1/w]$ is a quadratic extension of $\bbF_\ell$.
Now assume that $\ell\mid a^2+4$ and let $\lambda\in\bbF_\ell$ be such that $\lambda\equiv a/2\pmod{\ell}$, where here we have used that $\ell$ is odd to invert $2$. 
Then $2\lambda\equiv a\pmod{\ell}$ and the condition that $\ell\mid a^2+4$ shows that $\lambda^2\equiv-1\pmod{\ell}$.
In particular,
$$x^4-(a^2+2)x^2+1=(x^2+ax-1)(x^2-ax-1)=(x+\lambda)^2(x-\lambda)^2.$$
As $w=\pm \lambda\in\mathbb{F}_\ell$ and hence $\mathbb{F}_\ell[w]=\mathbb{F}_\ell=\bbF_\ell[w+1/w]$, we therefore conclude that no odd prime $\ell$ for which $\ell\mid a^2+4$ is an element of $T$.

From the above analysis, $T\subset\{2\}$ whenever $\Tr(\Phi)=a^2+2$.
If $\ell=2$ then $\Tr(\Phi)$ is even, or equivalently if $a$ is even, then $x^4-(a^2+2)x^2+1=(x+1)^4$ and $\mathbb{F}_\ell[w]=\mathbb{F}_\ell=\bbF_\ell[w+1/w]$.
Hence $T=\emptyset$.
If $\Tr(\Phi)$ is odd, equivalently if $a$ is odd, then $x^2+x+1$ is irreducible in $\bbF_2[x]$.
Consequently $w$ is a root of $x^2+x+1$ and $\bbF_2[w]\cong\bbF_4$.
Since $w^2+1=w$
$$w+\frac{1}{w}=\frac{w^2+1}{w}=1,$$
and therefore $\bbF_4\cong\bbF_2[w]\neq \bbF[w+1/w]\cong\bbF_2$.
From this we see that $T=\{2\}$ when $\Tr(\Phi)$ is odd.
This completes the proof.
\end{proof}

We now prove Theorem \ref{thm:intmod}, which we restate for the reader's convenience.

\intmod*

\begin{proof}[Proof of Theorem \ref{thm:intmod}]
Let $S$ be as in the statement of the theorem and suppose that $p\in \widetilde{\frak{C}}_S$ is a codimension $1$ point.
Note that either $p$ is a horizontal divisor and $k(p)$ has characteristic $0$ or $p$ is a vertical divisor and $k(p)$ has characteristic $\ell$ for some rational prime $\ell\notin S$ \cite[Lem 8.3.4]{Liu}.
In the former case, $p$ lies on the general fiber $\widetilde{C}$ of $\widetilde{\frak{C}}_S$ and the hypothesis of Theorem \ref{thm:intmod} already implies that the tautological Azumaya algebra extends over such points.
It therefore suffices to consider those points for which the residue field $k(p)$ has characteristic $\ell>0$.
If $p$ were such that $\mathcal{A}_{k(\widetilde{\frak{C}_S})}$ did not extend then either (1) or (2) of Theorem \ref{thm:CRSanalogue} holds.
However, (1) cannot hold by assumption on $S$ and, similarly, (2) cannot hold by assumption on $S$ combined with Lemma \ref{lem:finfield}.
Therefore $\mathcal{A}_{k(\widetilde{\frak{C}_S})}$ also extends over all codimension $1$ points with finite residue characteristic.
This completes the proof.
\end{proof}


\section{A cavalcade of examples}\label{sec:examples}

We now apply the results of the previous sections to a host of examples to explore situations where one can concretely compute whether the tautological Azumaya algebra extends over a canonical component.


\subsection{Tunnel number one torus bundles}\label{sec:tunnelnumone}
If $M$ is a once punctured torus bundle of tunnel number one, then $M$ is one of an infinite family $\{M_j\}_{j\in A}$ where $A=\bbZ\setminus\{-4,-3,\dots,0\}$ and $M_j$ has monodromy $RL^j$ (see \cite{BakerPetersen,Sakuma} and Example \ref{ex:tunnelone}).
Following Baker--Petersen \cite[\S 2.1]{BakerPetersen}, we also have a presentation given by
$$\pi_1(M_j)=\langle x,y,t\mid txt^{-1}=x(yx)^j,~tyt^{-1}=yx\rangle=\langle a,y\mid y^{j+2}=a^{-1}ya^2ya^{-1}\rangle,$$
where $a=y^{-1}t$.
For the convenience of the reader familiar with \cite{BakerPetersen}, we point out that in their notation $\mu=t$, $\alpha=a$, $\gamma=x$, and $j=-(n+2)$ so, in particular, our family is indexed slightly differently than theirs.
Abelianizing the latter description of $\pi_1(M_j)$, we see that $H_1(M_j,\bbZ)\cong\bbZ\oplus\bbZ/|j|\bbZ$ where the image of $y$ generates the factor of $\bbZ/|j|\bbZ$ and the image of $x$ is trivial in $H_1(M_j,\bbZ)$.

Baker and Petersen study the $\SL_2(\bbC)$-character variety of the family $\{M_j\}$ and show the following theorem.

\begin{thm}[\cite{BakerPetersen}, Theorem 5.1]\label{thm:BP}
For all $j\in A$, there is a unique canonical component $\frak{X}^{can}(M_j)$ of $\frak{X}(M_j)_\bbQ$ which is birational to a hyperelliptic curve over $\bbQ$.
If $j\not\equiv 0\pmod{4}$ then this is the unique component of $\frak{X}(M_j)_\bbQ$ containing the character of an irreducible representation and if $j\equiv 0\pmod{4}$, then there is an additional component which is isomorphic to $\mathbb{A}^1_\bbQ$ over $\bbQ$.
\end{thm}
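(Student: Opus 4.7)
The plan is to exploit the two-generator presentation $\pi_1(M_j) = \langle a, y \mid y^{j+2} = a^{-1}ya^2ya^{-1}\rangle$ together with the well-known fact that for any two-generator group $\Gamma$, the character scheme $\frak{X}(\Gamma)_\bbQ$ embeds into $\mathbb{A}^3_\bbQ$ via the trace coordinates $(X, Y, Z) = (I_a, I_y, I_{ay})$, with defining equations coming from the relations. In particular, I would first translate the defining relation into a polynomial equation in $X, Y, Z$: using the Cayley--Hamilton identity $A^2 = \Tr(A)\,A - I$ in $\SL_2$ and the Chebyshev-type recurrence $\Tr(A^{n+1}) = \Tr(A)\Tr(A^n) - \Tr(A^{n-1})$, the left-hand side $\Tr(y^{j+2})$ reduces to a polynomial $P_{j+2}(Y)$, while the right-hand side $\Tr(a^{-1}ya^2ya^{-1}) = \Tr(a^{-2}ya^2y)$ (by cyclicity of trace) expands via $A^{\pm 2} = \pm\Tr(A)A^{\pm 1} \mp I$ and the Fricke identities to a polynomial $Q_j(X, Y, Z)$. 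The vanishing of $P_{j+2}(Y) - Q_j(X,Y,Z)$ cuts out $\frak{X}(M_j)_\bbQ$ as a hypersurface in $\mathbb{A}^3_\bbQ$; one must also verify, by the standard argument that irreducible characters are determined by their traces, that this single trace identity (plus the Fricke cubic $X^2+Y^2+Z^2-XYZ-2$ relation only matters on the reducible locus) captures the whole scheme.

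Next I would factor $P_{j+2}(Y) - Q_j(X, Y, Z)$ over $\bbQ$ and identify components. The canonical component $\frak{X}^{can}(M_j)$ is distinguished as the unique component containing the character of (a lift of) the discrete faithful representation; the associated trace triple can be computed numerically or via deformation from $M_1$ (the figure-eight knot), and located on one of the factors. The hyperelliptic structure should appear because the defining polynomial, viewed as a polynomial in $Z$, is quadratic in $Z$ with coefficients in $\bbQ[X, Y]$, so projecting out $Z$ exhibits the canonical component birationally as a double cover of an affine curve in $\mathbb{A}^2_\bbQ$ branched along an explicit divisor, giving a Weierstrass model $Z^2 = f_j(X, Y)$ after a change of variables; restricting to the slice cut out by the remaining constraint yields the hyperelliptic model over $\bbQ$.

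The additional $\mathbb{A}^1_\bbQ$ component when $j \equiv 0 \pmod 4$ should arise from Type B/C reducible representations: since $H_1(M_j,\bbZ) \cong \bbZ \oplus \bbZ/|j|\bbZ$, when $4 \mid j$ one may take $\rho(y)$ with primitive $4^{th}$ root of unity eigenvalues (so $\chi_\rho(y) = 0$), producing an extra family of non-canonical reducible characters along the lines of Example \ref{ex:nfold}; the relation in trace coordinates then becomes a linear equation cutting out a copy of $\mathbb{A}^1_\bbQ$. Concretely, I would substitute $Y = 0$ (or the appropriate linear condition in $Y$) into $P_{j+2}(Y) - Q_j(X,Y,Z)$ and check that when $4 \mid j$, the specialization factors off a linear polynomial in the remaining variable, and that for $j \not\equiv 0 \pmod 4$ no such factor exists (since then the characters of these would-be representations fail to lie on the character variety, or coalesce onto the canonical component).

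The main obstacle will be carrying out the explicit expansion of $\Tr(a^{-2}ya^2y)$ cleanly enough that the factorization and hyperelliptic form can be read off, and verifying that the factor picked out by the discrete faithful character is geometrically integral over $\bbQ$ for every $j$ in the infinite family. The dependence on $j$ enters only through the Chebyshev polynomial $P_{j+2}(Y)$, so uniformity is plausible, but proving irreducibility of a one-parameter family of polynomials over $\bbQ$ typically demands either a reduction-mod-$p$ argument or a genuine understanding of the hyperelliptic branch locus; I would expect the latter, together with the distinction between $j \bmod 4$ classes, to be the technical heart of the argument.
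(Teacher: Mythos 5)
The statement you are proving is cited in the paper from Baker and Petersen; the paper offers no internal proof, so your proposal is an independent reconstruction. Your overall strategy (pass to the two-generator presentation, write trace polynomials, factor, and identify the distinguished component) is indeed the shape of the argument in Baker--Petersen, but the plan as written has concrete errors that would prevent it from succeeding.

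First, the dimension count is off. A single trace identity $P_{j+2}(Y) - Q_j(X,Y,Z) = 0$ cuts out a hypersurface of dimension two in $\mathbb{A}^3_\bbQ$, whereas the canonical component is a curve (Thurston: $M_j$ has one cusp, so curve components of irreducible characters are $1$-dimensional). A one-relator presentation $\langle a,y\mid R\rangle$ does not give a single trace equation; on the irreducible locus the relation $\rho(R)=\Id$ is equivalent to a small \emph{system} of trace conditions (e.g.\ $I_R=2$ together with $I_{Ra}=I_a$, or similar), and you would need to analyze that full system, then split off the $2$-dimensional abelian surface before reaching the curve. The phrase ``this single trace identity captures the whole scheme'' therefore cannot be right, and your later reference to a ``remaining constraint'' is never reconciled with the hypersurface claim.

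Second, and more significantly, you mis-identify the extra $\mathbb{A}^1_\bbQ$ component when $4\mid j$ as ``an extra family of non-canonical \emph{reducible} characters.'' In fact, the theorem's uniqueness clause is about components containing the character of an irreducible representation, so the additional $\mathbb{A}^1$ component must itself contain irreducible characters (and the paper's later discussion of the line $L = \{(0,0,z)\}$ confirms this: $L$ consists of characters of representations, almost all irreducible, with $\rho(y)$ having eigenvalues $\pm i$, and contains only finitely many reducible characters such as $(0,0,\pm2)$). Substituting $Y=0$ and expecting the whole resulting family to be reducible, or expecting a ``linear equation in the remaining variable'' to appear directly, is not an argument that these characters form a curve component distinct from $\frak{X}^{can}$; you would need to exhibit irreducible representations on $L$ and verify that $L$ is Zariski closed and of dimension one.

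Finally, the hyperelliptic structure and geometric integrality of the canonical component over $\bbQ$ are asserted rather than derived. The claim that the defining polynomial is quadratic in $Z$ is plausible but must be checked against the actual expansion of $\Tr(a^{-2}ya^{2}y)$; and, as you yourself note, uniform irreducibility over $\bbQ$ across the whole family is the technical heart and is left untouched. As it stands the proposal is a reasonable outline of the kind of computation needed, but the hypersurface claim and the mischaracterization of the $\mathbb{A}^1$ component are genuine errors, not just incomplete details.
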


\noindent Recall that, though there is a unique canonical component of the $\PSL_2(\bbC)$-character variety, there may be multiple components containing different lifts of the discrete faithful representation in the $\SL_2(\bbC)$-character variety.
The above theorem says that in this particular instance, this phenomenon does not happen.

In \cite[\S 9]{BakerPetersen}, Baker and Petersen go on to also classify when the character of a representation of Type A and B lies on the canonical component $\frak{X}^{can}(M_j)$.
Specifically, using the natural coordinates $(\Tr_a,\Tr_y,\Tr_{ay})$ where $\Tr_{ay}=\Tr_t$, they show that characters of Type A representations correspond to the points $(\pm s,2,\pm s)$ with $s=w+1/w$, and characters of Type B representations correspond to points $(\pm y_m,y_m,2)$ with $y_m=\eta_{|j|}^m+1/\eta_{|j|}^m=2\cos(2\pi m/|j|)$.
Moreover, they show that all of these points lie on $\frak{X}^{can}(M_j)$ unless $y_m\in\{ -2,0,  2\}$ (see the schematics \cite[Figs 2, 3]{BakerPetersen}). 
They also show that when $j\equiv 0\pmod{4}$, there is an additional line of characters of representations given by $L=\{(0,0,z)\}$, which are precisely the characters of representations for which $\rho(y)$ has eigenvalues $\pm i$.
This line contains the character of the Type B representations $(0,0,\pm 2)$, however this point does not lie on $\frak{X}^{can}(M_j)$.
In fact, when $k\equiv 0\pmod{4}$ there are two points $(0,0,\pm z_0)$ in the intersection $\frak{X}^{can}(M_j)\cap L$, however each of these points correspond to characters of irreducible representations.
Note that the two generation of $\pi_1(M_j)$ ensures that there are no non-abelian reducible representations of Type C.

We collate all of this information into a succinct statement.

\begin{thm}[Baker--Petersen]\label{thm:BP2}
Fix $j\in A$ and let $M_j$ be the corresponding once punctured torus bundle with monodromy $RL^j$.
Then the following hold:
\begin{enumerate}
\item There are always characters of Type A representations on the canonical component $\frak{X}^{can}(M_j)$,
\item There are never characters of Type C representations on the canonical component $\frak{X}^{can}(M_j)$,
\item When $j\notin \{1,2,4\}$, there are always characters of Type B representation on the canonical component $\frak{X}^{can}(M_j)$,
\item When $j=1$, there are no characters of Type B representations,
\item When $j=2$, no characters of Type B representations lie on the canonical component $\frak{X}^{can}(M_2)$,
\item When $j=4$, no characters of Type B representations lie on the canonical component $\frak{X}^{can}(M_4)$ but there are characters of Type B representations on the line $L$.
\end{enumerate}
\end{thm}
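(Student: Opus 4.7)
The plan is to assemble the explicit descriptions of reducible characters on $\frak{X}^{can}(M_j)$ from Sections 9 and 10 of \cite{BakerPetersen} with the classification of non-abelian reducible representations from Section \ref{sec:nonabclassification}. After identifying the points in the coordinates $(\Tr_a, \Tr_y, \Tr_{ay})$ that correspond to each type, a case-by-case analysis on $j$ determines when such points are forced to be degenerate (i.e., land at $y_m \in \{-2, 0, 2\}$) and hence fall off the canonical component.

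For (1), the monodromy matrix $\Phi_j$ has characteristic polynomial $x^2 - (j+2)x + 1$ whose discriminant $(j+2)^2 - 4$ is positive for every $j \in A$. Therefore there exists $w \in \overline{\bbQ}$ with $w^2$ an eigenvalue of $\Phi_j$, and Theorem \ref{thm:typearep} produces a Type A representation with character $(\pm s, 2, \pm s)$ where $s = w + 1/w$; these points lie on $\frak{X}^{can}(M_j)$ by the Baker--Petersen parametrization. For (2), I will use the two-generator presentation $\pi_1(M_j) = \langle a, y \mid y^{j+2} = a^{-1} y a^2 y a^{-1}\rangle$ with $a = y^{-1}t$: if $\rho(t) = \pm\mathrm{Id}$, then $\rho(a) = \pm \rho(y)^{-1}$, forcing $\rho(\pi_1(M_j))$ to be cyclic and in particular abelian. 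Hence no non-abelian reducible Type C representation can exist.

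For (3)--(6), the key computation is to determine when $y_m = 2\cos(2\pi m/|j|) \in \{-2, 0, 2\}$ for $m \in \{1, \dots, |j|-1\}$ (I exclude $m=0$, which yields the trivial eigenvalue $\beta = 1$ and thus fails to give a Type B representation). The conditions are $y_m = -2 \Leftrightarrow 2 \mid |j|$ and $m = |j|/2$, and $y_m = 0 \Leftrightarrow 4 \mid |j|$ and $m \in \{|j|/4, 3|j|/4\}$. Direct inspection then handles the small cases: $j=1$ leaves only $m=0$, giving (4); $j=2$ leaves only $m=1$ with $y_1 = -2$, giving (5); and $j=4$ leaves $m \in \{1,2,3\}$ all producing $y_m \in \{-2,0\}$, giving the first half of (6). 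For $j=3$ and for $|j| \geq 5$, the choice $m=1$ yields $y_1 = 2\cos(2\pi/|j|)$, which is strictly between $-2$ and $2$ and nonzero, establishing (3). For the second half of (6), I will observe that $(0,0,\pm 2) \in L$ corresponds to a representation with $\Tr(\rho(y)) = 0$ (so $\rho(y)$ has eigenvalues $\pm i$) and $\rho(t)$ parabolic quasi-unipotent, placing it squarely in the Type B classification of Theorem \ref{thm:wtfrepspm1}. The only subtlety in the whole argument is the bookkeeping around which excluded $y_m$-values come from non-existence of a Type B representation versus existence on a non-canonical component, and this is resolved by careful comparison of the Baker--Petersen figures with the conditions of Theorems \ref{thm:wtfrepspm1} and \ref{thm:wtfreps}.
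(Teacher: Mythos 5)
Your proof is correct and takes essentially the same route as the paper: both rely on Baker--Petersen's explicit parametrization of the reducible characters on $\frak{X}^{can}(M_j)$ in the coordinates $(\Tr_a,\Tr_y,\Tr_{ay})$ and their criterion that the Type B points $(\pm y_m, y_m, 2)$ fall off the canonical component exactly when $y_m\in\{-2,0,2\}$, while the degeneracy analysis and the two-generator argument ruling out Type C characters (from $\rho(a)=\pm\rho(y)^{-1}$ forcing a cyclic image) match what the paper invokes. Your enumeration of the cases $j\in\{1,2,3,4\}$ and $|j|\ge 5$ is the same bookkeeping the paper implicitly performs, just spelled out.
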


As a consequence of Theorems \ref{thm:main1}, \ref{thm:main2}, and \ref{thm:BP2} and the fact that the Azumaya algebra extends over a Type A representation if and only if $\Tr(\Phi_j)=j+2=a^2+2$ for some $a\in\bbZ$, we conclude the following.

\begin{thm}
Let $M_j$ be a hyperbolic once punctured torus bundle of tunnel number one and let $\frak{C}=\frak{X}^{can}(M_j)$.
Then the Azumaya algebra $\mathcal{A}_{k(\widetilde{C})}$ extends to all of $\widetilde{C}$ if and only if $j\in\{1,4\}$.
\end{thm}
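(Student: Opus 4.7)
The plan is to combine the classification results of Baker--Petersen (Theorem \ref{thm:BP2}) with Theorem \ref{thm:main1} and Corollary \ref{cor:maintwocon}, via a short case analysis on $j$. By Theorem \ref{thm:BP} the field of definition of $\frak{C} = \frak{X}^{can}(M_j)$ is $\bbQ$, and since $\frak{C}$ is a canonical component, Corollary \ref{cor:canoncomp} supplies the irreducibility hypothesis needed to apply Corollary \ref{cor:maintwocon}. From Theorem \ref{thm:BP2}, a character of a Type A representation always lies on $\frak{C}$, no character of a Type C representation ever does, and a character of a Type B representation lies on $\frak{C}$ exactly when $j \notin \{1, 2, 4\}$. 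So the question reduces to checking Conditions $(\star_1)$ and $(\star_2)$ on the prescribed list of $j$.

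To evaluate $(\star_1)$ I would use the monodromy matrix $\Phi_j = \begin{smm} j+1 & 1 \\ j & 1 \end{smm}$, whose characteristic polynomial is $x^2 - (j+2)x + 1$. If $w^2$ is a root of this polynomial then $(w - 1/w)^2 = w^2 + w^{-2} - 2 = j$, so the alternative $w - 1/w \in \bbQ$ is equivalent to $j$ being the square of an integer. The other alternative, $j+2 = a^2 + 2$ for some $a \in \bbN$, gives precisely the same condition. Hence $(\star_1)$ holds exactly when $j$ is a non-negative perfect square, i.e., $j \in \{1, 4, 9, 16, \ldots\}$.

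For $(\star_2)$ I would read off from Example \ref{ex:tunnelone} that $H_1(M_j, \bbZ) \cong \bbZ \oplus \bbZ/|j|\bbZ$, with $t$ generating the free factor, $y$ generating the torsion factor, and $x$ mapping to zero. The least common multiple of the orders of the images of $x$ and $y$ in $H_1(M_j, \bbZ)_{tors}$ is therefore $n = |j|$. Now $\eta_{|j|} - \eta_{|j|}^{-1} = 2i\sin(2\pi/|j|)$ is either zero or purely imaginary and nonzero, so it lies in $\bbQ$ if and only if $|j| \in \{1, 2\}$. Thus $(\star_2)$ fails exactly when $|j| \geq 3$.

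Combining these, for $j \in \{1, 4\}$ only Type A characters appear on $\frak{C}$ and $j$ is a perfect square, so Theorem \ref{thm:main1} gives that $\mathcal{A}_{k(\widetilde{C})}$ extends. For $j = 2$ only Type A characters appear on $\frak{C}$, but $2$ is not a perfect square, so $(\star_1)$ fails and Corollary \ref{cor:maintwocon} shows the algebra does not extend. For every remaining $j \in A$, Type B characters lie on $\frak{C}$ and $|j| \geq 3$, so $(\star_2)$ fails and Corollary \ref{cor:maintwocon} again rules out extension. Because each step is an explicit algebraic computation once the classification of Theorem \ref{thm:BP2} is in hand, I expect no substantive obstacle beyond careful bookkeeping of which representation type is present for each $j$.
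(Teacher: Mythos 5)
Your proposal is correct and follows the same route as the paper: combine Baker--Petersen's classification (Theorem \ref{thm:BP2}) of which representation types have characters on $\frak{X}^{can}(M_j)$ with Theorem \ref{thm:main1} and Corollary \ref{cor:maintwocon}, then translate Conditions $(\star_1)$, $(\star_2)$ into arithmetic of $j$ via $\Tr(\Phi_j)=j+2$ and $|H_1(M_j,\bbZ)_{tors}|=|j|$. The computations $(w-1/w)^2 = j$ and $\eta_{|j|}-\eta_{|j|}^{-1}=2i\sin(2\pi/|j|)$, the observation that $(\star_1)$ holds iff $j$ is a perfect square while $(\star_2)$ fails iff $|j|\ge 3$, and the resulting case split ($j\in\{1,4\}$ extends via Type A and $(\star_1)$; $j=2$ fails $(\star_1)$; all other $j\in A$ have Type B characters and fail $(\star_2)$) all agree with what the paper leaves implicit in its one-sentence proof. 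A small redundancy: invoking Corollary \ref{cor:canoncomp} separately is unnecessary, since Corollary \ref{cor:maintwocon} already has that hypothesis built in for canonical components.
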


\noindent We mention that one also has a similar statement for the line of characters $L$ when $j\equiv 0\pmod{4}$, namely that $\mathcal{A}_{k(L)}$ never extends over this component due to the characters of Type B representations at $(0,0,\pm2)$

As in the introduction of the paper, let $M_j^{p/q}$ denote the closed $3$-orbifold obtained by doing $p/q$-Dehn filling on the cusp, provided it is hyperbolic, and let $k_{M_j^{p/q}}$ denote the corresponding trace field and $\rho_j^{p/q}:\pi_1(M_j^{p/q})\to\SL_2(\bbC)$ any lift of the holonomy representation.
Then we form the corresponding quaternion algebra over $k_{M_j^{p/q}}$ by
$$\mathcal{A}_{M_j^{p/q}}=\{\sum a_i\rho_j^{p/q}(\gamma)\mid a_i\in k_{M_j^{p/q}},~\gamma\in\pi_1(M_j^{p/q})\}.$$
We therefore have the following corollary, where the case of $j=1$ was also previously obtained by Chinburg--Reid--Stover \cite[Thm 1.7]{CRS}, which follows directly from Theorem \ref{thm:main2}.

\begin{cor}
Let $S_{M_j}$ denote the set of rational primes such that all of the algebras $\mathcal{A}_{M_j^{p/q}}$ ramify over a prime in $S_{M_j}$ as $p/q$ vary over all hyperbolic Dehn surgeries.
Then $S_{M_j}\subset\{2\}$ when $j=1,4$.
\end{cor}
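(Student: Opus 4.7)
The plan is to combine the extension result for the canonical components of $\frak{X}(M_j)_\bbQ$ when $j\in\{1,4\}$ (established in the preceding theorem) with the effective bound from Theorem~\ref{thm:intmod}. First, the preceding theorem yields that for $j=1,4$ the tautological Azumaya algebra $\mathcal{A}_{k(\widetilde{C})}$ associated to the canonical component $\frak{C}=\frak{X}^{can}(M_j)$ extends over the smooth projective completion $\widetilde{C}$. Corollary~\ref{cor:main1} then produces a finite set of rational primes $S_{M_j}$ with the required property that any finite place of $k_{M_j^{p/q}}$ ramifying in $\mathcal{A}_{M_j^{p/q}}$ lies above a prime of $S_{M_j}$.

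To conclude $S_{M_j}\subset\{2\}$, I would invoke Theorem~\ref{thm:intmod}, which bounds $S_{M_j}$ by the explicit set $S$ consisting of $\{2\}$ together with the odd rational primes dividing the least common multiple of the orders of $x$ and $y$ in $H_1(M_j,\bbZ)_{tors}$. For the tunnel number one family the presentation $\pi_1(M_j)=\langle x,y,t\mid txt^{-1}=x(yx)^j,\ tyt^{-1}=yx\rangle$ yields $H_1(M_j,\bbZ)\cong\bbZ\oplus\bbZ/|j|\bbZ$, with the torsion factor generated by the image of $y$ and the class of $x$ trivial.

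For $j=1$ the torsion subgroup is trivial, so the relevant least common multiple is $1$ and $S=\{2\}$. For $j=4$, the class of $y$ has order $4$ and the class of $x$ has order $1$, so the least common multiple is $4$; as $4$ has no odd prime divisors, again $S=\{2\}$. In both cases $S_{M_j}\subset S=\{2\}$, which completes the argument. The only substantive verification is that the hypothesis of Theorem~\ref{thm:intmod} holds, i.e.\ that $\mathcal{A}_{k(\widetilde{C})}$ extends over $\widetilde{C}$, which is precisely the content of the preceding theorem for $j\in\{1,4\}$; there is no serious obstacle beyond this bookkeeping.
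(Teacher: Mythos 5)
Your argument is correct and is exactly the route the paper intends: the preceding theorem supplies extension of the tautological Azumaya algebra over $\widetilde{C}$ for $j\in\{1,4\}$, and Theorem~\ref{thm:intmod} then bounds $S_{M_j}$ by $S=\{2\}\cup\{\text{odd primes dividing }\operatorname{lcm}(\operatorname{ord}(x),\operatorname{ord}(y))\}$, which equals $\{2\}$ since $H_1(M_1,\bbZ)_{tors}$ is trivial and $H_1(M_4,\bbZ)_{tors}\cong\bbZ/4\bbZ$ has no odd torsion. The paper's cross-reference to Theorem~\ref{thm:main2} in the sentence introducing this corollary appears to be a misprint for Theorem~\ref{thm:intmod}; your reasoning reflects the correct dependency.
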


\begin{rem}
It can be shown that $S_1=\{2\}$, that is, one can actually find a hyperbolic Dehn filling where $\mathcal{A}_{M_j^{p/q}}$ has finite ramification.
However it seems numerically as though $S_4$ should in fact actually be empty.
This is related to Question \ref{ques:amidumb} and the surrounding discussion.
\end{rem}


\subsection{Cyclic covers of the figure eight knot}

A particularly interesting family of examples come from $j$-fold cyclic covers $N_j$ of the figure eight knot complement, as in Example \ref{ex:nfold}.
Recall that
$$\pi_1(N_1)=\langle x,y,t\mid txt^{-1}=xyx,~tyt^{-1}=yx\rangle,$$
and that the manifolds $N_j$ have fundamental groups given by $\pi_1(N_j)=\langle x,y,t^j\rangle<\pi_1(N_1)$.
Moreover, the fiber subgroup $\pi_1(F)=\langle x,y\rangle$ is contained in $\pi_1(N_j)$ for all $j$.
Therefore, via restriction of representations, we have induced algebraic maps $\widehat{i}_1:\frak{X}(N_1)\to \frak{X}(F)$, $\widehat{i}_j:\frak{X}(N_j)\to \frak{X}(F)$, and $\widehat{p}_j:\frak{X}(N_1)\to \frak{X}(N_j)$ for all $j\in\bbN$.
The character scheme $\frak{X}(N_1)$ has a unique curve containing the character of an irreducible representation, which we call $\frak{C}_1$ (see \cite[\S 7]{CRS} for instance).

The image of $\frak{C}_1$ under each $\widehat{p}_j$ is also a curve, which we call $\frak{C}_j$.
By restriction, the curve $\frak{C}_j$ contains the character of a discrete faithful representation of $\pi_1(N_j)$ and hence $\frak{C}_j$ is a canonical component of $\frak{X}(N_j)$.
Therefore we have the following commutative diagram
$$\xymatrix{
\frak{C}_1\ar^-{\widehat{i}_1}[r]\ar_-{\widehat{p}_j}[d]&\frak{X}(F)\ar@{=}[d]\\
\frak{C}_j\ar^-{\widehat{i}_j}[r]&\frak{X}(F).}$$
Let $\phi_j$ be the restriction of the homomorphism $\phi_1:\pi_1(N_1)\to\{-1,1\}$ defined by
$$\phi_1(x)=1,\quad\phi_1(y)=1,\quad\phi_1(t)=-1,$$
to the subgroup $\pi_1(N_j)=\langle x,y,t^j\rangle$.
Then given any representation $\rho$ of $\pi_1(N_j)$, we can construct a new representation $\phi_j\cdot\rho$ via $(\phi_j\cdot\rho)(\gamma)=\phi_j(\gamma)\rho(\gamma)$.
This assignment induces an algebraic isomorphism $\widehat\phi_j:\frak{X}(N_j)\to \frak{X}(N_j)$ by sending $\chi_\rho$ to $\chi_{\phi_j\cdot\rho}$.
In \cite[Prop 5.2]{BLZ} specialized to our setting, Boyer, Luft, and Zhang show that if $\widehat\phi_j(\frak{C}_j)=\frak{C}_j$ then $\widehat{i}_j$ is degree $2$ and otherwise it is degree $1$.

Returning to our above diagram, since $\frak{C}_1$ is the unique component of characters of irreducible representation, it follows that $\widehat{\phi}_1(\frak{C}_1)=\frak{C}_1$ and so $\widehat{i}_1$, and hence the composition $\widehat{i}_j\circ \widehat{p}_j$ is degree $2$ for every $j$.
When $j$ is even, we argue that $\widehat{i}_j$ has degree $1$ as in \cite[Prop 6.1]{BLZ}.
For this, notice that $\phi_1\vert_{\pi_1(N_j)}\equiv 1$ and therefore $\widehat{p}_j\circ\widehat{\phi}_j=\widehat{p}_j$.
Hence $\widehat{p}_j$ is degree $2$ and $\widehat{i}_j$ is degree $1$.

When $j$ is odd, we argue as in \cite[Prop 6.2]{BLZ} that $\widehat{i}_j$ has degree $2$.
Notice that $\widehat{p}_j\circ\widehat\phi_1=\widehat\phi_j\circ\widehat{p}_j$ and recall from before that $\widehat{\phi}_1(\frak{C}_1)=\frak{C}_1$.
Therefore
$$\widehat\phi_j(\frak{C}_j)=\widehat\phi_j(\widehat{p}_j(\frak{C}_1))=\widehat{p}_j(\widehat\phi_1(\frak{C}_1))=\frak{C}_j,$$
and hence $\widehat{i}_j$ has degree $2$ and $\widehat{p}_j$ has degree $1$.

Note that $\widehat{p}_j:\frak{C}_1\to\frak{C}_j$ is a regular map defined over $\bbQ$, as the characters of elements of $\pi_1(N_j)=\langle x,y,t^j\rangle$ are expressible as polynomials with integer coefficients in characters of $x$, $y$, $t$.
Since $\frak{C}_1$ is defined over $\bbQ$ in the sense of Long and Reid \cite{LongReid2}, so too is $\frak{C}_j$.
Moreover the curve $\frak{C}_1$ is a smooth affine curve and hence is isomorphic to the smooth projective completion $\widetilde{C}_1$ at all non-ideal points (\cite[\S 7]{CRS}).
Suppose that $j$ is odd, it follows that the corresponding tautological Azumaya algebras $\mathcal{A}_{\bbQ(\widetilde{C}_j)}$ are all isomorphic.
From the fact that $\mathcal{A}_{\widetilde{C}_1}$ exists and $\widehat{p}_j$ is an isomorphism away from ideal points, it follows that $\mathcal{A}_{\bbQ(\widetilde{C}_j)}$ extends to an Azumaya algebra $\mathcal{A}_{\widetilde{C}_j}$ isomorphic to $\mathcal{A}_{\widetilde{C}_1}$ whenever $j$ is odd.

When $j$ is even, the opposite is true due to the presence of characters of representations of $\pi_1(N_1)$ on $\frak{C}_1$ with dihedral image.
The figure eight knot complement has the following irreducible representation
$$\rho(x) =  \begin{pmatrix} \eta_5 & 0 \\ 0 &  \eta_5^{-1} \end{pmatrix},\quad\rho(y) =  \begin{pmatrix}  \eta_5^{2} & 0 \\ 0 &  \eta_5^{-2} \end{pmatrix},\quad\rho(t) = \begin{pmatrix} 0 & 1 \\ -1 & 0 \end{pmatrix},$$
with dihedral image on the canonical component $\frak{C}_1$, where $\eta_5$ is a primitive $5^{th}$ root of unity.
Its restriction to $\pi_1(N_j)$ for $j$ even is an abelian representation, as $\rho(t^j)=\pm\mathrm{Id}$.
However it follows from the considerations above that $\widehat{p}_j(\chi_\rho)$ agrees with the character of a non-abelian reducible representation of Type B since $\widehat{p}_j(\chi_\rho)$ lies on $\frak{C}_j$.
Explicitly, as in Example \ref{ex:nfold}, let $j=2k$ and define a representation $\widehat\rho_j:\pi_1(N_j)\to \SL_2(\bbC)$ via
$$\widehat\rho_j(x) =  \begin{pmatrix} \eta_5 & 0 \\ 0 &  \eta_5^{-1} \end{pmatrix},\quad\widehat\rho_j(y) =  \begin{pmatrix}  \eta_5^{2} &(-1)^k \frac{1-\eta_5^{-1}}{k} \\ 0 &  \eta_5^{-2} \end{pmatrix},\quad\widehat\rho_j(t^j) = \begin{pmatrix} (-1)^k & 1 \\ 0 & (-1)^k \end{pmatrix}.$$
Perhaps the most straightforward way to see that $\widehat\rho_j$ satisfies the relators of $\pi_1(N_j)$ is to check it for $j=2$ and then note that for all $j$ even, $\widehat\rho_j$ is a conjugate of $\widehat\rho_2\vert_{\pi_1(N_j)}$ by the diagonal matrix with entries $(\sqrt{k})^{\pm 1}$.

A straightforward calculation shows that if $\rho_j=\rho\vert_{\pi_1(N_j)}$, then for $j$ even, $\chi_{\rho_j}=\chi_{\widehat\rho_j}$ and hence $\chi_{\widehat\rho_j}\in \frak{C}_j$.
In particular, we see that $\frak{C}_j$ carries a Type B representation for all $j$ even.
From the discussion above, we moreover know that $\frak{C}_j$ is defined over $\bbQ$.
Theorem \ref{thm:main2} therefore shows that the tautological Azumaya algebra $\mathcal{A}_{\bbQ(\frak{C}_j)}$ does not extend for all $j$ even, since $\eta_5-1/\eta_5\notin \bbQ$.

Combining the above analysis we have the following theorem stated in the introduction.

\mainexamples*

Note that the conclusion that $S_{N_j}=\{2\}$ follows from the fact that this holds for the figure eight knot complement.

\begin{rem}
These calculations provide an interesting counterpart to the results of Long--Reid \cite{LongReid}, where it is shown that the canonical component of the $\PSL_2(\bbC)$-character variety $\overline{X}(M)$ is a birational invariant of any commensurability class when $M$ is a one cusped hyperbolic $3$-manifold. 
Importantly, as remarked therein, these results only hold for the $\PSL_2(\bbC)$-character variety and not for the $\SL_2(\bbC)$-character variety $X(M)$.

In our setting, the smooth projective completion $\widetilde{C}_1$ of the canonical component of $\frak{C}_1$ of the figure eight knot $N_1$ is an elliptic curve and the smooth projective completion of the canonical component of $\overline{\frak{C}}_1$ is $\mathbb{P}^1_{\bbC}$.
When $j$ is even, the above shows that the same is true for all $N_j$ however when $j$ is odd the above shows that both a canonical component for $X(N_j)$ and a canonical component for $\overline{X}(N_j)$ are $\mathbb{P}_{\bbC}^1$.
In particular, the failure within a commensurability class (and even an infinite sequence of finite covers) for a canonical component in $X(N_j)$ to be a birational invariant can be infinite.
\end{rem}


\subsection{An example of Dunfield}\label{sec:dunfield}
We now recall an example of Dunfield \cite{Dunfield}, showing that the field of definition of a canonical component for a once punctured torus need not be $\bbQ$ and discuss its ramifications on the corresponding Azumaya algebra.
We content ourselves to recount the important facets of Dunfield's computations here, as opposed to the entirety of it.

Let $M$ denote the once punctured torus bundle with monodromy $\iota L^2R^2$ so that 
$$\Phi=-\begin{pmatrix}
1&1\\
0&1\end{pmatrix}^2\begin{pmatrix}
1&0\\
1&1\end{pmatrix}^2=\begin{pmatrix}
-1&-2\\
-2&-5\end{pmatrix}.$$
This is the Census manifold $m135$ and has presentation
$$\pi_1(M)=\langle x,y,t\mid txt^{-1}=y^{-1}x^{-1}y^{-1},~tyt^{-1}=yx(y^{-1}x^{-1}y^{-1})^3\rangle,$$
for its fundamental group,
As this group is $3$-generated, we have that $\frak{X}(M)_\bbQ\subset \bbQ^7$ with coordinates given by $\Tr(\rho(x))$, $\Tr(\rho(y))$, $\Tr(\rho(t))$, $\Tr(\rho(xy))$, $\Tr(\rho(xt))$, $\Tr(\rho(yt))$, $\Tr(\rho(xyt))$ (see \cite[Prop 1.4.1]{CS1}).

Let $\alpha=\Tr(\rho(x))$, $\beta=\Tr(\rho(y))$, $\gamma=\Tr(\rho(xy))$, and $\tau=\Tr(\rho(t))$.
In \cite[\S 5]{Dunfield}, Dunfield shows that if $X'$ denotes the subset of $\frak{X}(M)_\bbQ$ such that $\chi_\rho$ lies on a component of $\frak{X}(M)_\bbQ$ corresponding to characters of irreducible representations, then the following relations hold
\begin{align*}\gamma\neq 0,\quad 2\alpha&=\beta\gamma,\quad \gamma^2=2(\alpha^2-2),\\
 \Tr(\rho(xt))=\frac{\beta\tau}{\gamma}, \quad\Tr(\rho(yt))&=\frac{2\beta\tau(\alpha^2-3)}{\gamma^2}, \quad\Tr(\rho(xyt))=\frac{2\tau}{\gamma}.
\end{align*}
From our presentation of $\pi_1(M)$, it follows that $txt^{-1}y=y^{-1}x^{-1}$ and from trace identities one deduces that
$$2\gamma-\alpha\beta=\Tr(\rho(xyt))\Tr(\rho(t))-\Tr(\rho(yt))\Tr(\rho(xt))=\frac{2\tau^2}{\gamma}-\frac{2\beta^2\tau^2(\alpha^2-3)}{\gamma^3}.$$
Using the relations $2\alpha=\beta\gamma$, $\gamma^2=2(\alpha^2-2)$ to write the previous equation strictly in terms of $\tau$ and $\gamma$ one sees that $X'$ is cut out by the equation
$$-4\tau^2(\gamma^2-4)=\gamma^4(\gamma^2-4).$$
Upon basechanging to $k=\bbQ(i)$, one finds that this has $4$ geometrically integral closed subschemes given by the vanishing sets
$$
\frak{C}_1=\{2\tau=-i\gamma^2\},\quad \frak{C}_2=\{2\tau=i\gamma^2\},\quad \frak{C}_3=\{\gamma=2\},\quad \frak{C}_4=\{\gamma=-2\},
$$
where $X_1=\frak{C}_1\cup \frak{C}_2$ in the language of \cite[\S 5]{Dunfield}.
The subschemes $\frak{C}_3$ and $\frak{C}_4$ cannot be canonical components over $\bbC$ since the above relations show that $\gamma^2=4$ implies $\alpha^2=\beta^2=4$ and hence $\Tr(\rho([x,y]))=2$.
This shows that $\rho\vert_{\langle x,y\rangle}$ is reducible on all of $\frak{C}_3$, $\frak{C}_4$ and hence cannot contain the character of a discrete faithful representation.

The subschemes $\frak{C}_1$ and $\frak{C}_2$ are therefore the two canonical components of $\frak{X}(M)$, which differ by a choice of sign for $\rho(t)$ when lifting the discrete faithful representation from the $\PSL_2(\bbC)$-character scheme $\overline{\frak{X}}(M)$.
Let $\frak{C}$ be a choice of either $\frak{C}_1$ or $\frak{C}_2$, then we show that the Azumaya algebra $\mathcal{A}_{k(\widetilde{C})}$ does not extend over $\frak{C}$.
For this, it suffices to see that $\frak{C}$ contains the character of a Type A representation.

The monodromy matrix $\Phi$ has characteristic polynomial $x^2+6x+1$ which has roots given by $\lambda_{\pm}=-3\pm 2\sqrt{2}$.
Recall that if $\rho$ is a Type A representation, then $\rho(t)$ has eigenvalues $w_\pm$, $1/w_{\pm}$, where $w_{\pm}^2=-3\pm 2\sqrt{2}$, and $\rho(xy)$ has eigenvalue $1$ with multiplicity $2$.
Therefore at a Type A representation we have $\gamma=\Tr(\rho(xy))=2$ and $\tau=\Tr(\rho(t))=w_\pm+\frac{1}{w_\pm}$.
It follows from the equation
$$\left(w_\pm+\frac{1}{w_\pm}\right)^2+4=w_\pm^2+\frac{1}{w_\pm^2}+6=-3\pm 2\sqrt{2}+\frac{1}{-3\pm 2\sqrt{2}}+6=0,$$
that for any such choice of $w_\pm$, $\tau=\pm 2 i$.
Each such choice of $\tau$ is possible and hence we see that there are characters of Type A representations on both $\frak{C}_1$ and $\frak{C}_2$.
Proposition \ref{prop:noextendA} then allows us to conclude that, despite the fact that the field of definition of $\frak{C}$ is imaginary quadratic and not $\bbQ$, the Azumaya algebra still does not extend.

\begin{rem}\label{rem:dunfield}
Note that this is an example where the field of definition in the sense of Long--Reid \cite{LongReid2} differs from the field of definition in the sense of algebraic geometry. 
Indeed the canonical component $\frak{C}$ is defined over $\bbQ(i)$, which is the field of definition in the sense of Long--Reid, in this example however the field of definition in the sense of algebraic geometry is simply $\bbQ$.  
\end{rem}


\section{Some final remarks}\label{sec:questions}

The work of \cite{CRS} and this paper leave open several interesting questions and so in this concluding section we make some final remarks and ask some natural follow-up questions.
Throughout we will specialize to the case that $\frak{C}$ is a canonical component of $\frak{X}(M)_k$ with field of definition $k$.

First, assume that the tautological Azumaya algebra extends to $\mathcal{A}_{\widetilde{C}}$ so that there is a finite set of rational primes $S_M$ as in Theorem \ref{thm:main1}.
If $p=\chi_\rho$ is a character associated to a representation $\rho$ of geometric significance (e.g., the holonomy representation of $M$ or a Dehn filling representation) then there may be a discrepancy between the specialization of $\mathcal{A}_{\widetilde{C}}$ at $p$, that is, $\mathcal{A}_{\widetilde{C}}\otimes_{k_\rho} k(p)$ and certain natural commensurability invariants.

Specifically, if $M_{p/q}$ is $p/q$-Dehn filling on $M$ and $\rho$ is the associated holonomy representation then the algebra $\mathcal{A}_{\widetilde{C}}\otimes_{k_\rho} k(p)$ may or may not agree with the invariant quaternion algebra of $M_{p/q}$. 
This owes to a few different issues, including the discrepancy between $\pi_1(M)$ and $\pi_1(M)^{(2)}$ (as shown in \cite[\S 3.3]{MR}) and possibly the difference between the field of definition $k$ and the invariant trace field (though we admittedly don't know of any examples where the latter is actually an issue).
As $p/q$ vary, Theorem \ref{thm:main1} shows that the algebras $\mathcal{A}_{\widetilde{C}}\otimes_{k_\rho} k(p)$ can only have finite ramification at primes lying over the rational primes in $S_M$ and one might hope that the same would be true of the ramification of the corresponding invariant quaternion algebras. 
Unfortunately, this seems not to be the case, for instance one can check numerically using Snappy \cite{snappy} that for Dehn surgeries of the example from Section \ref{sec:tunnelnumone} when $k=4$, there seems to be no such finite set. 
This begs the following open ended question:
\begin{qtn}
Is there a formulation of the Azumaya algebra machinery that encodes the invariant quaternion algebra at each Dehn filling representation?
\end{qtn}
The reader familiar with the discrepancy between the invariant quaternion algebra and non-invariant quaternion algebra might guess that the failure for the example when $k=4$ from Section \ref{sec:tunnelnumone} is due to the presence of $2$-torsion in the homology of the associated once punctured torus bundle.
However, even for hyperbolic knot complements, whose invariant and non-invariant quaternion algebras agree, this phenomenon can occur.
Indeed, the tautological Azumaya algebra extends for the figure-eight knot, where $S_M=\{2\}$, however there are Dehn fillings where the invariant quaternion algebra is ramified at primes not lying over $2$, e.g., the $(6,0)$ and $(8,0)$ filling in Snap \cite{snap}.
This seems to be due to the fact that, whereas the first homology of the figure-eight knot has no $2$-torsion, the same cannot be said of the Dehn filled orbifolds.

We now turn our attention to the non-existence of a finite set $S$ in the case that $\mathcal{A}_{k(\widetilde{C})}$ does not extend to an Azumaya algebra over $\widetilde{C}$.
In this case, we know that there is an infinite set of irreducible representations $\{\rho_i\}\subset\frak{C}$ such that for each $i$, there exists a rational prime $\ell_i$ so that $\mathcal{A}_{\widetilde{C}}\otimes_{k_{\rho_i}} k(p)$ is ramified at a finite place lying over $\ell_i$ and for which $\ell_j\neq \ell_i$ for all $j\neq i$.
The existence of this sequence when $\mathcal{A}_{k(\widetilde{C})}$ does not extend is an existential result due to Harari \cite{Harari} but it does not give any control over which representations lie in the infinite set $\{\rho_i\}$.
In particular, a priori these representations could have no geometric significance.
However numerically it seems to be the case that such representations can be chosen to be Dehn fillings and, in the case that $M$ is a hyperbolic knot complement, this was indeed conjectured by Chinburg--Reid--Stover \cite[Conj 6.7]{CRS}.
The first provable evidence for this conjecture was recently given by Rouse \cite{Rouse} for the knot $7_4$.
We find it natural to repeat this conjecture in our setting and additionally give a sharper form for once punctured torus bundles.
\begin{conjec}
Let $M$ be a hyperbolic once punctured torus bundle and $\mathcal{A}_{k(\widetilde{C})}$ the tautological Azumaya algebra.
If $\mathcal{A}_{k(\widetilde{C})}$ does not extend, then there exists an infinite set of representations $\{\rho_i\}\subset\frak{C}$ where each $\rho_i$ is the holonomy of a distinct $p_i/q_i$ hyperbolic Dehn fillings of $M$ and an infinite sequence of distinct rational primes $\ell_i$ such that the (non-invariant) quaternion algebra associated to $\rho_i$ ramifies at a finite prime lying over $\ell_i$.
Moreover, the $\rho_i$ can be taken to be $n_i/0$-fillings for some strictly monotone sequence of natural numbers $\{n_i\}$.
\end{conjec}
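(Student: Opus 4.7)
The plan is to combine the characterization of non-extension from Theorem~\ref{thm:main2} with an explicit parametrization of $n/0$-Dehn filling characters on $\widetilde{C}$ and to promote the local non-triviality of the tame symbol into global ramification of the Dehn-filled quaternion algebras by a local-to-global (sieve plus Chebotarev) argument.

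First, under the non-extension hypothesis, Theorem~\ref{thm:main2} and Propositions~\ref{prop:noextendA}--\ref{prop:noextendB} single out a character $\chi_{\rho_0}\in\frak{C}$ of a Type A, B, or C representation at which the tame symbol $\{\widetilde\kappa,\widetilde\nu\}_{p_0}$ is non-trivial in $k(p_0)^*/k(p_0)^{*2}$. I would first make the witnessing quadratic extension $L=k(p_0)(\sqrt{\widetilde\kappa(p_0)})$ fully explicit, so that it reads either as $k(p_0)(w-1/w)$ or as $k(p_0)(\eta_n-\eta_n^{-1})$ depending on which of Conditions $(\star_1)$ or $(\star_2)$ fails. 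This is the ``target'' extension that the global ramification must detect.

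Second, I would construct the candidate family $\{\rho_n\}$ of $n/0$-Dehn filling holonomies. Thurston's hyperbolic Dehn surgery theorem provides an $N_0$ so that for every $n\ge N_0$ the manifold $M_{n/0}$ is hyperbolic and $\chi_{\rho_n}$ lies on $\widetilde{C}$ inside a shrinking analytic neighborhood of the discrete faithful character. These characters are distinct since $\widetilde{I}_l(\chi_{\rho_n})$ varies with $n$; moreover the relation $\rho_n(l)^n=\mathrm{Id}$ forces $\widetilde{I}_l(\chi_{\rho_n})=2\cos(2\pi k_n/n)$, so that the trace field $k_{\rho_n}$ contains a genuine $n$th cyclotomic subfield. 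Using Lemmas~\ref{lem:choosehilbert}--\ref{lem:CRShilb} one obtains a Hilbert symbol for $\mathcal{A}_{\rho_n}\otimes_{k_{\rho_n}}k(\chi_{\rho_n})$ by specializing the tautological Hilbert symbol of $\mathcal{A}_{k(\widetilde{C})}$.

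Third, one would sieve: for each odd rational prime $\ell$ and each $n$ coprime to $\ell$, the ramification of the specialized Hilbert symbol at a prime of $k_{\rho_n}$ over $\ell$ is governed by a splitting/inertia condition in $L/k(p_0)$ together with the splitting behavior of $\ell$ in the cyclotomic layer introduced by the $n/0$ filling. A Chebotarev density argument applied to compositions $L\cdot\bbQ(\eta_n)$ should then force an unbounded sequence of distinct primes $\ell_n$ to appear as ramification primes of some $\mathcal{A}_{\rho_n}$. Combined with Theorem~\ref{thm:intmod}, which limits \emph{where} ramification can occur when the algebra does extend, this would upgrade the existential sequence of Harari~\cite{Harari} to a geometric sequence of $n/0$ fillings.

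The main obstacle is the third step: controlling global ramification of $\mathcal{A}_{\rho_n}$ from purely local tame-symbol data at $p_0$. The tame symbol records ramification only over codimension~$1$ points of $\widetilde{C}$, while the desired conclusion concerns codimension~$1$ primes of the varying trace fields $k_{\rho_n}$; bridging the two seems to require either an enhancement of the integral model argument of Section~\ref{sec:intmod} that tracks Brauer classes fiberwise, or explicit effective Hilbert symbol calculations in a one-parameter family of Dehn fillings (as in Rouse~\cite{Rouse} for the knot $7_4$). This is precisely the delicate phenomenon, absent from Harari's purely existential result, which explains why the conjecture has resisted a fully general proof.
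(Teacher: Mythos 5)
This statement is a \emph{conjecture} in the paper, not a theorem: the author explicitly frames it as an open problem in Section~\ref{sec:questions}, citing \cite[Conj~6.7]{CRS} for the analogous statement about knot complements and Rouse's work on $7_4$ as the only provable evidence to date. There is therefore no proof in the paper to compare against, and your proposal should be read as a research program rather than as a proof attempt.

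Your sketch has the right general shape and, to your credit, you are forthright that the third step is the real gap. That gap is genuine. The tame symbol $r_{p_0}$ detects non-extension as a codimension-one obstruction along $\widetilde{C}$, i.e., it is sensitive to the local ring $\mathcal{O}_{\widetilde{C},p_0}$ at the reducible character; by contrast, the ramification primes $\ell_n$ of the specialized algebras $\mathcal{A}_{\rho_n}$ live in the varying arithmetic of the residue fields $k(\chi_{\rho_n})$, which are ``horizontal'' specializations. Nothing in Lemmas~\ref{lem:choosehilbert}--\ref{lem:CRShilb}, Proposition~\ref{prop:idealirrepextend}, or even the integral-model machinery of Section~\ref{sec:intmod} gives a fiber-by-fiber control of Brauer classes at the points $\chi_{\rho_n}$ from the single local datum at $p_0$. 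Harari's theorem is precisely as strong as what one can currently say: it produces an infinite set of specializations with unbounded ramification but gives no geometric handle on which points achieve it. Upgrading this to the explicit $n_i/0$ family would require either an effective Hilbert-symbol computation along that one-parameter family (this is what Rouse does for $7_4$, and it is far from a soft argument) or a genuinely new ``fiberwise Brauer'' enhancement of Section~\ref{sec:intmod}; neither is supplied by the sieve/Chebotarev step you propose.

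Two smaller points. First, in step two you write $\rho_n(l)^n=\mathrm{Id}$ and $\widetilde{I}_l(\chi_{\rho_n})=2\cos(2\pi k_n/n)$, but $n/0$-filling kills $t^n$, not $l^n$; the cyclotomic relation should be imposed on $\rho_n(t)$, i.e., on $\widetilde{I}_t$. Second, even after this correction, knowing that $k_{\rho_n}$ contains a layer of $\bbQ(\eta_n+\eta_n^{-1})$ does not by itself force a prime over some new $\ell_n$ to ramify in $\mathcal{A}_{\rho_n}$: the Hilbert symbol could still be unramified at all finite places outside a fixed finite set, which is exactly what happens in the extension case. Distinguishing the non-extension case at the level of individual Dehn fillings is the heart of the conjecture and remains open.
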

\noindent In the above, the first statement is identical to that of \cite[Conj 6.7]{CRS} and we conjecture the second statement for once punctured torus bundles.
Although we have spent less time experimentally checking this conjecture in the hyperbolic knot complement case, recent work of Rouse suggests that the sharper form may already hold there \cite{Rouse2,Rouse}.

There are also some remarks to make that are more specific to this paper and the field of definition $k$ of any canonical component $\frak{C}$.
As was the case in \cite{CRS}, in all of the theorems in this paper we must be careful about stating our conditions using the field $k$.
For instance, instead of writing $\bbQ(\mu+1/\mu)=\bbQ(\mu)$ we must instead write $k(\mu+1/\mu)=k(\mu)$ and we must frequently worry about whether certain roots of unity lie in $k$.
This is unfortunate since $\mu$ and the roots of unity we must worry about are constructive but the field of definition $k$ is more mysterious. 
It is pointed out in \cite[Pg 6]{CRS} that for hyperbolic knot complements, in all known examples the field of definition $k$ is always $\bbQ$, however from Section \ref{sec:dunfield} we know that this need not be the case for once punctured torus bundles. 
Nonetheless, we wonder aloud whether $k$ can still be replaced by $\bbQ$ in this paper as well.

\begin{qtn}
Is it true that the field of definition $k$ is always a totally imaginary extension of $\bbQ$?
Moreover, is it always true that if $n$ denotes the least common multiple of the order of $x$, $y$ in the finite part of $H_1(M,\bbZ)$, then $\eta_n-1/\eta_n\notin k$ unless $\eta_n\in \bbQ$?
\end{qtn}

\noindent If the answer to the first question is affirmative, then it means that the condition that $w-1/w\in k$ is superfluous in Condition $(\star_1)$ since this never holds and if the answer to the second question is affirmative then in means that Condition $(\star_2)$ is also superfluous in Theorem \ref{thm:main2}.
As in \cite{CRS}, in all examples we know of both of these two properties hold. 
Moreover, if the first question has a negative answer, then we would be particularly interested in an example where $w-1/w\in k$.

We finish this section by giving four idle, unrelated questions.
First, the analysis in Section \ref{sec:intmod} is somewhat incomplete, in that from numerical computations it seems to us that the inclusion to $S$ of all primes which divide the order of the finite part of homology in Theorem \ref{thm:intmod} is unnecessary.
Specifically, there should be a way to handle that case in the analysis of Theorem \ref{thm:CRSanalogue}.
We therefore pose it as a question to resolve this.

\begin{qtn}
Can one remove the hypothesis that $S$ contains all primes which divide the order of $n$ from Theorem \ref{thm:intmod}?
\end{qtn}

Second, when one wants to prove converse theorems such as Theorem \ref{thm:main2}, an issue which enters the picture is understanding certain local coordinates around characters of non-abelian reducible representations, such as in Section \ref{sec:convthm}.
In the case of a once punctured torus bundle, we are fortunately able to lean on previous work of Heusener--Porti showing that such points are scheme reduced in $\frak{X}(M)$.
For the case of hyperbolic knot complements, there is the similar theorem in \cite[Thm 4.6]{CRS} however only when $\mu^2$ is a simple root of the Alexander polynomial, which by work of Heusener--Porti--Suarez \cite{HPS} implies that characters of non-abelian reducible representations are scheme reduced.
In an example where such a character is singular it would be interesting to know how that affects the extension problem for the tautological Azumaya algebra.

\begin{qtn}
To what extent do singularities in the character variety affect the extension problem for the tautological Azumaya algebra? Is it possible to find an example of a character variety for which the tautological Azumaya algebra does not extend over a (singular) projectivization of the canonical component but does extend over its smooth projective completion?
\end{qtn}

\noindent Note that the converse cannot hold, since an extension over the (singular) projectivization will always pullback to an extension on the smooth projective completion.

Third, we are interested in the extent to which Type C representations actually appear. For instance, in all example we know of, Type C representations never exist on the canonical component.
Moreover, in computational examples, we are only able to find Type C representations which have eigenvalues of order dividing $4$ and we are idly curious whether this is a general phenomenon.

\begin{qtn}
Do there exist characters of Type C representations on any canonical component? Can one find Type C representations of hyperbolic once punctured torus bundles where the image of $x$, $y$ do not have order dividing $4$? Equivalently, we are asking if the eigenvalues of the image of $x$, $y$ under such a representation must be a power of $i$. 
\end{qtn}

Finally, it is natural to ask about the story for $k$-cusped hyperbolic manifolds when $k\ge 2$. 
In such a setting, the canonical component has complex dimension $k$ and the tautological Azumaya algebra still makes sense to define.
However, the study of divisors and therefore extensions of Azumaya algebras seems to become much more intricate.
For example in the case that $k=2$, one needs to know that $\mathcal{A}_{k(C)}$ extends over all algebraic curves in the canonical component.
It would be interesting to understand the story in that setting, so we modestly ask the following related question.

\begin{qtn}\label{ques:amidumb}
Does there exists a $k$-cusped hyperbolic manifold $M$ for $k\ge 2$, such that the corresponding $S_M$ from Theorem \ref{thm:main1} exists?
In particular, is there a $k$-cusped hyperbolic manifold for $k\ge 2$ such that for all Dehn fillings of all $k$ cusps, the finite ramification of the (non-invariant) quaternion algebras lie over a finite set of rational primes $S$?
\end{qtn}

\noindent We admittedly have not tried to find any numerically and such an example would be very interesting.

\bibliographystyle{abbrv}
\bibliography{Biblio}
\end{document}